\definecolor{green}{rgb}{0,0.8,0} 
\definecolor{deepgreen}{cmyk}{1,0,1,0.5}
\newcommand{\Del}[1]{}
\numberwithin{equation}{section}
\newtheorem{theorem}{Theorem}[section]
\newtheorem{corollary}[theorem]{Corollary}
\newtheorem{lemma}[theorem]{Lemma}
\newtheorem{proposition}[theorem]{Proposition}
\newtheorem{remark}[theorem]{Remark}
\newcommand{\mand}{{\ \ \text{and} \ \  }}
\newcommand{\angles}[2]{\langle #1,#2\rangle}
\newcommand{\ep}{\varepsilon}
\newcommand{\cross}{\times}
\newcommand{\tr}{\textrm{tr}}
\newcommand{\bfn}{{\bf n}}
\newcommand{\bara}{{\overline a}}
\newcommand{\bary}{{\overline y}}
\newcommand{\barA}{{\overline A}}
\newcommand{\barD}{{\overline D}}
\newcommand{\barT}{{\overline T}}
\newcommand{\barV}{{\overline V}}
\newcommand{\barxi}{\overline{\xi}}
\newcommand{\barsigma}{\overline{\sigma}}
\renewcommand{\hbar}{{\underline h}}
\newcommand{\nbar}{{\underline n}}
\newcommand{\Ebar}{{\underline E}}
\newcommand{\bbR}{\mathbb R}
\newcommand{\calB}{\mathcal B}
\newcommand{\calC}{\mathcal C}
\newcommand{\calD}{\mathcal D}
\newcommand{\calE}{\mathcal E}
\newcommand{\calF}{\mathcal F}
\newcommand{\calH}{\mathcal H}
\newcommand{\calL}{\mathcal L}
\newcommand{\calT}{\mathcal T}
\newcommand{\tila}{{\tilde{a}}}
\newcommand{\tilc}{{\tilde{c}}}
\newcommand{\tilf}{{\tilde{f}}}
\newcommand{\tiln}{{\tilde{n}}}
\newcommand{\tilp}{{\tilde{p}}}
\newcommand{\tilF}{{\tilde{F}}}
\newcommand{\tilG}{{\tilde{G}}}
\newcommand{\tilP}{{\tilde{P}}}
\newcommand{\tilV}{{\tilde{V}}}
\newcommand{\scE}{{\mathscr{E}}}
\newcommand{\vece}{{\vec e}}
\newcommand{\mth}{m^{\mathrm{th}}}
\newcommand{\tilcalC}{{\tilde{\calC}}}
\newcommand{\tiltilcalC}{{\tilde{\tilde{\calC}}}}
\newcommand{\snabla}{{\slashed{\nabla}}}
\newcommand{\ud}{\mathrm{d}}
\newcommand{\tiltheta}{{\tilde{\theta}}}
\newcommand{\fy}{\varphi}
\newcommand{\Thetam}{\Theta^{(m)}}
\newcommand{\Lambdam}{\Lambda^{(m)}}
\newcommand{\Sigmam}{\Sigma^{(m)}}
\newcommand{\Xm}{X^{(m)}}
\newcommand{\gm}{g_{(m)}}
\newcommand{\dgm}{g^{(m)}}
\newcommand{\Thetamp}{\Theta^{(m+1)}}
\newcommand{\Lambdamp}{\Lambda^{(m+1)}}
\newcommand{\Sigmamp}{\Sigma^{(m+1)}}
\newcommand{\Thetamm}{\Theta^{(m-1)}}
\newcommand{\Lambdamm}{\Lambda^{(m-1)}}
\newcommand{\Sigmamm}{\Sigma^{(m-1)}}
\newcommand{\dgmm}{g^{(m-1)}}
\newcommand{\calLm}{\calL^{(m)}}
\newcommand{\fm}{f^{(m)}}
\newcommand{\Bm}{B^{(m)}}
\newcommand{\Cm}{C^{(m)}}
\newcommand{\Dm}{D^{(m)}}
\newcommand{\Em}{E^{(m)}}
\newcommand{\Fm}{F^{(m)}}
\newcommand{\gammam}{\gamma_{(m)}}
\newcommand{\Phim}{\Phi_{(m)}}
\newcommand{\tilOmega}{{\tilde{\Omega}}}
\newcommand{\sdiv}{\slashed{\mathrm{div}}}
\newsavebox{\@brx}
\newcommand{\llangle}[1][]{\savebox{\@brx}{\(\m@th{#1\langle}\)}%
  \mathopen{\copy\@brx\kern-0.5\wd\@brx\usebox{\@brx}}}
\newcommand{\rrangle}[1][]{\savebox{\@brx}{\(\m@th{#1\rangle}\)}%
  \mathclose{\copy\@brx\kern-0.5\wd\@brx\usebox{\@brx}}}
\newcommand{\bangles}[2]{\llangle #1,#2\rrangle}
\title{Well-posedness of the free boundary hard phase fluids  in Minkowski background and its Newtonian limit}
\author{Shuang Miao
\and  Sohrab Shahshahani
\and Sijue Wu}
\thanks{S. Miao was supported by the Fundamental Research Funds for the Central Universities in China.  S. Shahshahani was supported by the Simons Foundation grant 639284. S.~Wu was supported in part by NSF grant  DMS-1764112. }
\begin{document}

\begin{abstract}
The hard phase  model describes a relativistic barotropic irrotational fluid with sound speed equal to the  speed of light. In this paper, we prove the local well-posedness for this model in the Minkowski background with free boundary. Moreover, we show that as the  speed of light tends to infinity, the solution of this model converges to the solution of the corresponding Newtonian free boundary problem for incompressible fluids. In the appendix we explain how to extend our proof to the general barotropic fluid free boundary problem.
\end{abstract}
\maketitle
\section{Introduction}
 Let $(\mathbb R^{1+3}, m)$ be the Minkowski space-time  with  metric components $m_{\mu\nu}, \mu,\nu=0,1,2,3$,
\begin{align}\label{eq:Minkowskimetric}
	m_{00}=-1,\quad m_{11}=m_{22}=m_{33}=1, \quad \textrm{and} \quad m_{\mu\nu}=0, \quad \textrm{if}\quad \mu\slashed{=}\nu.
\end{align}
The motion of a relativistic perfect fluid occupying a domain $\Omega\subset \mathbb R^{1+3}$  in Minkowski background $(\mathbb R^{1+3}, m)$  is governed by the conservation laws
\begin{align}
&\nabla_\mu T^{\mu\nu}=0,\label{eq:con2}\\
&\nabla_\mu I^\mu =0,\label{eq:con1}
\end{align}
where 
\begin{align}\label{def EM tensor}
	T^{\mu\nu}=(\rho+p)u^{\mu}u^{\nu}+p(m^{-1})^{\mu\nu}
\end{align}
is 
the \emph{energy-momentum} tensor, and 
\begin{align}\label{def par current}
	I^{\mu}=\frak nu^{\mu}
\end{align}
is the \emph{particle current}. Here $u$ is the \emph{fluid velocity}, which is a  dimensionless future-directed unit timelike $4$-vector, so its components satisfy 
\begin{align}\label{u normalization}
\begin{split}
m_{\alpha\beta}u^\alpha u^\beta = -1,\qquad u^0>0;
\end{split}
\end{align}
and $\rho$ is the \emph{energy density}, $p$ is the \emph{pressure}, $\frak n$ is the \emph{number density} of particles, and  $\nabla$ is the covariant derivative associated to $m$.
 Let $s$ be the \emph{entropy per particle}, and $\theta$ be the \emph{temperature}. 
The laws of thermodynamics state that $\rho$ and $p$ are functions of $\frak n$ and $s$; $\rho\ge 0$, $p\ge 0$, and 
 \begin{align}\label{thermo general}
	p=\frak n\frac{\partial \rho}{\partial \frak n}-\rho,\quad \theta=\frac{1}{\frak n}\frac{\partial\rho}{\partial s}.
\end{align}
The \emph{sound speed} $\eta$ is defined by
\begin{align}\label{def sound speed}
	\eta^{2}:=\left(\frac{\partial p}{\partial\rho}\right)_{s}
\end{align}
and  is assumed to satisfy $0\le \eta\le c$, where $c$ is the  speed of light. \footnote{We have chosen not to normalize units leading to $c=1$, because as a biproduct of our well-posedness result and a priori estimates (to be discussed below) we are able to rigorously justify the Newtonian limit of the problem as $c\to\infty$. The exact powers of $c$ that appear in the equations below will be explained in the discussion following the statement of Theorem~\ref{thm:main}. Note also that even though the speed of light is taken to be $c$ rather than $1$, the coordinates $(x^0,\dots,x^3)$ on $(\bbR^{1+3},m)$ are chosen so that the metric components are \eqref{eq:Minkowskimetric}.}

Assume that the perfect fluid is  \emph{barotropic}, that is,  the pressure is a function of the energy density:
$$p=f(\rho).$$
Then \eqref{eq:con1} decouples from \eqref{eq:con2}, which by themselves form a closed system. In this case, 
both $p$ and $\rho$ are functions of a single variable $\sigma$,  defined by
\begin{align}\label{def enthalpy square}
	\rho+p=\sigma\frac{d\rho}{d\sigma}.
\end{align}
Assume that the function $f$ is strictly increasing and the integral 
$$\int_0^p\frac{dp}{\rho+p}=F(p)$$
exists. Let
$$V=\|V\|u$$
where 
$$\|V\|:=e^F$$
and $$G=\frac{\rho+p}{\|V\|^2}.$$
Then \eqref{eq:con2} can be reduced to the following equations in the fluid domain $\Omega$:
\begin{align}
&V^\nu\nabla_\nu V^\mu+\frac12 \nabla^\mu(\|V\|^2)=0\qquad\text{in }\Omega,\label{isentropic-barotropic1}\\
&\nabla_\mu \big(G(\|V\|)V^\mu\big)=0\qquad\text{in }\Omega. \label{isentropic-barotropic2}
\end{align}
See \cite{Ch-hp1} for the derivation of \eqref{isentropic-barotropic1}-\eqref{isentropic-barotropic2}.

Assume further that the fluid is \emph{irrotational}, that is
\begin{align}
V^\mu=\nabla^\mu \phi
\end{align}
for some scalar function $\phi$, and the sound speed $\eta$ equals to the  speed of light $c$. Then we arrive at the \emph{hard phase} model \footnote{This is also referred to as a \emph{stiff} or \emph{incompressible} fluid in the relativistic fluid literature.}. During the gravitational collapse of the degenerate core of a massive star, when the mass-energy density exceeds the nuclear saturation density, the sound speed is thought to approach the speed of light, cf. \cite{Ch-hp1, Lich-book, Rez-book, F-P, Zeldovich, Walecka}.  The hard phase model is an idealized model for this physical situation, cf. \cite{Ch-hp1}.  See also \cite{Ch-video}.
As derived in \cite{Ch-hp1}, the equation of state relating $p$ and $\sigma$ for the hard phase model  is 
\begin{align}\label{eq:state}
\begin{split}
p = \frac{1}{2}(\sigma^2-c^4),
\end{split}
\end{align}
the energy density satisfies 
\begin{align}\label{rho sigma rela}
	\rho=\frac12(\sigma^{2}+c^4),
\end{align}
and 
\begin{align}\|V\|=\sigma,\qquad G=1.\end{align}
For the hard phase fluid, the variable $\sigma$ is the enthalpy, which satisfies $\sigma^2> c^4$ in the fluid domain $\Omega$.  Equations
\eqref{isentropic-barotropic1}-\eqref{isentropic-barotropic2} in this case reduce to
\begin{align}\label{eq:Voriginalintro}
\begin{split}
-V_\mu V^\mu= \sigma^2, \qquad \nabla_\mu V^\mu=0, \qquad\text{in }\Omega.
\end{split}
\end{align}
 In terms of the potential function $\phi$, where $V^\mu=\nabla^\mu \phi$,  equations \eqref{eq:Voriginalintro} can be equivalently written as:
\begin{align}\label{hard-potential} -\nabla_\mu\phi \nabla^\mu\phi=\sigma^2,\qquad \Box\phi=0, \qquad\text{in }\Omega.\end{align}
Observe that the first equation in \eqref{eq:Voriginalintro} or \eqref{hard-potential}, from which \eqref{isentropic-barotropic1} follows by taking a covariant derivative, is a direct consequence of \eqref{u normalization}.
The energy-momentum tensor $T$ and the particle current $I$ for the hard phase model are
\begin{align*}
\begin{split}
T^{\mu\nu}=V^\mu V^\nu-\frac{1}{2}(m^{-1})^{\mu\nu}(V_\alpha V^\alpha+c^{4}),\quad\qquad I^\mu=V^\mu.
\end{split}
\end{align*}

 In this paper we study the motion of a hard phase fluid with free boundary, surrounded by vacuum. Let $(x^0, x^1, x^2, x^3)$ be the rectangular coordinates for a point in Minkowski spacetime ($\mathbb R^{1+3}, m)$. We  also use $t$ for $x^0$,  and $x=(x^1, x^2, x^3)$, and $\partial_t$ for $\nabla_0$, $\partial_x$ for $(\nabla_1, \nabla_2,\nabla_3)$. Let $\Omega=\{t\ge 0\}\cap\Omega$ be the fluid domain, 
$\Omega_t:=\{x^0=t\}\cap \Omega$,  and $\partial\Omega_t$ be the boundary of $\Omega_t$. Let $\partial\Omega=\bigcup_{ t\ge 0}\partial\Omega_t$ and $\calT\partial\Omega$ be the tangent space of $\partial\Omega$.
 Besides equations \eqref{eq:Voriginalintro}, we assume that on the boundary $\partial\Omega$,
\begin{align}
&\sigma^2=c^4 \qquad \text{on }\partial\Omega\label{boundary1}\\
&V\vert_{\partial\Omega}\in \calT\partial\Omega\label{boundary2}.
\end{align}
The first condition \eqref{boundary1} is equivalent to $p=0$ on $\partial\Omega$, and the second states that the fluid particle on the boundary $\partial\Omega$ will remain on $\partial\Omega$ at later times. 

To summarize, we study the Cauchy problem for the following system of equations 
\begin{align}\label{eq:Voriginalintro2}
\begin{split}
\begin{cases}
-V_\mu V^\mu= \sigma^2,\qquad &\mathrm{in~}\Omega  \\
\nabla_\mu V^\mu=0,\quad dV=0,\qquad &\mathrm{in~}\Omega\\
\sigma^2=c^4,\qquad &\mathrm{on~}\partial\Omega\\
V\vert_{\partial\Omega}\in \calT\partial\Omega
\end{cases},
\end{split}
\end{align}
where $V$, $\sigma^2$ and $\partial\Omega$ are the unknowns. Here $dV$ is the exterior derivative of the 1-form $V$ (we will abuse notation to write $V$ for both the vectorfield and the corresponding 1-form $V^\flat$), so $dV=0$ means that $V$ is irrotational.  We assume that the initial data $(V_0, \sigma_0)$ and  $\Omega_0$ are given and satisfy
\begin{align}\label{eq:data}
\begin{split}
\begin{cases}
-(V_0)_\mu (V_0)^\mu= \sigma_0^2\ge c^4, \quad V^0>0, \qquad &\mathrm{in~}\Omega_0  \\
\nabla_\mu (V_0)^\mu=0,\quad dV_0=0,\qquad &\mathrm{in~}\Omega_0\\
\sigma_0^2=c^4,\qquad &\mathrm{on~}\partial\Omega_0\\
\nabla_\mu \sigma_0^2\nabla^\mu \sigma_0^2\ge c_0^2 c^4>0\qquad&\text{on }\partial\Omega_{0}
\end{cases},
\end{split}
\end{align} 
where $c_0$ is a nonzero constant.\footnote{The last assumption in \eqref{eq:data} is the relativistic Taylor sign condition, which we will explain next.} 
We show that if the domain $\Omega_0$ and  $(V_0, \sigma_0)$ are sufficiently smooth and satisfy \eqref{eq:data},  then \eqref{eq:Voriginalintro2} is uniquely solvable in a time interval $[0, T_0]$, with $T_0>0$ depends only on the initial data, 
and the solution has the same regularity as the initial data. Furthermore we show that for suitably given initial data relative to the  speed of light $c$, the life span of the solution is $T_0=cT_1$, with $T_1>0$ independent of $c$;  and as $c\to\infty$, the solution of \eqref{eq:Voriginalintro2}-\eqref{eq:data} converges to the solution of the corresponding free boundary problem of the Newtonian incompressible fluid.

The free boundary problem \eqref{eq:Voriginalintro2} is a fully nonlinear system defined on a free domain. The key to solving \eqref{eq:Voriginalintro2} is to reduce it to a quasilinear system. 

Consider the Newtonian counterpart of our problem, the water wave problem, which concerns the motion of an incompressible, irrotational ideal fluid in  free domains, neglecting surface tension. Let $\tilV$ be the velocity and $\tilp$ the pressure. Assume that the fluid occupies the domain $\tilOmega_t$ at time $t$, with boundary $\partial\tilOmega_t$,  and let $\tiln$ be the unit outward normal to $\partial\tilOmega_t$.  An important condition for the well-posedness of the water wave system is the Taylor sign condition\footnote{It is known that the failure of this condition leads to instability; cf. \cite{GTay1}.}:
\begin{align}\label{taylor-ww}
-\frac{\partial\tilp}{\partial\tiln}\ge \tilc_0>0\qquad \text{on }\partial\tilOmega_t.
\end{align}
It was shown in \cite{Wu97, Wu99} that for the water wave system,  taking one material derivative to the Euler equation gives rise to a quasilinear  equation.  This quasilinear equation is\footnote{Let $\bf g$ be the gravity. The motion of water waves is described by 
\begin{align*}
\begin{split}
\begin{cases}
D_t \tilV+\nabla \tilp={\bf g},\qquad &\mathrm{in~}\tilOmega_t  \\
\text{div}\, \tilV=0,\quad \text{curl}\, \tilV=0,\qquad &\mathrm{in~}\tilOmega_t
\end{cases},\qquad
\begin{cases}
\tilp=0,\qquad &\mathrm{on~}\partial\tilOmega_t\\
(1,\tilV)\in \calT\partial\tilOmega,\qquad &\mathrm{on~}\partial\tilOmega
\end{cases},
\end{split}
\end{align*}
Taking a $D_t$ derivative to the Euler equation and computing the commutator $[D_t, \nabla]\tilp= -\nabla \tilp\cdot\nabla\tilV$ yields 
\begin{align}\label{ww-quasi}
D_t^2 \tilV-\nabla\tilp\cdot\nabla\tilV=-\nabla D_t\tilp.
\end{align}
Since $\tilp=0$ on $\partial\tilOmega_t$,  $-\nabla\tilp=a\tiln$ on the free boundary $\partial\tilOmega_t$, with $a=-\frac{\partial \tilp}{\partial\tiln}$. Also $\Delta \tilV=0$. Restricting \eqref{ww-quasi} on $\partial\tilOmega_t$ gives \eqref{eq:tilV0}.}
\begin{align}\label{eq:tilV0}
\begin{cases}
(D_t^2+\tila\nabla_\tiln)\tilV=-\nabla D_t\tilp,\qquad& \mathrm{on~}\partial\tilOmega_t\\
\Delta \tilV =0,\qquad &\mathrm{in~}\tilOmega_t
\end{cases},
\end{align}
where $D_t=\partial_t+\tilV\cdot\nabla$ is the material derivative, $\tila=-\frac{\partial \tilp}{\partial\tiln}$, and $\nabla_\tiln$ is the Dirichlet-Neumann operator. Observe that by Green's identity,  the Dirichlet-Neumann operator $\nabla_\tiln$ is a positive operator:
$$\int_{\partial\tilOmega_t} v \nabla_\tiln v \,ds=\int_{\tilOmega_t}|\nabla v|^2\,dx>0$$
for $v$ harmonic. In \cite{Wu97, Wu99} boundary integrals were used to express the quantities $\tila$ and $-\nabla D_t\tilp$, and the first equation in \eqref{eq:tilV0} was shown to be a quasilinear equation of hyperbolic type, with the left-hand side consisting of principal terms, and a local-wellposedness result was obtained. 
In \cite{Ch-Lin1} a similar quasilinear equation as the first in \eqref{eq:tilV0} was used to study the more general case that allows for non-zero vorticity (see also \cite{ZZ}).  Instead of boundary integrals, elliptic regularity estimates and equations
\begin{align}\label{ww-p}
\begin{cases}
-\Delta\tilp=\partial_i \tilV^j\partial_j\tilV^i,\quad\text{in }{\tilOmega_t}, \qquad\qquad \qquad \tilp=0 \quad\text{on }\partial\tilOmega_t\\
-\Delta D_t\tilp=\partial_i \tilp\Delta\tilV^i+G(\partial \tilV, \partial^2\tilp),\quad\text{in }{\tilOmega_t}, \qquad D_t\tilp=0 \quad\text{on }\partial\tilOmega_t
\end{cases}
\end{align}
  were used in \cite{Ch-Lin1} to control the regularity of $\nabla\nabla\tilp$ and $\nabla\nabla D_t\tilp$ via the regularity of $\Delta \tilp$ and $\Delta D_t\tilp$, and an a priori estimate was obtained under the assumption that the Taylor sign condition \eqref{taylor-ww} holds.

This motivates us to take a $D_V:=V^{\mu}\nabla_{\mu}$ derivative of the equation \eqref{isentropic-barotropic1}  
to obtain\footnote{We know $\|V\|=\sigma$ for the hard phase model. In the rest of this paper we will not work with the fluid velocity $u$ again, and will refer to $V$ simply as the velocity.}
\begin{align}\label{eq:DVinside}
D_V^2V^\nu-\frac12\nabla_\mu\sigma^2\nabla^\mu V^\nu=-\frac12\nabla^\nu D_V \sigma^2.
\end{align}
Since $\sigma^2\equiv c^4$ on $\partial\Omega$ by assumption, $\nabla\sigma^2$ is normal (with respect to $m$) to $\partial\Omega$. 
Let  $n$ be the unit outward pointing (spacetime) normal to $\partial\Omega$.  Assume that the relativistic Taylor sign condition
\begin{align}\label{rel-taylor}
\nabla_\mu \sigma^2\nabla^\mu \sigma^2 >0\qquad\text{on }\partial\Omega
\end{align}
holds.\footnote{This is consistent with the fact that $\sigma^2>c^{4}$ in the fluid domain $\Omega$. Assume that the relativistic Taylor sign condition $\nabla^{\mu}\sigma_{0}^{2}\nabla_{\mu}\sigma_{0}^{2}\geq c_{0}^{2}c^{4}>0$ holds initially and that the solution exists. Then by continuity, \eqref{rel-taylor} will remain to hold for a short period of time.  During this time $\nabla\sigma^{2}$ is space-like, and hence $\partial\Omega$ is timelike.}
Then we 
 can write 
\begin{align*}
\begin{split}
\nabla \sigma^2  = -an,\qquad \mathrm{on~}\partial\Omega,
\end{split}
\end{align*}
where $a>0$ is given by
\begin{align}\label{eq:aintro}
\begin{split}
a=\sqrt{\nabla_\mu\sigma^2 \nabla^\mu\sigma^2}.
\end{split}
\end{align}
 Observe that  the second equation in \eqref{eq:Voriginalintro2} gives 
\begin{align}\label{box v}
\Box V=0,
\end{align}
where $\Box$ is the D'Alembert operator.   Going back to \eqref{eq:DVinside} and restricting it to the boundary we get
\begin{align}\label{eq:V1}
\begin{cases}(D_V^2+\frac{1}{2}a\nabla_n) V^\nu =-\frac{1}{2}\nabla^\nu D_V\sigma^2,\qquad &\mathrm{on~}\partial\Omega\\
\Box V^\nu=0,\qquad &\mathrm{in~}\Omega
\end{cases}.
\end{align}
Here $\nabla_n$ can be thought of as the \emph{hyperbolic Dirichlet-Neumann map}. That is, $\nabla_n \theta$ is the normal derivative on $\partial\Omega$ of the solution $\Theta$  for the wave equation 
$$\begin{cases}\Box\Theta=0\qquad\text{in }\Omega\\
\Theta=\theta \qquad\text{on }\partial\Omega\end{cases},$$  
 provided the initial data for $\Theta$ are given. Applying $\nabla_\nu$ to \eqref{isentropic-barotropic1} and \eqref{eq:DVinside} and summing over $\nu$ yields
\begin{align}\label{eq:sigma1a-intro}
\begin{cases}
\Box\sigma^2=(-2\nabla^\mu V^\nu) (\nabla_\mu V_\nu)\quad\text{in }\Omega,\qquad\sigma^2=c^4\quad \text{on }\partial\Omega\\
\Box D_V\sigma^2=4(\nabla^\mu V^\nu)\nabla_\mu \nabla_\nu \sigma^2+4(\nabla^\lambda V^\nu)(\nabla_\lambda V^\mu)(\nabla_\nu V_\mu)\quad\text{in }\Omega, \qquad D_V\sigma^2=0\quad \text{on }\partial\Omega.
\end{cases}
\end{align}
Here $D_V\sigma^2=0$ on $\partial\Omega$ follows from the boundary conditions \eqref{boundary1}-\eqref{boundary2}.

Although equations \eqref{eq:V1}-\eqref{eq:sigma1a-intro} take  similar forms as the quasilinear system \eqref{eq:tilV0}-\eqref{ww-p} of the water waves, there are fundamental differences. Most notably the Laplacian  $\Delta$ for water waves is replaced by the D'Alembertian $\Box$ for our problem. 
It is not clear whether the  hyperbolic Dirichlet-Neumann map $\nabla_n$ is still positive, and if \eqref{eq:V1}-\eqref{eq:sigma1a-intro} is a quasilinear system with a lower order right-hand side. Most importantly, it is not clear in which functional analytic settings the Cauchy problem for \eqref{eq:V1}-\eqref{eq:sigma1a-intro} can be solved.

 In \S\ref{general estimates} we  will develop the necessary analytic tools to resolve these issues. In particular in Lemma 
 ~\ref{lem:energy}, we will show that  in the energy functional for equation
 \begin{align}\label{eq:theta}
\begin{cases}(D_V^2+\frac{1}{2}a\nabla_n) \Theta =f,\qquad &\mathrm{on~}\partial\Omega\\
\Box \Theta=g,\qquad &\mathrm{in~}\Omega
\end{cases},
\end{align}
the   hyperbolic Dirichlet-Neumann map $\nabla_n$ controls $\int_{\Omega_T} |\nabla_{t,x} \Theta|^2\,dx$,  minus some integrals involving lower order terms and the initial data, provided $V$ is timelike; and the energy functional for \eqref{eq:theta} controls 
$$\int_{\partial\Omega_T}|D_V \Theta|^2\,dS+ \int_{\Omega_T}  |\nabla_{t,x} \Theta|^2\,dx.$$
Using the tools in \S\ref{general estimates}, we will  show that the quantities $a$ and $-\frac{1}{2}\nabla^\nu D_V\sigma^2$ in \eqref{eq:V1}-\eqref{eq:sigma1a-intro} are of lower order. We will first solve the Cauchy problem for the quasilinear system \eqref{eq:V1}-\eqref{eq:sigma1a-intro}, and then prove that a solution of \eqref{eq:V1}-\eqref{eq:sigma1a-intro} is also a solution of the Cauchy problem \eqref{eq:Voriginalintro2}-\eqref{eq:data}, provided the data for the Cauchy problem of the system \eqref{eq:V1}-\eqref{eq:sigma1a-intro} are derived from those of equation \eqref{eq:data}. 

To solve the system \eqref{eq:V1}-\eqref{eq:sigma1a-intro}, 
we will first construct an energy functional by applying the basic energy estimate in Lemma~\ref{lem:energy} to $\Theta=D_V^j V$ for  integers $0\le j \le k$, and prove an a priori estimate.  We find it advantageous to work with $D^j_V V$ instead of other types of derivatives of $V$, since we know $D_V^j \sigma^2=0$ on $\partial\Omega$, for $j\ge 0$ integers, thanks to the boundary conditions \eqref{boundary1}-\eqref{boundary2}. Using equation \eqref{eq:V1}, we can control the Sobolev norms   involving derivatives in all directions by showing $D_V^2\approx \nabla$ on $\Omega$. We will also include in our energy functional a quantity involving some $L^2$ integrals of $D_V^j\sigma^2$, see \eqref{energy}, to control the quantities $a$ and $-\frac{1}{2}\nabla^\nu D_V\sigma^2$ in \eqref{eq:V1}. To prove the existence of solutions of the system \eqref{eq:V1}-\eqref{eq:sigma1a-intro}, we will use the Galerkin method, discretizing the system  \eqref{eq:V1}-\eqref{eq:sigma1a-intro} into a system of finite dimensional ODEs. This appears to be one of the most natural methods to construct approximate systems for \eqref{eq:V1}-\eqref{eq:sigma1a-intro}, since it allows us to almost effortlessly extend our proof for the a priori estimate for  \eqref{eq:V1}-\eqref{eq:sigma1a-intro} to the discretized system. In Section \ref{subsubsec:apriori} we will give an extended outline of the approach in this paper.  

We now state our results. Let $(V_0,\sigma_0)$ and $\Omega_0$ be given and satisfy \eqref{eq:data}.
Observe that we can use the second equation in \eqref{eq:data} to compute the covariant derivative $\nabla^0 V_0$, hence 
$D_{V_0} V_0$, and subsequently the higher order derivatives $D_{V_0}^k V_0$ and $D_{V_0}^k\sigma^2_0$. 

Assume that there is a diffeomorphism $Y: \Omega_0\to B$ with $B$ the unit ball in $\bbR^3$, and assume  that the following regularity and compatibility conditions are satisfied by the data:
\begin{align}\label{eq:data-reg-Euler-intro}
\begin{split}
\partial_x^aY\in L^2(\Omega_0),\qquad &2a\leq K+2,\\
\partial_{x}^{a}D_{V_0}^kV_0, ~\partial_{x}^{a}D_{V_0}^{k+1}\sigma_0^2\in L^2(\Omega_0),\qquad &k\leq K+1,\quad 2a+k\leq K+2,\\
D_{V_0}^{K+1}V_{0}\in L^2(\partial \Omega_0),\qquad&\\
D_{V_0}^k\sigma_0^2\in H^1_0(\Omega_0),\qquad&k\leq K+1.
\end{split}
\end{align}
\begin{theorem}\label{thm:main}
Let $K$ be sufficiently large. Then for initial data \eqref{eq:data} satisfying the regularity and compatibility conditions \eqref{eq:data-reg-Euler-intro}, 
there exists $T_0>0$,  a unique domain $\Omega=\cup_{t\in[0,T_0]}\Omega_t$, and a unique solution $(V, \sigma)$ to \eqref{eq:Voriginalintro2} for $t\in [0, T_0]$. Moreover, $(V,\sigma)$ and $\Omega$ satisfy the same regularity properties as their initial data. 
\end{theorem}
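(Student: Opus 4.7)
The plan is to prove Theorem~\ref{thm:main} by reducing the fully nonlinear free boundary system \eqref{eq:Voriginalintro2} to the quasilinear coupled system \eqref{eq:V1}--\eqref{eq:sigma1a-intro}, solving the Cauchy problem for this reduced system, and then verifying that the resulting solution recovers a genuine solution of the original problem. First I would fix the moving domain by precomposing with the diffeomorphism $Y\colon\Omega_0\to B$ and extending it to a time-dependent map $Y_t\colon\Omega_t\to B$ transported by (a suitable extension of) $V$, so that all the Sobolev spaces on $\Omega_t$ in \eqref{eq:data-reg-Euler-intro} can be compared with fixed spaces on the reference ball $B$. The initial data for the derived system are computed from \eqref{eq:data} by using the divergence constraint $\nabla_\mu V_0^\mu=0$ and irrotationality $dV_0=0$ to recover $\nabla^0 V_0$, and then by recursively differentiating \eqref{isentropic-barotropic1} and \eqref{eq:sigma1a-intro} to generate $D_{V_0}^k V_0$ and $D_{V_0}^k \sigma_0^2$ up to the orders dictated by \eqref{eq:data-reg-Euler-intro}; the boundary compatibilities $D_{V_0}^k\sigma_0^2\in H^1_0(\Omega_0)$ are exactly the conditions that make the reduced equations compatible with the Dirichlet data on $\partial\Omega$ inherited from \eqref{boundary1}.

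The heart of the argument is an a priori estimate for the energy functional
\Align{
E(t)=\sum_{j=0}^{K+1}\Bigl(\int_{\partial\Omega_t}|D_V^{j+1}V|^2\,dS+\int_{\Omega_t}|\nabla_{t,x}D_V^{j}V|^2\,dx\Bigr)
+\sum_{j=0}^{K+1}\int_{\Omega_t}|\nabla_{t,x}D_V^{j}\sigma^2|^2\,dx,
}
obtained by applying the basic energy identity for the operator $(D_V^2+\tfrac12 a\nabla_n)$ coupled with $\Box$ (Lemma~\ref{lem:energy}) to $\Theta=D_V^{j}V$, and standard wave energy estimates to $D_V^{j}\sigma^2$. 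The forcing on the right-hand side of the $D_V$-differentiated version of \eqref{eq:V1} and \eqref{eq:sigma1a-intro} must be recognized as genuinely lower order: here the coefficient $a=\sqrt{\nabla_\mu\sigma^2\nabla^\mu\sigma^2}$ is controlled by the $\sigma^2$ part of the energy together with the Taylor sign assumption in \eqref{eq:data}, while $-\tfrac12\nabla^\nu D_V\sigma^2$ is controlled using the Dirichlet boundary condition $D_V\sigma^2=0$ and the inhomogeneous wave equation for $D_V\sigma^2$ in \eqref{eq:sigma1a-intro} via elliptic-type gains after restriction to a time slice. The crucial closure observation is that on $\Omega$, the first equation of \eqref{eq:V1} combined with $\Box V=0$ and the fact that $V$ is timelike implies $D_V^2$ is elliptic transverse to the characteristic cone in a precise sense, so that $\|D_V^{j+2}V\|+\|\nabla D_V^{j}V\|$ controls a full set of derivatives $\partial_x^a D_V^k V$ with $2a+k\le K+2$; this is what allows the $D_V$-based energy to dominate the norms in \eqref{eq:data-reg-Euler-intro}.

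For existence I would use Galerkin projection onto a finite-dimensional subspace adapted to $H^1_0(\Omega_t)$ (transported to $B$ via $Y_t$), reducing \eqref{eq:V1}--\eqref{eq:sigma1a-intro} to a system of ODEs for which local solvability is immediate; the energy estimate above passes verbatim to the projected system, yielding a uniform bound independent of the dimension, after which compactness and weak convergence produce a solution of the full reduced system on $[0,T_0]$ with $T_0$ depending only on the initial energy. Having obtained $(V,\sigma^2,\Omega)$ solving \eqref{eq:V1}--\eqref{eq:sigma1a-intro}, I would verify propagation of the original constraints: setting $\calE_1=V_\mu V^\mu+\sigma^2$, $\calE_2=\nabla_\mu V^\mu$, $\calE_3=dV$, and $\calE_4=\sigma^2-c^4$ on $\partial\Omega$, one derives from the reduced system a closed homogeneous linear system of transport/wave equations for $(\calE_1,\calE_2,\calE_3,\calE_4)$ with vanishing initial data, so a Gr\"onwall argument forces them to vanish identically. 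Uniqueness is obtained by differencing two solutions, deriving a linear version of \eqref{eq:V1}--\eqref{eq:sigma1a-intro}, and running the energy estimate at one derivative lower.

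The step I expect to be the main obstacle is establishing the basic energy identity of Lemma~\ref{lem:energy} in the form required, and in particular showing that the hyperbolic Dirichlet--Neumann map $\nabla_n$ appearing in \eqref{eq:V1} produces a \emph{coercive} boundary contribution to the energy despite the fact that $\Box$, unlike $\Delta$, is not positive definite. This positivity must come from the timelike character of $V$ and from choosing the multiplier carefully (essentially a time-translation-like vectorfield built from $V$) together with the fact that $\partial\Omega$ is timelike under the Taylor sign condition \eqref{rel-taylor}, so that traces on $\partial\Omega$ can absorb the bulk flux term. All other steps, including the Galerkin convergence and the propagation of constraints, should follow by now-standard patterns once this coercivity is in hand.
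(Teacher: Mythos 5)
Your roadmap matches the paper's quite closely: reduce to the quasilinear system \eqref{eq:V1}--\eqref{eq:sigma1a-intro}, fix the domain via Lagrangian coordinates, build $D_V$-based energy estimates, prove existence via Galerkin, and propagate the constraints back to \eqref{eq:Voriginalintro2}. You also correctly single out the coercivity of the hyperbolic Dirichlet--Neumann map as the crux, and your anticipation that it comes from a $V$-built multiplier together with the timelike character of $\partial\Omega$ is exactly the content of Lemma~\ref{lem:energy}.

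There are, however, two concrete gaps. First, projecting onto $H^1_0$-subspaces for the Galerkin step is wrong for the $V$ equation: the boundary condition on $V$ is the evolution law $(D_V^2+\tfrac12 a\nabla_n)V=f$, not a Dirichlet condition, so test functions with vanishing trace cannot enforce it --- you would only be solving $\Box V=0$ with free boundary trace. The paper's weak formulation circumvents this with the embedding $\Phi:H^1(B)\to(H^1(B))^\ast$, $(\Phi(u),v)=\angles{u}{v}+\bangles{\gamma^{-1}\tr\,u}{\tr\,v}$, so that a single variational identity on $H^1(B)$ encodes both the interior wave equation and the boundary dynamics; the Galerkin basis is then chosen orthogonal in $H^1(B)$, and only the $\Lambda=D_V\sigma^2$ equation, with its Dirichlet condition, is posed on $H^1_0(B)$. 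Second, your constraint-propagation list $(\calE_1,\ldots,\calE_4)$ does not close into a homogeneous system. The quantity $\calE_4=\sigma^2-c^4\vert_{\partial\Omega}$ is already enforced by the reduced system's boundary data, and $\calE_2=\nabla_\mu V^\mu$ follows automatically once vorticity vanishes (from $\Box V=0$ together with $dV=0$). What is actually needed alongside $B=V_\mu V^\mu+\sigma^2$ and $\omega=dV$ are the Euler residual $X_\alpha=D_V V_\alpha+\tfrac12\partial_\alpha\sigma^2$ and the residual $Y_\mu=(D_V^2-\tfrac12(\nabla^\alpha\sigma^2)\nabla_\alpha)V_\mu+\tfrac12\nabla_\mu D_V\sigma^2$ of the quasilinear equation itself. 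The boundary and wave equations that $\omega$ satisfies involve $\nabla X$ and $\nabla Y$, so without tracking $X$ and $Y$ the system you would write down for your residuals is not homogeneous and Gr\"onwall cannot be applied.
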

\begin{remark}
By examining the proof of Theorem~\ref{thm:main} one can find a numerical value for $K$ (for instance $K=20$ is sufficient), but since achieving the optimal regularity is not our concern in this work, we have stated the theorem without specifying the optimal value of $K$ that can be derived from our proof. 
\end{remark}
\begin{remark}
The life span $T_0$ depends only on the norm of the initial data,  the constant $c_0$ and the  speed of light $c$. For suitably set up data relative to $c$,  $T_0=cT_1$, with $T_1$ independent of $c$, see Theorem~\ref{thm:limit} for the precise statement.
 \end{remark}
\begin{remark}
For the top order $D_{V_0}$ derivative of $V_0$ we use the weak formulation to define $D_{V_0}^{\ell+1}V_0$ on the boundary.  See for instance Lemma \ref{lem:Lambdanormal}. 
\end{remark}

To treat the Newtonian limit, we need to introduce some notation.
We will use $t'$ and $x'$ to denote the time and space variables in the non-relativistic setting. We define the flow map of $V$ by  $\frac{\ud\Phi^{i}(t,\cdot)}{\ud t}=\left(\frac{V^{i}}{V^{0}}\right)(t,\Phi(t,\cdot))$, so that $\Phi(t,\cdot)$ maps $\Omega_0$ to $\Omega_t$, and let $\Psi_c(t',\cdot)=\Phi(ct',\cdot)$.
The non-relativistic fluid velocity components are defined by
\begin{align}\label{eq:Newvdef}
\begin{split}
v^i(t',x')=c\frac{u^i(ct',x')}{u^0(ct',x')},
\end{split}
\end{align}
where $x'=\Psi_c(t',x_0')$, $x_0'\in\Omega_0$, so 
\begin{align*}
\begin{split}
u^0(ct',x')=\frac{1}{\sqrt{1-|v(t',x')|^2/c^2}};\qquad u^i(ct',x')=\frac{v^i(t',x')/c}{\sqrt{1-|v(t',x')|^2/c^2}},\quad\text{for } i=1,2,3.
\end{split}
\end{align*}
Unlike for the fluid velocity, for some of the thermodynamical variables one has to subtract the contribution of the rest mass to arrive at a quantity which has a limit as $c\to\infty$ (see for instance  Section 1.1 of \cite{Ch-sd}). In particular, the non-relativistic enthalpy, $h$, is defined by (again with $x'=\Psi_c(t',x_0'), x_0'\in\Omega_0$)
\begin{align}\label{eq:Newhdef}
\begin{split}
h(t',x')=\sigma(ct',x')-c^2,\qquad 
\text{so }\quad \sigma^2(ct',x')= c^4+2c^2h(t',x')+h^2(t',x').
\end{split}
\end{align}
The robust a priori estimates we establish in this paper allow us to conclude that in the limit $c\to\infty$, $(h,v)$ converge to a solution $(h',v')$ of the Newtonian problem 
\begin{align}\label{eq:Newtonian-problem}
\begin{cases}
\partial_{t'}v'+v'\cdot\nabla_{x'} v'=-\nabla_{x'}h',\qquad &\mathrm{in~}\calD_{t'}\\
\nabla_{x'}\cdot v'=0,\qquad \nabla_{x'}\cross v' =0, \qquad &\mathrm{in~}\calD_{t'}\\
h'=0,\qquad &\mathrm{on~}\partial\calD_{t'}\\
(1,v')\in\calT \left(\cup_{t'}(t',\partial\calD_{t'})\right)
\end{cases}.
\end{align}
Here $\calD_{t'}=\Psi(t',\Omega_{0})$ with $\Psi(t',\cdot)$ defined by 
\begin{align}\label{def Psi}
	\frac{\ud\Psi(t',\cdot)}{\ud t'}=v'(t',\Psi(t',\cdot)).
\end{align}
To state our result precisely, we introduce the notation
\begin{align}\label{eq:barVdefintro}
\begin{split}
\barV(t,x):=c^{-1}V(t,x),\qquad \barsigma^2(t,x)=c^{-2}\sigma^2(t,x)-c^2,
\end{split}
\end{align}
and define the renormalized initial energy norm
\begin{align*}
\begin{split}
\overline{\calE}^K_0:=\sum_{k\leq K}\big(\|\partial_{t,x}D_{\barV}^k\barV(0)\|_{L^2(\Omega_0)}^2+\|D_{\barV}^{k+1}\barV(0)\|_{L^2(\partial\Omega_0)}^2+\|\partial_{t,x}D_{\barV}^{k+1}\barsigma^2(0)\|_{L^2(\Omega_0)}^2\big).
\end{split}
\end{align*}
\begin{theorem}\label{thm:limit}
Let $K$ be sufficiently large, and $v$ and $h$ be defined by~\eqref{eq:Newvdef} and~\eqref{eq:Newhdef}. Suppose the initial data for \eqref{eq:Voriginalintro2} are chosen so that the components $\barV^{i}(0, \cdot)$, $i=1,2,3$, are independent of $c$ and  the component $|\barV^{0}(0,\cdot)-c|$ and the energy $\overline{\calE^K_0}$ are bounded with bounds independent of $c$. Then the solution from Theorem~\ref{thm:main} can be extended to time $T_0=c T_1$, with $T_1>0$ independent of $c$. Moreover as $c\to\infty$, $v(t',\Psi_c(t',\cdot))$, $v_{t'}(t',\Psi_c(t',\cdot))$, and $h(t',\Psi_c(t',\cdot))$ converge strongly in $H^5(\Omega_0)$ to $(v'(t',\Psi(t',\cdot))$, $v'_{t'}(t',\Psi(t',\cdot))$, and $h'(t',\Psi(t',\cdot)))$, respectively, with $(v(t',\cdot),v_{t'}(t',\cdot),h(t',\cdot))$ a solution of \eqref{eq:Newtonian-problem}.
\end{theorem}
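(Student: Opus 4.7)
The plan is to prove the theorem in three steps: (i) rescale all variables so that the hard phase equations become a regular perturbation, as $c\to\infty$, of the Newtonian free boundary system; (ii) upgrade the a priori bounds from \thm{thm:main} to bounds that are uniform in $c$ on a rescaled time interval $[0,T_1]$; and (iii) pass to the Newtonian limit via compactness on a fixed reference domain. The guiding observation is that in the rescaled variables $\barV = c^{-1}V$, $\barsigma^2 = c^{-2}\sigma^2-c^2$ and rescaled time $t'=t/c$, the material derivative $\barD := c^{-1}D_V$ satisfies $\barD = (\barV^0/c)\partial_{t'}+\barV^i\partial_{x'^i} \to \partial_{t'}+v'\cdot\nabla_{x'}$ as $c\to\infty$ (since $\barV^0\approx c$), while the wave operator $\Box = -c^{-2}\partial_{t'}^2+\Delta_{x'}$ degenerates to the Newtonian Laplacian. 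Substituting into \eqref{isentropic-barotropic1} and dividing by $c^2$ yields, for the spatial components,
\[
\frac{\barV^0}{c}\partial_{t'}\barV^i+\barV^j\partial_{x'^j}\barV^i+\partial_{x'^i}h+\frac{h}{c^2}\partial_{x'^i}h=0,
\]
which converges formally to the Euler equation in \eqref{eq:Newtonian-problem}. The remaining equations in \eqref{eq:Voriginalintro2} transform in a similarly benign way.

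Second, I would revisit the basic energy estimate (\lem{lem:energy}) and the higher-order a priori estimate (outlined in Section~\ref{subsubsec:apriori}), tracking powers of $c$ at each step in the rescaled variables. The hypotheses of the theorem—that the $\barV^i(0,\cdot)$, $|\barV^0(0,\cdot)-c|$, and $\overline{\calE}^K_0$ are $c$-uniformly bounded—are exactly what is needed to initialize the rescaled energy uniformly in $c$. The goal is a differential inequality of the form $\frac{d}{dt'}\overline{\calE}^K\leq C(\overline{\calE}^K)$ with $C(\cdot)$ independent of $c$, which closes on a time interval $[0,T_1]$ with $T_1$ depending only on $\overline{\calE}^K_0$ and $c_0$. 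Combined with the continuation criterion implicit in \thm{thm:main}, this upgrades the lifespan to $T_0=cT_1$.

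Third, I would pull back the solution to $\Omega_0$ via the flow map $\Psi_c$, so that the $c$-uniform energy bounds translate to $c$-uniform bounds on $(\barV,\barsigma^2,\Psi_c)$ in a fixed high Sobolev space. Banach--Alaoglu gives weak-$*$ convergence of a subsequence, and the Aubin--Lions lemma gives strong convergence in $H^5(\Omega_0)$, which is more than enough to pass to the limit in the nonlinear terms. Using $\barV^0/c\to1$, one identifies $\barV^i\to v'$, $h\to h'$, $\Psi_c\to\Psi$, and the pulled-back equations converge to \eqref{eq:Newtonian-problem}. Uniqueness of the incompressible water wave problem promotes subsequential convergence to convergence of the whole family.

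The main obstacle will be establishing the $c$-uniformity of the energy estimates in the presence of the degenerating operator $\Box=-c^{-2}\partial_{t'}^2+\Delta_{x'}$. The coercivity of the hyperbolic Dirichlet--Neumann operator $\nabla_n$ in \lem{lem:energy} must produce constants that remain bounded as $c\to\infty$, in effect recovering the uniform ellipticity of the Newtonian Dirichlet--Neumann map in the limit; this should follow from the timelikeness of $\partial\Omega$ and of $V$ (equivalently $\barV^0\sim c$), so that the standard energy identity for $\Box$, applied with the timelike multiplier $\partial_t$ and exploiting that $\partial\Omega$ is timelike, yields spatial $L^2$ control of $\nabla_{t,x}\Theta$ with constants independent of $c$. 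A related technical point is that commutators with $D_V$ (which is itself $O(c)$) generate terms that a priori scale with $c$; these must be cancelled or absorbed using the structural identity $\|V\|^2=\sigma^2$ and the boundary conditions $D_V^j\sigma^2=0$ on $\partial\Omega$, precisely the reason—noted in the discussion preceding \eqref{eq:data-reg-Euler-intro}—that the energy is built from $D_V$ derivatives rather than arbitrary directional derivatives. Finally, one needs to check that the rescaled compatibility conditions \eqref{eq:data-reg-Euler-intro} descend to admissible data for \eqref{eq:Newtonian-problem}, so that the limiting solution lies within the well-posedness class of the classical water wave problem and hence is unique.
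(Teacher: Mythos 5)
Your proposal is correct and follows essentially the same route as the paper: uniform-in-$c$ bounds from Proposition~\ref{prop:apriori} (which is stated precisely in the renormalized variables $\barV,\barsigma^2$ and with the factor $c^{-1}$ in front of the spacetime integrals in \eqref{eq:energyid1}, so the "main obstacle" you flag is already built into the estimate), followed by compactness on the fixed reference domain and identification of the limit equations via the algebraic identity $\barsigma^2 = (\barV^0-c)^2-\sum_i(\barV^i)^2+2c(\barV^0-c)$ (your "structural identity $\|V\|^2=\sigma^2$"), which upgrades the hypothesis $|\barV^0-c|=O(1)$ to $\barV^0-c=O(c^{-1})$ and hence shows the rescaled material derivative degenerates to the Newtonian one; uniqueness of the water-wave problem then removes the passage to a subsequence. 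The only inessential difference is that the paper applies Rellich--Kondrachov at each fixed $t'\in[0,T_1]$ rather than invoking Aubin--Lions, and it does not rederive the uniform energy inequality from scratch, as Proposition~\ref{prop:apriori} already supplies it.
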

\begin{remark}
Theorem~\ref{thm:limit} gives a different proof of existence for \eqref{eq:Newtonian-problem}, originally considered in \cite{Ch-Lin1,LinAnn}.
\end{remark}
\begin{remark}
The choice of $H^5$ for the convergence norm is arbitrary and the convergence can be made as strong as we wish by taking $K$ large.
\end{remark}

The method in this paper works for the more general free boundary relativistic barotropic fluid model \eqref{isentropic-barotropic1}-\eqref{isentropic-barotropic2}-\eqref{boundary1}-\eqref{boundary2}. 
In Appendix~\ref{sec:app general} we will give a brief outline to show how to prove the well-posedness of the Cauchy problem for \eqref{isentropic-barotropic1}-\eqref{isentropic-barotropic2}-\eqref{boundary1}-\eqref{boundary2}. 
We choose to work on the hard phase model \eqref{eq:Voriginalintro2} for the sake of simplicity, as it already captures the main challenges in the more general problem. 

The work in this paper and in \cite{Wu97, Wu99} suggest that the general approach here should work for a variety of free boundary problems. Consider for instance the  Newtonian \emph{compressible} fluid, $\tilV$ would again satisfy a wave equation $\Box_{\tilde{h}}\tilV=0$ in the interior, where $\tilde{h}$ is a conformal metric of the \emph{acoustical metric} (cf. \cite{Ch-M,Luk-Speck})
\begin{align*}
	h:=\eta^{2}(dt)^{2}+\sum_{i=1}^{3}(dx^{i}-\tilV^{i}dt)^{2}.
\end{align*}
Here $\eta$ is the sound speed. While the wave equation in this case is quasilinear, we again see a formal similarity with \eqref{eq:V1}. 

\subsection{Earlier works}
With the exception of \cite{Rendall92} which shows existence of a class of solutions to certain relativistic gaseous models based on earlier work \cite{Makino3}, other advances for well-posedness of relativistic free boundary problems are quite recent. A priori estimates were obtained in \cite{HSS1,JLM1} for some gaseous models. In \cite{Trakhinin} an existence result  was obtained  for a gaseous model using Nash-Moser iteration, and in \cite{Oliynyk2} the existence of solutions was proved for a liquid model in two spacetime dimensions. In \cite{Oliynyk1,Oliynyk3,Oliynyk4},  using different methods, Oliynyk derived a priori estimates and an existence result for a similar liquid model. The same barotropic fluid free boundary problem as in this article was considered by Ginsberg in \cite{Ginsberg1}, who proved an a priori estimate under additional smallness assumptions on the initial data.

\subsection{Main ideas for a priori estimates and local well-posedness}\label{subsec:intro-apriori}

\subsubsection{A priori estimates}\label{subsubsec:apriori}
The a priori estimates for the proof of local existence in Theorem~\ref{thm:main} and the uniform in $c$ time of existence in Theorem~\ref{thm:limit} can be combined by working with the renormalized variables $\barV$ and $\barsigma^2$ defined in \eqref{eq:barVdefintro}. Introducing $\bara:=c^{-2}a$ it can be seen that $\barV$ and $\barsigma^2$ satisfy the same system as \eqref{eq:V1}--\eqref{eq:sigma1a-intro} with $(V,\sigma^2,a)$ replaced by $(\barV,\barsigma^2,\bara)$; see \eqref{eq:mainbar}--\eqref{eq:sigma1abar}. We begin by discussing the energy identity in Lemma~\ref{lem:energy}. For general quantities $\Theta$ satisfying equation \eqref{eq:theta},  with $a$ and $D_V$ replaced by $\bara$ and $D_\barV$, we have, by  Lemma~\ref{lem:energy},
\begin{align}\label{eq:energyid-intro}
	\begin{split}
	&\int_{\Omega_T}\left(c^{-1}D_\barV\Theta \nabla_{0}\Theta+\frac{\barV^0}{2c}\nabla^\mu\Theta\nabla_\mu\Theta\right)\ud x+\int_{\partial\Omega_T}\frac{\barV^0}{c\bara}\left(D_\barV\Theta\right)^2\ud S\\
	=&\int_{\Omega_0}\left(c^{-1}D_\barV\Theta \nabla_{0}\Theta+\frac{\barV^0}{2c}\nabla^\mu\Theta\nabla_\mu\Theta\right)\ud x+\int_{\partial\Omega_0}\frac{\barV^0}{c\bara}\left(D_\barV\Theta\right)^2\ud S\\
	&-c^{-1}\int_0^T\int_{\Omega_t}g D_\barV\Theta \ud x \ud t+2c^{-1}\int_{0}^T\int_{\partial\Omega_t}\frac{1}{a}f  D_\barV\Theta \ud S\ud t+c^{-1}\int_0^T\int_{\partial\Omega_t}\frac{\sdiv \barV}{\bara}\left(D_{\barV}\Theta\right)^{2}\ud S\ud t\\
	&+c^{-1}\int_{0}^T\int_{\Omega_t}(\nabla^\mu \barV^\nu)\nabla_\mu\Theta \nabla_\nu\Theta \ud x \ud t-c^{-1}\int_0^T\int_{\partial\Omega_t}\frac{1}{\bara^2}(D_\barV\bara)(D_\barV\Theta)^2 \ud S\ud t,
	\end{split}
	\end{align}
here $\sdiv \barV$ denotes the (spacetime) divergence of $\barV$ as a vectorfield on $\partial\Omega$ and should be thought of as a lower order term. Since $V$ is future-directed timelike,\footnote{Assume that the solution exists. By the assumption \eqref{eq:data} on initial data and continuity, $V$ will remain future-directed and timelike for a short period of time $[0, T]$ and $\barV^0\simeq c$.} the first term on the left satisfies

\begin{align*}
\begin{split}
(c^{-1}D_\barV\Theta)(\nabla_0\Theta)+\frac{\barV^0}{2c}(\nabla_\nu \Theta)(\nabla^\nu \Theta)\gtrsim |\nabla_{t,x}\Theta|^2.
\end{split}
\end{align*}
Therefore  
as long as $a$ is positive (see \eqref{rel-taylor}), the left hand side of the energy identity \eqref{eq:energyid-intro} controls 
\begin{align*}
\begin{split}
\int_{\partial\Omega_T}|D_\barV\Theta|^2\ud S+\int_{\Omega_T}|\nabla_{t,x}\Theta|^2\ud x.
\end{split}
\end{align*}
The factor $c^{-1}$ in front of the space-time integrals, which appears naturally in this renormalized formulation, is what allows us to prove the uniform in $c$ time of existence. As mentioned earlier, we will construct our energy by working on $D_\barV^k \barV$. Applying \eqref{eq:energyid-intro} to  $\Theta=D_\barV^k \barV^\nu$ and summing over $\nu$  
we get control for $\int_{\partial\Omega_T}|D_\barV^{k+1}\barV|^2\ud S+\int_{\Omega_T}|\nabla_{t,x}D_\barV^k\barV|^2\ud x$ by the right hand side of \eqref{eq:energyid-intro} with $\Theta=D_\barV^k \barV^\nu$. 
This motivates the definition of our $k$-th order energy: 
\begin{align*}
\begin{split}
\calE_k(T):=\int_{\partial\Omega_T}|D_\barV^{k+1}\barV|^2\ud S+\int_{\Omega_T}|\nabla_{t,x}D_\barV^k\barV|^2\ud x.
\end{split}
\end{align*}
The equations satisfied by $D_\barV^k\barV$ are derived in Lemmas~\ref{lem:Vho} and~\ref{lem:boxDVkV}, based on the commutator identities \eqref{eq:comtemp1}--\eqref{eq:comtemp4}. 
Observe that $D_{\barV}$ is defined globally both in the interior of the fluid domain and on the free boundary, being tangential there.  As demonstrated in \eqref{eq:comtemp1}--\eqref{eq:comtemp4},  commuting $D_\barV$ derivatives preserves all important structures of our equations. 

Next we discuss how to estimate the right-hand sides of \eqref{eq:energyid-intro} for $\Theta=\barV$. 
The main contribution is from the inhomogeneous term in the boundary equation in \eqref{eq:V1}, for which we need to control
\begin{align*}
\begin{split}
c^{-1}\int_0^T\int_{\partial\Omega_t}|\nabla D_\barV\barsigma^2|\ud S \ud t.
\end{split}
\end{align*}
For this we use the fact that $D_\barV\barsigma^2$ satisfies the analogue wave equation \eqref{eq:sigma1a-intro} with zero boundary data and with $V$ and $\sigma^2$ replaced by $\barV$ and $\barsigma^2$; see \eqref{eq:sigma1abar}. In Lemma~\ref{lem:benergy} we show, by an appropriate choice of  multiplier field $Q$ for the wave equation, that 
\begin{align}\label{eq:Venergy-intro-temp3}
\begin{split}
&\int_{\Omega_T}\left|\nabla_{t,x}D_\barV\barsigma^2\right|^2\ud x+c^{-1}\int_0^T\int_{\partial\Omega_t}\left|\nabla_{t,x} D_\barV\barsigma^2\right|^2\ud S \ud t\\
\lesssim &\int_{\Omega_0}|\nabla_{t,x}D_\barV\barsigma^2|^2\ud x +\left|c^{-1}\int_0^T\int_{\Omega_t}\left(\Box D_\barV\barsigma^2\right)\left(QD_\barV\barsigma^2\right)\ud x \ud t\right|\\
&+c^{-1}\left|\int_{0}^{T}\int_{\Omega_{t}}\left(\frac12(\nabla_{\mu}Q^{\mu})(\nabla_{\nu}D_\barV\barsigma^2)(\nabla^{\nu}D_\barV\barsigma^2)-(\nabla^{\mu}Q^{\nu})(\nabla_{\mu}D_\barV\barsigma^2)(\nabla_{\nu}D_\barV\barsigma^2)\right)\,dx\,dt\right|.
\end{split}
\end{align}
We will also need the analogue of this estimate with $D_\barV\barsigma^2$ replaced by $D_\barV^k\barsigma^2$ (also contained in Lemma~\ref{lem:benergy}), and the wave equation satisfied by $D_\barV^k\barsigma^2$ is derived in Lemma~\ref{lem:boxDVk1sigma}. 
For $\Box D_\barV\barsigma^2$ on the right-hand side of \eqref{eq:Venergy-intro-temp3}, we note that the term with two derivatives of $\barsigma^2$ on the right-hand side of \eqref{eq:sigma1a-intro} can be seen to be lower order by converting the wave equation for $\barsigma^2$ in \eqref{eq:sigma1a-intro} into an elliptic equation with $D_\barV^2\barsigma^2$ as the source term, as done in Lemma~\ref{lem:A1}, and using elliptic regularity. 

For the right-hand side of \eqref{eq:energyid-intro} with $\Theta=D_\barV^k\barV$, and the higher order analogue of  
\eqref{eq:Venergy-intro-temp3}, we need to estimate spacetime integrals involving the right-hand sides of the equations satisfied by $D_\barV^k\barV$ and $D_\barV^{k+1}\barsigma^2$ as derived in Lemmas~\ref{lem:Vho},~\ref{lem:boxDVkV}, and~\ref{lem:boxDVk1sigma}. The idea for treating the main source terms is similar to what was outlined above, and the treatment of the commutator errors is carried out in Subsection~\ref{subsec:aprioriproof}. Here we only mention that the most delicate commutator error is $c^{-1}\int_0^T\int_{\partial\Omega_t}|\nabla D_\barV^{k-1}\barV|^2\ud S\ud t$, and estimating this term using the energy $\calE_k$ defined above involves one more multiplier identity for wave equations on bounded domains, which is derived in Lemma~\ref{lem:oblique}.

 Finally, we explain how our energies give control of Sobolev norms. This will be needed, for instance, to bound lower order terms in $L^\infty$ in our estimates. The details of deriving Sobolev estimates from our energies are contained in Subsection~\ref{sec:Sobolev}, so here we mention the main idea which is quite simple: Boundedness of $\calE_{k+2}$ gives us control of $\|D_\barV^{k+2}\barV\|_{H^{\frac{1}{2}}(\partial\Omega_t)}$ on the boundary (by the trace theorem), and $\|D_\barV^2 D_\barV^k\barV\|_{L^2(\Omega_t)}$ and $\|\nabla D_\barV^k\barV\|_{L^2(\Omega_t)}$ in the interior (in fact we get more in the interior). Then using the higher order versions of \eqref{eq:V1}, as derived in Lemmas~\ref{lem:Vho} and~\ref{lem:boxDVkV}, this gives us control of $\nabla_n D_\barV^k\barV$ in $H^{\frac{1}{2}}(\partial\Omega_t)$ and an elliptic operator applied to $D_\barV^k\barV$ in $L^2(\Omega_t)$ (see Lemma~\ref{lem:A1}). Elliptic regularity with Neumann boundary conditions (see Lemma~\ref{lem: elliptic estimate}) then allow us to deduce an $H^2(\Omega_t)$ bound for $D_\barV^k\barV$. Similar ideas allow us to get control of $\barV$ in $H^{a}(\Omega_t)$ in terms of $\calE_k$ as long as $2a\leq k$, and similarly for $D_\barV\barsigma^2$. See Proposition~\ref{prop:L2Sobolev}.

\subsubsection{The iteration and Newtonian limit}\label{subsubsec:iteration}
The a priori estimates outlined in the previous subsection can be carried out completely in Eulerian coordinates (that is, over the fluid domain $\Omega$), and contain the main ideas for proving well-posedness. In practice, however, it is more convenient to set up the iteration for the proof of well-posedness in Lagrangian coordinates. The main reason is that in this way the domain becomes fixed, and all norms and function spaces are defined with respect to this fixed domain.
To recast the equations in Lagrangian coordinates we define the (renormalized) Lagrangian map $X:[0,T]\times B\to\bbR^{1+3}$ by the requirements that $X(t,\cdot)$ map $B$ to $\Omega_t\subseteq \bbR^{1+3}$ and
\begin{align*}
\begin{split}
\frac{\ud X}{\ud t}(t,y)=\frac{V}{V^0}(t,X(t,y)).
\end{split}
\end{align*}
In other words, $t\mapsto (t,X(t,\cdot))$ is the flow of the vectorfield $\frac{V}{V^0}$. Note that $\partial_t$ in these coordinates is just the renormalized material derivatives $\frac{1}{V^0}D_V$. The pullback Minkowski metric on $I\times B$ is
\begin{align*}
\begin{split}
g=&-\left(1-\sum_{i=1}^3\frac{(V^{i})^{2}}{(V^{0})^{2}}\circ X\right)\ud t^{2}+2\sum_{i,\ell=1}^3\frac{V^{i}}{V^{0}}\circ X\frac{\partial X^{i}}{\partial y^{\ell}}\ud t\ud y^{\ell}+\sum_{i,k,\ell=1}^3\frac{\partial X^{i}}{\partial y^{k}}\frac{\partial X^{i}}{\partial y^{\ell}}\ud y^{k}\ud y^{\ell}.
\end{split}
\end{align*}
We denote the Lagrangian velocity $V$ by $\Theta$, the enthalpy $\sigma^2$  by $\Sigma$, and its material derivative $D_V\sigma^2$ by $\Lambda$:
\begin{align*}
\begin{split}
\Theta\:= V\circ X \qquad \mathrm{and}\qquad \Sigma:=\sigma^2\circ X =\sqrt{ -g_{\alpha\beta}V^\alpha V^\beta},\qquad \Lambda:=(D_V\sigma^2)\circ X.
\end{split}
\end{align*}
The wave operator then becomes ($g^{\alpha\beta}$ denote the components of the inverse metric $g^{-1}$ and $|g|:=-\det g$)
\begin{align*}
\begin{split}
\Box_g f:= \frac{1}{\sqrt{|g|}}\partial_\alpha (\sqrt{|g|}g^{\alpha\beta}\partial_\beta f).
\end{split}
\end{align*}
Let (by a slight abuse of notation we continue to denote the normal in Lagrangian coordinate by $n$)
\begin{align*}
\begin{split}
\gamma:=\frac{\sqrt{\nabla_\mu\sigma^2 \nabla^\mu\sigma^2}}{2V^0}\circ X,\qquad \mathrm{and}\qquad n^\mu\equiv\frac{\nabla^\mu \sigma^2}{\sqrt{\nabla_\mu\sigma^2 \nabla^\mu\sigma^2}}\circ X= \frac{g^{\mu\nu}\partial_\nu\sigma^2}{\sqrt{g^{\alpha\beta}\partial_\alpha\sigma^2\partial_\beta \sigma^2}}.
\end{split}
\end{align*}
Equation \eqref{eq:V1} then becomes
\begin{align}
&\begin{cases}\label{eq:V2}
(\partial_t^2+\gamma \nabla_{n})\Theta^\nu=-\frac{1}{2(\Theta^0)^2}g^{\alpha\beta}(\partial_\beta X^\nu)\partial_\alpha \Lambda+\frac{1}{\Theta^0}\partial_t \Theta^0\partial_t\Theta^\nu,\qquad&\mathrm{on~}[0,T]\times \partial B\\
\Box_g\Theta^\nu=0,\qquad&\mathrm{in~}[0,T]\times B
\end{cases},\\
&\begin{cases}\label{eq:LambdaLag}
\Box_{g}\Lambda=S(\Theta,\Sigma),\qquad&\mathrm{in~}I\times B\\
\Lambda\equiv 0,\qquad&\mathrm{on~}I\times \partial B
\end{cases},\\
&\partial_t\Sigma=\frac{1}{\Theta_0}\Lambda, \label{eq:SigmaLag}
\end{align}
where
\begin{align}\label{eq:Sdefintro}
\begin{split}
S(\Theta,\Sigma)&:=4g^{\alpha\beta}(\partial_\beta\Theta^\nu)\partial_\alpha\big(m_{\mu\nu}g^{\gamma\delta}(\partial_\delta X^\mu)(\partial_\gamma\Sigma)\big)
+4m_{\rho\mu}m_{\nu\kappa}g^{\alpha\beta}g^{\gamma\delta}(\partial_\delta X^\kappa)(\partial_\alpha\Theta^\nu)(\partial_\beta\Theta^\mu)(\partial_\gamma\Theta^\rho),
\end{split}
\end{align}
The idea for the iteration is to iteratively define $\Thetam$, $\Lambdam$, and $\Sigmam$ as solutions of
\begin{align}
&\begin{cases}\label{eq:Thetam-intro}
(\partial_t^2+\gamma^{(m)} \nabla_{\bfn^{(m)}})(\Theta^{(m+1)})^\nu=-\frac{g_{(m)}^{\alpha\beta}(\partial_{\beta}(X^{(m)})^{\nu})\partial_{\alpha}\Lambdam}{2((\Theta^{(m)})^0)^2}+\frac{\partial_t (\Theta^{(m)})^0\partial_t(\Theta^{(m)})^\nu}{(\Theta^{(m)})^0},\qquad&\mathrm{on~}I\times \partial B\\
\Box_{g^{(m)}}(\Theta^{(m+1)})^\nu=0,\qquad&\mathrm{in~}I\times B
\end{cases},\\
&\begin{cases}\label{eq:Lambdam-intro}
\Box_{g^{(m)}}\Lambda^{(m+1)}=S(\Thetam,\Sigmam),\qquad&\mathrm{in~}I\times B\\
\Lambda^{(m+1)}\equiv 0,\qquad&\mathrm{on~}I\times \partial B
\end{cases},\\
&\partial_t\Sigmamp=\frac{1}{(\Thetamp)^0}\Lambdamp,\label{eq:Sigmam-intro}
\end{align}
where $S(\Sigma^{(k)},\Theta^{(k)})$ is defined from \eqref{eq:Sdefintro} by replacing $\Theta$, $g$, $\Lambda$, and $\Sigma$ by  their iterates  $\Theta^{(k)}$, $g^{(k)}$, and $\Sigma^{(k)}$, respectively. 
See Section~\ref{sec:iteration} for the precise definition of $g^{(k)}$. Once we can show the existence of solutions to each of these linear systems satisfying appropriate energy identities, the ideas from the previous subsection nicely carry over to prove the convergence of $\Theta^{(m)}$ to a solution of \eqref{eq:V2}.  For \eqref{eq:Lambdam-intro} the existence theory is standard, as this is a wave equation with variable coefficients and constant Dirichlet conditions. The only non-standard part is proving existence and energy estimates for the linear system \eqref{eq:Thetam-intro}. This can be achieved by formulating an appropriate weak version of the equation (the main challenge is treating the hyperbolic Dirichlet-Neumann map) and using Galerkin approximations. The weak equations are derived in Subsection~\ref{subsec:weakder}. 
The advantage of this weak formulation for proving existence is that it does not involve the hyperbolic Dirichlet-Neumann map, while it still allows us to derive the main energy identity as for \eqref{eq:energyid1} in Lemma~\ref{lem:energy} 
(see Proposition~\ref{prop:weak1}). Existence, higher regularity, and energy estimates for the weak solution of the linearized problem are proved using Galerkin approximations in Subsection~\ref{sec:weakex} (see \cite{Evans-book,Wloka-book,Lions-Magenes-book1} for some explanations of the Galerkin method.). Once energy estimates, which are modeled on our a priori estimates, are proved at the linear level, a standard iteration scheme produces our desired solution. This is carried out in Section~\ref{sec:iteration}.

Finally, for the proof of Theorem~\ref{thm:limit}, the uniform in $c$ time of existence follows from the local existence result in Theorem~\ref{thm:main} and the uniform in $c$ a priori estimates in Proposition~\ref{prop:apriori}. The existence of a limit also follows from standard compactness arguments and boundedness of higher order energies. The most delicate remaining point is that even though $\barV^0$ and $\partial_0$ are treated at the same order as $\barV^i$ and $\partial_i$ in  the energy estimates in Proposition~\ref{prop:apriori}, after the solution is obtained, one can use the equations to show that these quantities have further decay in $c$. This allows us to conclude that the limiting equations coincide with the classical equations in the Newtonian setting. The details of the argument are presented in Section~\ref{sec:Newtonian}.

\subsection{Notation and conventions}
On $\bbR^{1+3}$ containing the fluid we use $x=(x^0,\dots,x^3)$ as coordinates. On the Lagrangian side we use $y=(y^0,\dots,y^3)$. We also use $t$ for $y^0$ and $\bary$ for $(y^1,\dots,y^3)$. An arbitrary spatial derivative of order $a$ is denoted by $\partial_\bary^a$. Indices are raised and lowered with respect to the Minkowski metric $m$ and Greek indices run over $\{0,\dots,3\}$ while Roman indices run over $\{1,2,3\}$. On the Lagrangian side we use $0$ and $t$ interchangeably for the index of the time coordiante, and sometimes use $r$ for the radial coordinate $r^2=\sum_{i=1}^3(y^i)^2$. For a tensor $T^{\mu\nu}$ in rectangular coordinates we use the notation $T^{\mu r}:=\sum_{b=1}^3\frac{y^b}{r}T^{\mu b}$.

For the derivative of a function $\Theta$ along a vectorifeld $X$, such as the normal $n$, we use the notations $\nabla_n\Theta$ and $n\Theta$ interchangeably. The derivative along the fluid flow line will be denoted by $D_V:=V^\mu \partial_\mu$.

The dependency of constants on other parameters or unknowns is denoted by subscripts, so for instant $C_\delta$ denotes a constant depending on $C_\delta$. The exact value of constants may differ from inequality to inequality as should be clear from the context. 

On the Lagrangian side $B$ denotes the unit ball of radius one, which we use to parameterize the constant $x^0$ slices of the fluid, and $\partial B$ denotes the boundary of $B$. The $L^2$ pairing on $B$ is denoted by $\angles{\cdot}{\cdot}$ and the $L^2$ pairing on $\partial B$ by $\bangles{\cdot}{\cdot}$. The duality pairing between $H^1(B)$ and $(H^1(B))^\ast$ is denoted by $(\cdot,\cdot)$.

\section{A Priori Estimates}\label{sec:apriori}
In this section we assume that $V$, $\sigma^2$, and $D_V\sigma^2$ already exist and satisfy
\begin{align}\label{eq:main}
\begin{cases}
(D_V^2+\frac{1}{2}a\nabla_n)V=-\frac{1}{2}\nabla D_V\sigma^2,\\
\Box V=0.
\end{cases}
\end{align}
and
\begin{align}\label{eq:sigma1a}
\begin{cases}
\Box\sigma^2=(-2\nabla^\mu V^\nu) (\nabla_\mu V_\nu).\\
\Box D_V\sigma^2=4(\nabla^\mu V^\nu)\nabla_\mu \nabla_\nu \sigma^2+4(\nabla^\lambda V^\nu)(\nabla_\lambda V^\mu)(\nabla_\nu V_\mu).
\end{cases}
\end{align}
Here $n$ denotes the exterior unit normal to the timelike boundary of $\Omega$, which we denote by $\partial\Omega$, and $a$ is as in \eqref{eq:aintro}. With $\barsigma^2:=c^{-2}\sigma^2-c^2$ (see \eqref{eq:Newhdef}), $\bara=c^{-2}a$, and $\barV=c^{-1}V$ these equations can be written as
\begin{align}\label{eq:mainbar}
\begin{cases}
(D_\barV^2+\frac{1}{2}\bara\nabla_n)\barV=-\frac{1}{2}\nabla D_\barV\barsigma^2,\\
\Box \barV=0.
\end{cases}
\end{align}
and
\begin{align}\label{eq:sigma1abar}
\begin{cases}
\Box\barsigma^2=(-2\nabla^\mu \barV^\nu) (\nabla_\mu \barV_\nu).\\
\Box D_\barV\barsigma^2=4(\nabla^\mu \barV^\nu)\nabla_\mu \nabla_\nu \barsigma^2+4(\nabla^\lambda \barV^\nu)(\nabla_\lambda \barV^\mu)(\nabla_\nu \barV_\mu).
\end{cases}
\end{align}
Motivated by the discussion in the introduction, for any function $\Theta$ we define the energies
\begin{align*}
\begin{split}
&E[\Theta,t]:=\int_{\Omega_t}|\partial_{t,x}\Theta|^2\ud x+\int_{\partial\Omega_t}|D_\barV\Theta|^2\ud S,\\
&\Ebar[\Theta,T]:=\sup_{0\leq t\leq T}\int_{\Omega_t}|\partial_{t,x}\Theta|^2\ud x+c^{-1}\int_0^T\int_{\partial\Omega_t}|\partial_{t,x}\Theta|^2\ud S\ud t,
\end{split}
\end{align*}
where for $T=0$
\begin{align*}
\begin{split}
\Ebar[\Theta,0]:=\int_{\Omega_0}|\partial_{t,x}\Theta|^2\ud x.
\end{split}
\end{align*}
Higher order energies are defined as
\begin{align*}
\begin{split}
E_j[\Theta,t]=E[D_\barV^j\Theta,t],\quad E_{\leq k}[\Theta,t]= \sum_{j=0}^kE_j[\Theta,t],\quad\Ebar_j[\Theta,T]=\Ebar[D_\barV^j\Theta,T],\quad \Ebar_{\leq k}[\Theta,T]= \sum_{j=0}^k\Ebar_j[\Theta,T].
\end{split}
\end{align*}
To simplify notation we introduce the unified energy
\begin{align}\label{energy}
\begin{split}
\calE_{k}(T):=\Ebar_{\leq k+1}[\barsigma^2,T]+ \sup_{0\leq t\leq T}E_{\leq k}[\barV,t].
\end{split}
\end{align}
Our goal in this section is to prove the following a priori estimate.
\begin{proposition}\label{prop:apriori}
	Suppose $\barV$ is a solution to \eqref{eq:mainbar} with 
	\begin{align}\label{eq:aprioribootstrap}
	\begin{split}
	\calE_\ell(T)\leq C_1,
	\end{split}
	\end{align}
	for some constant $C_1>0$ and $\ell$ sufficiently large and let
\begin{align*}
\begin{split}
\scE_\ell(T):=\sup_{t\in[0,T]}\sum_{k\leq \ell+2-2j}(\|D_\barV^k\barV\|_{H^j(\Omega_t)}^2+\|D_\barV^{k+1}\barsigma^2\|_{H^j(\Omega_t)}^2)+\calE_\ell(T).
\end{split}
\end{align*}
	If $T=c\barT$ with $\barT>0$  sufficiently small, depending on $C_1$, $\scE_\ell(0)$, $c_0$, and $\ell$, then
	\begin{align}\label{eq:apriori}
	\begin{split}
	\scE_\ell(T)\leq P_\ell(\scE_\ell(0))
	\end{split}
	\end{align}  
	for some polynomial function $P_\ell$ (independent of $C_1$).
\end{proposition}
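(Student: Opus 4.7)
The plan is to derive \eqref{eq:apriori} by combining three ingredients, each matching one piece of $\scE_\ell$: the basic boundary-interior energy identity of \lem{lem:energy} applied to the renormalized velocity, the interior multiplier identity of \lem{lem:benergy} for the wave equation satisfied by $D_\barV^{k+1}\barsigma^2$, and elliptic regularity that upgrades boundary and divergence-type bounds to full Sobolev control. The crucial quantitative point is that every \emph{spacetime} error produced by this scheme carries a factor $c^{-1}$, so that integrating over $t\in[0,T]=[0,c\barT]$ produces a factor $\barT$ rather than $T$; choosing $\barT$ small will therefore absorb these errors into the left-hand side.

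First, under the bootstrap \eqref{eq:aprioribootstrap}, I would establish (following the Sobolev reduction sketched in the introduction, using \lem{lem:A1} and the Neumann elliptic estimate of \lem{lem: elliptic estimate}) that
\[
\scE_\ell(T)\lesssim \calE_\ell(T)+\text{(fixed polynomial in $\scE_\ell(0)$)},
\]
so that it suffices to control $\calE_\ell(T)$. This step uses that the boundary trace of $D_\barV^{k+1}\barV$ is controlled by $\calE_{k+1}$, that the higher-order boundary equations in \lem{lem:Vho} allow $\nabla_n D_\barV^k\barV$ on $\partial\Omega_t$ to be read off from quantities already controlled by $\calE_\ell$, and that $\Box$ applied to $D_\barV^k\barV$ (see \lem{lem:boxDVkV}) behaves elliptically once $D_\barV^2 D_\barV^k\barV$ is viewed as a source. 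An identical argument, using the zero Dirichlet data for $D_\barV^{j}\barsigma^2$ ($j\ge 1$), gives the corresponding $H^j$ bounds for $D_\barV^{k+1}\barsigma^2$ from $\Ebar_{\le k+1}[\barsigma^2,T]$.

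Next I would apply \lem{lem:energy} with $\Theta=D_\barV^k\barV^\nu$ for each $0\le k\le\ell$ and each $\nu$, using the evolution equations from \lems{lem:Vho}{lem:boxDVkV}. The identity \eqref{eq:energyid-intro} then bounds $\sup_{t}E_{\le \ell}[\barV,t]$ by $\scE_\ell(0)$ plus three kinds of spacetime error: (a) the principal boundary source $c^{-1}\int_0^T\!\int_{\partial\Omega_t}\!|\nabla D_\barV^{k+1}\barsigma^2|\,|D_\barV^{k+1}\barV|\,\ud S\ud t$, absorbed by Cauchy--Schwarz into $\Ebar_{\le k+1}[\barsigma^2,T]$ at the cost of $c^{-1}T$ times the bootstrap; (b) commutator terms from $[\Box,D_\barV^k]\barV$ and $[D_\barV^2+\tfrac12\bara\nabla_n,D_\barV^k]$ which are multilinear in lower-order quantities and controlled in $L^\infty$ by the Sobolev step; (c) lower-order factors of $\bara$ and $\sdiv\barV$ on $\partial\Omega_t$ that are handled by the relativistic Taylor sign assumption together with the Sobolev control. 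In parallel, apply \lem{lem:benergy} to $D_\barV^{j}\barsigma^2$ for $1\le j\le \ell+1$, using \lem{lem:boxDVk1sigma} for the source. The right-hand sides involve at most the interior norms that have just been bounded, together with factors of $c^{-1}$ from the spacetime integration.

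The single delicate step is the commutator error $c^{-1}\int_0^T\!\int_{\partial\Omega_t}|\nabla D_\barV^{k-1}\barV|^2\,\ud S\,\ud t$ that arises when $D_\barV^k$ is pushed past $\Box$ or $\nabla_n$: a naive estimate would lose a tangential derivative on the boundary compared to what $\calE_\ell$ provides. This is the main obstacle, and it is precisely what the oblique-derivative multiplier identity of \lem{lem:oblique} is designed to address: applied with a suitable spacelike multiplier it trades the missing tangential derivative for a $D_\barV$ derivative already contained in $\Ebar_{\le k}[\barV,T]$. With this term under control, combining the previous estimates yields
\[
\calE_\ell(T)\le \calE_\ell(0)+C\,\barT\,Q(\scE_\ell(T))+C\int_0^T Q(\scE_\ell(t))\,\ud t,
\]
for a polynomial $Q$ depending on $C_1,c_0,\ell$. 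Choosing $\barT$ small to absorb the $\barT\,Q(\scE_\ell(T))$ term by continuity (the bootstrap constant is continuous in $T$ for smooth solutions) and applying Gr\"onwall produces the polynomial bound \eqref{eq:apriori}.
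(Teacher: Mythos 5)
Your overall architecture matches the paper's: Sobolev control from the energies via elliptic regularity (the paper's Proposition~\ref{prop:L2Sobolev} and Corollary~\ref{cor:Sobolev}), the energy identity of Lemma~\ref{lem:energy} applied to $D_\barV^k\barV$, Lemma~\ref{lem:benergy} for $D_\barV^{k+1}\barsigma^2$, Lemma~\ref{lem:oblique} for the boundary term $c^{-1}\int_0^T\int_{\partial\Omega_t}|\nabla D_\barV^{k-1}\barV|^2\,\ud S\,\ud t$, and smallness of $\barT$ plus Gr\"onwall. However, there is a genuine gap in your item (b): the claim that the commutator terms from $[\Box,D_\barV^k]$ are ``multilinear in lower-order quantities and controlled in $L^\infty$ by the Sobolev step'' fails at top order. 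By Lemmas~\ref{lem:boxDVkV} and~\ref{lem:boxDVk1sigma}, the sources $G_k$ and $H_k$ contain the terms $\nabla^{(2)}D_\barV^{k-1}\barV$; for $k=\ell$ this requires $\|D_\barV^{\ell-1}\barV\|_{H^2(\Omega_t)}$, which corresponds to $j=2$, $k=\ell-1$ in the definition of $\scE_\ell$ and is \emph{not} included there (one needs $k\leq\ell-2$ when $j=2$). So the spacetime Cauchy--Schwarz you propose against $D_\barV^{\ell+1}\barV$ loses a derivative and cannot close.

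The missing idea is the double integration by parts that the paper carries out in \eqref{eq:DVsigmatemp2}--\eqref{eq:DVsigmatemp3} and \eqref{eq:Gtemp1}--\eqref{eq:Gtemp2}: one writes $(F^{\mu\nu}\nabla_\mu\nabla_\nu D_\barV^{k-1}\barV)(QD_\barV^{k+1}\barsigma^2)$ (and its analogue with $D_\barV^{j+1}\barV$) as a pure divergence plus terms in which the derivative count is balanced, e.g.\ $(F^{\mu\nu}\nabla_\nu D_\barV^{k}\barV)(Q\nabla_\mu D_\barV^{k}\barsigma^2)$. The divergence terms are chosen (using $\barV^\mu n_\mu=0$ on $\partial\Omega$) so that their boundary contributions reduce precisely to the quantity $c^{-1}\int_0^T\int_{\partial\Omega_t}|\nabla D_\barV^{k-1}\barV|^2\,\ud S\,\ud t$ that Lemma~\ref{lem:nablaDV1} (built on Lemma~\ref{lem:oblique}) controls, together with small multiples of $\calE_k(T)$ absorbable on the left. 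You correctly identified the role of the oblique multiplier for that boundary integral, but without the integration-by-parts step you never arrive at a configuration where it applies; the raw interior term simply exceeds the regularity available from $\scE_\ell$. A secondary omission of the same flavor: the term $\nabla^{(2)}D_\barV^{k}\barsigma^2$ in $H_k$ must be converted to controllable quantities via the elliptic identity of Lemma~\ref{lem:A1} (this is Lemma~\ref{lem:aux1} in the paper), not bounded directly by the energies.
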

\begin{remark}\label{rmk: a priori}
Proposition~\ref{prop:apriori} is the backbone of the proof of both Theorems~\ref{thm:main} and~\ref{thm:limit}. Indeed, for local existence as in Theorem~\ref{thm:main}, we set up an iteration based on these a priori estimates for a fixed value of $c$ (which we can taken to be $c=1$, as we suggest the reader do on first reading). For the Newtonian limit as in Theorem~\ref{thm:limit}, we use Proposition~\ref{prop:apriori} to extend the solution to time $x^0=c\barT$ and use boundedness of higher energies to extract a limit.
\end{remark}
\subsection{General identities and estimates}\label{general estimates}
In this section we record a number of general identities and estimates which will be used in the proof of Proposition~\ref{prop:apriori}. We start by recording a general multiplier identity for the wave equation. Let $Q=Q^\mu\nabla_\mu$ be an arbitrary first order multiplier. Then a direct calculation shows that
\begin{equation}\label{eq:multid}
(\Box \Theta )(Q\Theta)= \nabla_\mu((Q\Theta)( \nabla^\mu \Theta)-\frac{1}{2}Q^\mu (\nabla_\nu\Theta)( \nabla^\nu\Theta))+\frac{1}{2}(\nabla_\mu Q^\mu)(\nabla_\nu\Theta)(\nabla^\nu\Theta)-(\nabla^\mu Q^\nu)( \nabla_\mu\Theta)(\nabla_\nu\Theta).
\end{equation}

Our first lemma  
is the main energy identity for \eqref{eq:mainbar}.
\begin{lemma}\label{lem:energy}
	Suppose $\Theta$ satisfies
	\begin{align}\label{eq:Theta}
	\begin{cases}
	(D_\barV^2+\frac{1}{2}\bara n)\Theta=f\\
	\Box \Theta=g
	\end{cases}.
	\end{align}
	Then
	\begin{align}\label{eq:energyid1}
	\begin{split}
	&\int_{\Omega_T}(c^{-1}D_\barV\Theta \nabla_{0}\Theta+\frac{\barV^0}{2c}\nabla^\mu\Theta\nabla_\mu\Theta)\ud x+\int_{\partial\Omega_T}\frac{\barV^0}{c\bara}(D_\barV\Theta)^2\ud S\\
	=&\int_{\Omega_0}(c^{-1}D_\barV\Theta \nabla_{0}\Theta+\frac{\barV^0}{2c}\nabla^\mu\Theta\nabla_\mu\Theta)\ud x+\int_{\partial\Omega_0}\frac{\barV^0}{c\bara}(D_\barV\Theta)^2\ud S\\
	&-c^{-1}\int_0^T\int_{\Omega_t}g D_\barV\Theta \ud x \ud t+2c^{-1}\int_{0}^T\int_{\partial\Omega_t}\frac{1}{\bara}f D_V\Theta \ud S\ud t+c^{-1}\int_0^T\int_{\partial\Omega_t}\frac{\sdiv \barV}{\bara}\left(D_{\barV}\Theta\right)^{2}\ud S\ud t\\
	&+c^{-1}\int_{0}^T\int_{\Omega_t}(\nabla^\mu \barV^\nu)\nabla_\mu\Theta \nabla_\nu\Theta \ud x \ud t-c^{-1}\int_0^T\int_{\partial\Omega_t}\frac{1}{\bara^2}(D_\barV\bara)(D_\barV\Theta)^2 \ud S\ud t,
	\end{split}
	\end{align}
where $\sdiv$ denotes the divergence operator on $\partial\Omega$.\footnote{For a simpler model, suppose $u$ satisfies 
\begin{align*}
\begin{cases}
\Box u =0,\qquad &\mathrm{in~}[0,T]\times B\\
(\partial_t^2 +\partial_r) u = f\qquad &\mathrm{on~}[0,T]\times \partial B
\end{cases},
\end{align*}
where $B$ is the unit ball in $\bbR^3$ with normal $\partial_r$. Then a similar argument using the multiplier $\partial_tu$ gives
\begin{align*}
\begin{split}
\frac{1}{2}\int_{B}|\partial_{t,x}u(T)|^2\ud x+\frac{1}{2}\int_{\partial B}(\partial_tu(T))^2\ud S = \frac{1}{2}\int_{B}|\partial_{t,x}u(0)|^2\ud x+\frac{1}{2}\int_{\partial B}(\partial_tu(0))^2\ud S +\int_0^T\int_{\partial B}(\partial_tu) f \ud S \ud t.
\end{split}
\end{align*}}
\end{lemma}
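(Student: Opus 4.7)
The plan is to apply the general multiplier identity \eqref{eq:multid} with the timelike multiplier $Q=c^{-1}\barV$, integrate over the spacetime region $\Omega^{[0,T]}:=\bigcup_{0\leq t\leq T}\Omega_t$, and then use the divergence theorem. This choice exploits three features simultaneously: (i) $\barV$ is timelike, which is what makes the bulk piece of the flux across a spacelike slice positive-definite in $|\nabla_{t,x}\Theta|^2$; (ii) $\barV\cdot n=0$ on $\partial\Omega$, by the boundary condition $V|_{\partial\Omega}\in\calT\partial\Omega$ in \eqref{eq:Voriginalintro2}, which kills the $|\nabla\Theta|^2$ part of the lateral flux; (iii) $\nabla_\mu \barV^\mu=c^{-1}\nabla_\mu V^\mu=0$, so the $\tfrac12(\nabla_\mu Q^\mu)|\nabla\Theta|^2$ term in \eqref{eq:multid} drops out entirely. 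Writing $A^\mu:=c^{-1}(D_\barV\Theta)\nabla^\mu\Theta-\frac{\barV^\mu}{2c}(\nabla_\alpha\Theta)(\nabla^\alpha\Theta)$, what remains after integration is the identity
\begin{equation*}
\int_{\partial\Omega^{[0,T]}}A^\mu\nu_\mu\,dS
=c^{-1}\iint g\,D_\barV\Theta\,dx\,dt
+c^{-1}\iint(\nabla^\mu\barV^\nu)(\nabla_\mu\Theta)(\nabla_\nu\Theta)\,dx\,dt.
\end{equation*}

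The two spacetime integrals on the right are already exactly two of the terms in \eqref{eq:energyid1}. On the spacelike slices $\Omega_0,\Omega_T$ the flux of $A^\mu$ along the unit future-directed timelike normal yields the bulk energy density $c^{-1}(D_\barV\Theta)(\nabla_0\Theta)+\frac{\barV^0}{2c}(\nabla^\alpha\Theta)(\nabla_\alpha\Theta)$ appearing on the left of \eqref{eq:energyid1}. On the timelike lateral boundary $\partial\Omega^{[0,T]}$ the tangency $\barV\cdot n=0$ reduces the flux to $c^{-1}(D_\barV\Theta)(\nabla_n\Theta)$, and then the boundary equation in \eqref{eq:Theta} is used to solve for the normal derivative as $\nabla_n\Theta=\frac{2}{\bara}(f-D_\barV^2\Theta)$. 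The lateral integrand therefore becomes
\begin{equation*}
\frac{2}{c\bara}\,f\,D_\barV\Theta \;-\; \frac{1}{c\bara}\,D_\barV\bigl((D_\barV\Theta)^2\bigr),
\end{equation*}
the first piece of which is the boundary forcing contribution on the right of \eqref{eq:energyid1}.

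The second piece is where the boundary energy $\int_{\partial\Omega_T}\frac{\barV^0}{c\bara}(D_\barV\Theta)^2\,dS$ is manufactured. Writing
\begin{equation*}
\frac{1}{\bara}D_\barV\!\bigl((D_\barV\Theta)^2\bigr)=D_\barV\!\left(\frac{(D_\barV\Theta)^2}{\bara}\right)+\frac{D_\barV\bara}{\bara^2}(D_\barV\Theta)^2,
\end{equation*}
one integrates the first term by parts along $\partial\Omega^{[0,T]}$, using that $\barV$ is tangent to $\partial\Omega$, via the tangential divergence identity $\int_{\partial\Omega^{[0,T]}} D_\barV F\,dS=-\int_{\partial\Omega^{[0,T]}} F\,\sdiv\barV\,dS+\int_{\partial(\partial\Omega^{[0,T]})} F\,(\barV\cdot\tilde\nu)\,d\ell$, where $\tilde\nu$ is the outward unit conormal to $\partial\Omega_0\cup\partial\Omega_T$ inside $\partial\Omega^{[0,T]}$. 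Evaluating $\barV\cdot\tilde\nu$ is cleanest in Lagrangian coordinates, where $\partial\Omega$ is stationary, $D_\barV=\barV^0\partial_t$, and $-g_{tt}=\sigma^2/(V^0)^2$; a short calculation comparing the induced volume form $\sqrt{|h|}\,dt\wedge d\sigma$ on $\partial\Omega^{[0,T]}$ to the surface form $d\sigma$ on the time-slices $\partial\Omega_t$ produces exactly the weight $\barV^0$ after the cancellation of the lapse factor. Collecting everything yields the boundary terms $\pm\int_{\partial\Omega_t}\frac{\barV^0}{c\bara}(D_\barV\Theta)^2\,dS$ in \eqref{eq:energyid1}, while the tangential divergence term produces $c^{-1}\iint\frac{\sdiv \barV}{\bara}(D_\barV\Theta)^2\,dS\,dt$ and the product-rule remainder produces $-c^{-1}\iint\frac{D_\barV\bara}{\bara^2}(D_\barV\Theta)^2\,dS\,dt$.

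The only genuinely nontrivial step is this lateral integration by parts: one has to track carefully how the induced 3-volume form on $\partial\Omega^{[0,T]}$ factorizes relative to the induced 2-form on the time-slices $\partial\Omega_t$, so that the tangential boundary-of-boundary integrals come out with coefficient $\barV^0/(c\bara)$ and not merely $c^{-1}$ times some unidentified lapse. Performing this check in Lagrangian coordinates, where the geometry of $\partial\Omega$ is fixed, bypasses the technical awkwardness of $\partial\Omega$ being a moving hypersurface in Eulerian coordinates.
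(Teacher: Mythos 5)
Your proposal is correct and is essentially the paper's argument. The paper applies the multiplier identity \eqref{eq:multid} with $Q=\barV$ over the interior to produce \eqref{eq:energytemp1}, separately multiplies the boundary equation by $\bara^{-1}D_\barV\Theta$ and integrates over $\partial\Omega$ to produce \eqref{eq:boundary}, and adds the two, whereby the lateral flux $(D_\barV\Theta)(n\Theta)$ cancels; you reach the same place by keeping everything in one pass and substituting $\nabla_n\Theta=\frac{2}{\bara}(f-D_\barV^2\Theta)$ directly into the lateral flux from the multiplier identity. The observations you highlight — $\barV\cdot n=0$ killing the $|\nabla\Theta|^2$ part of the lateral flux, $\nabla_\mu\barV^\mu=0$ killing the $\frac12(\nabla_\mu Q^\mu)|\nabla\Theta|^2$ bulk term, and the tangential integration by parts along $\partial\Omega$ that produces the $\sdiv\barV$ error, the $D_\barV\bara$ error, and the boundary caps with the $\barV^0$ weight — are exactly the ingredients of the paper's proof, and your Lagrangian justification of the $\barV^0$ weight is a valid way to make explicit what the paper carries out tacitly in passing from the equation on $\partial\Omega$ to \eqref{eq:boundary}.
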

\begin{proof} 
	Multiplying the first equation in \eqref{eq:Theta} by $\frac{1}{\bara} D_\barV\Theta$ we get 
	\begin{align*}
	\begin{split}
	\frac{1}{2}D_{\barV}\left(\frac{1}{\bara}(D_\barV\Theta)^2\right)+\frac{1}{2}(n\Theta) (D_\barV\Theta)=\frac{1}{\bara}f D_\barV\Theta-\frac{1}{2\bara^2}(D_\barV \bara)(D_\barV\Theta)^2,
	\end{split}
	\end{align*}
	which upon integration over $\partial\Omega=\cup_{t\in[0,T]}\partial\Omega_t$ gives
	\begin{align}\label{eq:boundary}
	\begin{split}
	&\int_{\partial\Omega_T}\frac{\barV^0}{\bara}(D_\barV\Theta)^2\ud S+\int_0^T\int_{\partial\Omega_t}(n\Theta)(D_\barV\Theta)\ud S\ud t\\
	&=\int_{\partial\Omega_0}\frac{\barV^0}{\bara}(D_\barV\Theta)^2\ud S+\int_0^T\int_{\partial\Omega_t}\frac{2}{\bara}f D_\barV\Theta \ud S\ud t\\
	&-\int_0^T\int_{\partial\Omega_t}\frac{1}{\bara^2}(D_\barV\bara)(D_\barV\Theta)^2\ud S\ud t+\int_0^T\int_{\partial\Omega_t}\frac{\sdiv \barV}{\bara}\left(D_{\barV}\Theta\right)^{2}\ud S\ud t.
	\end{split}
	\end{align}
To treat the second term on the left, we integrate \eqref{eq:multid} with $Q=\barV$ over $\Omega\cap\{0\le t\le T\}$.  Using the fact that $\barV$ is tangent to $\partial\Omega$, we get 
	\begin{align}\label{eq:energytemp1}
	\begin{split}
	&\int_{\Omega_T}(D_\barV\Theta \nabla_{0}\Theta+\frac{\barV^0}{2}\nabla^\nu\Theta\nabla_\nu\Theta)\ud x-\int_0^T\int_{\partial\Omega_t}(D_\barV\Theta)( n\Theta )\ud S\ud t\\
	&=\int_{\Omega_0}(D_\barV\Theta \nabla_{0}\Theta+\frac{\barV^0}{2}\nabla^\nu\Theta\nabla_\nu\Theta)\ud x-\iint_{\Omega}g D_\barV\Theta \ud x \ud t+\iint_{\Omega}(\nabla^\mu \barV^\nu)(\nabla_\mu\Theta)(\nabla_\nu\Theta)\ud x \ud t.
	\end{split}
	\end{align}
The lemma follows by adding \eqref{eq:energytemp1} to \eqref{eq:boundary} and multiplying by $c^{-1}$.
\end{proof}
We will apply Lemma~\ref{lem:energy} to $\Theta=D_\barV^k \barV^\nu$,  for $0\le k\le \ell$.
The next energy estimate is used for the second equation in \eqref{eq:sigma1a} (see \eqref{eq:Venergy-intro-temp3}), as well as \eqref{eq:boxDVk1sigma}.
\begin{lemma}\label{lem:benergy}
	There is a (future-directed and timelike) vectorfield $Q$ such that for any $\Theta$ which is constant on $\partial\Omega$,
	\begin{align}\label{eq:benergy}
	\begin{split}
	&\int_{\Omega_T}|\partial_{t,x}\Theta|^2\ud x+c^{-1}\int_0^T\int_{\partial\Omega_t}|\partial_{t,x}\Theta|^2\ud S \ud t\\
	&\lesssim \int_{\Omega_0}|\partial_{t,x}\Theta|^2\ud x+\left|c^{-1}\int_0^T\int_{\Omega_t}(\Box \Theta)(Q\Theta)\ud x \ud t\right|\\
	&\quad+\left|c^{-1}\int_0^T\int_{\Omega_t}(\frac{1}{2}(\nabla_\mu Q^\mu)(\nabla_\nu\Theta)(\nabla^\nu\Theta)-(\nabla^\mu Q^\nu)(\nabla_\mu\Theta)(\nabla_\nu\Theta))\ud x \ud t\right|.
	\end{split}
	\end{align}
\end{lemma}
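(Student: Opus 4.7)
The plan is to apply the multiplier identity \eqref{eq:multid} to a carefully chosen future-directed timelike $Q$ that is a small spacelike perturbation of $\barV$ in the normal direction, and then use the divergence theorem over $\Omega\cap\{0\leq t\leq T\}$ to turn everything into a sign-definite estimate.

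First I would fix the multiplier. Let $N$ be any smooth extension to all of $\Omega$ of the outward spacelike unit normal $n$ of $\partial\Omega$ (for instance one can take $N=-\nabla\barsigma^2/|\nabla\barsigma^2|$ near $\partial\Omega$, patched with any smooth vectorfield via a cutoff), and set
\[Q:=\barV-\mu N\]
for a small positive constant $\mu$, to be chosen independent of $c$. Because $-\barV^\mu\barV_\mu=\barsigma^2\ge c^{2}$ on $\Omega$ (using $\sigma^2\ge c^{4}$) and $N_\mu N^\mu=1$ on $\partial\Omega$ with $N$ uniformly bounded, for $\mu$ small enough $Q$ is future-directed timelike throughout $\Omega$; moreover $Q^0=\barV^0-\mu N^0\gtrsim c$ since $\barV^0\ge c$, while $|Q^i|\lesssim 1$ thanks to the a priori bounds on $\barV$.

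Integrating \eqref{eq:multid} over $\Omega\cap\{0\le t\le T\}$ and applying the divergence theorem, with $P^\mu:=(Q\Theta)\nabla^\mu\Theta-\tfrac12 Q^\mu\nabla^\nu\Theta\nabla_\nu\Theta$ and $B$ denoting the bulk term, yields
\[\int_{\Omega_T}P^0\,dx-\int_{\Omega_0}P^0\,dx+\int_0^T\!\!\int_{\partial\Omega_t}P^\mu n_\mu\,dS\,dt=\int_0^T\!\!\int_{\Omega_t}\!\bigl[(\Box\Theta)(Q\Theta)-B\bigr]\,dx\,dt.\]
Two key computations feed this identity. On $\Omega_t$, a direct expansion gives $-P^0=\tfrac12 Q^0|\partial_{t,x}\Theta|^2+Q^i(\partial_t\Theta)(\partial_i\Theta)$, so the timelike character of $Q$ with $Q^0\gtrsim c\gg |Q^i|$ yields the pointwise coercivity $-P^0\gtrsim c\,|\partial_{t,x}\Theta|^2$. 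On $\partial\Omega$, since $\Theta$ is constant, all tangential derivatives vanish, so $\nabla_\mu\Theta=(n\Theta)n_\mu$; thus $|\partial_{t,x}\Theta|^2\sim (n\Theta)^2$ and the boundary flux collapses to $P^\mu n_\mu=\tfrac12 Q_n(n\Theta)^2$. Since $\barV$ is tangent to $\partial\Omega$ by the free boundary condition and $N^\mu n_\mu=1$ there, $Q_n=-\mu$, and therefore $-P^\mu n_\mu=\tfrac12\mu(n\Theta)^2\sim \mu\,|\partial_{t,x}\Theta|^2$ on $\partial\Omega$.

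Multiplying the divergence identity by $-1$, bounding the left-hand side below by $c_1 c\int_{\Omega_T}|\partial_{t,x}\Theta|^2+c_1\mu\int_0^T\!\!\int_{\partial\Omega_t}|\partial_{t,x}\Theta|^2$, bounding $-\int_{\Omega_0}P^0$ above by $C c\int_{\Omega_0}|\partial_{t,x}\Theta|^2$, and then dividing by $c$ produces \eqref{eq:benergy}. The main obstacle is the correct balancing in the choice of $Q$: the perturbation must simultaneously (a) keep $Q$ timelike uniformly in $c$ (forcing $\mu$ small, independent of $c$), (b) produce a definite-sign normal component $Q_n$ on $\partial\Omega$ (forcing a nonzero component along $N$, not possible with $Q=\barV$ alone since $\barV$ is tangential), and (c) generate an interior density of size $c|\partial_{t,x}\Theta|^2$ so that the final division by $c$ reproduces the $c^{-1}$ weight on the boundary term. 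Once this balance is struck, the remaining manipulations are routine divergence-theorem computations combined with the observation that constancy of $\Theta$ on $\partial\Omega$ eliminates all tangential derivatives.
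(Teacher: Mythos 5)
Your proposal follows the same strategy as the paper: integrate the multiplier identity \eqref{eq:multid} over $\Omega\cap\{0\le t\le T\}$ with a multiplier $Q$ that is a normal perturbation of $\barV$, then use that $\Theta$ is constant on $\partial\Omega$ to reduce the boundary flux to $\tfrac12 Q^n(n\Theta)^2$. The mechanics you lay out (computing $-P^0=\tfrac12 Q^0|\partial_{t,x}\Theta|^2+Q^i(\partial_t\Theta)(\partial_i\Theta)$, the collapse $\nabla_\mu\Theta=(n\Theta)n_\mu$ on $\partial\Omega$, and the $c$-bookkeeping) are correct, and the scaling you impose, namely $Q^0\simeq c$, $|Q^i|\lesssim 1$, $Q^n=O(1)$ independent of $c$, followed by a final division by $c$, reproduces exactly the $c^{-1}$-weighted boundary term and the unweighted bulk term in \eqref{eq:benergy}. (The paper instead puts the $c^{-1}$ scaling into $Q$ itself, taking $Q=c^{-1}(\alpha\barV\pm n)$; the two normalizations are equivalent.) One small slip: $-\barV^\mu\barV_\mu=\barsigma^2+c^2$, not $\barsigma^2$, but the conclusion $\geq c^2$ that you actually use is unchanged.

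The one substantive point of comparison is the sign of $Q^n=m(Q,n)$. You take $Q=\barV-\mu N$, so $Q^n=-\mu<0$, whereas the paper's proof takes $Q=c^{-1}(\alpha\barV+n)$ and asserts $Q^n>0$. These disagree. Your sign is the one consistent with the divergence identity as the paper itself uses it: in the proof of Lemma~\ref{lem:energy} the lateral flux in \eqref{eq:energytemp1} enters on the left as $-\int_0^T\!\int_{\partial\Omega_t}(D_\barV\Theta)(n\Theta)\,dS\,dt$, which (since for $Q=\barV$ one has $n_\mu P^\mu=(D_\barV\Theta)(n\Theta)$) is $-\int n_\mu P^\mu$. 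When $\Theta$ is constant on $\partial\Omega$ this becomes $-\tfrac12 Q^n\int(n\Theta)^2$, which is nonnegative precisely when $Q^n\le 0$. So the coercive choice is $Q^n<0$, as you make it; the ``$Q^n>0$'' and ``$\alpha\barV+n$'' in the paper's sketch appear to be a sign typo (compare with Lemma~\ref{lem:oblique}, which does take $Q^n<0$ with $Q=c^{-1}(\alpha\barV-n)$). Your write-up is therefore not only correct but also self-consistent on a point where the paper's one-line proof is not.
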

\begin{proof}
	Since $\Theta$ is constant on $\partial\Omega$,
	\begin{align*}
	\begin{split}
	\nabla_\nu \Theta \nabla^\nu \Theta = (n\Theta)^2
	\end{split}
	\end{align*}
	and
	\begin{align*}
	\begin{split}
	n_\mu((Q\Theta)( \nabla^\mu \Theta)-\frac{1}{2}Q^\mu (\nabla_\nu\Theta)( \nabla^\nu\Theta))=\frac{1}{2}Q^n (n\Theta)^2
	\end{split}
	\end{align*}
	on $\partial\Omega$, where $Q^n:=m(Q,n)$. Therefore letting $Q$ be a future-directed timelike vectorfield with $Q^n>0$,  in particular $Q=c^{-1}(\alpha \barV+n)$ for some large $\alpha$, we get the desired estimate upon integrating \eqref{eq:multid} over $\Omega$.
\end{proof}

Our last application of \eqref{eq:multid} will be to control arbitrary derivatives of an arbitrary function on the boundary in terms of the normal and $D_V$ derivatives.

\begin{lemma}\label{lem:oblique}
	There exists a (future-directed and timelike) vectorfield $Q$  such that or any function $\Theta$ 
	\begin{align}\label{eq:oblique}
	\begin{split}
	&\sup_{0\leq t\leq T}\int_{\Omega_t}|\partial_{t,x}\Theta|^2\ud x+c^{-1}\int_0^T\int_{\partial\Omega_t}|\partial_{t,x}\Theta|^2\ud S \ud t\\
	&\lesssim \int_{\Omega_0}|\partial_{t,x}\Theta|^2\ud x+c^{-1}\int_0^T\int_{\partial\Omega_t}((n\Theta)^2+(D_\barV\Theta)^2)\ud S\ud t\\
	&\quad+\left|c^{-1}\int_0^T\int_{\Omega_t}(\Box \Theta)(Q\Theta)\ud x \ud t\right|+\left|c^{-1}\int_0^T\int_{\Omega_t}(\frac{1}{2}(\nabla_\mu Q^\mu)(\nabla_\nu\Theta)(\nabla^\nu\Theta)-(\nabla^\mu Q^\nu)(\nabla_\mu\Theta)(\nabla_\nu\Theta))\ud x \ud t\right|.
	\end{split}
	\end{align}
\end{lemma}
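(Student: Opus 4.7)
The plan is to apply the multiplier identity \eqref{eq:multid} with a carefully chosen vectorfield $Q$ which is timelike future-directed throughout $\Omega$ but whose normal component on $\partial\Omega$ points \emph{slightly inward}. Concretely, I would take
\begin{equation*}
Q = c^{-1}\bigl(\alpha\,\barV - \delta\,N\bigr),
\end{equation*}
where $N$ is a smooth vectorfield on $\Omega$ supported in a collar neighborhood of $\partial\Omega$ and equal to the unit outward normal $n$ on $\partial\Omega$, while $\alpha>0$ is a (sufficiently large) constant and $\delta>0$ is small and fixed, both independent of $c$. Since $\barV$ is tangent to $\partial\Omega$, one has $m(Q,n)|_{\partial\Omega}=-\delta/c$, which is strictly negative. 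On the other hand, using $-m(\barV,\barV)\ge c^{2}$ and boundedness of $N$, for $\alpha$ large and $\delta$ small $Q$ is timelike and future-directed everywhere on $\Omega$. This mirrors the structure of $Q$ in Lemma~\ref{lem:benergy}, except for the crucial inward tilt along $n$.

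Applying \eqref{eq:multid} with this $Q$ and integrating by the divergence theorem over $\Omega_{[0,T]}$ produces a flux on the top slice $\Omega_T$, a flux on $\Omega_0$, and a flux on $\partial\Omega_{[0,T]}$. Since $Q^{0}\sim \alpha$ and $Q^{i}=O(c^{-1})$, the top-slice integrand $\tfrac{Q^{0}}{2}|\partial_{t,x}\Theta|^{2}+Q^{i}(\partial_i\Theta)(\partial_t\Theta)$ is bounded below by $\tfrac{\alpha}{4}|\partial_{t,x}\Theta|^{2}$ for sufficiently large $\alpha$ (and $c$ bounded below by a fixed constant), which will give the $\sup_t\int_{\Omega_t}|\partial_{t,x}\Theta|^{2}$ term on the left of~\eqref{eq:oblique} after taking $\sup$ over $T$.

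The heart of the argument is the boundary flux on $\partial\Omega$. In the $m$-orthonormal frame $\{T:=\barV/|\barV|,\,n,\,e_1,\,e_2\}$ adapted to $\partial\Omega$, with $T$ the unit future-directed timelike tangent and $e_1,e_2$ spacelike tangent to $\partial\Omega_t$, one has $\nabla_\nu\Theta\,\nabla^\nu\Theta=-(T\Theta)^{2}+(n\Theta)^{2}+(e_1\Theta)^{2}+(e_2\Theta)^{2}$, and a direct computation of $(Q\Theta)(n\Theta)-\tfrac{1}{2}m(Q,n)\nabla_\nu\Theta\nabla^\nu\Theta$ gives
\begin{equation*}
\tfrac{\alpha}{c}(D_\barV\Theta)(n\Theta)-\tfrac{\delta}{2c}(n\Theta)^{2}-\tfrac{\delta}{2c}(T\Theta)^{2}+\tfrac{\delta}{2c}\bigl((e_1\Theta)^{2}+(e_2\Theta)^{2}\bigr).
\end{equation*}
The sign $m(Q,n)<0$ is precisely what produces the \emph{positive} coefficient $\delta/(2c)$ in front of the spatial tangential derivatives $\sum(e_i\Theta)^{2}$. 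Young's inequality on the cross term together with $(T\Theta)^{2}\lesssim c^{-2}(D_\barV\Theta)^{2}$ (from $|\barV|^{2}\gtrsim c^{2}$) then yield the pointwise bound on $\partial\Omega$
\begin{equation*}
\text{boundary flux}\;\ge\; \tfrac{\delta}{4c}\bigl((e_1\Theta)^{2}+(e_2\Theta)^{2}\bigr)\;-\;C\,c^{-1}\bigl((D_\barV\Theta)^{2}+(n\Theta)^{2}\bigr),
\end{equation*}
with $C$ independent of $c$ and $\Theta$.

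Rearranging the integrated identity and moving $Cc^{-1}\int_0^T\!\!\int_{\partial\Omega_t}((D_\barV\Theta)^{2}+(n\Theta)^{2})\,\ud S\,\ud t$ to the right-hand side (where it is already allowed by the statement), and noting that on $\partial\Omega$ the Euclidean gradient satisfies $|\partial_{t,x}\Theta|^{2}\simeq (T\Theta)^{2}+(n\Theta)^{2}+\sum(e_i\Theta)^{2}$ with $c$-uniform constants, the full $c^{-1}\int_0^T\!\!\int_{\partial\Omega_t}|\partial_{t,x}\Theta|^{2}\,\ud S\,\ud t$ term on the left of~\eqref{eq:oblique} is controlled; the interior source and commutator contributions from~\eqref{eq:multid} supply the remaining two integrals on the right. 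The main obstacle is arranging $Q$ to be simultaneously timelike future-directed in the interior (for coercivity on $\Omega_T$) and inward-pointing along $n$ on $\partial\Omega$ (for the favorable sign of $\sum(e_i\Theta)^{2}$ in the boundary flux): the natural outward choice $Q=c^{-1}(\alpha\barV+n)$ of Lemma~\ref{lem:benergy} yields those tangential derivatives with the wrong sign, which is harmless there only because $\Theta$ is assumed constant on $\partial\Omega$ and so $(e_i\Theta)=0$. Dropping that hypothesis is exactly what forces the inward tilt, and the smallness of $\delta$ relative to the timelike bound $|\barV|\gtrsim c$ is what preserves the timelike character of $Q$ uniformly in $c$.
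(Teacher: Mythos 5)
Your proposal is correct and follows essentially the same route as the paper: the paper also applies the multiplier identity \eqref{eq:multid} with $Q=c^{-1}(\alpha\barV-n)$, i.e.\ an inward-tilted timelike multiplier with $m(Q,n)<0$ on $\partial\Omega$, and uses exactly the sign of that normal component to make the tangential derivatives appear with a favorable sign in the boundary flux, absorbing the $(D_\barV\Theta)(n\Theta)$ cross term into $(n\Theta)^2+(D_\barV\Theta)^2$. Your extra parameter $\delta$ and the explicit collar extension $N$ of $n$ are harmless refinements of the same argument.
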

\begin{proof}
	The proof is similar to that of Lemma~\ref{lem:benergy}, but this time we choose $Q^n=m(Q,n)<0$. For instance, let $Q=c^{-1}(\alpha \barV-n)$ with $\alpha>0$ chosen so that $Q$ is future-directed and timelike. Then on $\partial\Omega$,
	\begin{align*}
	\begin{split}
	&n_\mu((Q\Theta)( \nabla^\mu \Theta)-\frac{1}{2}Q^\mu (\nabla_\nu\Theta)( \nabla^\nu\Theta))\\
	=&c^{-1}\left(\left(\alpha D_{\barV}\Theta\right)\left( n\Theta\right)-(n\Theta)^2+\frac{1}{2}\nabla_\nu\Theta \nabla^\nu\Theta\right)\geq c^{-1}\left(c_1 |\partial_{t,x}\Theta|^2- c_2 ((n\Theta)^2+(D_\barV\Theta)^2)\right)
	\end{split}
	\end{align*}
	for some  constants $c_1, c_2>0$ depending only on $V$. The lemma now follows by integrating \eqref{eq:multid} on $\Omega$.
\end{proof}

The next lemma will be used to control $\|\nabla^2\Theta\|_{L^2(\Omega_t)}$ in terms of $\|\Box \Theta\|_{L^2(\Omega_t)}$ and $\|\nabla D_\barV\Theta\|_{L^2(\Omega_t)}$.

\begin{lemma}\label{lem:A1}
	Let $\barA$ be defined as $\barA:=\bara^{ij}\partial^2_{ij}$, where $\bara^{ij}=(m^{-1})^{ij}-\frac{\barV^i}{\barV^0}\frac{\barV^j}{\barV^0}$. Then for any $\Theta$,
	\begin{align*}
	\begin{split}
	\barA\Theta&=\Box\Theta+\frac{1}{\barV^0}\partial_0 D_\barV\Theta-\frac{\barV^i}{(\barV^0)^2}\partial_i D_\barV\Theta+\left(\frac{\barV^i}{\left(\barV^0\right)^2}\partial_i \barV^0\right)\partial_0\Theta+\left(\frac{\barV^i}{\left(\barV^0\right)^2}\partial_i \barV^j\right)\partial_j\Theta\\
	&\quad-\left(\frac{1}{\barV^0}\partial_0 \barV^0\right)\partial_0\Theta-\left(\frac{1}{\barV^0}\partial_0\barV^j\right)\partial_j\Theta.
	\end{split}
	\end{align*}
	Moreover,
	\begin{align*}
	\begin{split}
	&\partial_0^2\Theta = \Delta \Theta-\Box \Theta,\\
	&\partial^2_{0i}\Theta=\frac{1}{\barV^0}\left(\partial_i D_\barV\Theta-\barV^j\partial^2_{ij}\Theta-\left(\partial_i\barV^0\right)\partial_0\Theta-\left(\partial_i\barV^j\right)\partial_j\Theta\right).
	\end{split}
	\end{align*}
\end{lemma}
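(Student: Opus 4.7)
The proof is a direct algebraic verification, using only the product rule and the definition of $\Box$ with respect to the Minkowski metric. No obstacle is expected; the only risk is a miscount of signs or factors of $\barV^0$, so I would organize the bookkeeping carefully.

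The plan is to first handle the two ``moreover'' identities, since these provide the building blocks for the main identity. The first, $\partial_0^2\Theta = \Delta\Theta - \Box\Theta$, is immediate from $\Box = (m^{-1})^{\mu\nu}\partial_\mu\partial_\nu = -\partial_0^2 + \Delta$ with the Minkowski metric \eqref{eq:Minkowskimetric}. The second follows from a direct expansion of
\[
\partial_i D_\barV\Theta = \partial_i(\barV^\mu\partial_\mu\Theta) = (\partial_i\barV^0)\partial_0\Theta + \barV^0\partial_i\partial_0\Theta + (\partial_i\barV^j)\partial_j\Theta + \barV^j\partial_i\partial_j\Theta,
\]
after which I solve algebraically for $\partial_0\partial_i\Theta$ and divide by $\barV^0$ (which is nonzero since $\barV$ is timelike and future-directed).

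For the main identity, the key observation is that
\[
\barA\Theta - \Box\Theta = \Delta\Theta - \frac{\barV^i \barV^j}{(\barV^0)^2}\partial_i\partial_j\Theta - \Box\Theta = \partial_0^2\Theta - \frac{\barV^i \barV^j}{(\barV^0)^2}\partial_i\partial_j\Theta,
\]
using the first ``moreover'' identity to replace $\Delta\Theta - \Box\Theta$ by $\partial_0^2\Theta$. I would then separately expand
\[
\tfrac{1}{\barV^0}\partial_0 D_\barV\Theta = \partial_0^2\Theta + \tfrac{\barV^j}{\barV^0}\partial_0\partial_j\Theta + \tfrac{1}{\barV^0}(\partial_0\barV^0)\partial_0\Theta + \tfrac{1}{\barV^0}(\partial_0\barV^j)\partial_j\Theta,
\]
\[
\tfrac{\barV^i}{(\barV^0)^2}\partial_i D_\barV\Theta = \tfrac{\barV^i}{\barV^0}\partial_i\partial_0\Theta + \tfrac{\barV^i\barV^j}{(\barV^0)^2}\partial_i\partial_j\Theta + \tfrac{\barV^i}{(\barV^0)^2}(\partial_i\barV^0)\partial_0\Theta + \tfrac{\barV^i}{(\barV^0)^2}(\partial_i\barV^j)\partial_j\Theta,
\]
subtract them, and note that the mixed $\partial_0\partial_j\Theta$ terms cancel exactly, leaving
\[
\tfrac{1}{\barV^0}\partial_0 D_\barV\Theta - \tfrac{\barV^i}{(\barV^0)^2}\partial_i D_\barV\Theta = \left(\partial_0^2\Theta - \tfrac{\barV^i\barV^j}{(\barV^0)^2}\partial_i\partial_j\Theta\right) + (\text{first-order in }\Theta).
\]
Substituting $\barA\Theta - \Box\Theta$ for the parenthesized expression and rearranging produces exactly the claimed formula.

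The main thing to be careful about is collecting the four lower-order terms involving $\partial_\mu \barV^\nu$ with the correct signs — they come in two pairs from the $\partial_0 D_\barV\Theta$ expansion (with sign $+$) and from the $\partial_i D_\barV\Theta$ expansion (with sign $-$), and their final signs in the statement are determined by moving them to the opposite side of the identity.
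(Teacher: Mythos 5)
Your proof is correct and follows essentially the same route as the paper: the paper's proof is the terse statement ``direct calculation using the identities'' together with exactly the two expansions of $\partial_0 D_\barV\Theta$ and $\partial_i D_\barV\Theta$ that you write out in your Step~4 and Step~2. You have simply supplied the intermediate algebra that the paper leaves implicit.
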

\begin{proof}
	The proof is a direct calculation using the identities
	\begin{align*}
	&\partial_0 D_\barV\Theta= \barV^0\partial_0^2\Theta+\barV^j\partial^2_{0j}\Theta+(\partial_0\barV^0)\partial_0\Theta+(\partial_0\barV^i)\partial_i\Theta,\\
	&\partial_i D_\barV\Theta= \barV^0\partial^2_{i0}\Theta+\barV^j\partial^2_{ij}\Theta+(\partial_i\barV^0)\partial_0\Theta+(\partial_i\barV^j)\partial_j\Theta.
	\end{align*}
\end{proof}
To use Lemma \ref{lem:A1}, we will apply the following standard elliptic estimates (cf. \cite{Taylor-book1}).

\begin{lemma}\label{lem: elliptic estimate}
	For any $t>0$, we have
	\begin{align*}
	\|\nabla_x^{(2)}\Theta\|_{L^2(\Omega_{t})}\lesssim \|\barA\Theta\|_{L^{2}(\Omega_{t})}+\|\Theta\|_{H^{\frac{3}{2}}(\partial\Omega_{t})},
	\end{align*}
	and 
	\begin{align*}
	\|\nabla_x^{(2)}\Theta\|_{L^{2}(\Omega_{t})}\lesssim \|\barA\Theta\|_{L^{2}(\Omega_{t})}+\|N\Theta\|_{H^{\frac{1}{2}}(\partial\Omega_{t})},
	\end{align*}
	where $N$ is a transversal vectorfield to $\partial\Omega_{t}\subseteq\Omega_t$,	and where the implicit constants depend on $\Omega_{t}$.
\end{lemma}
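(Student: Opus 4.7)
The plan is to recognize $\barA$ as a uniformly elliptic operator on $\Omega_t$ and then invoke standard boundary regularity estimates from the cited reference \cite{Taylor-book1}. The only substantive ``content'' is the verification of ellipticity; the rest is a textbook appeal.

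First I would verify uniform ellipticity. For any $\xi\in\bbR^3$,
\begin{align*}
\bara^{ij}\xi_i\xi_j = |\xi|^2-\frac{(\barV^i\xi_i)^2}{(\barV^0)^2},
\end{align*}
and since $V$ (hence $\barV=c^{-1}V$) is future-directed timelike, $m_{\alpha\beta}\barV^\alpha\barV^\beta<0$ yields $\sum_i(\barV^i)^2<(\barV^0)^2$. Cauchy--Schwarz then gives $(\barV^i\xi_i)^2\leq |\barV|^2|\xi|^2$, so
\begin{align*}
\bara^{ij}\xi_i\xi_j \geq \Big(1-\frac{|\barV|^2}{(\barV^0)^2}\Big)|\xi|^2 \geq \lambda|\xi|^2
\end{align*}
for some $\lambda>0$ depending on $\barV$. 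Thus $\barA$ is a second-order uniformly elliptic operator (with no zeroth- or first-order terms) whose coefficients inherit the regularity of $\barV$.

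For the first (Dirichlet-type) estimate, I would use the trace theorem to pick $\wtilTheta\in H^2(\Omega_t)$ with $\wtilTheta|_{\partial\Omega_t}=\Theta|_{\partial\Omega_t}$ and $\|\wtilTheta\|_{H^2(\Omega_t)}\lesssim \|\Theta\|_{H^{3/2}(\partial\Omega_t)}$. Then $\Theta-\wtilTheta$ has zero trace, and standard $H^2$ regularity for the Dirichlet problem for the uniformly elliptic operator $\barA$ yields
\begin{align*}
\|\nabla_x^{(2)}(\Theta-\wtilTheta)\|_{L^2(\Omega_t)} \lesssim \|\barA(\Theta-\wtilTheta)\|_{L^2(\Omega_t)} \lesssim \|\barA\Theta\|_{L^2(\Omega_t)}+\|\wtilTheta\|_{H^2(\Omega_t)}.
\end{align*}
Combining these bounds with the trace estimate and the triangle inequality $\|\nabla_x^{(2)}\Theta\|_{L^2}\leq \|\nabla_x^{(2)}(\Theta-\wtilTheta)\|_{L^2}+\|\nabla_x^{(2)}\wtilTheta\|_{L^2}$ produces the first inequality.

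For the second (transversal-derivative) estimate, I would interpret $N\Theta=g$ as an oblique-derivative boundary condition. Since $N$ is transversal to $\partial\Omega_t$, the Shapiro--Lopatinsky (covering) condition for the pair $(\barA,N)$ is immediate: after flattening the boundary, the characteristic equation has a unique root with positive imaginary part, and the symbol of $N$ (having nonzero normal component) does not annihilate the corresponding mode. Standard elliptic regularity for oblique-derivative problems then yields the desired bound. I do not anticipate a genuine obstacle; the only subtlety is tracking the dependence of constants on $\Omega_t$ (through its boundary regularity) and on $\barV$ (through the ellipticity constant $\lambda$), which is why the statement permits the implicit constants to depend on $\Omega_t$.
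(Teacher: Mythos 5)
The paper does not prove this lemma; it simply cites the standard references (``cf.~\cite{Taylor-book1}''). Your proposal is therefore a correct unpacking of the same appeal, and the one piece of genuine content---the verification that $\barA$ is uniformly elliptic, via the timelike normalization $\sum_i(\barV^i)^2<(\barV^0)^2$ and Cauchy--Schwarz---is exactly right.

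One small point you glide over: both displayed estimates carry no lower-order term $\|\Theta\|_{L^2(\Omega_t)}$ on the right, which the generic a~priori estimate (Calder\'on--Zygmund plus G\aa rding plus localization) would produce. For the Dirichlet estimate you may drop it because $\barA$ has no zeroth-order term, so the maximum principle gives injectivity of $\barA$ on $H^2\cap H^1_0(\Omega_t)$, and the Fredholm argument then upgrades the a~priori estimate to the stated form. For the oblique-derivative estimate the kernel is not trivial---it consists of constants (constants solve $\barA u=0$, $Nu=0$; Hopf's lemma shows nothing else does when $N$ is transversal)---but since the left side is only $\|\nabla_x^{(2)}\Theta\|_{L^2}$, which annihilates constants, quotienting by the kernel still yields the stated bound. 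This is worth saying explicitly since it is the reason the second estimate controls $\|\nabla_x^{(2)}\Theta\|_{L^2}$ rather than the full $\|\Theta\|_{H^2}$.
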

\subsection{Higher order equations}
Here we derive the higher order versions of \eqref{eq:mainbar} and \eqref{eq:sigma1abar}. The main commutator identities, valid for any $\Theta$, are:
\begin{align}
&[D_\barV,\nabla_\mu]\Theta= -(\nabla_\mu \barV^\nu)\nabla_\nu\Theta.\label{eq:comtemp1}\\
&[D_\barV,\nabla_\nu\nabla_\lambda]\Theta = -(\nabla_\lambda \barV^\kappa)\nabla_\kappa \nabla_\nu\Theta - (\nabla_\nu \barV^\kappa)\nabla_\kappa\nabla_\lambda\Theta-(\nabla_\nu\nabla_\lambda \barV^\kappa)\nabla_\kappa\Theta.\label{eq:comtemp2}\\
&[D_\barV,\Box]\Theta=-2(\nabla^\mu \barV^\nu)\nabla_\mu\nabla_\nu\Theta.\label{eq:comtemp3}\\
&[D_\barV,D_\barV^2-\frac{1}{2}\nabla^{\mu}\barsigma^{2}\nabla_{\mu}]\Theta=\frac{1}{2}(\nabla_\lambda\barsigma^2)(\nabla^\lambda \barV^\nu)(\nabla_\nu\Theta)+\frac{1}{2}(\nabla_{\lambda}\barsigma^{2})(\nabla^{\nu}\barV^{\lambda})(\nabla_{\nu}\Theta)\nonumber\\
&\phantom{[D_V,D_V^2-\frac{1}{2\nabla^{\mu}\barsigma^{2}}\nabla_{\mu}]\Theta=}-\frac{1}{2}(\nabla^\nu(D_\barV\barsigma^2))\nabla_\nu\Theta.\label{eq:comtemp4}
\end{align}
Applying these identities we can calculate the higher order versions of  \eqref{eq:mainbar} and \eqref{eq:sigma1abar}, which we record in the following lemmas.
\begin{lemma}\label{lem:Vho}
	For any $k\geq0$
	\begin{align}\label{eq:Vho}
	\begin{split}
	(D_\barV^2+\frac{1}{2}\bara n)D_\barV^k\barV= -\frac{1}{2}\nabla D_\barV^{k+1}\barsigma^2+ F_k
	\end{split}
	\end{align}
	where $F_k$ is a linear combination of terms of the forms
	\begin{enumerate}
		\item \label{eq:Fk1} $(\nabla D^{k_1}_\barV \barV)\dots(\nabla D_\barV^{k_m} \barV)(\nabla D^{k_{m+1}}_\barV \barsigma^2)$, where $k_1+\dots+k_{m+1}\leq k-1$.
		\item \label{eq:Fk2} $(\nabla D^{k_1}_\barV \barV)\dots(\nabla D_\barV^{k_m} \barV)(\nabla D^{k_{m+1}}_\barV D_\barV \barsigma^2)$, where $k_1+\dots+k_{m+1}\leq k-1$.
	\end{enumerate}
\end{lemma}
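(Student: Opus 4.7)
I would prove \eqref{eq:Vho} by induction on $k$. The base case $k=0$ is the first equation in \eqref{eq:mainbar} with $F_0\equiv 0$, and the vacuous bound $k_1+\cdots+k_{m+1}\leq -1$ is satisfied by the empty sum. The inductive engine is to differentiate the $k$-th identity along the tangential vectorfield $D_\barV$, tracking the commutators with \eqref{eq:comtemp1} and \eqref{eq:comtemp4} and verifying that every new error term fits one of the allowed types.

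The key observation for the inductive step is that on $\partial\Omega$ we have $\nabla^\mu \barsigma^2=-\bara\, n^\mu$, hence
\[
D_\barV^2+\tfrac12\bara\,\nabla_n \;=\; D_\barV^2-\tfrac12\nabla^\mu\barsigma^2\nabla_\mu\qquad\text{on }\partial\Omega,
\]
which is precisely the operator whose commutator with $D_\barV$ is computed in \eqref{eq:comtemp4}. Assuming the formula holds for $k$, I would rewrite it on $\partial\Omega$ in the latter form and apply $D_\barV$ to both sides (which preserves being a boundary identity since $D_\barV$ is tangential). Using \eqref{eq:comtemp4} on the left and \eqref{eq:comtemp1} on the right to commute $D_\barV$ past $\nabla$ in $\nabla D_\barV^{k+1}\barsigma^2$, one extracts the principal terms $(D_\barV^2+\frac12\bara\,\nabla_n)D_\barV^{k+1}\barV$ and $-\frac12\nabla D_\barV^{k+2}\barsigma^2$ needed for the $(k+1)$-th identity, plus a collection of quadratic and higher error terms that I claim fit into $F_{k+1}$.

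The bookkeeping for the errors is as follows. The commutator \eqref{eq:comtemp4} applied to $D_\barV^k\barV$ produces two terms of the form $(\nabla\barsigma^2)(\nabla\barV)(\nabla D_\barV^k\barV)$---type \eqref{eq:Fk1} with indices summing to $k$---and one term $-\frac12(\nabla^\nu D_\barV\barsigma^2)(\nabla_\nu D_\barV^k\barV)$, which is type \eqref{eq:Fk2} with $k_1=k$, $k_{m+1}=0$, total $k$. The $\nabla$--$D_\barV$ commutator on the right gives $-(\nabla\barV)(\nabla D_\barV^{k+1}\barsigma^2)$, type \eqref{eq:Fk2} with $k_1=0$, $k_{m+1}=k$, total $k$. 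Finally, differentiating each factor of an existing term of $F_k$ by $D_\barV$ using \eqref{eq:comtemp1} either raises a single $D_\barV$-index by one (raising the total from $\leq k-1$ to $\leq k$) or inserts an additional factor $-(\nabla\barV)=-(\nabla D_\barV^0\barV)$ of index zero (leaving the sum at $\leq k-1$). In both cases the result is admissible for $F_{k+1}$, and when the differentiated factor is the $\barsigma^2$-factor of a type \eqref{eq:Fk1} term the result $\nabla D_\barV^{k_{m+1}+1}\barsigma^2$ is naturally reclassified as type \eqref{eq:Fk2}.

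The main obstacle is organizational rather than analytical: verifying closure of the two-type family \eqref{eq:Fk1}--\eqref{eq:Fk2} under the combined operations of differentiating by $D_\barV$ and commuting with $\nabla$. Type \eqref{eq:Fk2} exists precisely to absorb terms such as $(\nabla D_\barV\barsigma^2)(\nabla D_\barV^k\barV)$ whose effective $\barsigma^2$-index exceeds what type \eqref{eq:Fk1} permits; without it, the borderline contribution from the last term of \eqref{eq:comtemp4} would break the induction. Once this classification is in place, every differentiated or commuted term lands inside $F_{k+1}$ and the induction closes.
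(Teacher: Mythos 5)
Your proposal is correct and follows essentially the same route as the paper: induction on $k$, the identification $\bara\, n=-\nabla\barsigma^2$ on $\partial\Omega$ so that the boundary operator becomes $D_\barV^2-\frac12\nabla^\mu\barsigma^2\nabla_\mu$, and the commutator identities \eqref{eq:comtemp1}, \eqref{eq:comtemp4} to check that all error terms fall into the two admissible families. The only cosmetic difference is that you invoke the pre-packaged identity \eqref{eq:comtemp4} while the paper computes that same commutator inline.
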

\begin{proof}
	We proceed inductively. For $k=0$ the desired identity holds with $F_k=0$. Assume it holds for $k=j$ and let us prove it for $k=j+1$. First,
	\begin{align*}
	\begin{split}
	-D_\barV (\bara n D_\barV^j \barV) = D_\barV (\nabla^\mu\barsigma^2 \nabla_\mu D_\barV^j \barV)&= \bara n D_\barV^{j+1}\barV+(\nabla^\mu D_\barV\barsigma^2)\nabla_\mu D_\barV^j \barV\\
	&\quad-(\nabla^\mu \barV^\nu)((\nabla_\nu\barsigma^2)\nabla_\mu D_\barV^j \barV+(\nabla_\mu\barsigma^2)\nabla_\nu D_\barV^j\barV),
	\end{split}
	\end{align*}
	so $[D_\barV+\frac{1}{2}\bara n,D_\barV]D_\barV^j\barV$ has the right form. Next, in view of \eqref{eq:comtemp1}, $D_\barV$ applied to the terms in \eqref{eq:Fk1} and \eqref{eq:Fk2} with $k$ replaced by $j$, as well as $\nabla D_\barV^{j+1}\barsigma^2$,  also has the desired form.
\end{proof}
The wave equation for $D_\barV^k\barV$ in $\Omega$ is derived in the next lemma.

\begin{lemma}\label{lem:boxDVkV}
	For any $k\geq0$
	\begin{align}\label{eq:boxDVkV}
	\begin{split}
	\Box D_\barV^k \barV= G_k
	\end{split}
	\end{align}
	where $G_k$ is a linear combination of terms of the form
	\begin{equation}\label{eq:Gk}
	(\nabla D_\barV^{k_1}\barV)\dots(\nabla D_\barV^{k_m}\barV)(\nabla^{(2)} D_\barV^{k_{m+1}}\barV),\qquad k_1+\dots+k_{m+1}\leq k-1.
	\end{equation}
\end{lemma}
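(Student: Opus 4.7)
The plan is a straightforward induction on $k$, using the commutator identity \eqref{eq:comtemp3} to exchange $\Box$ with $D_\barV$ and the identities \eqref{eq:comtemp1}--\eqref{eq:comtemp2} to commute $D_\barV$ past $\nabla$ and $\nabla^{(2)}$ in each factor of $G_j$. The base case $k=0$ is exactly the second equation in \eqref{eq:mainbar}, $\Box\barV=0$; here $G_0=0$, which fits the claim vacuously (the constraint $k_1+\dots+k_{m+1}\le -1$ admits no terms).

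For the induction step, assume $\Box D_\barV^j\barV=G_j$ and compute
\begin{align*}
\Box D_\barV^{j+1}\barV=D_\barV\Box D_\barV^j\barV+[\Box,D_\barV]D_\barV^j\barV=D_\barV G_j+2(\nabla^\mu\barV^\nu)\nabla_\mu\nabla_\nu D_\barV^j\barV,
\end{align*}
by \eqref{eq:comtemp3}. The commutator term is already of the form \eqref{eq:Gk} with $m=1$, $k_1=0$, $k_{m+1}=j$, and $k_1+k_{m+1}=j=(j+1)-1$, as required. So the task reduces to showing that $D_\barV G_j$ is a sum of terms of the form \eqref{eq:Gk} with total order $\le j$.

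For this, apply $D_\barV$ to a typical summand $(\nabla D_\barV^{k_1}\barV)\cdots(\nabla D_\barV^{k_m}\barV)(\nabla^{(2)}D_\barV^{k_{m+1}}\barV)$ of $G_j$ by the Leibniz rule. For a factor of the first type we write
\begin{align*}
D_\barV(\nabla D_\barV^{k_i}\barV)=\nabla D_\barV^{k_i+1}\barV+[D_\barV,\nabla]D_\barV^{k_i}\barV=\nabla D_\barV^{k_i+1}\barV-(\nabla\barV^\lambda)\nabla_\lambda D_\barV^{k_i}\barV
\end{align*}
using \eqref{eq:comtemp1}; for the last factor we use \eqref{eq:comtemp2} to write
\begin{align*}
D_\barV(\nabla^{(2)}D_\barV^{k_{m+1}}\barV)=\nabla^{(2)}D_\barV^{k_{m+1}+1}\barV+[D_\barV,\nabla^{(2)}]D_\barV^{k_{m+1}}\barV,
\end{align*}
where the commutator is schematically a sum of terms $(\nabla\barV)(\nabla^{(2)}D_\barV^{k_{m+1}}\barV)$ and $(\nabla^{(2)}\barV)(\nabla D_\barV^{k_{m+1}}\barV)$. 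In every case the differentiated factor is replaced either by a factor of the same shape with one index $k_i$ raised by $1$, or by a product of two factors of the required shape whose total order sum equals the original order, with an extra factor of $\nabla\barV$ (order $0$) that does not increase the total order. Summing over which factor receives the $D_\barV$, the resulting terms all have total order $\le (j-1)+1=j=(j+1)-1$, so they fit the form \eqref{eq:Gk} at level $k=j+1$. There are no conceptual obstacles; the only bookkeeping step is checking that the commutator pieces from \eqref{eq:comtemp2} respect the schematic form, which they do because $(\nabla^{(2)}\barV)(\nabla D_\barV^{k_{m+1}}\barV)$ can be regarded as a product with $(m+1)$ first-type factors (taking the new $\nabla^{(2)}\barV$ as the ``second derivative'' factor and the old $\nabla D_\barV^{k_{m+1}}\barV$ as a first-type factor), and similarly for the other type. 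This completes the induction.
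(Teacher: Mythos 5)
Your proof is correct and follows exactly the same inductive strategy as the paper: base case from $\Box\barV=0$, then the commutator $[\Box,D_\barV]$ via \eqref{eq:comtemp3} yields a term already of the form \eqref{eq:Gk}, and $D_\barV$ applied to $G_j$ is handled via the Leibniz rule together with \eqref{eq:comtemp1} and \eqref{eq:comtemp2}. The only difference is that you spell out the bookkeeping for the commutator pieces from \eqref{eq:comtemp2}, which the paper leaves implicit; your accounting is accurate.
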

\begin{proof}
	Again we proceed inductively. For $k=0$, $G_k=0$ so suppose the lemma holds with $k=j$ and let us prove it for $k=j+1$. By \eqref{eq:comtemp3}, $[D_\barV,\Box] D_\barV^j\barV$ has the right form. Similarly, $D_\barV$ applied to \eqref{eq:Gk} has the desired form by \eqref{eq:comtemp1} and\eqref{eq:comtemp2}.
\end{proof}
Next we derive the wave equation satisfied by $D_\barV^{k+1}\barsigma^2$.
\begin{lemma}\label{lem:boxDVk1sigma}
	For any $k\geq0$
	\begin{align}\label{eq:boxDVk1sigma}
	\begin{split}
	\Box D_\barV^{k+1}\barsigma^{2}=H_k
	\end{split}
	\end{align}
	where $H_k$ is a linear combination of terms of the forms
	\begin{enumerate}
		\item \label{eq:Hk1}  $(\nabla D_\barV^{k_1} \barV)\dots(\nabla D_\barV^{k_m}\barV)\nabla^{(2)}D_\barV^{k_{m+1}}\barsigma^2$, where $k_1+\dots+k_{m+1}\leq k$.
		\item \label{eq:Hk2} $(\nabla D_\barV^{k_1}\barV)\dots (\nabla D_\barV^{k_m}\barV)(\nabla D_\barV^{k_{m+1}}\barsigma^2) \nabla^{(2)}D_\barV^{k_{m+2}}\barV$, where $k_1+\dots+k_{m+2}\leq k$ and $k_{m+2}\leq k-1$.
		\item \label{eq:Hk3} $(\nabla D_\barV^{k_1} \barV) \dots(\nabla D_\barV^{k_m}\barV)$ with $k_1+\dots+k_m=k$.
	\end{enumerate}
\end{lemma}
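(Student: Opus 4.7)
The plan is to prove this by induction on $k$, in direct parallel with the proofs of \lem{lem:Vho} and \lem{lem:boxDVkV}. For the base case $k=0$, the second equation in \eqref{eq:sigma1abar} reads
\[
\Box D_\barV\barsigma^{2}=4(\nabla^\mu\barV^\nu)\nabla_\mu\nabla_\nu\barsigma^{2}+4(\nabla^\lambda\barV^\nu)(\nabla_\lambda\barV^\mu)(\nabla_\nu\barV_\mu),
\]
which is exactly a term of type \eqref{eq:Hk1} (with $m=1$, $k_1=k_2=0$) plus a term of type \eqref{eq:Hk3} (with $m=3$, all $k_i=0$), so the base case is immediate.

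For the inductive step, assuming the identity at level $k=j$, I would commute $D_\barV$ past $\Box$ via \eqref{eq:comtemp3} to obtain
\[
\Box D_\barV^{j+2}\barsigma^{2}=D_\barV H_j+2(\nabla^\mu\barV^\nu)\nabla_\mu\nabla_\nu D_\barV^{j+1}\barsigma^{2}.
\]
The second term on the right is already of type \eqref{eq:Hk1} at level $j+1$, with $m=1$, $k_1=0$, $k_2=j+1$. The remaining task is to expand $D_\barV H_j$ using the Leibniz rule together with the commutator identities \eqref{eq:comtemp1} (for the first-order factors $\nabla D_\barV^{k_i}\barV$ and $\nabla D_\barV^{k_{m+1}}\barsigma^{2}$) and \eqref{eq:comtemp2} (for the second-order factors $\nabla^{(2)}D_\barV^{k_{m+1}}\barsigma^{2}$ and $\nabla^{(2)}D_\barV^{k_{m+2}}\barV$), and check that each resulting piece still lies in one of \eqref{eq:Hk1}--\eqref{eq:Hk3} at level $j+1$.

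The verification is then by cases. Whenever $D_\barV$ hits a first-order factor, it either raises that factor's exponent by one (keeping the term in its original type with the total sum increased by one to at most $j+1$), or it produces an extra $\nabla\barV$ factor of exponent zero via the commutator term $-(\nabla_\mu\barV^\lambda)\nabla_\lambda(\cdot)$ (keeping the term in its original type with one additional $\nabla\barV$ factor while leaving the total sum unchanged). The only mildly new phenomenon is $D_\barV$ acting on a second-order factor $\nabla^{(2)}D_\barV^{k_{m+1}}\barsigma^{2}$ inside a type \eqref{eq:Hk1} expression: besides the incremented term and the two terms with an extra $\nabla\barV$ factor (which all remain of type \eqref{eq:Hk1}), the last piece of \eqref{eq:comtemp2} contributes $-(\nabla_\nu\nabla_\lambda\barV^\kappa)\nabla_\kappa D_\barV^{k_{m+1}}\barsigma^{2}$; combined with the remaining $\nabla D_\barV^{k_i}\barV$ factors this is precisely a type \eqref{eq:Hk2} contribution with new second-order index $k_{m+2}=0$, which matches the constraint $k_{m+2}\leq (j+1)-1$.

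The main (and essentially only) obstacle is the bookkeeping to confirm that the index constraints $k_1+\cdots\leq k$ for types \eqref{eq:Hk1} and \eqref{eq:Hk2}, and the strict inequality $k_{m+2}\leq k-1$ in type \eqref{eq:Hk2}, are all preserved after every application of $D_\barV$. Since each commutator either raises exactly one exponent by one or raises none while inserting an exponent-zero $\nabla\barV$ factor, these constraints are stable under passage from $j$ to $j+1$, and the corresponding analysis for types \eqref{eq:Hk2} and \eqref{eq:Hk3} is entirely analogous. Carrying out this case-by-case expansion then closes the induction.
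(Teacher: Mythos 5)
Your proposal is correct and follows the paper's proof exactly: induction on $k$, with the base case being the second equation in \eqref{eq:sigma1abar} and the inductive step resolved by commuting $D_\barV$ past $\Box$ via \eqref{eq:comtemp3} and then applying the Leibniz rule together with \eqref{eq:comtemp1} and \eqref{eq:comtemp2} to $D_\barV H_j$; your treatment is actually a bit more explicit than the paper's about the case where \eqref{eq:comtemp2} produces a new second-order factor. One small wrinkle that both your argument and the paper's gloss over: the commutator piece $-(\nabla_\mu\barV^\nu)\nabla_\nu(\cdot)$ of \eqref{eq:comtemp1} inserts a $\nabla\barV$ factor of exponent zero without raising the total sum, so applied to a type \eqref{eq:Hk3} term at level $j$ it yields a product of first-order factors with total $j$, not $j+1$ --- this satisfies ``$k_1+\cdots+k_m\le k$'' but not the stated equality ``$=k$'', suggesting the constraint in item \eqref{eq:Hk3} should read ``$\le k$'' (which is all that the downstream estimates in Lemma~\ref{lem:aux1} and Proposition~\ref{prop:apriori} use).
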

\begin{proof}
	For $k=0$ the statement already contained in the second equation in \eqref{eq:sigma1a}, so let us assume it holds for $k=j$ and prove it for $k=j+1$. By \eqref{eq:comtemp3} the commutator $[D_\barV,\Box]D_\barV^{j+1}\barsigma^2$ has the right form. By \eqref{eq:comtemp1}, $D_\barV$ applied on terms of the form \eqref{eq:Hk3} also has the right form. Finally $D_\barV$ applied to terms in \eqref{eq:Hk1} and \eqref{eq:Hk2} has the desired form in view of \eqref{eq:comtemp1} and \eqref{eq:comtemp2}.
\end{proof}
\subsection{Sobolev estimates}\label{sec:Sobolev}
To prove Proposition~\ref{prop:apriori} we need to show that higher order energies give pointwise control of lower order derivatives of $V$ and $L^2$ control of lower order Sobolev norms of $V$. The main result of this subsection is the following proposition.
\begin{proposition}\label{prop:L2Sobolev}
	Suppose
	\begin{align}\label{a priori assumption H2 V}
	\sum_{k+2p\leq M+2}\|\partial_{t,x}^p D_\barV^{k}\barV\|_{L^2(\Omega_t)}^2+\sum_{k+2p\leq M+2}\|\partial_{t,x}^{p}D_{\barV}^{k+1}\barsigma^{2}\|_{L^{2}(\Omega_{t})}^{2}\leq C_{M}.
	\end{align}
	If $M>0$ is sufficiently large and $T=c\barT$ with $\barT>0$ sufficiently small, then under the assumptions of Proposition~\ref{prop:apriori}, for any $t\in[0,T]$
	\begin{align}\label{eq:sobolevgoal}
	\begin{split}
	&\sum_{k+2p\leq M+2}\|\partial_{t,x}^p D_\barV^{k}\barV\|_{L^2(\Omega_t)}^2+\sum_{k+2p\leq M+2}\|\partial_{t,x}^{p}D_{\barV}^{k+1}\barsigma^{2}\|_{L^{2}(\Omega_{t})}^{2}\\
	&\lesssim \sup_{0\leq \tau\leq t}\Ebar_{\leq M+1}[\barsigma^2,\tau]+ \sup_{0\leq \tau\leq t}E_{\leq M}[\barV,\tau]+\sum_{k+2p\leq M+2}\|\partial_{t,x}^p D_\barV^{k}\barV\|_{L^2(\Omega_0)}^2+\sum_{k+2p\leq M+2}\|\partial_{t,x}^{p}D_{\barV}^{k+1}\barsigma^{2}\|_{L^{2}(\Omega_{0})}^{2}.
	\end{split}
	\end{align}
	The implicit constant in this estimate is independent of $C_M$ and $c$.
\end{proposition}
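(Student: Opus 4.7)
The proof of Proposition~\ref{prop:L2Sobolev} proceeds by induction on the number of spatial derivatives $p$, exploiting the heuristic---made precise by Lemma~\ref{lem:A1} and Lemma~\ref{lem: elliptic estimate}---that two spatial derivatives applied to $D_\barV^k\barV$ are equivalent, modulo strictly lower-order terms and boundary data, to one additional $D_\barV$ derivative. At each inductive step $p$, I prove the bounds for $\barV$ at index $k$ and for $D_\barV^{k+1}\barsigma^2$ at the same index \emph{simultaneously}, ranging over $k+2p\le M+2$.

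For the base cases $p\le 1$ the required estimates are read off directly from the energies: $\|\partial_{t,x}D_\barV^k\barV\|_{L^2(\Omega_t)}$ for $k\le M$ comes from $E_{\le M}[\barV,t]\le \calE_\ell(T)$, and $\|\partial_{t,x}D_\barV^{k+1}\barsigma^2\|_{L^2(\Omega_t)}$ for $k+1\le M+1$ comes from $\Ebar_{\le M+1}[\barsigma^2,T]$. The pure $L^2$ bounds ($p=0$) are recovered by integrating in time along the flow using $\partial_0 D_\barV^k\barV=(\barV^0)^{-1}(D_\barV^{k+1}\barV-\barV^i\partial_i D_\barV^k\barV)$, which produces the initial-data term; the smallness of $T=c\barT$ and a Gronwall argument close this.

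For the inductive step at level $p\ge 2$, I first treat $D_\barV^{k+1}\barsigma^2$, which satisfies the Dirichlet boundary condition $D_\barV^{k+1}\barsigma^2\big|_{\partial\Omega}=0$. Applying Lemma~\ref{lem:A1} yields
\[
\barA\bigl(D_\barV^{k+1}\barsigma^2\bigr)=\Box D_\barV^{k+1}\barsigma^2+(\barV^0)^{-1}\partial_0 D_\barV^{k+2}\barsigma^2-\tfrac{\barV^i}{(\barV^0)^2}\partial_i D_\barV^{k+2}\barsigma^2+\text{(first-order terms)},
\]
where $\Box D_\barV^{k+1}\barsigma^2=H_k$ from Lemma~\ref{lem:boxDVk1sigma} is nonlinear but sits at strictly lower derivative order, and the transport term $\partial_{t,x}D_\barV^{k+2}\barsigma^2$, differentiated $p-2$ more times, corresponds to a $\barsigma^2$-norm at index $(k+1,p-1)$ covered by the inductive hypothesis since $(k+1)+2(p-1)\le M+1$. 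Iterated $H^p$ elliptic regularity with zero Dirichlet data from Lemma~\ref{lem: elliptic estimate} then delivers the desired bound. Equipped with this, I turn to $\barV$: Lemma~\ref{lem:A1} gives $\barA(D_\barV^k\barV)=G_k+(\barV^0)^{-1}\partial_0 D_\barV^{k+1}\barV+\text{l.o.t.}$ with $G_k$ from Lemma~\ref{lem:boxDVkV}, while Lemma~\ref{lem:Vho} provides Neumann-type boundary data
\[
\bara\,n D_\barV^k\barV=-2D_\barV^{k+2}\barV-\nabla D_\barV^{k+1}\barsigma^2+2F_k\quad\text{on }\partial\Omega_t.
\]
The trace theorem controls the boundary $H^{p-3/2}$ norm of the first term by the interior $H^{p-1}$ norm of $D_\barV^{k+2}\barV$ (one inductive level below, since $(k+2)+2(p-1)\le M+2$), and of the second term by the $H^p$ bound on $D_\barV^{k+1}\barsigma^2$ just obtained. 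The Neumann-version of Lemma~\ref{lem: elliptic estimate} then finishes the $\barV$ step; the time components $\partial_0^2$ and $\partial_{0i}^2$ of $\partial_{t,x}^p D_\barV^k\barV$ are converted to purely spatial ones via the algebraic identities at the end of Lemma~\ref{lem:A1}.

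The principal obstacle is precisely the interleaving of the $\barV$ and $\barsigma^2$ estimates: the Neumann trace for $\barV$ produced by Lemma~\ref{lem:Vho} requires the $\barsigma^2$ bound at the \emph{same} level $p$, forcing a strict ordering within each inductive step. The secondary technical point is verifying that the nonlinearities $F_k,G_k,H_k$ land in $L^2$; this is where Sobolev embedding is invoked on the non-top-order factors, which in turn forces $M$ to be sufficiently large. The $c$-uniformity of the implicit constant is automatic: $\bara^{ij}=(m^{-1})^{ij}-\barV^i\barV^j/(\barV^0)^2$ has $O(1)$ coefficients under the assumptions, so the elliptic constant in Lemma~\ref{lem: elliptic estimate} and all trace/Sobolev constants are independent of $c$; the restriction $T=c\barT$ with $\barT$ small is used solely to control the geometry of $\Omega_t$ relative to $\Omega_0$ and to close the Gronwall step with a constant independent of the bootstrap parameter $C_M$.
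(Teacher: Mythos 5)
Your overall strategy---induction on the number of spatial derivatives $p$, treating $D_\barV^{k+1}\barsigma^2$ first and then $D_\barV^k\barV$ at each level, converting wave-equation information into elliptic information via Lemma~\ref{lem:A1} and elliptic regularity---is the same as the paper's. But there is a genuine gap in how you close the induction, and a lesser imprecision in the elliptic step.

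The gap concerns the nonlinearities $H_k$ and $F_k$. You assert that $H_k$ ``sits at strictly lower derivative order.'' That is true in the combined ordering $k+2p$, but it is false in the ordering by which your induction is organized, namely the number of spatial derivatives $q\le p$. When you apply $\nabla^{p-1}$ to $H_k$ (respectively to $F_k$ in the boundary term for $\barV$), the top-order contributions are $\nabla^{p+1}D_\barV^{k}\barsigma^2$ and $\nabla^{p+1}D_\barV^{k-1}\barV$, which carry $p+1$ spatial derivatives. These are \emph{not} covered by your inductive hypothesis, which only gives control for $q\le p$. They \emph{are} covered by the bootstrap assumption \eqref{a priori assumption H2 V}, but if you bound them that way your implicit constant depends on $C_M$, defeating the entire purpose of the proposition. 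The paper's resolution is Lemma~\ref{lem:L 2 1}: those $p{+}1$-derivative terms are bounded in $L^2(\Omega_t)$ with a constant \emph{independent} of $C_M$ by integrating the bootstrap bound along the flow over a time interval of length $c\barT$ with $\barT$ small, exactly the fundamental-theorem-of-calculus mechanism you invoke for the $p=0$ base case but do not re-deploy inside the inductive step. Without this, the induction does not close with a $C_M$-independent constant.

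The lesser point: you cite ``iterated $H^p$ elliptic regularity with zero Dirichlet data from Lemma~\ref{lem: elliptic estimate},'' but that lemma only provides the $H^2$ estimate $\|\nabla_x^{(2)}\Theta\|_{L^2}\lesssim\|\barA\Theta\|_{L^2}+\|\Theta\|_{H^{3/2}(\partial\Omega_t)}$. Differentiating the equation $\barA D_\barV^{k+1}\barsigma^2 = (\text{source})$ by $\nabla^{p-1}$ and applying this lemma naively is circular, because $\nabla^{p-1}D_\barV^{k+1}\barsigma^2$ has nonzero boundary trace (the normal derivatives of $D_\barV^{k+1}\barsigma^2$ do not vanish even though the function itself does). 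The paper handles this by differentiating \emph{tangentially} with $\snabla$, which preserves the vanishing trace, and then uses Lemma~\ref{lem: reduce to tangential} to convert tangential back to full derivatives one at a time. If you instead invoke a genuine higher-order Dirichlet regularity result (as in Lemma~\ref{lem:Dirichlet} of the linear-theory section), that is fine in principle, but you should say so rather than attributing it to Lemma~\ref{lem: elliptic estimate}.
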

Before discussing the proof of Proposition~\ref{prop:L2Sobolev} we state a few immediate corollaries.
\begin{corollary}\label{cor:Sobolev}
	Assuming the bootstrap assumption 
	\begin{align*}
	\begin{split}
	&\sup_{t\in[0,T]}\left[\sum_{1\leq k\leq M-2}\|D_\barV^k\barV\|_{H^2(\Omega_t)}+\sum_{j\leq 2}\sum_{1\leq k\leq M-2(j+1)}\|\nabla^{(j)}D_\barV^k\barV\|_{L^\infty(\Omega_t)}\right]\\
	&\lesssim \calE_M^{\frac{1}{2}}(T)+\sum_{k+2p\leq M+2}\|\nabla^p D_\barV^{k}\barV\|_{L^2(\Omega_0)}^2+\sum_{k+2p\leq M+2}\|\nabla^{p}D_{\barV}^{k+1}\barsigma^{2}\|_{L^{2}(\Omega_{0})}^{2},
	\end{split}
	\end{align*}
	where the implicit constant is independent of $C_M$ in \eqref{a priori assumption H2 V}. 
\end{corollary}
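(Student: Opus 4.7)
The plan is to deduce Corollary~\ref{cor:Sobolev} directly from Proposition~\ref{prop:L2Sobolev} via the Sobolev embedding $H^2(\Omega_t)\hookrightarrow L^\infty(\Omega_t)$, with only index-bookkeeping and a uniform domain-regularity observation in between. First I would invoke Proposition~\ref{prop:L2Sobolev} to convert the energy $\calE_M(T)$ (plus initial data) into $L^2$ bounds of the form $\sum_{k+2p\leq M+2}\|\partial_{t,x}^p D_\barV^k\barV\|_{L^2(\Omega_t)}^2\lesssim \calE_M(T)+\text{i.d.}$, and analogously for $D_\barV^{k+1}\barsigma^2$. Because the implicit constant there is asserted to be independent of the auxiliary bootstrap constant $C_M$ in \eqref{a priori assumption H2 V}, a standard continuity argument simultaneously closes that earlier bootstrap, so its hypothesis is available to us for free once $\scE_\ell(0)$ is finite and $\barT$ is small.

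Next I would extract the two claimed bounds purely by index counting. The $H^2(\Omega_t)$-norm of $D_\barV^k\barV$ is the sum of $L^2$-norms of $\partial_x^p D_\barV^k\barV$ for $p=0,1,2$, which Proposition~\ref{prop:L2Sobolev} controls whenever $k+4\leq M+2$, i.e.\ $k\leq M-2$, matching the stated range. For the $L^\infty$ piece, I would use the Sobolev embedding in three spatial dimensions, $\|\nabla^{(j)} D_\barV^k\barV\|_{L^\infty(\Omega_t)}\lesssim \|D_\barV^k\barV\|_{H^{j+2}(\Omega_t)}$, and again invoke Proposition~\ref{prop:L2Sobolev} with $p=j+2$; the required condition $k+2(j+2)\leq M+2$ is exactly $k\leq M-2(j+1)$, matching the range in the corollary for each $j\in\{0,1,2\}$. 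Taking square roots and summing turns the squared bounds of Proposition~\ref{prop:L2Sobolev} into the $\calE_M^{1/2}(T)$ on the right-hand side of the corollary, with the initial-data terms transferred unchanged.

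The only non-routine step I anticipate is ensuring that the embedding constant in $H^2(\Omega_t)\hookrightarrow L^\infty(\Omega_t)$ can be taken uniform in $t\in[0,T]$, since the fluid domain evolves with $t$. This is where the bootstrap on $\barV$ pays off: pointwise control of $\barV$ (and of a few derivatives, which the Sobolev bounds produced above supply) gives uniform control on the flow $X(t,\cdot)$ of $\barV/\barV^0$, and because $T=c\barT$ with $\barT$ small, the domains $\Omega_t$ remain, say, $C^2$-close to $\Omega_0$ throughout $[0,T]$. A uniform Sobolev embedding constant then transfers from $\Omega_0$ to all $\Omega_t$. I expect this uniformization of the domain geometry—rather than any functional-analytic difficulty—to be the only real content beyond Proposition~\ref{prop:L2Sobolev} and Sobolev embedding; everything else is bookkeeping and the proof is essentially a one-line deduction modulo that point.
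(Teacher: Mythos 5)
Your proposal matches the paper's own one-line proof, which cites Proposition~\ref{prop:L2Sobolev} together with the Sobolev embedding $H^2(\Omega_t)\hookrightarrow L^\infty(\Omega_t)$; your index bookkeeping ($k+2(j+2)\leq M+2$ giving $k\leq M-2(j+1)$, and $k+4\leq M+2$ giving $k\leq M-2$) fills in exactly the intended details, and the implicit-constant independence of $C_M$ is simply inherited from Proposition~\ref{prop:L2Sobolev}. Your remark about the uniform-in-$t$ Sobolev constant on the evolving domain $\Omega_t$ is a sensible clarification that the paper leaves implicit.
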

\begin{proof}
This is a direct consequence of Proposition~\ref{prop:L2Sobolev} and the Sobolev embedding $H^{2}(\Omega_t)\hookrightarrow L^\infty(\Omega_t)$.
\end{proof}
In order to use the energy estimates from the previous section we also need to show that $V$ remains timelike  and $a$ stays bounded away from zero. These statements are summarized in the following corollary.
\begin{corollary}\label{cor:aV}
	Suppose the hypothesis of Proposition~\ref{prop:apriori} hold. Then there are constant $a_0,v_0>0$ and $\gamma>1$ such that
	\begin{align*}
	\begin{split}
	\inf_{0\leq t\leq T}\bara>a_0,\qquad\inf_{0\leq t\leq T}\barV^0>v_0c,\qquad\inf_{0\leq t\leq T} \frac{(\barV^0)^2}{\sum_{j=1}^3(\barV^j)^2}>\gamma c^2.
	\end{split}
	\end{align*}
\end{corollary}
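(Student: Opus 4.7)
This is a standard continuity argument that crucially exploits the key feature of the renormalized formulation: writing $\partial_t = (\barV^0)^{-1}(D_\barV - \barV^j \partial_j)$, the factor $(\barV^0)^{-1}\sim c^{-1}$ cancels the extra factor of $c$ coming from the interval length $T=c\bar T$, so any quantity with bounded $D_\barV$-derivative drifts by at most $O(\bar T)$ on $[0,T]$. First, I would record quantitative initial bounds. The relativistic Taylor sign condition in \eqref{eq:data} gives $\bar a(0,\cdot)=c^{-2}a(0,\cdot)\ge c_0$ on $\partial\Omega_0$. The Theorem~\ref{thm:limit} hypotheses on the initial data give $|\barV^0(0,\cdot)-c|$ and $|\barV^i(0,\cdot)|$ bounded uniformly in $c$, so $\barV^0(0,\cdot)\ge c/2$ for $c$ large. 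Rewriting the constraint $-V_\mu V^\mu=\sigma^2$ as $(\barV^0)^2 = (\bar\sigma^2+c^2)+\sum_j(\barV^j)^2$ (using $\sigma^2 = c^2(\bar\sigma^2+c^2)$) and noting $\bar\sigma^2\ge 0$, one gets $(\barV^0(0,\cdot))^2/\sum_j(\barV^j(0,\cdot))^2 \ge c^2/M_0$ for some $M_0$ independent of $c$.

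Next, set $T^\ast\in[0,T]$ to be the largest time so that on $[0,T^\ast]$ the three inequalities $\bar a\ge c_0/2$, $\barV^0\ge c/4$, and $(\barV^0)^2\ge (c^2/(2M_0))\sum_j(\barV^j)^2$ all hold; by continuity $T^\ast>0$. On $[0,T^\ast]$, since $\ell$ is large and we are under the bootstrap of Proposition~\ref{prop:apriori}, Corollary~\ref{cor:Sobolev} (fed by Proposition~\ref{prop:L2Sobolev}) gives $c$-uniform $L^\infty$ bounds on $\nabla^{(j)}D_\barV^k\barV$ and on $\nabla^{(j)}D_\barV^{k+1}\bar\sigma^2$ for small $j,k$; in particular $\|D_\barV\barV\|_{L^\infty}$, $\|\nabla D_\barV\barV\|_{L^\infty}$, $\|\nabla D_\barV\bar\sigma^2\|_{L^\infty}$ and $\|\nabla^{(2)}D_\barV\bar\sigma^2\|_{L^\infty}$ are bounded by a constant $C$ depending only on $C_1$ and $\scE_\ell(0)$. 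Using $\bar a = \sqrt{(m^{-1})^{\mu\nu}\partial_\mu\bar\sigma^2\,\partial_\nu\bar\sigma^2}$, the commutator $[D_\barV,\partial_\mu]=-(\partial_\mu\barV^\lambda)\partial_\lambda$, and the bootstrap bound $(\barV^0)^{-1}\le 4c^{-1}$, the identity $\partial_t=(\barV^0)^{-1}(D_\barV-\barV^j\partial_j)$ gives $\|\partial_t q\|_{L^\infty}\le Cc^{-1}$ for each of $q\in\{\bar a,\,\barV^0,\,\sum_j(\barV^j)^2\}$, with $C$ independent of $c$. For $q=\bar a$, which lives on $\partial\Omega$, the same argument applies once one follows a boundary Lagrangian particle, exploiting the tangency of $D_\barV$ to $\partial\Omega$. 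Integrating over $[0,t]$ with $t\le T^\ast\le c\bar T$ yields $|q(t)-q(0)|\le C\bar T$; taking $\bar T$ small enough (depending on $C_1$, $c_0$, $M_0$, $\scE_\ell(0)$, but not on $c$) makes this drift smaller than $c_0/4$, $c/8$, and the corresponding threshold for the ratio, so each bootstrap inequality improves strictly on $[0,T^\ast]$. Continuity then forces $T^\ast=T$ and the statement follows with $a_0=c_0/2$, $v_0=1/4$, and $\gamma$ a suitable multiple of $1/M_0$.

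The main technical point that must be handled carefully is that every $L^\infty$ bound entering the argument be genuinely $c$-uniform: this is precisely the purpose of the renormalized unknowns $(\barV,\bar\sigma^2,\bar a)$, under which the energies $\calE_\ell$ and the derived Sobolev estimates in Corollary~\ref{cor:Sobolev} carry no hidden powers of $c$, so that the $c^{-1}$ gained by $(\barV^0)^{-1}$ in $\partial_t$ exactly compensates the $c$ in the interval length $T=c\bar T$. A secondary point is that for the zeroth-order quantity $\barV$ itself (needed in the $\barV^j\partial_j$ term) Corollary~\ref{cor:Sobolev} only controls $k\ge 1$, but $\barV$ on $[0,T^\ast]$ is controlled by its initial $L^\infty$ bound plus the same integrated estimate as above, fed back into the argument in a bootstrap manner.
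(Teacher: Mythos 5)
Your proposal is correct and is essentially the paper's own argument made explicit: the paper's one-line proof ("integrating in time combined with the $L^\infty$-bounds in Corollary~\ref{cor:Sobolev}") is precisely the mechanism you spell out, namely that $\partial_t=(\barV^0)^{-1}(D_\barV-\barV^j\partial_j)$ supplies a factor $c^{-1}$ that cancels the $c$ in $T=c\barT$, so the drift of $\bara$, $\barV^0$, and $\sum_j(\barV^j)^2$ over $[0,T]$ is $O(\barT)$. Your secondary observation that Corollary~\ref{cor:Sobolev} only starts at $k\ge 1$ and that $\barV$ itself must be fed in via the same integrated argument is correct, and is in fact exactly what Lemma~\ref{lem:Linfty1} already records.
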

\begin{proof}
	The proof is by integrating in time combined with the $L^{\infty}$-bounds in Corollary~\ref{cor:Sobolev}.
\end{proof}
Before we give the proof of the proposition, we need some preparation. First, we introduce some notations:
\begin{align}\label{fixing notations}
\begin{split}
\nabla_{i}:=\partial_{i},\quad \nbar:=\frac{(\partial_{1}\barsigma^{2},\partial_{2}\barsigma^{2},\partial_{3}\barsigma^{2})}{\sqrt{\sum_{i=1}^{3}(\partial_{i}\barsigma^{2})^{2}}},\quad \nbar_i:=\delta_{ij}\nbar^j,\quad \slashed{\nabla}_{i}:=\partial_{i}-\nbar_{i}\nbar^{j}\partial_{j}.
\end{split}
\end{align}
Note that $\slashed{\nabla}_{i}, i=1,2,3$ are defined globally, are tangential to $\partial\Omega_{t}$, and span $T\partial\Omega_t$.

The following lemma is used to estimate $\|D_{\barV}^{k+1}\barsigma^{2}\|_{H^{j}(\Omega_{t})}$, and plays a crucial role in estimating $\|D_{\barV}^{k}\barV\|_{H^{j}(\Omega_{t})}$.

\begin{lemma}\label{lem: reduce to tangential}
	For any smooth function $\Theta$, the following estimate holds:
	\begin{align*}
	\|\Theta\|_{H^{j}(\Omega_{t})}&\lesssim \|\Theta\|_{H^{j-1}(\Omega_t)}+\|\nabla^{(j-2)}\barA\Theta\|_{L^2(\Omega_t)}+\|[\barA,\nabla^{(j-2)}]\Theta\|_{L^2(\Omega_t)}\\
	&\quad+\|[\snabla,\nabla^{(j-2)}]\Theta\|_{H^1(\Omega_t)}+\|\snabla\Theta\|_{H^{j-1}(\Omega_t)}.
	\end{align*}
\end{lemma}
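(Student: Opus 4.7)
I would prove the lemma by induction on $j$, with the case $j=2$ being the essential one and the general case following by straightforward commutator bookkeeping. For $j=2$ the two commutator terms vanish (since $\nabla^{(0)}$ is the identity), so the goal reduces to
\[
\|\Theta\|_{H^2(\Omega_t)}\lesssim \|\Theta\|_{H^1(\Omega_t)} + \|\barA\Theta\|_{L^2(\Omega_t)} + \|\snabla\Theta\|_{H^1(\Omega_t)}.
\]
The plan here is to apply the Dirichlet version of Lemma~\ref{lem: elliptic estimate}. The operator $\barA=\bara^{ij}\partial_{ij}^2$ is uniformly elliptic: by Corollary~\ref{cor:aV}, the spatial vector $\barV^i/\barV^0$ has Euclidean norm bounded strictly below $1$, so $\bara^{ij}=(m^{-1})^{ij}-(\barV^i/\barV^0)(\barV^j/\barV^0)$ is positive definite with a lower bound independent of $t$ and $c$. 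This gives
\[
\|\nabla^{(2)}\Theta\|_{L^2(\Omega_t)}\lesssim \|\barA\Theta\|_{L^2(\Omega_t)} + \|\Theta\|_{H^{3/2}(\partial\Omega_t)}.
\]
It then remains to trade the $H^{3/2}(\partial\Omega_t)$ trace for an interior tangential norm. Because the vector fields $\{\snabla_i\}_{i=1}^3$ are tangent to $\partial\Omega_t$ and span $T\partial\Omega_t$ (the relativistic Taylor sign condition keeps $\nabla\barsigma^2$ transverse), the intrinsic norm satisfies $\|\Theta\|_{H^{3/2}(\partial\Omega_t)}\lesssim \|\Theta\|_{H^{1/2}(\partial\Omega_t)}+\|\snabla\Theta\|_{H^{1/2}(\partial\Omega_t)}$; the trace map $H^1(\Omega_t)\hookrightarrow H^{1/2}(\partial\Omega_t)$ then converts both pieces into interior norms, giving $\|\Theta\|_{H^{3/2}(\partial\Omega_t)}\lesssim \|\Theta\|_{H^1(\Omega_t)}+\|\snabla\Theta\|_{H^1(\Omega_t)}$, which combines with the elliptic estimate to yield the base case.

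For general $j\ge 2$ I would apply the base case componentwise to the scalar quantities making up $\nabla^{(j-2)}\Theta$:
\[
\|\nabla^{(j-2)}\Theta\|_{H^2(\Omega_t)}\lesssim \|\nabla^{(j-2)}\Theta\|_{H^1(\Omega_t)} + \|\barA\nabla^{(j-2)}\Theta\|_{L^2(\Omega_t)} + \|\snabla\nabla^{(j-2)}\Theta\|_{H^1(\Omega_t)}.
\]
Since $\|\nabla^{(j-2)}\Theta\|_{H^2(\Omega_t)}^2\ge \|\nabla^{(j)}\Theta\|_{L^2(\Omega_t)}^2$, the left-hand side controls $\|\Theta\|_{H^j(\Omega_t)}$ modulo $\|\Theta\|_{H^{j-1}(\Omega_t)}$. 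The first term on the right is already absorbed into $\|\Theta\|_{H^{j-1}(\Omega_t)}$, and the remaining two are handled by commuting $\nabla^{(j-2)}$ past $\barA$ and past $\snabla$:
\[
\|\barA\nabla^{(j-2)}\Theta\|_{L^2(\Omega_t)}\le \|\nabla^{(j-2)}\barA\Theta\|_{L^2(\Omega_t)} + \|[\barA,\nabla^{(j-2)}]\Theta\|_{L^2(\Omega_t)},
\]
\[
\|\snabla\nabla^{(j-2)}\Theta\|_{H^1(\Omega_t)}\le \|\nabla^{(j-2)}\snabla\Theta\|_{H^1(\Omega_t)}+\|[\snabla,\nabla^{(j-2)}]\Theta\|_{H^1(\Omega_t)}\le \|\snabla\Theta\|_{H^{j-1}(\Omega_t)}+\|[\snabla,\nabla^{(j-2)}]\Theta\|_{H^1(\Omega_t)}.
\]
Summing these bounds produces exactly the five terms on the right-hand side of the statement.

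The main obstacle, and really the only non-routine step, is the base-case boundary reduction: one must recognize that the trace norm $\|\Theta\|_{H^{3/2}(\partial\Omega_t)}$ appearing from elliptic regularity can be replaced by $\|\snabla\Theta\|_{H^1(\Omega_t)}$ plus a strictly lower-order piece. This ultimately rests on the structural fact that $\snabla$ is globally defined, tangential on $\partial\Omega_t$, and spans $T\partial\Omega_t$, a property inherited from the Taylor sign condition $|\nabla\barsigma^2|>0$ maintained under the bootstrap. Once this observation is accepted the remainder of the proof is mechanical; no delicate cancellations are required and the implicit constants depend only on the geometry of $\Omega_t$ and the ellipticity constant of $\barA$, both of which are uniform in $c$ and $t\in[0,T]$ by Corollary~\ref{cor:aV}.
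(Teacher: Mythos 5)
Your proposal is correct and follows essentially the same route as the paper: apply the first (Dirichlet-type) estimate of Lemma~\ref{lem: elliptic estimate} to $\nabla^{(j-2)}\Theta$, control the $H^{3/2}(\partial\Omega_t)$ trace by one $\snabla$ derivative in $H^{1/2}(\partial\Omega_t)$ plus a lower-order piece, convert to interior norms via the trace theorem, and then commute $\nabla^{(j-2)}$ past $\barA$ and $\snabla$ to produce the stated commutator terms. The only cosmetic difference is that you phrase it as an induction on $j$, whereas the general case is really just the $j=2$ case applied componentwise to $\nabla^{(j-2)}\Theta$, exactly as the paper does in one step.
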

\begin{proof}
	Using the first estimate in Lemma \ref{lem: elliptic estimate} and the trace theorem
	\begin{align*}
	\begin{split}
	\|\Theta\|_{H^j(\Omega_t)}&\lesssim \|\Theta\|_{H^{j-1}(\Omega_t)}+\|\barA\nabla^{(j-2)}\Theta\|_{L^2(\Omega_t)}+\|\snabla\nabla^{(j-2)}\Theta\|_{H^{\frac{1}{2}}(\partial\Omega_t)}+\|\nabla^{(j-2)}\Theta\|_{H^{\frac{1}{2}}(\partial\Omega_t)}\\
	&\lesssim \|\Theta\|_{H^{j-1}(\Omega_t)}+\|\barA\nabla^{(j-2)}\Theta\|_{L^2(\Omega_t)}+\|\snabla\nabla^{(j-2)}\Theta\|_{H^1(\Omega_t)}.
	\end{split}
	\end{align*}
	The desired estimate follows after commuting the operators.
\end{proof}
The next lemma allows us to bound lower order terms in $L^\infty$.
\begin{lemma}\label{lem:Linfty1}
	Under the bootstrap assumption \eqref{a priori assumption H2 V}, if $\barT>0$ is sufficiently small, then
	\begin{align}\label{eq:Linfty1}
	\begin{split}
	\left\|\nabla^{a} D_\barV^k \left(\barV^{0}-c,\barV^{i}\right)\right\|_{L^\infty(\Omega_t)}+\|\nabla^{a} D_\barV^{k+1} \barsigma^{2}\|_{L^\infty(\Omega_t)}\lesssim 1,\quad \forall 0\leq a\leq p-2,\quad k\leq M-2p-3,\quad t\in[0,T],
	\end{split}
	\end{align}
	where the implicit constant is independent of $C_{M}$. 
\end{lemma}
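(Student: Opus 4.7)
The plan is to reduce the bound to an initial $L^\infty$ estimate plus a time integral by means of the fluid flow, and then to exploit the fact that $(\barV^0)^{-1}\lesssim c^{-1}$ cancels the $c$ in $T = c\barT$, so that the integral picks up a factor of $\barT$ only. This reduces the problem to Sobolev embedding applied to the $L^2$ bootstrap \eqref{a priori assumption H2 V}, with the smallness of $\barT$ absorbing the resulting $C_M$-dependence.

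Concretely, let $\Theta$ stand for any of $\barV^0-c$, $\barV^i$, or $D_\barV\barsigma^2$, and set $f:=\nabla^a D_\barV^k\Theta$ with $0\leq a\leq p-2$ and $k\leq M-2p-3$. Let $\Phi(t,\cdot):\Omega_0\to \Omega_t$ be the Lagrangian flow defined by $\partial_t\Phi^i = (V^i/V^0)\circ\Phi$. Then
\[
\partial_t\bigl[f(t,\Phi(t,x_0))\bigr] \;=\; \frac{1}{\barV^0}(D_\barV f)(t,\Phi(t,x_0)),
\]
and integrating in $t$ and taking supremum over $x_0\in\Omega_0$ gives
\[
\|f(t)\|_{L^\infty(\Omega_t)} \;\leq\; \|f(0)\|_{L^\infty(\Omega_0)} + \int_0^t \Bigl\|\tfrac{1}{\barV^0}D_\barV f\Bigr\|_{L^\infty(\Omega_s)}\,ds.
\]
The timelike condition $m_{\alpha\beta}\barV^\alpha\barV^\beta = -\sigma^2/c^2 \leq -c^2$ forces $\barV^0\geq c$, so $\|(\barV^0)^{-1}\|_{L^\infty}\leq c^{-1}$, and combined with $T=c\barT$ this yields
\[
\|f(t)\|_{L^\infty(\Omega_t)} \;\leq\; \|f(0)\|_{L^\infty(\Omega_0)} + \barT\,\sup_{s\in[0,T]}\|D_\barV f\|_{L^\infty(\Omega_s)}.
\]

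To bound the integrand, I would expand $D_\barV f = \nabla^a D_\barV^{k+1}\Theta + [D_\barV,\nabla^a]D_\barV^k\Theta$ using the commutator identity \eqref{eq:comtemp1}, obtaining a sum of products of the form $(\nabla^{a_1}\barV)\cdots(\nabla^{a_m}\barV)\,\nabla^b D_\barV^k\Theta$ with $a_j\geq 1$ and $a_1+\cdots+a_m+b\leq a$. The Sobolev embedding $H^2(\Omega_s)\hookrightarrow L^\infty(\Omega_s)$ together with the bootstrap \eqref{a priori assumption H2 V} bounds each factor: the constraints $a\leq p-2$ and $k\leq M-2p-3$ give the slack $k+2(a+2)\leq M-3 < M+2$ (and similarly $k+2(a+2)+1\leq M+2$ for the $D_\barV\barsigma^2$ case, using the $\barsigma^2$-part of \eqref{a priori assumption H2 V} after absorbing the extra $D_\barV$), which is exactly the margin needed for the two additional spatial derivatives required by the embedding. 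Thus $\sup_{[0,T]}\|D_\barV f\|_{L^\infty}\leq Q(C_M)$ for some polynomial $Q$ depending only on $M$.

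Finally, choose $\barT$ small enough (depending on $C_M$ but independent of $c$) so that $\barT\,Q(C_M)\leq 1$. Since $\|f(0)\|_{L^\infty(\Omega_0)}$ is controlled by the initial Sobolev norms in \eqref{eq:data-reg-Euler-intro} via $H^2\hookrightarrow L^\infty$ and is thus independent of $C_M$, we obtain $\|f(t)\|_{L^\infty(\Omega_t)}\lesssim 1$ uniformly in $C_M$. The only delicate point is the bookkeeping in the commutator expansion of $[D_\barV,\nabla^a]$: one has to verify, by induction on $a$, that every factor that appears can be placed within the index range of \eqref{a priori assumption H2 V} with two derivatives to spare for the Sobolev embedding. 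This is ensured precisely by the loss of two indices encoded in $a\leq p-2$ (two spatial derivatives lost) together with the three-index loss $k\leq M-2p-3$ (accounting for one extra $D_\barV$ and commutator overhead).
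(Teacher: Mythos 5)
Your proposal is correct and follows essentially the same route as the paper: the fundamental theorem of calculus along the Lagrangian flow, the factor $c^{-1}$ from $\barV^0\geq c$ cancelling $T=c\barT$, Sobolev embedding $H^2(\Omega_s)\hookrightarrow L^\infty(\Omega_s)$ applied to the bootstrap \eqref{a priori assumption H2 V}, and smallness of $\barT$ to absorb the $C_M$-dependence. Your index bookkeeping and the explicit treatment of the commutator $[D_\barV,\nabla^a]$ are consistent with (indeed slightly more detailed than) the paper's argument.
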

\begin{proof}
	Let $\barxi$ be the Lagrangian parameterization, that is,
	\begin{align*}
	\begin{split}
	\partial_\tau \barxi(\tau,y)= \left(\frac{\barV}{\barV^0}\right)\left(\xi(\tau,y)\right),\qquad \barxi(0,y)=y.
	\end{split}
	\end{align*}
	If $p_t$ is a point on $\Omega_t$, we let $p_0$ be the point on $\Omega_0$ such that $\barxi(t,p_0)=p_t$. 
	For any function $\Theta$
	\begin{align}\label{V inte formula}
	\begin{split}
	\Theta(p_t)-\Theta(p_0)=\int_0^t\left(\frac{D_\barV\Theta}{\barV^0}\right)(p_\tau)\ud \tau.
	\end{split}
	\end{align}
	It follows that, using the standard Sobolev estimate,
	\begin{align*}
	\begin{split}
	\|\Theta\|_{L^\infty(\Omega_t)}\leq \|\Theta\|_{L^\infty(\Omega_0)}+Cc^{-1}\,t\sup_{0\leq s\leq t}\|D_\barV\Theta\|_{L^\infty(\Omega_s)}\lesssim  \|\Theta\|_{H^2(\Omega_0)}+c^{-1}t\sup_{0\leq s\leq t}\|D_\barV\Theta\|_{H^2(\Omega_s)}.
	\end{split}
	\end{align*}
	We apply this estimate to $\Theta= \nabla^{a} D_V^k V$. Then, by \eqref{a priori assumption H2 V}, as long as $a+2\leq p$ and $k+1\leq M+2-2(a+2)$, 
	\begin{align*}
	\begin{split}
	\sup_{0\leq s\leq t}\|D_V\Theta\|_{H^2(\Omega_s)}\lesssim_{C_{\ell}} 1.
	\end{split}
	\end{align*}
	Therefore, estimate \eqref{eq:Linfty1} follows by taking $\barT$ small. The argument for $\nabla^{a}D_{V}^{k+1}\sigma^{2}$ is similar.
\end{proof}
We have a similar estimate for the $L^{2}$ norms:
\begin{lemma}\label{lem:L 2 1}
	Under the bootstrap assumption \eqref{a priori assumption H2 V}, if $\barT>0$ is sufficiently small, then
	\begin{align}\label{eq:L 2 1}
	\begin{split}
	\left\|\nabla^{a}D_\barV^k \left(\barV^{0}-c,\barV^{i}\right)\right\|_{L^2(\Omega_t)}+\|\nabla^{a} D_\barV^{k+1} \barsigma^{2}\|_{L^2(\Omega_t)}\lesssim 1,\qquad \forall 2a+k\leq M+1,\quad t\in[0,T],
	\end{split}
	\end{align}
	where the implicit constant is independent of $C_{M}$.
\end{lemma}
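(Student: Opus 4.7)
The plan is to adapt the proof of Lemma~\ref{lem:Linfty1} from $L^\infty$ to $L^2$, using the same Lagrangian parameterization $\barxi(\tau,\cdot)\colon\Omega_0\to\Omega_\tau$ defined by $\partial_\tau \barxi = (\barV/\barV^0)\circ\barxi$ and $\barxi(0,y)=y$. Under the bootstrap \eqref{a priori assumption H2 V}, combined with the $L^\infty$ bounds already established in Lemma~\ref{lem:Linfty1}, the Jacobian of $\barxi$ is uniformly bounded above and below on $[0,T]$, so pulling back the $L^2$ norm from $\Omega_\tau$ to $\Omega_0$ costs only an absolute constant.

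Taking the $L^2(\Omega_0,\ud y)$ norm of the flow-line identity \eqref{V inte formula}, applying Minkowski's integral inequality in $\tau$, changing variables on both sides via $\barxi$, and using $(\barV^0)^{-1}\lesssim c^{-1}$ from Corollary~\ref{cor:aV}, I would obtain the $L^2$ analogue of the key step of Lemma~\ref{lem:Linfty1}:
\begin{equation*}
\|\Theta\|_{L^2(\Omega_t)} \lesssim \|\Theta\|_{L^2(\Omega_0)} + c^{-1}\int_0^t \|D_\barV\Theta\|_{L^2(\Omega_\tau)}\,\ud\tau.
\end{equation*}

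Apply this with $\Theta=\nabla^a D_\barV^k(\barV^0-c,\barV^i)$ and $\Theta=\nabla^a D_\barV^{k+1}\barsigma^2$ for $2a+k\leq M+1$. Iterating the commutator identity \eqref{eq:comtemp1} produces
\begin{equation*}
D_\barV\,\nabla^a D_\barV^k\barV = \nabla^a D_\barV^{k+1}\barV + \Pi_{a,k},
\end{equation*}
where $\Pi_{a,k}$ is a polynomial in the quantities $\nabla^b D_\barV^j\barV$ with $b\leq a$, $j\leq k$, and where each summand carries at least one factor of $\nabla\barV$. Matching indices with \eqref{a priori assumption H2 V}: the top-order term $\nabla^a D_\barV^{k+1}\barV$ corresponds to $p=a$, $k'=k+1$, so $k'+2p=(k+1)+2a\leq M+2$, which is exactly $2a+k\leq M+1$. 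Hence this term is bounded in $L^2(\Omega_\tau)$ by $C_M^{1/2}$, while the lower-order factors in $\Pi_{a,k}$ are bounded in $L^\infty$ by Lemma~\ref{lem:Linfty1}. The parallel calculation works for $\nabla^a D_\barV^{k+2}\barsigma^2$ with the same index margin.

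Therefore $\|D_\barV\Theta\|_{L^2(\Omega_\tau)}\lesssim P(C_M)$ for some polynomial $P$, and using $t\leq T=c\barT$,
\begin{equation*}
\|\Theta\|_{L^2(\Omega_t)} \lesssim \|\Theta\|_{L^2(\Omega_0)} + \barT\,P(C_M).
\end{equation*}
Choosing $\barT$ small enough that $\barT\,P(C_M)\leq 1$ yields the claim, with implicit constant depending only on the initial data. The main technical task, which I expect to be routine combinatorial bookkeeping rather than a genuine obstacle, is expanding the commutators $[D_\barV,\nabla^a]$ for $\barV$ and for $D_\barV\barsigma^2$ and verifying that every factor lies within the range allowed by the bootstrap; no new analytic input is needed beyond what was used for Lemma~\ref{lem:Linfty1}.
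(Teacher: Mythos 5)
Your proposal matches the paper's approach: the paper's own proof is the one-line remark that one applies the fundamental theorem of calculus to $\|\Theta\|_{L^2(\Omega_t)}$ along the Lagrangian flow, bounds the Jacobian via another application of FTC, and omits the details; you have correctly reconstructed exactly those details. One small bookkeeping point worth flagging, though it does not affect correctness: when estimating the commutator terms $(\nabla^{1+m}\barV)(\nabla^{a-m}D_\barV^k\barV)$ in $\Pi_{a,k}$, the $L^\infty$ bound from Lemma~\ref{lem:Linfty1} applies to a factor $\nabla^b D_\barV^j$ only when $2b+j\leq M-7$, so for small $m$ you put $\nabla^{1+m}\barV$ in $L^\infty$ and $\nabla^{a-m}D_\barV^k\barV$ in $L^2$, while for $m\geq 4$ you must swap the roles (placing $\nabla^{a-m}D_\barV^k\barV$ in $L^\infty$, which is admissible because $2(a-m)+k\leq M+1-2m\leq M-7$, and $\nabla^{1+m}\barV$ in $L^2$ via the bootstrap); both cases close for $M$ large, and this is precisely the sort of routine verification the paper elects to omit.
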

\begin{proof}
	The proof is again an application of the fundamental theorem of calculus, this time applied to $\|\Theta\|_{L^2(\Omega_t)}$, where we also bound the Jacobian of the Lagrangian coordinate transformation from $\Omega_0$ to $\Omega_t$ using the fundamental theorem of calculus. We omit the details.
\end{proof}
\begin{proof}[Proof of Proposition \ref{prop:L2Sobolev}]
Note that we only need to consider $\partial_{x}^{p}D_{\barV}^{k}\barV$. Indeed, using induction on the order of $\partial_{t}$, for $\partial_{t}\partial_{x}^{p-1}D_{\barV}^{k}\barV$, we have
	\begin{align*}
	\partial_{t}\partial_{x}^{p-1}D_{\barV}^{k}\barV=&\frac{1}{\barV^{0}}\cdot \barV^{0}\partial_{t}\partial_{x}^{p-1}D_{\barV}^{k}\barV=\frac{1}{\barV^{0}}\left(D_{\barV}\partial_{x}^{p-1}D_{\barV}^{k}\barV-\barV^{j}\partial_{j}\partial^{p-1}_{x}D_{\barV}^{k}\barV\right)\\
	=&\frac{1}{\barV^{0}}\left(\partial_{x}^{p-1}D_{\barV}^{k+1}\barV-\barV^{j}\partial_{j}\partial_{x}^{p-1}D_{\barV}^{k}\barV+[D_{\barV},\partial_{x}^{p-1}]D_{\barV}^{k}\barV\right).
	\end{align*}
	If we can estimate $\partial_{t}^{p'}\partial_{x}^{p-p'}D_{\barV}^{k}\barV$, for $\partial_{t}^{p'+1}\partial_{x}^{p-p'-1}D_{\barV}^{k}\barV$, we have
	\begin{align*}
	\partial_{t}^{p'+1}\partial_{x}^{p-p'-1}D_{\barV}^{k}\barV=&\partial_{t}\partial_{t}^{p'}\partial_{x}^{p-p'-1}D_{\barV}^{k}\barV.
	\end{align*}
	The induction argument follows exactly the same way as we treat the case when $p'=0$. The argument for $\barsigma^{2}$ is the same.
Turning to  $\partial_{x}^{p}D_{\barV}^{k}\barV$,	we will use an induction argument on $p$. When $p=1$, the result follows directly by definition. Now we assume that the estimate holds for index less or equal to $1\leq p\leq\frac{M+2}{2}-1$, that is,
	\begin{align}\label{eq:JIH}
	\begin{split}
	&\sum_{q\leq p}\sum_{k+2q\leq M+2}\|\partial_{t,x}^q D_\barV^{k}\barV\|_{L^2(\Omega_t)}^2+\sum_{q\leq p}\sum_{k+2q\leq M+2}\|\partial_{t,x}^{q}D_{\barV}^{k+1}\barsigma^{2}\|_{L^{2}(\Omega_{t})}^{2}\\
	&\lesssim \sup_{0\leq \tau\leq t}\Ebar_{\leq M+1}[\barsigma^2,\tau]+ \sup_{0\leq \tau\leq t}E_{\leq M}[\barV,\tau]\\
	&\quad\sum_{q\leq p}\sum_{k+2q\leq M+2}\|\partial_{t,x}^q D_\barV^{k}\barV\|_{L^2(\Omega_0)}^2+\sum_{q\leq p}\sum_{k+2q\leq M+2}\|\partial_{t,x}^{q}D_{\barV}^{k+1}\barsigma^{2}\|_{L^{2}(\Omega_{0})}^{2},
	\end{split}
	\end{align}
	and prove the estimates for $p+1$, that is,
	\begin{align}\label{eq:JIG}
	\begin{split}
	&\sum_{k\leq M-2p}\|\partial_{t,x}^{p+1} D_\barV^{k}\barV\|_{L^2(\Omega_t)}^2+\sum_{k\leq M-2p}\|\partial_{t,x}^{p+1}D_{\barV}^{k+1}\barsigma^{2}\|_{L^{2}(\Omega_{t})}^{2}\\
	&\lesssim \sup_{0\leq \tau\leq t}\Ebar_{\leq M+1}[\barsigma^2,\tau]+ \sup_{0\leq \tau\leq t}E_{\leq M}[\barV,\tau]\\
	&\quad\sum_{q\leq p+1}\sum_{k+2q\leq M+2}\|\partial_{t,x}^q D_\barV^{k}\barV\|_{L^2(\Omega_0)}^2+\sum_{q\leq p+1}\sum_{k+2q\leq M+2}\|\partial_{t,x}^{q}D_{\barV}^{k+1}\barsigma^{2}\|_{L^{2}(\Omega_{0})}^{2}.
	\end{split}
	\end{align}
	We start with the estimate for $\|\nabla_{x}^{p+1}D_{\barV}^{k+1}\barsigma^{2}\|_{L^{2}(\Omega_{t})}$ and in fact first estimate $\|\nabla_x^{(2)}\slashed{\nabla}^{(p-1)}D_{\barV}^{k+1}\barsigma^{2}\|_{L^{2}(\Omega_{t})}$. To apply Lemma \ref{lem:A1} to $\Theta:=\slashed{\nabla}^{p-1}D_{\barV}^{k+1}\barsigma^{2}$ we need to estimate $\|\barA\slashed{\nabla}^{p-1}D_{\barV}^{k+1}\barsigma^{2}\|_{L^2(\Omega_t)}$. Using the notation of Lemma \ref{lem:boxDVk1sigma}, we have\footnote{We use the schematic notation $A\sim A_1+\dots+A_m$ to mean $A$ is a linear combination of terms of the forms $A_1,\dots,A_m$.}
	\begin{align}\label{higher order A sigma tangential}
	\begin{split}
	\barA\slashed{\nabla}^{p-1}D_{\barV}^{k+1}\barsigma^{2}&\sim \slashed{\nabla}^{p-1}H_{k}+[\slashed{\nabla}^{p-1},\Box]D_{\barV}^{k+1}\barsigma^{2}+\frac{\barV\cdot \nabla \barV}{(\barV^{0})^{2}}\nabla\slashed{\nabla}^{p-1}D_{\barV}^{k+1}\barsigma^{2}\\
	&\quad+\frac{\barV}{(\barV^{0})^{2}}\nabla \slashed{\nabla}^{p-1}D_{\barV}^{k+2}\barsigma^{2}+\frac{\barV}{(\barV^{0})^{2}}\nabla[D_{\barV},\slashed{\nabla}^{p-1}]D_{\barV}^{k+1}\barsigma^{2}.
	\end{split}
	\end{align}
	Except for  $\slashed{\nabla}^{p-1}H_{k}$ the $L^2(\Omega_t)$ norms of all the terms on the right-hand side of \eqref{higher order A sigma tangential} are bounded by the right-hand side of \eqref{eq:JIG} using the induction hypothesis \eqref{eq:JIH}. Here for the terms where derivatives hit the coefficients of $\snabla$ it suffices to observe that these coefficients are functions of $\nabla\barsigma^2=-2D_\barV\barV$.
	Next we investigate the structure of $\slashed{\nabla}^{p-1}H_{k}$. In view of Lemma \ref{lem:boxDVk1sigma}, the top order terms in $\slashed{\nabla}^{p-1}H_{k}$ are 
	\begin{align*}
	\nabla^{p+1}D_{\barV}^{k}\barsigma^{2}\mand \nabla^{p+1}D_{\barV}^{k-1}\barV.
	\end{align*}
	The $L^{2}(\Omega_{t})$ norm of all other term appearing in $\snabla^{(p-1)}H_k$ can be bounded by the right-hand side of \eqref{eq:JIG} using  the using induction hypothesis \eqref{eq:JIH}. For the two top order terms above, since $k\leq M-2p$ we can use Lemma \ref{lem:L 2 1} to bound the $L^{2}(\Omega_t)$ norms of these terms by the right-hand side of \eqref{eq:JIG} as well. Based on this discussion, Using \eqref{higher order A sigma tangential} and Lemma~\ref{lem:A1}, for any $k\leq M-2p$ we obtain
	\begin{align}\label{Hs esti tangential sigma 1st iterate}
	\begin{split}
	\|\nabla^{2}\slashed{\nabla}^{p-1}D_{\barV}^{k+1}\barsigma^{2}\|_{L^{2}(\Omega_{t})}&\lesssim \sup_{0\leq \tau\leq t}\Ebar_{\leq M+1}[\barsigma^2,\tau]+ \sup_{0\leq \tau\leq t}E_{\leq M}[\barV,\tau]\\
	&\quad\sum_{q\leq p+1}\sum_{k+2q\leq M+2}\|\partial_{t,x}^q D_\barV^{k}\barV\|_{L^2(\Omega_0)}^2+\sum_{q\leq p+1}\sum_{k+2q\leq M+2}\|\partial_{t,x}^{q}D_{\barV}^{k+1}\barsigma^{2}\|_{L^{2}(\Omega_{0})}^{2}.
	\end{split}
	\end{align}
	Next we apply Lemma \ref{lem: reduce to tangential} to $\Theta:=\nabla\slashed{\nabla}^{p-2}D_{\barV}^{k+1}\barsigma^{2}$ to get
	\begin{align}\label{Hs sigma 2nd iterate pre}
	\begin{split}
	\|\nabla \snabla^{p-2}D_\barV^{k+1}\barsigma^2\|_{H^2(\Omega_t)}&\lesssim \|\nabla\snabla^{p-2}\barA D_\barV^{k+1}\barsigma^2\|_{L^2(\Omega_t)}+ \|[\nabla\snabla^{p-2},\barA]D_\barV^{k+1}\barsigma^2\|_{L^2(\Omega_t)}+\|\snabla^{p-2}D_\barV^{k+1}\barsigma^2\|_{H^2(\Omega_t)}\\
	&\quad +\|[\nabla,\snabla]\snabla^{p-2}D_\barV^{k+1}\barsigma^2\|_{H^1(\Omega_t)}+\|\snabla^{p-1}D_\barV^{k+1}\barsigma^2\|_{H^2(\Omega_t)}.
	\end{split}
	\end{align}
	By \eqref{Hs esti tangential sigma 1st iterate} and the arguments leading to it, all the terms on the right-hand side of \eqref{Hs sigma 2nd iterate pre} except  $$\|\nabla\snabla^{p-2}\barA D_\barV^{k+1}\barsigma^2\|_{L^2(\Omega_t)}$$ are bounded by the right-hand side of \eqref{eq:JIG}.
	The term $\|\nabla\slashed{\nabla}^{p-2}\barA D_{\barV}^{k+1}\barsigma^{2}\|_{L^{2}(\Omega_{t})}$ is bounded in the same way as in the treatment of $\snabla^{p-1}H_k$ above, using Lemmas~\ref{lem:boxDVk1sigma} and~\ref{lem:L 2 1}. Summarizing we have obtained
	\begin{align}\label{Hs sigma 2nd iterate}
	\begin{split}
	\|\nabla^{3}\slashed{\nabla}^{p-2}D_{\barV}^{k+1}\barsigma^{2}\|_{L^{2}(\Omega_{t})}&\lesssim \sup_{0\leq \tau\leq t}\Ebar_{\leq M+1}[\barsigma^2,\tau]+ \sup_{0\leq \tau\leq t}E_{\leq M}[\barV,\tau]\\
	&\quad\sum_{q\leq p+1}\sum_{k+2q\leq M+2}\|\partial_{t,x}^q D_\barV^{k}\barV\|_{L^2(\Omega_0)}^2+\sum_{q\leq p+1}\sum_{k+2q\leq M+2}\|\partial_{t,x}^{q}D_{\barV}^{k+1}\barsigma^{2}\|_{L^{2}(\Omega_{0})}^{2}.
	\end{split}
	\end{align}
	Repeating the  argument inductively for $\Theta:=\nabla^{2}\slashed{\nabla}^{p-3}D_{\barV}^{k+1}\barsigma^{2}, \nabla^{3}\slashed{\nabla}^{p-4}D_{\barV}^{k+1}\barsigma^{2}, ...$, we finally obtain
	\begin{align}\label{Hs sigma final}
	\begin{split}
	\|\nabla^{p+1}D_{\barV}^{k+1}\barsigma^{2}\|_{L^{2}(\Omega_{t})}&\lesssim \sup_{0\leq \tau\leq t}\Ebar_{\leq M+1}[\barsigma^2,\tau]+ \sup_{0\leq \tau\leq t}E_{\leq M}[\barV,\tau]\\
	&\quad\sum_{q\leq p+1}\sum_{k+2q\leq M+2}\|\partial_{t,x}^q D_\barV^{k}\barV\|_{L^2(\Omega_0)}^2+\sum_{q\leq p+1}\sum_{k+2q\leq M+2}\|\partial_{t,x}^{q}D_{\barV}^{k+1}\barsigma^{2}\|_{L^{2}(\Omega_{0})}^{2}.
	\end{split}
	\end{align}
	Next we use the second estimate in Lemma \ref{lem: elliptic estimate} to estimate $\|\nabla^{p+1}D_{\barV}^{k}\barV\|_{L^{2}(\Omega_{t})}, k+2p+2\leq M+2$, under the induction hypothesis \eqref{eq:JIH}. The second estimate in Lemma \ref{lem: elliptic estimate} gives
	\begin{align}\label{Hs V pre}
	\begin{split}
	\|\nabla^{p+1}D_{\barV}^{k}\barV\|_{L^{2}(\Omega_{t})}\lesssim \|\barA\nabla^{p-1}D_{\barV}^{k}\barV\|_{L^{2}(\Omega_{t})}+\|(\nabla^\mu\barsigma^{2})\partial_\mu\left(\nabla^{p-1}D_{\barV}^{k}\barV\right)\|_{H^{\frac{1}{2}}(\partial\Omega_{t})}.
	\end{split}
	\end{align}
	The term $\barA\nabla^{p-1}D_{V}^{k}V$ has the similar structure to the corresponding term in \eqref{higher order A sigma tangential} and can be handled using similar considerations,
	so we concentrate on the boundary contribution $\|(\nabla^\mu\barsigma^{2})\partial_\mu\left(\nabla^{p-1}D_{\barV}^{k}\barV\right)\|_{H^{\frac{1}{2}}(\partial\Omega_{t})}$. Using the trace theorem and Lemma~\ref{lem:Vho}
	\begin{align*}
	\begin{split}
	\|(\nabla^\mu\barsigma^{2})\partial_\mu\left(\nabla^{p-1}D_{\barV}^{k}\barV\right)\|_{H^{\frac{1}{2}}(\partial\Omega_{t})}&\lesssim \|(\nabla^\mu\barsigma^{2})\partial_\mu\left(\nabla^{p-1}D_{\barV}^{k}\barV\right)\|_{H^{1}(\Omega_{t})}\\
	&\lesssim \|[(\nabla^\mu\barsigma^{2})\partial_\mu,\nabla^{p-1}]D_{\barV}^{k}\barV\|_{H^{1}(\Omega_{t})}+\|\nabla^{p-1}D_{\barV}^{k+2}\barV\|_{H^{1}(\Omega_{t})}\\
	&\quad+\|\nabla^{p}D_{\barV}^{k+1}\barsigma^2\|_{H^{1}(\Omega_{t})}+\|\nabla^{p-1}F_k\|_{H^{1}(\Omega_{t})}.
	\end{split}
	\end{align*}
	Except for the last term $\|\nabla^{p-1}F_k\|_{H^{1}(\Omega_{t})}$, all other terms on the right above are bounded by the right-hand side of \eqref{eq:JIG} using the induction hypothesis\footnote{Here note that $k\leq M-2p$ is equivalent to $k+2\leq M+2-2p$, so $\|\nabla^{p-1}D_{\barV}^{k+2}\barV\|_{H^{1}(\Omega_{t})}$ can indeed be bounded by the induction hypothesis \eqref{eq:JIH}.} \eqref{eq:JIH} and \eqref{Hs sigma final}. For $\|\nabla^{p-1}F_k\|_{H^{1}(\Omega_{t})}$, in view of Lemma~\ref{lem:Vho} the highest order terms in $\nabla^pF_k$ are
	\begin{align*}
	\nabla^{p+1}D_{\barV}^{k-1}\barV,\quad \textrm{and}\quad \nabla^{p+1}D_{\barV}^{k}\barsigma^{2}.
	\end{align*}
	The term $\|\nabla^{p+1}D_{\barV}^{k}\barsigma^{2}\|_{L^2(\Omega_t)}$ was already bounded in \eqref{Hs sigma final}, and $\|\nabla^{p+1}D_{\barV}^{k-1}\barV\|_{L^2(\Omega_t)}$ can be handled using Lemma \ref{lem:L 2 1}. Putting everything together we have proved that
	\begin{align}\label{Hs V 2}
	\begin{split}
	\|(\nabla^\mu\barsigma^{2})\partial_\mu\left(\nabla^{p-1}D_{\barV}^{k}\barV\right)\|_{H^{\frac{1}{2}}(\partial\Omega_{t})}&\lesssim \sup_{0\leq \tau\leq t}\Ebar_{\leq M+1}[\barsigma^2,\tau]+ \sup_{0\leq \tau\leq t}E_{\leq M}[\barV,\tau]\\
	&\quad\sum_{q\leq p+1}\sum_{k+2q\leq M+2}\|\partial_{t,x}^q D_\barV^{k}\barV\|_{L^2(\Omega_0)}^2+\sum_{q\leq p+1}\sum_{k+2q\leq M+2}\|\partial_{t,x}^{q}D_{\barV}^{k+1}\barsigma^{2}\|_{L^{2}(\Omega_{0})}^{2}.
	\end{split}
	\end{align}
	Combining \eqref{Hs V pre} and \eqref{Hs V 2}, we finally obtain
	\begin{align}\label{Hs V final}
	\begin{split}
	\|\nabla^{p+1}D_{\barV}^{k}\barV\|_{L^{2}(\Omega_{t})}&\lesssim \sup_{0\leq \tau\leq t}\Ebar_{\leq M+1}[\barsigma^2,\tau]+ \sup_{0\leq \tau\leq t}E_{\leq M}[\barV,\tau]\\
	&\quad\sum_{q\leq p+1}\sum_{k+2q\leq M+2}\|\partial_{t,x}^q D_\barV^{k}\barV\|_{L^2(\Omega_0)}^2+\sum_{q\leq p+1}\sum_{k+2q\leq M+2}\|\partial_{t,x}^{q}D_{\barV}^{k+1}\barsigma^{2}\|_{L^{2}(\Omega_{0})}^{2},
	\end{split}
	\end{align}
	which completes the proof of \eqref{eq:JIG}.
\end{proof}
\subsection{Proof of Proposition~\ref{prop:apriori}}\label{subsec:aprioriproof}
We are now ready to prove Proposition~\ref{prop:apriori}. Throughout the proof we use the fact that in view of Corollary~\ref{cor:aV}
\begin{align*}
\begin{split}
E[\Theta,t]\simeq \int_{\Omega_t}\left(c^{-1}D_\barV\Theta \partial_t\Theta+\frac{\barV^0}{2c}\nabla^\mu\Theta\nabla_\mu\Theta\right)\ud x+\int_{\partial\Omega_t}\frac{\barV^0}{c\bara}(D_\barV\Theta)^2\ud S.
\end{split}
\end{align*}
Recall that our goal is to prove estimate \eqref{eq:apriori}.
The following auxiliary lemma which relies on elliptic estimates for $A$ is an important ingredient of the proof.
\begin{lemma}\label{lem:aux1}
	Suppose the hypotheses of Proposition~\ref{prop:apriori} hold. Then for any $k\leq \ell$ and $t\in[0,T]$
	\begin{align}\label{eq:ellipticaux1}
	\begin{split}
	\int_{\Omega_t}|\nabla^{(2)}D_\barV^{k}\barsigma^2|^2\ud x\lesssim \calE_\ell(T)+\sum_{m+2\leq \ell}\|\partial_{t,x}^{2}D_{\barV}^{m}\barV\|^{2}_{L^{2}(\Omega_{0})}+\sum_{m+2\leq\ell}\|\partial^{2}_{t,x}D_{\barV}^{m+1}\barsigma^{2}\|^{2}_{L^{2}(\Omega_{0})}.
	\end{split}
	\end{align}
	Here the implicit constant depends polynomially on $\calE_{k-1}(T)$ if $k$ is sufficiently large, and on $C_1$ for small $k$.
\end{lemma}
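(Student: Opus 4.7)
The plan is to combine the Dirichlet elliptic estimate for $\barA$ with the structural identities from Lemma~\ref{lem:A1} and Lemma~\ref{lem:boxDVk1sigma}, and to induct on $k$ using Corollary~\ref{cor:Sobolev} and Proposition~\ref{prop:L2Sobolev} to absorb lower-order contributions. The key starting observation is that since $\barsigma^2\equiv 0$ on $\partial\Omega_t$ and $\barV$ is tangential there, we have $D_\barV^k\barsigma^2\equiv 0$ on $\partial\Omega_t$ for every $k\ge 0$. Therefore the first estimate in Lemma~\ref{lem: elliptic estimate}, applied to $\Theta=D_\barV^k\barsigma^2$, gives
\begin{align*}
\|\nabla^{(2)}D_\barV^k\barsigma^2\|_{L^2(\Omega_t)}\lesssim \|\barA D_\barV^k\barsigma^2\|_{L^2(\Omega_t)},
\end{align*}
and the boundary seminorm on the right of Lemma~\ref{lem: elliptic estimate} vanishes.

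Next I would invoke Lemma~\ref{lem:A1} with $\Theta=D_\barV^k\barsigma^2$ to rewrite $\barA D_\barV^k\barsigma^2$ as $\Box D_\barV^k\barsigma^2$ plus a transport-type term involving $\partial_{t,x}D_\barV^{k+1}\barsigma^2$, plus strictly lower-order contributions of the form (coefficients of $\barV$)$\cdot\partial_{t,x}D_\barV^k\barsigma^2$. The $L^2$ norm of the transport piece $\frac{1}{\barV^0}\partial_0 D_\barV^{k+1}\barsigma^2-\frac{\barV^i}{(\barV^0)^2}\partial_i D_\barV^{k+1}\barsigma^2$ is bounded directly by $\sup_\tau \Ebar_{\leq k+1}[\barsigma^2,\tau]\subseteq \calE_\ell(T)$ (since $k\le\ell$), and the lower-order pieces are controlled by $\calE_\ell(T)$ using the $L^\infty$ estimates for $\partial\barV$ from Corollary~\ref{cor:Sobolev}. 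To handle $\Box D_\barV^k\barsigma^2$, I would apply Lemma~\ref{lem:boxDVk1sigma} (with index shifted to $k-1$), which expresses it as a sum of terms of types (1)--(3). In type (1) the top-order factor is $\nabla^{(2)}D_\barV^{k_{m+1}}\barsigma^2$ with $k_{m+1}\le k-1$; in type (2) the top-order factor is $\nabla^{(2)}D_\barV^{k_{m+2}}\barV$ with $k_{m+2}\le k-1$; and type (3) contains only first derivatives of $\barV$.

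The main obstacle is closing the estimate at top order in these three types. I would handle type (1) by induction on $k$: the factor $\nabla^{(2)}D_\barV^{k_{m+1}}\barsigma^2$ with $k_{m+1}\le k-1$ is estimated in $L^2$ by the inductive hypothesis (which produces the polynomial-in-$\calE_{k-1}(T)$ dependence stated in the lemma), while the remaining factors $\nabla D_\barV^{k_i}\barV$ are bounded in $L^\infty$ via Corollary~\ref{cor:Sobolev} once $\ell$ is large enough that each $k_i$ lies in its Sobolev-embedding range. Type (2) is controlled similarly, using Proposition~\ref{prop:L2Sobolev} to bound $\|\nabla^{(2)}D_\barV^{k_{m+2}}\barV\|_{L^2(\Omega_t)}$ (this falls within the range $k_{m+2}+2\cdot 2\le \ell+2$ provided by Proposition~\ref{prop:L2Sobolev}) by $\calE_\ell(T)$ plus initial data, with the remaining factors again placed in $L^\infty$. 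Type (3) contains only products of first derivatives of $\barV$ with total order at most $k-1$, and is estimated by placing one factor in $L^2$ (controlled by $\calE_\ell(T)$ via Proposition~\ref{prop:L2Sobolev}) and the rest in $L^\infty$ via Corollary~\ref{cor:Sobolev}. Summing the resulting bounds yields \eqref{eq:ellipticaux1}, with the claimed polynomial dependence on $\calE_{k-1}(T)$ arising from the $L^\infty$ factors and the inductive step.
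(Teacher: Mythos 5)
Your proposal is correct and follows essentially the same route as the paper: elliptic estimate for $\barA$ with vanishing Dirichlet data, the identity from Lemma~\ref{lem:A1}, the structure of $H_{k-1}$ from Lemma~\ref{lem:boxDVk1sigma}, and an induction on $k$ closed with Corollary~\ref{cor:Sobolev} and Proposition~\ref{prop:L2Sobolev}. You are in fact slightly more careful than the paper at two points: you state explicitly that the boundary seminorm in Lemma~\ref{lem: elliptic estimate} drops out because $D_\barV^k\barsigma^2\equiv 0$ on $\partial\Omega_t$, and you correctly shift the index to $H_{k-1}$ (the paper's proof writes $H_j$ where it means $H_{j-1}$, a typo consistent with the subsequent bounds $k_{m+1}\le j-1$, $k_{m+2}\le j-2$).
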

\begin{proof}
	For $k=0$ this follows for instance by writing
	\begin{align*}
	\begin{split}
	\nabla^\mu \barsigma^2 = -2 D_\barV \barV^\mu.
	\end{split}
	\end{align*}
	Inductively, suppose the statement of the lemma holds for $k\leq j-1\leq \ell-1$ and let us prove it for $k=j$. In view of Corollary~\ref{cor:aV} the operator $\barA$ is elliptic, so applying Lemma~\ref{lem:A1} with $\Theta=D_\barV^j\barsigma^2$ we get
	\begin{align*}
	\begin{split}
	\int_{\Omega_t}|\nabla^{(2)}\barD_V^{j}\barsigma^2|^2\ud x\lesssim \int_{\Omega_t}|\partial_{t,x}D_\barV^{j+1}\barsigma^2|^2\ud x +\int_{\Omega_t}|\partial_{t,x}D_\barV^{j}\barsigma^2|^2\ud x +\int_{\Omega_t}|H_j|^2\ud x,
	\end{split}
	\end{align*}
	where $H_{j}$ is given in Lemma \ref{lem:boxDVk1sigma}. The first two terms already have the right form so we concentrate on the last term 
	for which we use Lemmas~\ref{lem:boxDVk1sigma} and~\ref{lem:Linfty1}. The contribution of line \eqref{eq:Hk1} in Lemma \ref{lem:boxDVk1sigma} is bounded by
\begin{align}\label{esti:ellipticaux1}
	\sum_{k\leq j-1}\int_{\Omega_t}|\nabla^{(2)}D_{V}^{k}\barsigma^{2}|^{2}+\sum_{k\leq j-1}\int_{\Omega_{t}}|\nabla D_{\barV}^{k}\barV|^{2}dx.
\end{align}	
The first term in \eqref{esti:ellipticaux1} can be bounded using induction hypothesis, while the second term is directly bounded by $\calE_{\ell-1}(T)\leq \calE_{\ell}(T)$. The contribution from line \eqref{eq:Hk2} in Lemma \ref{lem:boxDVk1sigma} is bounded by 
\begin{align}\label{esti:ellipticaux2}
	\begin{split}
	\sum_{k\leq j-2}\int_{\Omega_{t}}|\nabla^{(2)}D_{\barV}^{k}\barV|^{2}dx+\sum_{k\leq j-1}\int_{\Omega_{t}}\left(|\nabla D_{\barV}^{k}\barsigma^{2}|^{2}+|\nabla D_{\barV}^{k}\barV|^{2}\right)dx.
	\end{split}
\end{align}
The second term in \eqref{esti:ellipticaux2} is bounded by $\calE_{\ell-1}(T)$. The first term can be bounded using  Proposition~\ref{prop:L2Sobolev}, because $2\cdot 2+\ell-2=\ell+2$. The contribution of line \eqref{eq:Hk3} in Lemma~\ref{lem:boxDVk1sigma} is directly bounded by $\calE_{\ell-1}(T)$.
	\end{proof}
The next lemma allows us to estimate $\nabla D_\barV^{k-1}\barV$ on the boundary assuming boundedness of the $k$th energy.
\begin{lemma}\label{lem:nablaDV1}
	Suppose the hypotheses of Proposition~\ref{prop:apriori} hold. Given $\eta>0$ (small), if $\barT>0$ is sufficiently small then for any $k\leq \ell$ (recall that $T=c\barT$)
	\begin{align}\label{eq:nablaDV1}
	c^{-1}\int_{0}^{T}\int_{\partial\Omega_t}|\nabla D_\barV^j\barV|^2\ud S\ud t \lesssim &\scE_\ell(0)+R_{j,\eta}(\calE_{k-1}(T))+ \eta \calE_{k}(T),
	\end{align}
	where the implicit constant is independent of $C_1$, and $R_{j,\eta}$ is some polynomial function for each $j\leq k-1$.
\end{lemma}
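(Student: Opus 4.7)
\emph{Plan.} Apply Lemma~\ref{lem:oblique} with $\Theta = D_\barV^j\barV$. This reduces the target quantity $c^{-1}\int_0^T\int_{\partial\Omega_t}|\nabla D_\barV^j\barV|^2\,dS\,dt$ to three pieces: (i) an initial bulk term $\int_{\Omega_0}|\partial_{t,x}D_\barV^j\barV|^2\,dx \le \scE_\ell(0)$; (ii) the spacetime boundary integrals $c^{-1}\int_0^T\int_{\partial\Omega_t}\bigl((nD_\barV^j\barV)^2 + (D_\barV^{j+1}\barV)^2\bigr)\,dS\,dt$; and (iii) bulk errors involving $\Box D_\barV^j\barV$ together with the deformation of the multiplier $Q = c^{-1}(\alpha\barV - n)$ supplied by Lemma~\ref{lem:oblique}. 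The point is that the tangential derivatives of $D_\barV^j\barV$ that make up the left-hand side are traded for normal and material derivatives, both of which are directly accessible through the higher-order system.

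The boundary integral $c^{-1}\int_0^T\int_{\partial\Omega_t}(D_\barV^{j+1}\barV)^2\,dS\,dt \le \barT\,\sup_t E_j[\barV,t] \le \barT\,\calE_{k-1}(T)$ is immediate since $j \le k-1$. For $(nD_\barV^j\barV)^2$ the plan is to invoke the higher-order boundary equation~\eqref{eq:Vho} and Corollary~\ref{cor:aV} ($\bara \ge a_0 > 0$) to solve
$$nD_\barV^j\barV = \frac{2}{\bara}\bigl(-D_\barV^{j+2}\barV - \tfrac{1}{2}\nabla D_\barV^{j+1}\barsigma^2 + F_j\bigr)\quad\text{on } \partial\Omega.$$
The $D_\barV^{j+2}\barV$ term contributes $c^{-1}\int_0^T\int_{\partial\Omega_t}|D_\barV^{j+2}\barV|^2\,dS\,dt \le \barT\,\sup_t E_{j+1}[\barV,t] \le \barT\,\calE_k(T)$, which is precisely the $\eta\,\calE_k(T)$ term after taking $\barT \le \eta$. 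Because $D_\barV^{j+1}\barsigma^2 \equiv 0$ on $\partial\Omega$ by~\eqref{boundary1}--\eqref{boundary2}, the $\nabla D_\barV^{j+1}\barsigma^2$ contribution on the spacetime boundary is dominated directly by $\Ebar_{j+1}[\barsigma^2, T] \le \calE_{k-1}(T)$. The lower-order commutator piece $F_j$, by the structure in Lemma~\ref{lem:Vho}, becomes the polynomial $R_{j,\eta}(\calE_{k-1}(T))$ after placing all but one factor in $L^\infty$ via Corollary~\ref{cor:Sobolev} and using the trace theorem on the remaining $L^2$ factor.

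The bulk error is handled by Lemma~\ref{lem:boxDVkV}, which gives $\Box D_\barV^j\barV = G_j$ with every factor of the form $\nabla D_\barV^{k_i}\barV$ having $k_i \le j-1 \le k-2$ and at most one factor bearing two derivatives. Cauchy--Schwarz against $QD_\barV^j\barV$ (itself bounded by $E_j[\barV,\cdot]^{1/2} \lesssim \calE_{k-1}(T)^{1/2}$), combined with $L^\infty$ control of the low-order factors from Corollary~\ref{cor:Sobolev} and the elliptic estimate Lemma~\ref{lem:aux1} for the $\nabla^{(2)}D_\barV^{k_{m+1}}\barV$ factor, produces a polynomial in $\calE_{k-1}(T)$ gaining a factor $c^{-1}T = \barT$ from time integration. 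The deformation term from Lemma~\ref{lem:oblique} is bounded by the same procedure using $\|\nabla Q\|_{L^\infty} \lesssim c^{-1}$, and also contributes at most $\barT\,\calE_{k-1}(T)$. The principal obstacle is the $D_\barV^{j+2}\barV$ boundary term: it is genuinely top-order relative to $\calE_k(T)$, and only the smallness of $\barT$ allows it to appear as $\eta\,\calE_k(T)$ on the right-hand side without closing a fatal feedback loop; all other errors are strictly order $k-1$, so once $\barT$ is chosen small depending on $\eta$ and the implicit constants from Corollaries~\ref{cor:Sobolev} and~\ref{cor:aV}, the estimate closes.
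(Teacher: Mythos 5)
Your proposal follows essentially the same route as the paper's proof: apply Lemma~\ref{lem:oblique} to $\Theta=D_\barV^j\barV$, trade the normal derivative on $\partial\Omega$ for $D_\barV^{j+2}\barV$, $\nabla D_\barV^{j+1}\barsigma^2$ and $F_j$ via~\eqref{eq:Vho} and Corollary~\ref{cor:aV}, isolate the top-order $D_\barV^{j+2}\barV$ term as the one needing smallness of $\barT$ to land in the $\eta\,\calE_k(T)$ bucket, and control the bulk $\Box D_\barV^j\barV=G_j$ contribution via Lemma~\ref{lem:boxDVkV} together with the Sobolev machinery. The paper formally frames this as an induction on $j$, but the induction hypothesis is never invoked in the inductive step (the $F_i$ boundary contribution is controlled through the trace theorem and Proposition~\ref{prop:L2Sobolev}, not through the hypothesis), so your non-inductive presentation is equivalent.

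One small slip: you appeal to Lemma~\ref{lem:aux1} to control the $\nabla^{(2)}D_\barV^{k_{m+1}}\barV$ factor appearing in $G_j$, but that lemma gives the elliptic estimate for $\nabla^{(2)}D_\barV^{k}\barsigma^2$, not for $\barV$. The correct tool for the $\nabla^{(2)}D_\barV^{k_{m+1}}\barV$ factor (with $k_{m+1}\le j-1\le k-2$) is Corollary~\ref{cor:Sobolev} (equivalently Proposition~\ref{prop:L2Sobolev}), which bounds $\|D_\barV^{k_{m+1}}\barV\|_{H^2(\Omega_t)}$ by $\calE_k^{1/2}(T)$ plus initial data; the factor of $\barT$ from the time integration then places this in the $\eta\,\calE_k(T)+$initial bucket. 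With that substitution, the argument is correct and matches the paper's.
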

\begin{proof}
	For $j=0$ this follows for instance from Corollary~\ref{cor:Sobolev} and the trace theorem. Proceeding inductively, we assume \eqref{eq:nablaDV1} holds for $j\leq i-1\leq k-2$ and prove it for $j=i$. We apply Lemma~\ref{lem:oblique} to $\Theta=D_\barV^i\barV$. The last integral on the right in \eqref{eq:oblique} can be absorbed in the left if $\barT$ is sufficiently small. The term
	\begin{align*}
	\begin{split}
	c^{-1}\int_0^{c\barT}\int_{\partial\Omega_t}\left(D_\barV D_\barV^i \barV\right)^2 \ud S \ud t
	\end{split}
	\end{align*}
	is bounded by the right-hand side of \eqref{eq:nablaDV1} if $\barT$ is sufficiently small. For the contribution $nD_\barV^i\barV$ on $\partial\Omega$ in the right-hand side of \eqref{eq:oblique} we use \eqref{eq:Vho} to write
	\begin{align}\label{eq:nVeq}
	\begin{split}
	-nD_\barV^i\barV =\frac{2}{\bara}D_\barV D_\barV^{i+1}\barV+\frac{1}{\bara}\nabla D_\barV^{i+1}\barsigma^2-\frac{2}{\bara}F_i,
	\end{split}
	\end{align}
	where $F_i$ is as in Lemma~\ref{lem:Vho}.
	Using Lemma \ref{lem:Linfty1}, the contribution of $F_i$ is bounded by
	\begin{align}\label{esti:auxbdy1}
	\begin{split}
		c^{-1}\sum_{m\leq i}\int_{0}^{c\barT}\int_{\partial\Omega_{t}}|\nabla D_{\barV}^{m}\barsigma^{2}|^{2}dx+c^{-1}\sum_{m\leq i-1}\int_{0}^{c\barT}\int_{\partial\Omega_{t}}|\nabla D_{\barV}^{m}\barV|^{2}dx.
		\end{split}
	\end{align}
	The first term on the right-hand side of \eqref{esti:auxbdy1} is bounded by $\calE_{k-2}(T)$, and hence by the right-hand side of  \eqref{eq:nablaDV1}. Using the trace theorem, Proposition~\ref{prop:L2Sobolev}, and choosing $\barT>0$ sufficiently small, the second term in \eqref{esti:auxbdy1} is bounded by the right-hand side of \eqref{eq:nablaDV1}.

	 Next, the integral
	\begin{align*}
	\begin{split}
	c^{-1}\int_0^T\int_{\partial\Omega_t}\left|\frac{1}{\bara}D_\barV D_\barV^{i+1}\barV\right|^2 \ud S\ud t \end{split}
	\end{align*}
	from the right-hand side of \eqref{eq:nVeq} is bounded by $\eta\calE_k(T)$ if $\barT$ is small, because by assumption $i\leq k-1$. By the same restriction on $i$ the integral
	\begin{align*}
	\begin{split}
	c^{-1}\int_0^T\int_{\partial\Omega_t}\left|\frac{1}{\bara}\nabla D_\barV^{i+1}\barsigma^2\right|^2\ud S\ud t
	\end{split}
	\end{align*} 
	is bounded by $\calE_{k-1}(T)$. To complete the proof of the lemma we still need to consider the term
	\begin{align*}
	\begin{split}
	c^{-1}\int_0^T\int_{\Omega_t}(\Box D_\barV^i\barV)(QD_\barV^i\barV)\ud x \ud t
	\end{split}
	\end{align*}
	on the right-hand side of \eqref{eq:oblique}. Here $Q$ is the multiplier in Lemma~\ref{lem:oblique}. By Lemma~\ref{lem:boxDVkV}, $\Box D_\barV^i\barV$ contains terms which involve $\nabla^{(2)}D_\barV^{i-1}\barV$. Since $i\leq k-1$, this corresponds to (at worst) $\nabla^{(2)} D_\barV^{k-2}\barV$. But in view of Corollary~\ref{cor:Sobolev}, $c^{-1}\int_{0}^{T}\int_{\Omega_t}|\nabla^{2}D^{i-1}_{\barV}\barV|^{2}\ud x\ud t$ is bounded by the right-hand side of \eqref{eq:nablaDV1}, if $\barT>0$ is sufficiently small. The term $c^{-1}\int_{0}^{T}\int_{\Omega_{t}}|QD_{\barV}^{i}\barV|^{2}\ud x \ud t$ is simply bounded by $\calE_{k-1}(T)$.
\end{proof}
	We turn to the proof of Proposition~\ref{prop:apriori}.
	\begin{proof}[Proof of  Proposition~\ref{prop:apriori}]
		We prove estimate \eqref{eq:apriori} inductively. First by Lemma~\ref{lem:energy} applied to \eqref{eq:mainbar}, and Corollary~\ref{cor:Sobolev}
		\begin{align*}
		\begin{split}
		\sup_{0\leq t\leq T}E[\barV,t]\lesssim E[\barV,0]+c^{-1}T (1+C_1)^m,
		\end{split}
		\end{align*}
		for some $m>0$. Therefore, if $\barT=c^{-1}T$ is sufficiently small
		\begin{align*}
		\begin{split}
		\sup_{0\leq t\leq T}E[\barV,t]\lesssim E[\barV,0].
		\end{split}
		\end{align*}
		Similarly, by Lemma~\ref{lem:benergy} applied to each of the equations in \eqref{eq:sigma1a} (note that $\barsigma^2$ and $D_\barV\barsigma^2$ are constant on $\partial\Omega$), and Corollary~\ref{cor:Sobolev},
		\begin{align*}
		\begin{split}
		\Ebar_{\leq 1}[\barsigma^2,T]\lesssim E_{\leq \ell}[\barV,0]+\Ebar_{\leq\ell+1}[\barsigma^2,0]. 
		\end{split}
		\end{align*}
		It follows that
		\begin{align*}
		\begin{split}
		\calE_{0}(T)\lesssim E_{\leq \ell}[\barV,0]+\Ebar_{\leq\ell+1}[\barsigma^2,0].
		\end{split}
		\end{align*}
		Now we assume that smallness of $\barT$ implies
		\begin{align}\label{eq:indhyp1}
		\begin{split}
		\scE_{ k-1}(T)\leq P_{k-1}(\scE_{\ell}(0))
		\end{split}
		\end{align}
		for some $1\leq k\leq \ell$ and some polynomial $P_{k-1}$, and use this to prove
		\begin{align}\label{eq:indcon1}
		\begin{split}
		\scE_{ k}(T)\leq P_k(\scE_{\ell}(0)),
		\end{split}
		\end{align}
		for some polynomial $P_k$, possibly by taking $\barT$ even smaller. In \eqref{eq:indhyp1} and \eqref{eq:indcon1} the polynomials $P_{k-1}$ and $P_k$ are taken to be independent of $C_1$. In fact, below we assume that $k$ is sufficiently large,
		because otherwise the desired bounds follow Corollary~\ref{cor:Sobolev} by taking $\barT$ small, in the same manner as above.
		
		{\bf{Step 1:}} First we show that
		\begin{align}\label{eq:step2goal}
		\begin{split}
		\Ebar_{k+1}[\barsigma^2,T]\leq \tilP_k(\calE_\ell(0))+\kappa \calE_k(T),
		\end{split}
		\end{align}
		for some polynomial $\tilP_k$, and where $\kappa$ is a given small absolute constant to be chosen later. The idea is to apply Lemma~\ref{lem:benergy} with $\Theta=D_\barV^{k+1}\barsigma^2$ to equation \eqref{eq:boxDVk1sigma}, which can be done because $D_\barV^{k+1}\barsigma^2\equiv0$ on $\partial\Omega$. Note that using Corollary~\ref{cor:Sobolev} to estimate $\nabla Q$ in $L^\infty$ (here $Q$ is as in the statement of Lemma~\ref{lem:benergy}) the last term on the right-hand side of \eqref{eq:benergy} can be absorbed on the left, provided $\barT$ is small, to give
		\begin{align}\label{eq:DVsigmatemp1}
		\begin{split}
		&\int_{\Omega_{\tau}}|\partial_{t,x}D_\barV^{k+1}\barsigma^2|^2\ud x+c^{-1}\int_0^{\tau}\int_{\partial\Omega_t}|\partial_{t,x}D_\barV^{k+1}\barsigma^2|^2\ud S \ud t\\
		&\lesssim \int_{\Omega_0}|\partial_{t,x}D_\barV^{k+1}\barsigma^2|^2\ud x+\left|c^{-1}\int_0^{\tau}\int_{\Omega_t}H_k\,QD_\barV^{k+1}\barsigma^2\ud x \ud t\right|,
		\end{split}
		\end{align}
		for any $\tau\leq T$, where $H_k$ is as in Lemma~\ref{lem:boxDVk1sigma}. If $H_k$ is of the form \eqref{eq:Hk3} in Lemma~\ref{lem:boxDVk1sigma}, then we can use Corollary~\ref{cor:Sobolev} and Cauchy-Schwarz on the last term on the right in \eqref{eq:DVsigmatemp1}, to bound this contribution by
		\begin{align*}
		\begin{split}
		\sum_{j\leq k}c^{-1}\int_0^\tau\int_{\Omega_t} |\nabla D_\barV^j\barV|^{2}\ud x\ud t+c^{-1}\int_0^\tau\int_{\Omega_t}|\nabla D_\barV^{k+1}\barsigma^2|^2\ud x \ud t.
		\end{split}
		\end{align*}
		If $\barT$ is small, the first term can be bounded by $\kappa \calE_k(T)$ and the second term can be absorbed on the left-hand side of \eqref{eq:DVsigmatemp1}. If $H_k$ is of the form \eqref{eq:Hk1} in Lemma~\ref{lem:boxDVk1sigma} we use elliptic estimates. The term that needs special attention is when $k_{m+1}=\max\{k_1,\dots,k_{m+1}\}$, in which case, after using Cauchy-Schwarz and Corollary~\ref{cor:Sobolev} as above, we need to estimate
		\begin{align*}
		\begin{split}
		c^{-1}\int_0^\tau\int_{\Omega_t}|\nabla^{(2)}D_\barV^{k}\barsigma^2|^2\ud x\ud t.
		\end{split}
		\end{align*}
		But, by Lemma~\ref{lem:aux1} this term is bounded by the right-hand side of \eqref{eq:step2goal} provided $\barT$ is sufficiently small.
		
		It remains to treat the contribution of $H_k$ replaced by \eqref{eq:Hk2} in Lemma~\ref{lem:boxDVk1sigma} to \eqref{eq:DVsigmatemp1}. Here, we only treat the most difficult case when $k_{m+2}=k-1$, and for brevity write the resulting expression in \eqref{eq:Hk2} in Lemma~\ref{lem:boxDVk1sigma} as
		\begin{align*}
		\begin{split}
		F^{\mu\nu}\nabla_\mu\nabla_\nu D_\barV^{k-1}\barV,
		\end{split}
		\end{align*}
		where in view of Corollary~\ref{cor:Sobolev}, $F$ satisfies
		\begin{align*}
		\begin{split}
		\|F\|_{L^\infty(\overline{\Omega})}+\|\nabla F\|_{L^\infty(\overline{\Omega})}\leq (\calE_{k-1}(T))^n
		\end{split}
		\end{align*}
		for some positive integer $n$. Replacing $H_k$ on the right-hand side of \eqref{eq:DVsigmatemp1} by this expression, we write
		\begin{align}\label{eq:DVsigmatemp2}
		\begin{split}
		(F^{\mu\nu}\nabla_\mu\nabla_\nu D_\barV^{k-1}\barV)(QD_\barV^{k+1}\barsigma^2)&=\nabla_\mu[(F^{\mu\nu}\nabla_{\nu}D_\barV^{k-1}\barV)(QD_\barV^{k+1}\barsigma^2)]-(\nabla_\mu F^{\mu\nu})(\nabla_\nu D_\barV^{k-1}\barV)(QD_\barV^{k+1}\barsigma^2)\\
		&\quad-(F^{\mu\nu}\nabla_\nu D_\barV^{k-1}\barV)(\nabla_\mu Q^\lambda)(\nabla_\lambda D_\barV^{k+1}\barsigma^2)\\
		&\quad-(F^{\mu\nu}\nabla_\nu D_\barV^{k-1}\barV)(Q\nabla_\mu D_\barV^{k+1}\barsigma^2).
		\end{split}
		\end{align}
		The last term can again be massaged as
		\begin{align*}
		\begin{split}
		(F^{\mu\nu}\nabla_\nu D_\barV^{k-1}\barV)(Q\nabla_\mu D_\barV^{k+1}\barsigma^2)
		&=\nabla_\lambda[(F^{\mu\nu}\nabla_\nu D_\barV^{k-1}\barV)(\barV^\lambda Q\nabla_\mu D_\barV^{k}\barsigma^2)]-(F^{\mu\nu}\nabla_\nu D_\barV^{k}\barV)(Q\nabla_\mu D_\barV^{k}\barsigma^2)\\
		&\quad-((D_\barV F^{\mu\nu})\nabla_\nu D_\barV^{k-1}\barV)(Q\nabla_\mu D_\barV^{k}\barsigma^2)+(F^{\mu\nu}(\nabla_\nu V^\lambda) \nabla_\lambda D_\barV^{k-1}\barV)(Q\nabla_\mu D_\barV^{k}\barsigma^2)\\
		&\quad -(F^{\mu\nu}\nabla_\nu D_\barV^{k-1}\barV)((D_\barV Q^\lambda)\nabla_\lambda\nabla_\mu D_\barV^{k}\barsigma^2)\\
		&\quad-(F^{\mu\nu}\nabla_\nu D_\barV^{k-1}\barV)(Q^\lambda[D_\barV,\nabla_\lambda\nabla_\mu] D_\barV^{k}\barsigma^2).
		\end{split}
		\end{align*}
		Plugging back into \eqref{eq:DVsigmatemp2} we get
		\begin{align}\label{eq:DVsigmatemp3}
		\begin{split}
		(F^{\mu\nu}\nabla_\mu\nabla_\nu D_\barV^{k-1}\barV)(QD_\barV^{k+1}\barsigma^2)&=\nabla_\mu[(F^{\mu\nu}\nabla_{\nu}D_\barV^{k-1}\barV)(QD_\barV^{k+1}\barsigma^2)-(F^{\lambda\nu}\nabla_\nu D_\barV^{k-1}\barV)(\barV^\mu Q\nabla_\lambda D_\barV^{k}\barsigma^2)]\\
		&\quad -(\nabla_\mu F^{\mu\nu})(\nabla_\nu D_\barV^{k-1}\barV)(QD_\barV^{k+1}\barsigma^2)-(F^{\mu\nu}\nabla_\nu D_\barV^{k-1}\barV)(\nabla_\mu Q^\lambda)(\nabla_\lambda D_\barV^{k+1}\barsigma^2)\\
		&\quad +(F^{\mu\nu}\nabla_\nu D_\barV^{k}\barV)(Q\nabla_\mu D_\barV^{k}\barsigma^2)+((D_\barV F^{\mu\nu})\nabla_\nu D_\barV^{k-1}\barV)(Q\nabla_\mu D_\barV^{k}\barsigma^2)\\
		&\quad-(F^{\mu\nu}(\nabla_\nu \barV^\lambda) \nabla_\lambda D_\barV^{k-1}\barV)(Q\nabla_\mu D_\barV^{k}\barsigma^2)+(F^{\mu\nu}\nabla_\nu D_\barV^{k-1}\barV)((D_\barV Q^\lambda)\nabla_\lambda\nabla_\mu D_\barV^{k}\barsigma^2)\\
		&\quad+(F^{\mu\nu}\nabla_\nu D_\barV^{k-1}\barV)(Q^\lambda[D_\barV,\nabla_\lambda\nabla_\mu] D_\barV^{k}\barsigma^2).
		\end{split}
		\end{align}
		We need to consider the integration of the terms on the right in \eqref{eq:DVsigmatemp3} over the space-time region $\cup_{0\leq t\leq \tau}\Omega_t$. The terms 
		\begin{align*}
		\begin{split}
		(\nabla_\mu F^{\mu\nu})(\nabla_\nu D_\barV^{k-1}\barV)(QD_\barV^{k+1}\barsigma^2)\mand (F^{\mu\nu}\nabla_\nu D_\barV^{k-1}\barV)(\nabla_\mu Q^\lambda)(\nabla_\lambda D_\barV^{k+1}\barsigma^2)
		\end{split}
		\end{align*}
		can simply be bounded by Cauchy-Schwarz, assuming $\barT$ is sufficiently small. The last five terms,
		\begin{align*}
		\begin{split}
		&(F^{\mu\nu}\nabla_\nu D_\barV^{k}\barV)(Q\nabla_\mu D_\barV^{k}\barsigma^2)\quad((D_\barV F^{\mu\nu})\nabla_\nu D_\barV^{k-1}\barV)(Q\nabla_\mu D_\barV^{k}\barsigma^2), \quad (F^{\mu\nu}(\nabla_\nu \barV^\lambda) \nabla_\lambda D_\barV^{k-1}\barV)(Q\nabla_\mu D_\barV^{k}\sigma^2),\\
		&(F^{\mu\nu}\nabla_\nu D_\barV^{k-1}\barV)((D_\barV Q^\lambda)\nabla_\lambda\nabla_\mu D_\barV^{k}\barsigma^2),\quad (F^{\mu\nu}\nabla_\nu D_\barV^{k-1}\barV)(Q^\lambda[D_\barV,\nabla_\lambda\nabla_\mu] D_\barV^{k}\barsigma^2),
		\end{split}
		\end{align*}
		can also be treated by Cauchy-Schwarz, this time combined with elliptic estimates as above, using Lemma~\ref{lem:aux1} and \eqref{eq:comtemp2}. For the first term, $\nabla_\mu I^\mu$, with
		\begin{align*}
		\begin{split}
		I^\mu:=(F^{\mu\nu}\nabla_{\nu}D_\barV^{k-1}\barV)(QD_\barV^{k+1}\barsigma^2)-(F^{\lambda\nu}\nabla_\nu D_\barV^{k-1}\barV)(\barV^\mu Q\nabla_\lambda D_\barV^{k}\barsigma^2),
		\end{split}
		\end{align*}
		on the right-hand side of \eqref{eq:DVsigmatemp3}, by the divergence theorem
		\begin{align*}
		\begin{split}
		c^{-1}\int_0^\tau\int_{\Omega_t} \nabla_\mu I^\mu \ud x \ud t&=c^{-1}\int_{\Omega_0}I^0 \ud x-c^{-1}\int_{\Omega_\tau}I^0\ud x+c^{-1}\int_{0}^\tau\int_{\partial\Omega_t} n_\mu I^\mu \ud S \ud t.
		\end{split}
		\end{align*}
		The first term on the right is bounded by the initial data. The second term on the right is bounded by
		\begin{align}\label{eq:divItemp1}
		\begin{split}
		C_\delta\int_{\Omega_\tau}|\partial_{t,x}D_\barV^{k-1}\barV|^2 \ud x+\delta \int_{\Omega_\tau}|\partial_{t,x} D_\barV^{k+1}\barsigma^2|^2\ud x,
		\end{split}
		\end{align}
		where $C_\delta$ depends polynomially on $\calE_{k-1}(T)$. The second term on the right in \eqref{eq:divItemp1} can be absorbed on the left-hand side of \eqref{eq:DVsigmatemp1} if $\delta$ is chosen sufficiently small (an absolute constant). The first term in \eqref{eq:divItemp1} is bounded by the right-hand side of \eqref{eq:step2goal} by the induction hypothesis. Finally, using the fact that $\barV^\mu n_\mu=0$ on $\partial\Omega$,
		\begin{align}\label{eq:divItemp2}
		\begin{split}
		c^{-1}\int_{0}^\tau\int_{\partial\Omega_t} n_\mu I^\mu \ud S \ud t\leq C_\delta c^{-1}\int_{0}^\tau\int_{\partial\Omega_t}|\nabla D_\barV^{k-1}\barV|^2 \ud S\ud t+\delta c^{-1} \int_0^\tau\int_{\partial\Omega_t}|\nabla D_\barV^{k+1}\barsigma^2|^2\ud S \ud t,
		\end{split}
		\end{align}
		where again $C_\delta$ can depend polynomially on $\calE_{k-1}(T)$. The last term on the right can be absorbed on the left in \eqref{eq:DVsigmatemp1} if $\delta$ is chosen sufficiently small. For the first term on the right in \eqref{eq:divItemp2} we use Lemma~\ref{lem:nablaDV1} with $\eta$ small (depending on $C_\delta$ in \eqref{eq:divItemp2} and $\kappa$ in \eqref{eq:step2goal}), where the second term on the right in \eqref{eq:nablaDV1} is bounded by the right-hand side of \eqref{eq:step2goal} using the induction hypothesis. This finishes the proof of \eqref{eq:step2goal}. 
{\bf{Step 2:}} Here we prove that given $\delta>0$, if $\barT$ is sufficiently small then for any $j\leq k$
\begin{align}\label{eq:step1goal}
\begin{split}
\left|\int_{0}^T\int_{\Omega_t}(D_\barV^{j+1}\barV)(\Box D_\barV^j\barV)\ud x\ud t\right|\lesssim \calE_\ell(0)+\tilde{\tilde{P}}_j(\calE_{k-1}(T))+\delta\calE_k(T) 
\end{split}
\end{align}
where the polynomial $\tilde{\tilde{P}}_j$ and the implicit constant are independent of $C_1$. In view of Lemma~\ref{lem:energy} this estimate is needed in estimating $$\sup_{0\leq t\leq T}E_{\leq k}[\barV,t].$$ 
Recall that $\Box D_\barV^j\barV= G_j$ where $G_j$ is as in Lemma~\ref{lem:boxDVkV}. We treat the hardest case when $k_{m+1}=\max\{k_1,\dots,k_{m+1}\}$ (the other cases can be handled using Cauchy-Schwarz and Corollary~\ref{cor:Sobolev}). In fact we concentrate on the most difficult case $k_{m+1}=j-1$. In this case we write $G_j$ as
\begin{align*}
\begin{split}
G^{\mu\nu}\nabla_\mu\nabla_\nu D_\barV^{j-1}\barV,
\end{split}
\end{align*}
where $G$ satisfies
\begin{align*}
\begin{split}
\|G\|_{L^\infty(\overline{\Omega})}+\|\nabla G\|_{L^\infty(\overline{\Omega})}\leq (\calE_{k-1}(T))^n
\end{split}
\end{align*}
for some integer $n\geq0$. We now proceed as in the derivation of \eqref{eq:DVsigmatemp3}. First
\begin{align}\label{eq:Gtemp1}
\begin{split}
(G^{\mu\nu}\nabla_\mu\nabla_\nu D_\barV^{j-1}\barV)(D_\barV^{j+1}\barV)&=\nabla_\mu[(G^{\mu\nu}\nabla_{\nu}D_\barV^{j-1}\barV)(D_\barV^{j+1}\barV)]-(\nabla_\mu G^{\mu\nu})(\nabla_\nu D_\barV^{j-1}\barV)(D_\barV^{j+1}\barV)\\
&\quad-(G^{\mu\nu}\nabla_\nu D_\barV^{j-1}\barV)(\nabla_\mu D_\barV^{j+1}\barV).
\end{split}
\end{align}
The last term can again be massaged as
\begin{align*}
\begin{split}
(G^{\mu\nu}\nabla_\nu D_\barV^{j-1}\barV)(\nabla_\mu D_\barV^{j+1}\barV)
&=\nabla_\lambda[(G^{\mu\nu}\nabla_\nu D_\barV^{j-1}\barV)(\barV^\lambda \nabla_\mu D_\barV^{j}\barV)]-(G^{\mu\nu}\nabla_\nu D_\barV^{j}\barV)(\nabla_\mu D_\barV^{j}\barV)\\
&\quad-((D_\barV G^{\mu\nu})\nabla_\nu D_\barV^{j-1}\barV)(\nabla_\mu D_\barV^{j}\barV)+(G^{\mu\nu}(\nabla_\nu \barV^\lambda) \nabla_\lambda D_\barV^{j-1}\barV)(\nabla_\mu D_\barV^{j}\barV)\\
&\quad +(G^{\mu\nu}\nabla_\nu D_\barV^{j-1}\barV)((\nabla_\mu \barV^\lambda)\nabla_\lambda D_\barV^{j}\barV).
\end{split}
\end{align*}
Plugging back into \eqref{eq:Gtemp1} we get
\begin{align}\label{eq:Gtemp2}
\begin{split}
(G^{\mu\nu}\nabla_\nu D_\barV^{j-1}\barV)(\nabla_\mu D_\barV^{j+1}\barV)&=\nabla_\mu[(G^{\mu\nu}\nabla_{\nu}D_\barV^{j-1}\barV)(D_\barV^{j+1}\barV)-(G^{\lambda\nu}\nabla_\nu D_\barV^{j-1}\barV)(\barV^\mu \nabla_\lambda D_\barV^{j}\barV)]\\
&\quad -(\nabla_\mu G^{\mu\nu})(\nabla_\nu D_\barV^{j-1}\barV)(D_\barV^{j+1}\barV)+(G^{\mu\nu}\nabla_\nu D_\barV^{j}\barV)(\nabla_\mu D_\barV^{j}\barV)\\
&\quad +((D_\barV G^{\mu\nu})\nabla_\nu D_\barV^{j-1}\barV)(\nabla_\mu D_\barV^{j}\barV)-(G^{\mu\nu}(\nabla_\nu \barV^\lambda) \nabla_\lambda D_\barV^{j-1}\barV)(\nabla_\mu D_\barV^{j}\barV)\\
&\quad +(G^{\mu\nu}\nabla_\nu D_\barV^{j-1}\barV)((\nabla_\mu \barV^\lambda)\nabla_\lambda D_\barV^{j}\barV).
\end{split}
\end{align}
We want to integrate \eqref{eq:Gtemp2} over $\cup_{t\in[0,T]}\Omega_t$. The contribution of the last five terms can be bounded by $\calE_\ell(0)$, as required in \eqref{eq:step1goal}, provided $\barT$ is small. It remains to consider the divergence terms
\begin{align*}
\begin{split}
\nabla_\mu I_1^\mu:= \nabla_\mu[(G^{\mu\nu}\nabla_{\nu}D_\barV^{j-1}\barV)(D_\barV^{j+1}\barV)],\qquad \nabla_\mu I_2^\mu:= \nabla_\mu[(G^{\lambda\nu}\nabla_\nu D_\barV^{j-1}\barV)(\barV^\mu \nabla_\lambda D_\barV^{j}\barV)].
\end{split}
\end{align*}
Note that $n_\mu I_2^\mu=0$ on $\partial\Omega$ because $n_\mu \barV^\mu=0$ there. Therefore the contribution of $\nabla_\mu I_2^\mu$ is bounded by the initial data plus
\begin{align*}
\begin{split}
\int_{\Omega_T}|\nabla D_\barV^{j-1}\barV||\nabla D_\barV^j \barV|\ud x\leq C_\delta \int_{\Omega_T}|\nabla D_\barV^{j-1}\barV|^2\ud x+ \delta\int_{\Omega_T}|\nabla D_\barV^j\barV|^2 \ud x,
\end{split}
\end{align*}
with $C_\delta$ depending polynomially on $\calE_{k-1}(T)$. The second term on the right is in the form required by \eqref{eq:step1goal}, and the first term on the right is bounded by the induction hypothesis. The contribution of $\nabla_\mu I_1^\mu$ on $\Omega_T$ is bounded in an identical fashion. On $\partial\Omega$ the contribution of $\nabla_\mu I_1^\mu$ is bounded by
\begin{align*}
\begin{split}
c^{-1}\int_0^T\int_{\partial\Omega_t}|\nabla D_\barV^{j-1}\barV|^2\ud S\ud t+c^{-1}\int_0^T\int_{\partial\Omega_t}|D_\barV^{j+1}\barV|^2\ud S\ud t
\end{split}
\end{align*}
which, using Lemma~\ref{lem:nablaDV1}, can be bounded by the right-hand side of \eqref{eq:step1goal} by choosing $\barT$ and $\eta$ in Lemma~\ref{lem:nablaDV1} small. This completes the proof of \eqref{eq:step1goal}.

{\bf{Step 3:}} Finally we show that
\begin{align}\label{eq:step3goal}
\begin{split}
\sup_{0\leq t \leq T}E_{k}[\barV,t]\leq S_k(\calE_\ell(0)),
\end{split}
\end{align}
for some polynomial $S_k$. Note that \eqref{eq:step3goal} and \eqref{eq:step2goal} complete the proof of the proposition upon taking $\kappa$ in \eqref{eq:step2goal} small. We apply the energy identity \eqref{eq:energyid1} to $\Theta= D_\barV^k\barV$. The first two terms on the right in \eqref{eq:energyid1} are bounded by the initial data. The last three terms there can be absorbed on the left by taking $\barT$ sufficiently small. The term $c^{-1}\int_0^T\int_{\Omega_t}g D_\barV\Theta \ud x \ud t$
with $g=\Box D_\barV^k\barV$ was treated in Step 2 above. Indeed, by \eqref{eq:step1goal} and the induction hypothesis, this term can be bounded by a polynomial of $\calE_\ell(0)$ plus a term which can be absorbed in the left in \eqref{eq:energyid1} and \eqref{eq:step2goal}. Finally we consider the term
\begin{align*}
\begin{split}
c^{-1}\int_{0}^T\int_{\partial\Omega_t}\frac{1}{\bara}f D_\barV\Theta \ud S\ud t\
\end{split}
\end{align*}
on the right-hand side of \eqref{eq:energyid1}, where $f= (D_\barV^2-\frac{1}{2}\bara n)D_\barV^k\barV$ as in Lemma~\ref{lem:Vho}. The main term $\nabla D_\barV^{k+1}\barsigma^2$ was already treated in \eqref{eq:step2goal} in Step 1 above. The contribution of the terms of the forms (1) and (2) in Lemma~\ref{lem:Vho} are also bounded by a polynomial of $\calE_\ell(0)$ or absorbed in the left in \eqref{eq:energyid1} and \eqref{eq:step2goal}, in view of Corollary~\ref{cor:Sobolev} and Lemma~\ref{lem:nablaDV1}, and by the induction hypothesis.
\end{proof}

\section{The Linear Theory}\label{sec:linear}
In this section we discuss the linearized equations for $V$ and $D_V\sigma^2$ and prove existence and energy estimates for them. Since this concerns the local existence result of Theorem~\ref{thm:main} which is for fixed $c$, to simplify notation we simply let $c=1$ in this section and in Section~\ref{sec:iteration}. Since the parameter $c$ is a constant it is clear that the proof is identical for any other choice of $c$. The assumptions on the coefficients and the source terms are of course such that they can be recovered in the iteration for the quasilinear system. The general scheme is the one outlined in Subsection~\ref{subsubsec:apriori} above, and involves proving Sobolev estimates in terms of the energies. However, as explained in Subsection~\ref{subsubsec:iteration} this scheme will be carried out on the Lagrangian side. The actual iteration for the nonlinear problem will be the subject of the next section.

\subsection{The weak formulation of the equations}\label{subsec:weakder}
\subsubsection{The equation for $V$} Recall from \eqref{eq:V1} that the boundary equation for $V$ is
	\begin{align}\label{boundary eq}
	\left(D_V^2+\frac{1}{2}a\nabla_n\right) V^\nu =-\frac{1}{2}\nabla^\nu D_V\sigma^2.
	\end{align}
When considering the linearized equation for $V$ we assume that in \eqref{boundary eq} the $V$ appearing in $D_{V}$,$a$ and $n$ are given, and that the right-hand side is replaced by a fixed function. To be more precise, we consider the following linear system for the unknown $\Theta$:
	\begin{align}\label{linear system Euler}
	\begin{split}
	&\Box\Theta=0,\quad \textrm{in}\quad \Omega,\\
	&D_{V}^{2}\Theta+\frac{1}{2}a\nabla_{n}\Theta=\tilf,\quad \textrm{on}\quad \partial\Omega.
	\end{split}
	\end{align}
At the linear level the Eulerian coordinates are given by 
	\begin{align}\label{Lagrangian def}
	\frac{\ud x^{0}(t,y)}{\ud t}=1,\qquad  \frac{\ud x^{i}(t,y)}{\ud t}=\left(\frac{V^{i}}{V^{0}}\right)(t,y), \quad i=1,2,3.
	\end{align}
On the Lagrangian domain the parameterization domain is $[0,T]\times B$ with timelike boundary $[0,T]\times \partial B$. The linearized Minkowski metric becomes
	\begin{align}\label{metric Lagrangian}
	\begin{split}
	g=&-\left(1-\sum_{i}\frac{(V^{i})^{2}}{(V^{0})^{2}}\right)\ud t^{2}+2\sum_{i,a}\frac{V^{i}}{V^{0}}\frac{\partial x^{i}}{\partial y^{a}}\ud t\ud y^{a}+\sum_{i,a,b}\frac{\partial x^{i}}{\partial y^{a}}\frac{\partial x^{i}}{\partial y^{b}}\ud y^{a}\ud y^{b}.
	\end{split}
	\end{align}
By redefining the source function $f$, we write the linear system \eqref{linear system Euler} on the Lagrangian side as
	\begin{align}\label{linear system Lagrangian}
	\begin{split}
	&\Box_{g}\Theta=0\quad \textrm{in}\quad B\times[0,T],\\
	&\partial_{t}^{2}\Theta+\gamma \nabla_{n}\Theta=f\quad\textrm{on}\quad \partial B\times[0,T],
	\end{split}
	\end{align}
where $\gamma:=\frac{a}{2(V^{0})^{2}}$ as in Subsection~\ref{subsubsec:iteration}. To derive the weak formulation as in Subsection~\ref{subsubsec:iteration}, we assume all functions are smooth and multiply the first equation in \eqref{linear system Lagrangian} by a test function $\fy$ and integrate by parts to get 
	\begin{align*}
	\begin{split}
	0&=\int_0^s\int_B\varphi\Box_g\Theta \,\ud \bary\ud t= \int_0^s\int_B \frac{1}{\sqrt{|g|}}\partial_\alpha(\varphi \sqrt{|g|}g^{\alpha\beta}\partial_\beta\Theta)\,\ud \bary\ud t-\int_0^s\int_B g^{\alpha\beta}\partial_\alpha\Theta \partial_\beta \fy \,\ud \bary\ud t\\
	&=-\int_{B_s}\partial_t\Theta \varphi \,\ud\bary+\int_{B_s}g^{ta}\partial_a\Theta \fy \,\ud\bary-\int_{B_0}g^{\alpha0}\partial_\alpha\Theta \varphi \,\ud\bary+\frac{1}{2}\int_0^s\int_B\varphi g^{\alpha\beta}\partial_\beta\Theta \partial_\alpha\log|g|\,\ud\bary\ud t\\
	&\quad+\int_0^s\int_{\partial B} \frac{1}{\gamma}(f-\partial_t^2\Theta)\fy\,\ud S\ud t-\int_0^s\int_B g^{\alpha\beta}\partial_\alpha\Theta \partial_\beta \fy \,\ud \bary\ud t.
	\end{split}
	\end{align*}
Since this identity holds for all $s$, we can differentiate in $s$ to get, after some manipulation,
\begin{align*}
\begin{split}
\int_{\partial B}\frac{1}{\gamma}f\varphi \ud S&
=\int_B \partial_t^2\Theta\fy\,\ud\bary+\int_{\partial B}\frac{1}{\gamma}\partial_t^2\Theta \varphi \,\ud S-\int_{\partial B}g^{tr}\partial_t\Theta\varphi\,\ud S-\int_B\partial_a\Theta \fy \partial_tg^{ta}\,\ud\bary+\int_B \partial_t\Theta\varphi \partial_ag^{ta}\,\ud\bary\\
&\quad+\int_B g^{\alpha\beta}\partial_\alpha\Theta \partial_\beta\fy \,\ud \bary-\int_B\partial_t\fy g^{0\alpha}\partial_\alpha\Theta \,\ud\bary+\int_B\partial_t\Theta  g^{ta}\partial_a\fy\,\ud\bary-\frac{1}{2}\int_B g^{\alpha\beta}\partial_\beta\Theta \fy \partial_\alpha\log|g|\,\ud\bary.
\end{split}
\end{align*}
Simplifying the line before last, we arrive at
	\begin{align}\label{eq:wtemp}
	\begin{split}
	\int_{\partial B}\frac{1}{\gamma}f\varphi \ud S&=\int_B \partial_t^2\Theta\fy\,\ud\bary+\int_{\partial B}\frac{1}{\gamma}\partial_t^2\Theta \varphi \,\ud S\\
	&\quad+\int_Bg^{ab}\partial_a\Theta\partial_b\fy\,\ud\bary+2\int_Bg^{ta}\partial_t\Theta\partial_a\fy\,\ud\bary-\int_{\partial B}g^{tr}\partial_t\Theta\varphi\,\ud S\\
	&\quad-\frac{1}{2}\int_B g^{\alpha\beta}\partial_\beta\Theta \fy \partial_\alpha\log|g|\,\ud\bary-\int_B\partial_a\Theta \fy \partial_tg^{ta}\,\ud\bary+\int_B \partial_t\Theta\varphi \partial_ag^{ta}\,\ud\bary.
	\end{split}
	\end{align}
	Equation \eqref{eq:wtemp} is our guide for formulating a weak problem. Recall that  $\angles{\cdot}{\cdot}$ denotes the inner product in $L^2(B)$ with respect to $\ud \bary$, and $\bangles{\cdot}{\cdot}$ denotes the inner product in $L^2(\partial B)$ with respect to the induced Euclidean measure $\ud S$. The pairing between $(H^1(B))^\ast$ and $H^1(B)$ is denoted by $(\cdot,\cdot)$. We define the bounded linear map $\Phi:H^1(B)\to(H^1(B))^\ast$ by
	\begin{align*}
	\begin{split}
	(\Phi(u),v):=\angles{u}{v}+\bangles{\gamma^{-1}\tr\,u}{\tr\,v}.
	\end{split}
	\end{align*}
Note that if $u$ is a sufficiently regular function of time, and $v$ is independent of time, we have
	\begin{align*}
	\begin{split}
	\angles{u''}{v}+\bangles{\gamma^{-1}\tr\,u''}{\tr\,v}=(\Phi(u)'',v)-2\bangles{(\gamma^{-1})'\tr\,u'}{\tr\,v}-\bangles{(\gamma^{-1})''\tr\,u}{\tr\,v}.
	\end{split}
	\end{align*}
	Also note that $\Phi$ is an embedding, because if $\Phi(u)=\Phi(w)$, then for all $\varphi\in C^\infty_0(B)$
	\begin{align*}
	\begin{split}
	\angles{u-w}{\varphi}=0
	\end{split}
	\end{align*}
	implying $u=w$ almost everywhere in $B$. But then
	\begin{align*}
	\begin{split}
	\bangles{\tr\,u-\tr\, w}{\gamma^{-1}\tr\,v}=0,\quad \forall v\in H^1(B),
	\end{split}
	\end{align*}
	and since $\tr:H^1(B)\to L^2(\partial B)$ is onto, it follows $\tr\, u =\tr\,w$. We define the following bilinear forms:
	\begin{align}\label{eq:defbilin}
	\begin{split}
	&B:H^1(B)\times H^1(B)\to \bbR,\qquad C:L^2(B)\times H^1(B)\to\bbR,\qquad D,E:L^2(\partial B)\times H^1(B)\to \bbR,\\
	&B(u,v):=\angles{g^{ab}\partial_au}{\partial_bv} -\frac{1}{2}\angles{\partial_au}{ vg^{a\alpha}\partial_\alpha\log |g|}-\angles{\partial_au}{v\partial_tg^{ta}},\\
	&C(u,v):=2\angles{u}{g^{ta} \partial_av}-\frac{1}{2}\angles{u}{v g^{t\alpha}\partial_\alpha\log |g|}+\angles{u}{v\partial_a g^{ta}},\\
	&D(u,v):=-\bangles{u}{g^{tr}\tr\,v}-2\bangles{u}{(\gamma^{-1})'\tr\,v},\\
	&E(u,v):=-\bangles{u}{(\gamma^{-1})''\tr\,v}.
	\end{split}
	\end{align}
	To simplify notation, for any $\Theta:[0,T]\to H^1(B)$ satisfying  
	\begin{align}\label{eq:Thetaspaces1}
	\begin{split}
	&\Theta\in L^2([0,T];H^1(B)),\quad \Theta'\in L^2([0,T],L^2(B)),\quad (\tr\,\Theta)'\in L^2([0,T];L^2(\partial B)),\\
	&\Phi(\Theta), \Phi(\Theta)', \Phi(\Theta)''\in L^2([0,T];(H^1(B))^\ast),
	\end{split}
	\end{align}
	let
	\begin{align}\label{eq:defcalL}
	\begin{split}
	\calL(\Theta,v):=B(\Theta,v)+C(\Theta',v)+D((\tr\,\Theta)',v)+E(\tr\,\Theta,v).
	\end{split}
	\end{align}
	The weak equation then becomes
	\begin{align}\label{eq:weak1}
	\begin{split}
	(\Phi(\Theta)'',v)+\calL(\Theta,v)=\bangles{\gamma^{-1}f}{\tr\,v},\qquad \forall v\in H^1(B),
	\end{split}
	\end{align}
	for almost every $t\in [0,T].$ 	To complete our formulation of the weak problem we also need to discuss the initial data. As in the model problem, the initial data consists of 
	\begin{align*}
	\begin{split}
	\theta_0\in H^1(B),\quad \theta_1\in L^2(B), \quad \tiltheta_1\in L^2(\partial B),
	\end{split}
	\end{align*}
	and the initial requirement on $\Theta$ is that
	\begin{align}\label{eq:weakdata1}
	\begin{split}
	&\Theta(0)=\theta_0\quad\mathrm{in~} L^2(B),\\
	&(\Phi(\Theta)'(0),v)=\angles{\theta_1}{v}+\bangles{\tiltheta_1}{\tr\,v},\qquad \forall v\in H^1(B).
	\end{split}
	\end{align}
	Note that since we seek $\Theta\in L^2([0,T];H^1(B))$, $\Theta'\in L^2([0,T];L^2(B))$ we must in fact have $\Theta\in C([0,T];L^2(B))$ (after possible modification on a set of zero measure), so the initial value $\Theta(0)$ makes sense in $L^2(B)$. By a similar reasoning we can make sense of $\Phi(\Theta)'(0)$.\footnote{In fact, with a little more work one can argue that $\Theta'\in C([0,T];L^2(B))$ and $(\tr\,\Theta)'\in C([0,T];L^2(\partial B))$; see for instance \cite{Lions-Magenes-book1, Wloka-book}. This will not be needed to prove energy estimates.} 
\begin{remark}\label{rem:strongeqn}
Suppose $\Theta$ satisfies \eqref{eq:weak1} and that it is sufficiently regular (say $C^3$). Then taking $v=\fy\in C^\infty_0(B)$, we can integrate by parts back in the definition of $\calL$ to conclude from \eqref{eq:weak1} that
\begin{align*}
\begin{split}
\int_B (\Box_g\Theta) \fy\ud x=0 \qquad \forall \fy\in C^\infty_0(B).
\end{split}
\end{align*}
It follows that $\Box_g\Theta\equiv0$ in $B$. Then using this fact and the surjectivity of $\tr:H^1(B)\to L^2(\partial B)$, we can take $v=\fy\in C^\infty(\overline{B})$ arbitrarily, and integrate by parts again in the definition of $\calL$ in \eqref{eq:weak1} to conclude that
\begin{align*}
\begin{split}
\partial_{t}^{2}\Theta+\gamma \nabla_{n}\Theta=f.
\end{split}
\end{align*}
A similar conclusion holds for equation~\eqref{eq:weakLambda1} below if $\Lambda$ is sufficiently regular.
\end{remark}
	We also need to consider the equations obtained by commuting several $\partial_{t}$ derivatives. To be more systematic, let us denote by $F_{0}=0$ and $f_{0}=\gamma^{-1}f$ the right-hand sides of the interior and boundary equations respectively. Similarly, we use $F_{k}$ and $f_{k}$ to denote the right-hand sides of the interior and boundary equations obtained by commuting $\partial_{t}$ derivatives $k$ times, and let $\Theta_k$ be the $k$-times differentiated unknown. Now we compute the commutator in the weak form. Assuming for the moment that all functions are sufficiently regular,
	\begin{align*}
		\left((\Phi(\Theta)'',v)+\calL(\Theta,v)\right)^{\prime}=&(\Phi(\Theta')'',v)+\calL(\Theta',v)\\&+\angles{\partial_{t}g^{ab}\partial_{a}\Theta}{\partial_{b}v}-\frac{1}{2}\angles{\partial_a\Theta}{ v\partial_{t}(g^{a\alpha}\partial_\alpha\log |g|)}-\angles{\partial_a\Theta}{v\partial^{2}_{t}g^{ta}}\\
		&+2\angles{\Theta'}{\left(\partial_{t}g^{ta}\right)\partial_{a}v}-\frac12\angles{\Theta'}{v\partial_{t}(g^{t\alpha}\partial_{\alpha}\log|g|)}+\angles{\Theta'}{v\partial_{t}\partial_{a}g^{ta}}\\
		&-\bangles{\Theta'}{(\partial_{t}g^{tr})\tr\,v}+\bangles{\Theta''}{(\partial_{t}(\gamma^{-1}))\tr\,v}.
	\end{align*}
	If we define
	\begin{align}\label{def:commutator weak}
		\begin{split}
		\angles{\calC(\Theta)}{v}&:=-\frac{1}{2}\angles{\partial_a\Theta}{ v\partial_{t}(g^{a\alpha}\partial_\alpha\log |g|)}-\angles{\partial_a\Theta}{v\partial^{2}_{t}g^{ta}}\\&\quad-\frac12\angles{\Theta'}{v\partial_{t}(g^{t\alpha}\partial_{\alpha}\log|g|)}+\angles{\Theta'}{v\partial_{t}\partial_{a}g^{ta}},\\
		\angles{\calC^{a}(\Theta)}{\partial_{a}v}&:=\angles{\partial_{t}g^{ab}\partial_{b}\Theta}{\partial_{a}v}+2\angles{\Theta'}{\left(\partial_{t}g^{ta}\right)\partial_{a}v},\\
		\bangles{\calC_{\calB}(\Theta)}{\tr\,v}&:=-\bangles{(\tr\,\Theta)'}{(\partial_{t}g^{tr})\tr\,v},\\
		\bangles{\tilcalC_{\calB}(\Theta)}{\tr\,v}&:=-\bangles{(\tr\,\Theta)'}{(\gamma^{-1})'\tr\,v},\\
		\bangles{\tiltilcalC_\calB(\Theta)}{v}&:=\bangles{(\tr\,\Theta)'}{(\gamma^{-1})''\tr\,v},
		\end{split}
	\end{align}
	then we have the following formula for the weak form of the commutator:
	\begin{align}\label{commutator weak}
		\begin{split}
		&\frac{\ud}{\ud t}\left((\Phi(\Theta)'',v)+\calL(\Theta,v)\right)-(\Phi(\Theta')'',v)-\calL(\Theta',v)-\bangles{\tilcalC_{\calB}(\Theta')}{\tr\,v}\\
		&=\angles{\calC(\Theta)}{v}+\angles{\calC^{a}(\Theta)}{\partial_{a}v}+\bangles{\calC_{\calB}(\Theta)}{\tr\,v}.
		\end{split}
	\end{align}
Therefore, $\Theta_1:=\partial_t\Theta$ satisfies the weak equation (again assuming sufficient regularity)
\begin{align*}
\begin{split}
(\Phi(\Theta_1)'',v)+\calL(\Theta_1,v)+\bangles{\tilcalC_{\calB}(\Theta_1)}{v}=\angles{F_{1}}{v}+\angles{\calF^{a}_{1}}{\partial_{a}v}+\bangles{f_{1}}{\tr\,v},
\end{split}
\end{align*}
where in terms of $\Theta_0:=\Theta$
\begin{align*}
\begin{split}
F_{1}=\partial_{t}F_{0}-\calC(\Theta_{0}),\quad
		\calF^{a}_{1}=\partial_{t}\calF^{a}_{0}-\calC^{a}(\Theta_{0}),\quad
		f_{1}=\partial_{t}f_{0}-\calC_{\calB}(\Theta_{0}).
\end{split}
\end{align*}
For the next derivative, note that with $\Theta_2:=\partial_t^2\Theta$ and $\tiltilcalC_\calB$ as in \eqref{def:commutator weak}
\begin{align*}
\begin{split}
\frac{\ud}{\ud t}\bangles{\tilcalC_{\calB}(\Theta_1)}{v}=\bangles{\tilcalC_{\calB}(\Theta_2)}{v}-\bangles{\tiltilcalC_\calB(\Theta_1)}{v},
\end{split}
\end{align*}
so
\begin{align*}
\begin{split}
(\Phi(\Theta_2)'',v)+\calL(\Theta_2,v)+2\bangles{\tilcalC_{\calB}(\Theta_2)}{v}=\angles{F_{2}}{v}+\angles{\calF^{a}_{2}}{\partial_{a}v}+\bangles{f_{2}}{\tr\,v},
\end{split}
\end{align*}
where
\begin{align*}
\begin{split}
F_{2}=\partial_{t}F_{1}-\calC(\Theta_{1}),\quad
		\calF^{a}_{2}=\partial_{t}\calF^{a}_{1}-\calC^{a}(\Theta_{1}),\quad
		f_{2}=\partial_{t}f_{1}-\calC_{\calB}(\Theta_{1})-\tiltilcalC_{\calB}(\Theta_1).
\end{split}
\end{align*}
Continuing in this way we see that $\Theta_k:=\partial_t^k\Theta$ satisfies the weak equation 
	\begin{align}\label{eq:weak2}
	\begin{split}
		(\Phi(\Theta_{k})'',v)+\calL(\Theta_{k},v)+k\bangles{\tilcalC_\calB(\Theta_k)}{v}=\angles{F_{k}}{v}+\angles{\calF^{a}_{k}}{\partial_{a}v}+\bangles{f_{k}}{\tr\,v},\qquad \forall v\in H^1(B),
		\end{split}
	\end{align}
where the source terms are defined by the following recursive formulas:
	\begin{align}\label{inhomogeneous source recursive}
		\begin{split}
		&F_{0}=0,\quad \calF^{a}_{0}=0,\quad f_{0}=\gamma^{-1}f,\\
		&F_{k}=\partial_{t}F_{k-1}-\calC(\Theta_{k-1}),\quad
		\calF^{a}_{k}=\partial_{t}\calF^{a}_{k-1}-\calC^{a}(\Theta_{k-1}),\quad
		f_{k}=\partial_{t}f_{k-1}-\calC_{\calB}(\Theta_{k-1})-(k-1)\tiltilcalC_{\calB}(\Theta_{k-1}).
		\end{split}
	\end{align}
Using an induction argument, one can derive the following explicit formulas valid for $k\geq 1$:
	\begin{align}\label{higher order source}
	\begin{split}
		F_{k}=&\partial_{t}^{k}F_{0}-\sum_{\ell=0}^{k-1}\partial_{t}^{\ell}\calC(\Theta_{k-\ell-1})=-\sum_{\ell=0}^{k-1}\partial_{t}^{\ell}\calC(\Theta_{k-\ell-1}),\\
		\calF^{a}_{k}=&\partial_{t}^{k}\calF_{0}^{a}-\sum_{\ell=0}^{k-1}\partial_{t}^{\ell}\calC^{a}(\Theta_{k-\ell-1})=-\sum_{\ell=0}^{k-1}\partial_{t}^{\ell}\calC^{a}(\Theta_{k-\ell-1}),\\
		f_{k}=&\partial_{t}^{k}f_{0}-\sum_{\ell=0}^{k-1}\partial_{t}^{\ell}\calC_{\calB}(\Theta_{k-\ell-1})-\sum_{\ell=0}^{k-2}(k-\ell-1)\partial_t^\ell\tiltilcalC_\calB(\Theta_{k-\ell-1}).
		\end{split}
	\end{align}
To rigorously justify the derivations above we would of course need $\Theta$ to be sufficiently regular, which we cannot a priori assume. Instead, in the next subsection we will inductively \emph{define} $\Theta_k$ to be the solution of \eqref{eq:weak2} (with appropriate initial data) and show that $\Theta_k=\Theta_{k-1}'$, proving regularity of $\Theta$.

\subsubsection{The equation for $D_V\sigma^2$}
The derivation of the weak formulation for the linearized equation for $D_V\sigma^2$ is similar to that of $V$, but simpler because the boundary condition is the standard zero Dirichlet condition. Recall from \eqref{eq:sigma1a-intro} that this equation is
\begin{align*}
\begin{split}
\Box D_V\sigma^2=4(\nabla^\mu V^\nu)\nabla_\mu\nabla_\nu\sigma^2+4(\nabla^\lambda V^\nu)(\nabla_\lambda V^\mu)(\nabla_\nu V_\mu),\qquad D_V\sigma^2\equiv0\mathrm{~on~}\partial\Omega.
\end{split}
\end{align*}
We will use $\Lambda$ to denote the linearized unknown for the equation of $D_V\sigma^2$. Going through similar computations as for $\Theta$ above, we can derive the weak equation for $\Lambda$. Let us restrict the domains of the bilinear forms $B$ and $C$:
\begin{align*}
\begin{split}
B:H^1_0(B)\times H^1_0(B)\to \bbR\qquad \mathrm{and} \qquad C:L^2(B)\times H^1_0(B)\to \bbR,
\end{split}
\end{align*}
To write the weak equation for $\Lambda$ we use the standard embedding of $H^1_0(B)$ in $H^{-1}(B):=(H^1_0(B))^\ast$ given by
\begin{align*}
\begin{split}
(\iota(u),v):=\angles{u}{v},\qquad u\in H^1_0(B),\quad v\in H^{1}_0(B).
\end{split}
\end{align*}
By a slight abuse of notation we will simply write $u$ for $\iota(u)$ from now on. For any $\Lambda$ with
\begin{align}\label{eq:Lambdaspaces1}
\begin{split}
&\Lambda\in L^2([0,T];H^1_0(B)),\quad \Lambda'\in L^2([0,T],L^2(B)),\quad\Lambda''\in L^2([0,T];H^{-1}(B)),
\end{split}
\end{align}
and $v\in H^1_0(B)$ define
\begin{align*}
\begin{split}
\calL_\sigma(\Lambda,v):=B(\Lambda,v)+C(\Lambda',v).
\end{split}
\end{align*}
The linearized weak equation for $\Lambda$ then takes the form
\begin{align}\label{eq:weakLambda1}
\begin{split}
(\Lambda'',v)+\calL_\sigma(\Lambda,v)=\angles{F_\sigma}{v},\qquad \forall v\in H^1_0(B),
\end{split}
\end{align}
where $F_\sigma\in L^2([0,T];L^2(B))$ is a given function. Also given initial data $\lambda_0\in H^1_0(B)$ and $\lambda_1\in L^2(B)$ the initial conditions are
\begin{align}\label{eq:Lambdaid}
\begin{split}
\Lambda(0)=\lambda_0,\qquad \quad ((\Lambda)'(0),v)=\angles{\lambda_1}{v},\quad\forall v\in H^1_0(B).
\end{split}
\end{align}
The discussion of the higher order equations for $\Lambda^k:=\partial_t^k\Lambda$ is also similar to the case of $\Theta$ but simpler.  Let
\begin{align}\label{eq:Lambdalinsource}
\begin{split}
F_{\sigma,0}&:=F_0,\qquad \calF^{a}_{\sigma,0}:=0,\\
F_{\sigma,k}=&\partial_{t}^{k}F_{\sigma,0}-\sum_{\ell=0}^{k-1}\partial_{t}^{\ell}\calC(\Lambda_{k-\ell-1})=-\sum_{\ell=0}^{k-1}\partial_{t}^{\ell}\calC(\Lambda_{k-\ell-1}),\\
\calF^{a}_{\sigma,k}=&\partial_{t}^{k}\calF_{\sigma,0}^{a}-\sum_{\ell=0}^{k-1}\partial_{t}^{\ell}\calC^{a}(\Lambda_{k-\ell-1})=-\sum_{\ell=0}^{k-1}\partial_{t}^{\ell}\calC^{a}(\Lambda_{k-\ell-1}),\\
\end{split}
\end{align}
The higher order equations for $\Lambda_k$ are then
\begin{align}\label{eq:Lambdalinweak2}
\begin{split}
(\Lambda_k'',v)+\calL_\sigma(\Lambda_{k},v)=\angles{F_{\sigma,k}}{v}+\angles{\calF^{a}_{\sigma,k}}{\partial_{a}v},\qquad \forall v\in H^1_0(B).
\end{split}
\end{align}

\subsection{Existence and uniqueness of the weak solution}\label{sec:weakex}
This subsection contains the existence theory and energy estimates which are the main ingredient of the nonlinear iteration. Let $K$ be a fixed large integer representing the total number of derivatives we commute. Throughout this section we assume that $g$, $\gamma$, $f_0$, $F_0$, $\calF_0^a$, $F_{\sigma,0}$, $\calF_{\sigma,0}^a$ are given functions satisfying the following conditions:
\begin{align}\label{eq:g-assumption1}
\begin{split}
\partial_{\bary}^a\partial_t^k g\in L^\infty([0,T];L^2(B)),\qquad &k\leq K+1,~ \begin{cases}2a\leq K+1-k,\quad &k\geq1\\ 2a\leq K,\quad &k=0\end{cases},\\
\partial_t^k (\tr\,g)\in L^2([0,T];L^2(\partial B)),\qquad &k\leq K,\\
\partial_t^k \gamma^{-1},\partial_t^k\gamma\in L^2([0,T];L^2(\partial B)),\qquad &k\leq K,\\
\partial_t^k\gamma^{-1},\partial_t^k\gamma\in L^\infty([0,T];L^\infty(\partial B)),\qquad &k\leq K-5,\\
\partial_t^kf_{0}\in L^2([0,T];L^2(\partial B)),\qquad&k\leq K,\\
\partial_t^k F_{\sigma,0}\in L^{2}([0,T];L^2( B)),\qquad&k\leq K.\\
\end{split}
\end{align}
We will also use the notation introduced in \eqref{def:commutator weak}, \eqref{higher order source}, and\eqref{eq:Lambdalinsource}.

We start with the more difficult case of equation \eqref{eq:weak2} for $\Theta$. The treatment of \eqref{eq:weak2} is divided into two parts. First, in Proposition~\ref{prop:weak1} we prove existence and uniqueness for the linear systems under quite general assumptions, and prove higher regularity of the solution up to order $K-5$. Then in Proposition~\ref{prop:weak2} we prove higher regularity up to order $K$. The reason for this distinction is that for the first $K-5$ derivatives we can always bound the coefficients appearing in the commutator errors in $L^\infty$, whereas for the last five derivatives we sometimes need to bound these coefficients in $L^2$. This requires estimating the lower order derivatives of the solution in $L^\infty$, which in turn calls for Sobolev estimates in terms of the energies. The main step in going from Proposition~\ref{prop:weak1} to Proposition~\ref{prop:weak2} is proving these Sobolev estimates.

\begin{proposition}\label{prop:weak1}
Suppose \eqref{eq:g-assumption1} holds and that there exist
\begin{align*}
\begin{split}
\theta_k\in H^1(B),\quad \theta_{k+1}\in L^2(B), \quad \tiltheta_{k+1}\in L^2(\partial B), \quad k=0,...,K-6
\end{split}
\end{align*}
such that the following two conditions hold: 
\begin{itemize}
\item  
For $k=0,\dots,K-7$
\begin{align*}
\begin{split}
\angles{\theta_{k+2}}{v}+\bangles{\tiltheta_{k+2}}{\gamma^{-1}\tr\,v}+\calL(\theta_{k},v)+k\bangles{\tilcalC_\calB(\theta_k)}{v}
=\bangles{f_{k}(0)}{\tr\,v}+\angles{F_{k}(0)}{v}+\angles{\calF^{a}_{k}(0)}{\partial_av}.
\end{split}
\end{align*}
Here $f_k$, $F_k$, $\calF^{a}_{k}$ are given by the formulas in \eqref{higher order source}, where the initial values of $\Theta_k$ and $\partial_t\Theta_k$ are defined to be $\theta_k$ and $\theta_{k+1}$ respectively (see Remark~\ref{rem:iddef}). 
\item 
$\tiltheta_k=\tr\,\theta_k$ for $k=1,\dots, K-6$.
\end{itemize}
Then there exists a unique $\Theta_{k}$ satisfying \eqref{eq:Thetaspaces1} and \eqref{eq:weakdata1} (with $(\theta_0,\theta_1,\tiltheta_1)$ replaced by $(\theta_k,\theta_{k+1},\tiltheta_{k+1})$), such that for all $v\in H^1(B)$ equation \eqref{eq:weak2} holds for almost every $t\in[0,T]$. The solution satisfies
\begin{align}\label{eq:energy}
\begin{split}
&\sup_{t\in [0,T]}\big(\|\Theta'_{k}\|_{L^2(B)}+\|\Theta_{k}\|_{H^1(B)}+\|\tr\,\Theta'_{k}\|_{L^2(\partial B)}\big)\\
&\leq C_1e^{C_2T}\Big(\|\theta_k\|_{H^1(B)}+\|\theta_{k+1}\|_{L^2(B)}+\|\tiltheta_{k+1}\|_{L^2(\partial B)}+\|f_{k}\|_{L^2([0,T];L^2(\partial B))}\\
&\phantom{\leq C_1e^{C_2T}\Big(}+\|F_{k}\|_{L^2([0,T];L^2(B))}+\|\calF^{a}_{k}\|_{L^{\infty}([0,T];L^2(B))}\Big),
\end{split}
\end{align}
In these estimates $C_1$, $C_2$, and $C_3$ are constants depending only on $\|g\|_{L^\infty([0,T]\times B)}$, $\|\partial_{t,\bary} g\|_{L^\infty([0,T]\times B)}$, $\|g^{-1}\|_{L^\infty([0,T]\times B)}$, $\|\partial_{t,\bary} g^{-1}\|_{L^\infty([0,T]\times B)}$, $\|\gamma^{-1}\|_{L^\infty([0,T]\times \partial B)}$, and $\|\partial_t\gamma^{-1}\|_{L^\infty([0,T]\times \partial B)}$. Moreover, we have $\Theta'_{k-1}=\Theta_{k}$ for $k=1,...,k-5$.
\end{proposition}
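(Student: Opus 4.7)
The strategy follows the Galerkin scheme outlined in the introduction. Fix a smooth orthonormal basis $\{w_j\}_{j\geq 1}$ of $L^2(B)$ which is also an orthogonal basis of $H^1(B)$ (for instance, Neumann eigenfunctions of the Laplacian on $B$). For each $m$ I seek an approximate solution $\Theta^m(t)=\sum_{j=1}^m d^m_j(t)\,w_j$ solving, for $j=1,\dots,m$,
\begin{equation*}
(\Phi(\Theta^m)''(t),w_j)+\calL(\Theta^m,w_j)+k\bangles{\tilcalC_\calB(\Theta^m)}{w_j}=\angles{F_k}{w_j}+\angles{\calF^a_k}{\partial_a w_j}+\bangles{f_k}{\tr\,w_j},
\end{equation*}
with $d^m_j(0)$ and $(d^m_j)'(0)$ obtained by projecting $\theta_k$ and the functional $v\mapsto\angles{\theta_{k+1}}{v}+\bangles{\tiltheta_{k+1}}{\tr\,v}$ onto $\mathrm{span}(w_1,\dots,w_m)$. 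The crucial observation is that the ``mass matrix'' $M_{ij}(t):=\angles{w_i}{w_j}+\bangles{\gamma^{-1}\tr\,w_i}{\tr\,w_j}$ is symmetric, as regular in $t$ as $\gamma^{-1}$ (cf.\ \eqref{eq:g-assumption1}), and uniformly positive definite, so after multiplication by $M^{-1}$ one has a linear second-order ODE system with $L^\infty_t$ coefficients, whose unique global solvability is standard.

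The energy estimate is obtained by testing formally against $v=(\Theta^m)'$, using the identities
\begin{align*}
(\Phi(u)'',u')&=\tfrac{d}{dt}\bigl(\tfrac12\|u'\|_{L^2(B)}^2+\tfrac12\bangles{\gamma^{-1}\tr\,u'}{\tr\,u'}\bigr)-\tfrac12\bangles{(\gamma^{-1})'\tr\,u'}{\tr\,u'},\\
B(u,u')&=\tfrac{d}{dt}\bigl(\tfrac12\angles{g^{ab}\partial_a u}{\partial_b u}\bigr)+\text{l.o.t.},
\end{align*}
together with positive definiteness of the spatial block $(g^{ab})_{1\leq a,b\leq 3}$ of the Lorentzian metric $g$ (which follows from the fact that $\partial B$ is timelike along the fluid flow). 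The remaining terms in $\calL(\Theta^m,(\Theta^m)')$, the boundary commutator $k\bangles{\tilcalC_\calB(\Theta^m)}{(\Theta^m)'}$, and the inhomogeneous contributions $\angles{F_k}{(\Theta^m)'}$ and $\bangles{f_k}{\tr\,(\Theta^m)'}$ are all controlled by Cauchy--Schwarz and Young's inequality. The sole delicate source term is $\angles{\calF^a_k}{\partial_a(\Theta^m)'}$, which is handled by integration by parts in $t$,
\begin{equation*}
\int_0^s\angles{\calF^a_k}{\partial_a(\Theta^m)'}\,dt=\angles{\calF^a_k(s)}{\partial_a\Theta^m(s)}-\angles{\calF^a_k(0)}{\partial_a\Theta^m(0)}-\int_0^s\angles{\partial_t\calF^a_k}{\partial_a\Theta^m}\,dt,
\end{equation*}
after which the boundary term at $t=s$ is absorbed into the coercive energy by Young's inequality; this explains the appearance of $L^\infty_tL^2$ (rather than $L^2_tL^2$) of $\calF^a_k$ on the right-hand side of \eqref{eq:energy}. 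Gr\"onwall's inequality then produces \eqref{eq:energy} for $\Theta^m$ uniformly in $m$.

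Weak-$\ast$ compactness delivers a subsequential limit $\Theta_k$ of the required regularity \eqref{eq:Thetaspaces1}, and the weak equation \eqref{eq:weak2} is recovered by testing against $v=w_j\psi(t)$ with $\psi\in C^\infty_c((0,T))$ and invoking density of $\bigcup_m\mathrm{span}(w_1,\dots,w_m)$ in $H^1(B)$; the initial conditions \eqref{eq:weakdata1} are extracted by an additional integration by parts in $t$. Uniqueness is proved by applying the energy method to the difference $W$ of two solutions with vanishing data; since $v=W'$ is not directly admissible as a test function, one uses the standard device of testing against $v(t)=\int_t^s W(\tau)\,d\tau$ for fixed $s\in(0,T]$, yielding an energy identity that forces $W\equiv 0$. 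Finally, the identification $\Theta_{k-1}'=\Theta_k$ for $1\leq k\leq K-5$ is shown by induction: differentiating \eqref{eq:weak2} for $\Theta_{k-1}$ distributionally in $t$ and applying the commutator identity \eqref{commutator weak} shows that $\widetilde\Theta_k:=\Theta_{k-1}'$ satisfies the weak equation \eqref{eq:weak2} at level $k$, while the compatibility hypotheses ensure that the initial conditions for $\widetilde\Theta_k$ coincide with those of $\Theta_k$; uniqueness then forces $\widetilde\Theta_k=\Theta_k$. The main obstacle throughout is the nonstandard mixed bulk/boundary structure encoded by $\Phi$, which forces us to treat $\Phi(\Theta_k)$ (rather than $\Theta_k$ itself) as the natural ``momentum'', to track boundary traces at every stage of the commutator calculus, and to verify positive definiteness of the effective mass matrix at the Galerkin level---none of which is present in classical hyperbolic Dirichlet problems.
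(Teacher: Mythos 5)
Your Galerkin scheme, energy estimate (including the integration-by-parts in $t$ for the $\calF^a_k$ term), and the uniqueness argument with the test function $v(t)=\int_t^s W(\tau)\,\ud\tau$ all match the paper's proof essentially step for step. The one genuine gap is in the recursive identification $\Theta_{k-1}'=\Theta_k$.

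You argue by differentiating the weak equation for $\Theta_{k-1}$ distributionally in $t$ and claiming that $\widetilde\Theta_k:=\Theta_{k-1}'$ satisfies \eqref{eq:weak2} at level $k$, after which uniqueness forces $\widetilde\Theta_k=\Theta_k$. But the uniqueness you just proved is uniqueness \emph{within the function class} \eqref{eq:Thetaspaces1}, i.e.\ among $\Theta$ with $\Theta\in L^2([0,T];H^1(B))$, $\Theta'\in L^2([0,T];L^2(B))$, $(\tr\Theta)'\in L^2([0,T];L^2(\partial B))$, and $\Phi(\Theta)''\in L^2([0,T];(H^1(B))^\ast)$. From what is known about $\Theta_{k-1}$ at this stage of the argument one only has $\Theta_{k-1}'\in L^2([0,T];L^2(B))$, not $L^2([0,T];H^1(B))$; likewise the second and higher time-derivative information that \eqref{eq:Thetaspaces1} demands of $\widetilde\Theta_k$ is not a priori available. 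So $\widetilde\Theta_k$ does not visibly lie in the class where uniqueness applies, and the conclusion cannot be drawn. This is exactly the pitfall the paper flags at the end of Subsection~\ref{subsec:weakder}: ``To rigorously justify the derivations above we would of course need $\Theta$ to be sufficiently regular, which we cannot a priori assume.'' The paper avoids it by running the argument in the opposite direction: it sets $\Gamma(t):=\Theta_{k-1}(0)+\int_0^t\Theta_k(s)\,\ud s$, so that $\Gamma$ \emph{gains} a derivative from $\Theta_k$ (hence lies in the required class and has the matching initial data by the compatibility hypothesis), computes the equation that $\Gamma$ satisfies using the commutator formula \eqref{commutator weak}, and shows that $w:=\Gamma-\Theta_{k-1}$ solves a homogeneous equation with zero data of the type the uniqueness argument handles (after accounting for the integrated commutator terms). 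That gives $\Gamma=\Theta_{k-1}$ and hence $\Theta_{k-1}'=\Theta_k$. Replacing your differentiation step with this integration argument would close the gap.
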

Before presenting the proof let us make a few remarks about the assumptions of the proposition.
\begin{remark}
Note that in assuming existence of $\theta_2$, $\theta_3$, etc, we are not imposing additional data. Rather, we are requiring additional regularity on $\theta_0$, $\theta_1$,  and $\tiltheta_1$. Indeed, if $\theta_2$, $\theta_3$, etc, exist they are uniquely determined by $\theta_0$, $\theta_1$,  and $\tiltheta_0$, so there is no freedom in prescribing them. To see this note that for instance in the regularity condition for $\theta_2,\tiltheta_2$ we can first take $v\in H^1_0(B)$ to get uniqueness of $\theta_2$ and then use the surjectivity of $\tr:H^1(B)\to L^2(\partial B)$ to get uniqueness of $\tiltheta_2$. Similarly, $\theta_3$ and $\tiltheta_3$ are determined by $\theta_2$.

The compatibility conditions are there to guarantee that the initial and boundary conditions match on the initial boundary, as required for wave equations on bounded domains.
\end{remark}
\begin{remark}\label{rem:iddef}
We clarify the meaning of the initial value of $f_k$, $F_k$, and $\tilF_k^a$. By definition, these are simply given by replacing $\Theta_k$ and $\partial_t\Theta_k$ by their initial data $\theta_k$ and $\theta_{k+1}$. For instance, the regularity for $(\theta_2,\tiltheta_2)$ is (see \eqref{eq:defbilin}, \eqref{eq:defcalL}, and \eqref{def:commutator weak})
\begin{align*}
\begin{split}
&\angles{\theta_2}{v}+\bangles{\tiltheta_2}{\gamma^{-1}\tr\,v}+B(\theta_0,v)+C(\theta_1,v)+D(\tiltheta_1,v)+E(\tr\,\theta_0,v)-\bangles{\tiltheta_1}{(\partial_t\gamma^{-1})\tr\,v}\\
&=\bangles{f_{0}(0)}{\tr\,v}+\angles{F_{0}(0)}{v}+\angles{\calF^{a}_{0}(0)}{\partial_av}.
\end{split}
\end{align*}
For higher values of $k$ we also replace $\Theta_\ell$ and $\partial_t\Theta_\ell$ appearing in \eqref{higher order source} by $\theta_\ell$, $\theta_{\ell+1}$, and $\tiltheta_{\ell+1}$ in the same way as above.
\end{remark}
\begin{proof}[Proof of Proposition~\ref{prop:weak1}]
{\bf{Existence.}} We proceed inductively. For $k=0$, we have $F_{0}=0$ and $\calF^{a}_{0}=0$. Once $\Theta_{k-1}$ is constructed, then $F_{k}$ and $\calF^{a}_{k}$ is defined as in \eqref{higher order source}, and satisfy
\begin{align*}
	\|F_{k}\|_{L^{2}([0,T]\times B)}<\infty,\quad\textrm{and}\quad \|\calF^{a}_{k}\|_{L^{\infty}([0,T];L^{2}(B))}<\infty.
\end{align*}
Let $\{e_\ell\}_{\ell=1}^\infty$ be an orthogonal basis of $H^1(B)$ which at the same time is an orthonormal basis of $L^2(B)$ (see for instance \cite{Jost-PDEbook}). All inner products are with respect to the time-independent measure $\ud\bary$. The linear span of $\{e_1,\dots,e_m\}$ is denoted by $E_m$ and the $L^2(B)$ orthogonal projection onto $E_m$ by $P_m$. Note that $P_m$ is also the $H^1(B)$ orthogonal projection. Indeed, if $v=\sum_{\ell=1}^\infty v^\ell e_\ell$ is an arbitrary element of $H^1(B)$, then both the $L^2(B)$ and $H^1(B)$ projections of $v$ on $E_m$ are given by $\sum_{\ell=1}^mv^\ell e_\ell$.  We define the map $\Phi_m:H^1(B)\to (H^1(B))^\ast$ by
\begin{align*}
\begin{split}
(\Phi_m(u),v):=(\Phi(u),P_mv)=\angles{u}{P_mv}+\bangles{\tr \,u}{\gamma^{-1}\tr\,P_mv},
\end{split}
\end{align*}
and say that
\begin{align*}
\begin{split}
\Theta_{k,m}(t,x):=\sum_{\ell=1}^m \Theta_{k,m}^\ell(t)e_\ell(x),\qquad \Theta_{k,m}^\ell\in C^2([0,T]),~\ell=1,\dots,m,
\end{split}
\end{align*}
satisfies the $\mth$ approximate weak equation if for $\ell=1,\dots,m$,
\begin{align}\label{eq:gw1}
\begin{split}
&(\Phi_m(\Theta_{k,m})'',e_\ell)+\calL(\Theta_{k,m},e_\ell)+k\bangles{\tilcalC_\calB(\Theta_{k,m})}{v}=\bangles{f_{k}}{\tr\,e_\ell}+\angles{F_{k}}{e_\ell}+\angles{\calF^{a}_{k}}{\partial_ae_\ell},
\end{split}
\end{align}
and
\begin{align}\label{eq:gdata1}
\begin{split}
\Theta_{k,m}^\ell(0)=\angles{\theta_k}{e_\ell},\qquad &\ell=1,\dots,m,\\
\angles{\Theta_{k,m}'(0)}{e_\ell}+\bangles{\tr\, \Theta_{k,m}'(0)}{e_\ell}=\angles{\theta_{k+1}}{e_\ell}+\bangles{\tiltheta_{k+1}}{\tr\,e_\ell},\qquad &\ell=1,\dots,m.
\end{split}
\end{align}
Existence and uniqueness of $\Theta_{k,m}$ satisfying the $\mth$ approximate weak equation is a consequence of existence theory for ODEs. Indeed, the equation reduces to a system of linear second order ODEs for the unknowns $\Theta_m^1,\dots,\Theta_m^m$. The matrix coefficient of the second order derivative is
\begin{align*}
\begin{split}
I^m+G^m
\end{split}
\end{align*}
where $G^m$ is the positive semi-definite matrix\footnote{To see that $G^m$ is positive semi-definite, let $g^m(x)$ be the $m\times m$ matrix with entries $e_i(x)e_j(x)$. For each $x$, $g^m(x)$ has $m-1$ zero eigenvectors (take $m-1$ linearly independent vectors which are perpendicular to $\vece(x):=(e_1(x),\dots ,e_m(x))^\intercal$) and one positive eigenvector (namely $\vece(x)$) with eigenvalue $\|\vece(x)\|_{\bbR^m}^2$. In particular $g^m(x)$ is positive semi-definite for each $x$. Then note that for each constant vector $X_0\in\bbR^m$
\begin{align*}
\begin{split}
X_0^\intercal G^m X_0=\int_{\partial B} X_0^\intercal g^m(x) X_0 \gamma^{-1}\ud S(x)\geq0.
\end{split}
\end{align*}}
with entries $$G^m_{ij}=\bangles{\gamma^{-1}\tr\,e_i}{\tr\, e_j}.$$ It follows that $I^m+G^m$ is invertible and the ODE can be put in standard form. Similarly, for the initial data for the time derivatives $(\Theta_m^k)'(0)$ we use the invertibility of $I^m+\tilG^m$ and the second equation in \eqref{eq:gdata1}, where $\tilG^m$ is the positive semi-definite matrix with entries $\tilG^m_{ij}=\bangles{\tr\,e_i}{\tr\, e_j}$. The initial data for $\Theta_m^k(0)$ are given by the first equation in \eqref{eq:gdata1}.
	
Our next goal is to prove energy estimates for the $\mth$ approximate weak equation. For this we multiply \eqref{eq:gw1} by $(\Theta_{k,m}^{\ell})'$ and sum up in $\ell=1,\dots,m$ to get, with $|\nabla_gu|^2:=g^{ab}\partial_au\partial_bu$,
	\begin{align}\label{eq:ge1}
	\begin{split}
	&\int_{0}^{t}\left(\frac{1}{2}\frac{\ud}{\ud t}\big(\|\Theta_{k,m}'\|_{L^2(B)}^2+\||\nabla_g\Theta_{k,m}|\|_{L^2(B)}^2+\|\gamma^{-\frac{1}{2}}\tr\,\Theta_{k,m}'\|_{L^2(\partial B)}^2\big)\right)ds\\
	&=\int_{0}^{t}\left(\frac{1}{2}\angles{\Theta_{k,m}'}{\Theta_{k,m}' (g^{t\alpha}\partial_\alpha\log|g|)}+\frac{1}{2}\angles{\partial_a\Theta_{k,m}}{\Theta_{k,m}' (g^{a\alpha}\partial_\alpha\log|g|)}\right)ds\\
	&\quad+\int_{0}^{t}\left(\angles{\Theta_{k,m}}{\Theta_{k,m}' \partial_ag^{ta}}+\frac{1}{2}\angles{\partial_tg^{ab} \partial_a\Theta_{k,m}}{\partial_b\Theta_{k,m}}\right)ds\\
	&\quad+\int_{0}^{t}\left(\frac{2k+1}{2}\bangles{(\partial_t\gamma^{-1})\Theta_{k,m}'}{\Theta'_{k,m}}+\bangles{f^{h}}{\tr\,\Theta_{k,m}'}+\angles{F_{k}}{\Theta_{k,m}'}+\angles{\calF^{a}_{k}}{\partial_a\Theta_{k,m}'}\right)ds.
	\end{split}
	\end{align}
	To obtain an energy estimate, we need to bound the initial norms $\||\nabla_g\Theta_{k,m}(0)|\|_{L^2(B)}$, $\|\Theta_{k,m}'(0)\|_{L^2(B)}$, and $\|\tr\,\Theta_{k,m}'(0)\|_{L^2(\partial B)}$. The first term, $\||\nabla_g\Theta_{k,m}(0)|\|_{L^2(B)}$, can be bounded by the $H^1(B)$ norm of $\theta_0$. For the time derivatives we multiply the second equation in \eqref{eq:gdata1} by $(\Theta_{k,m}^\ell)'(0)$ and sum up in $\ell=1,\dots,m$ to get
	\begin{align*}
	\begin{split}
	\|\Theta_{k,m}'(0)\|_{L^2(B)}^2+\|\tr\,\Theta_{k,m}'(0)\|_{L^2(\partial B)}^2\leq \|\theta_{k+1}\|_{L^2(B)}\|\Theta_{k,m}'(0)\|_{L^2(B)}+\|\tiltheta_{k+1}\|_{L^2(\partial B)}\|\tr\,\Theta_{k,m}'(0)\|_{L^2(\partial B)},
	\end{split}
	\end{align*}
	from which it follows that
	\begin{align*}
	\begin{split}
	\|\Theta_{k,m}'(0)\|_{L^2(B)}^2+\|\tr\,\Theta_{k,m}'(0)\|_{L^2(\partial B)}^2\leq \|\theta_{k+1}\|_{L^2(B)}^2+\|\tiltheta_{k+1}\|_{L^2(\partial B)}^2.
	\end{split}
	\end{align*}
	To obtain the uniform bounds on $\Theta_{k,m}(t)$, we use \eqref{eq:ge1} to get
	\begin{align*}
	\begin{split}
	&\sup_{t\in [0,T]}\big(\|\Theta_{k,m}'\|_{L^2(B)}^2+\||\nabla_g\Theta_{k,m}|\|_{L^2(B)}^2+\|\tr\,\Theta_{k,m}'\|_{L^2(\partial B)}^2\big)\\
	&\leq \|\theta_k\|_{H^1(B)}^2+\|\theta_{k+1}\|_{L^2(B)}^2+\|\tiltheta_{k+1}\|_{L^2(\partial B)}^2\\
	&+\int_0^T(\|f_{k}\|_{L^2(\partial B)}\|\tr\,\Theta_{k,m}'\|_{L^2(\partial B)}\,\ud t+\frac{2k+1}{2}\int_0^T\|\partial_t\gamma^{-1}\|_{L^\infty(\partial B)}\|\Theta_{k,m}'\|_{L^2(\partial B)}^2\ud t\\
	&+\int_0^T\|F_{k}\|_{L^2(B)}\|\Theta_{k,m}'\|_{L^2(B)}\,\ud t+\left|\int_{0}^{T}\int_{B}\calF^{a}_{k}\partial_{a}\Theta'_{k,m}\,d\bary\,\ud t\right|\\
	&+\int_0^T(\|g\|_{L^\infty(B)}\|\partial_{t,\bary}\log |g|\|_{L^\infty(B)}+\|\partial_{t,\bary}g\|_{L^\infty(B)})(\|\Theta_{k,m}'\|_{L^2(B)}^2+\|\partial_\bary \Theta_{k,m}\|_{L^2(B)}^2)\,\ud t.
	\end{split}
	\end{align*}
	The term $\int_{0}^{T}\int_{B}\calF^{a}_{k}\partial_{a}\Theta'_{k,m}\,d\bary\,\ud t$ needs some special care:
	\begin{align*}
	\int_{0}^{T}\int_{B}\calF^{a}_{k}\partial_{a}\Theta'_{k,m}\,d\bary\,\ud t=&-\int_{0}^{T}\int_{B}\partial_{t}\calF^{a}_{k}\partial_{a}\Theta_{k,m}d\bary\,\ud t+\int_{B}\calF^{a}_{k}(T)\partial_{a}\Theta_{k,m}(T)d\bary-\int_{B}\calF^{a}_{k}(0)\partial_{a}\Theta_{k,m}(0)d\bary,
	\end{align*}
	which gives
	\begin{align*}
	\left|	\int_{0}^{T}\int_{B}\calF^{a}_{k}\partial_{a}\Theta'_{k,m}\,d\bary\,\ud t\right|\lesssim& \int_{0}^{T}\|\partial_{t}\calF^{a}_{k}(t,\cdot)\|^{2}_{L^{2}(B)}dt+\int_{0}^{T}\|\nabla_{g}\Theta_{k,m}(t,\cdot)\|^{2}_{L^{2}(B)}dt\\
	&+\delta\sup_{t\in[0,T]}\|\nabla_{g}\Theta_{k,m}(t,\cdot)\|^{2}_{L^{2}(B)}+C_{\delta}\sup_{t\in[0,T]}\|\calF^{a}_{k}(t,\cdot)\|_{L^{2}(B)}^{2},
	\end{align*}
	where $\delta>0$ is sufficiently small. Then it follows from Gronwall that
	\begin{align}\label{eq:gen1}
	\begin{split}
	&\sup_{t\in [0,T]}\big(\|\Theta_{k,m}'\|_{L^2(B)}+\|\Theta_{k,m}\|_{H^1(B)}+\|\tr\,\Theta_{k,m}'\|_{L^2(\partial B)}\big)\\
	&\leq C_1e^{C_2T}\Big(\|\theta_k\|_{H^1(B)}+\|\theta_{k+1}\|_{L^2(B)}+\|\tiltheta_{k+1}\|_{L^2(\partial B)}+\|f_{k}\|_{L^2([0,T];L^2(\partial B))}\\
	&\phantom{\leq C_1e^{C_2T}\Big(}+\|F_{k}\|_{L^2([0,T];L^2(B))}+\|\calF^{a}_{k}\|_{L^{\infty}([0,T];L^2(B))}\Big).
	\end{split}
	\end{align}
	Here the constants $C_1$ and $C_2$ depend only on
	\begin{align*}
	\begin{split}
	&\|g\|_{L^\infty([0,T]\times B)}, \|\partial_{t,\bary} g\|_{L^\infty([0,T]\times B)},~ \|g^{-1}\|_{L^\infty([0,T]\times B)}, \|\partial_{t,\bary} g^{-1}\|_{L^\infty([0,T]\times B)},\\
	&\|\gamma^{-1}\|_{L^\infty([0,T]\times \partial B)}, ~\|\partial_t\gamma^{-1}\|_{L^\infty([0,T]\times \partial B)}.
	\end{split}
	\end{align*}
	Estimate \eqref{eq:gen1} is the main energy estimate on $\Theta_{k,m}$ which allows us to pass to a limit. To get a bound on $\Phi_m(\Theta_{k,m})''$ note that by \eqref{eq:gw1} and the definition of $\Phi_{k,m}$, for any $v\in H^1(B)$
	\begin{align*}
	\begin{split}
	(\Phi_m(\Theta_{k,m})'',v)&=(\Phi_m(\Theta_{k,m})'',P_mv)\\
	&=\bangles{f_{k}}{\tr\,P_mv}+\angles{F_{k}}{P_m v}-\angles{\calF^{a}_{k}}{\partial_a P_mv}-\calL(\Theta_{k,m},P_m v).
	\end{split}
	\end{align*}
	Appealing to the bounds \eqref{eq:gen1} we get\footnote{Since $\tr \,\Theta_{k,m}$ and $\tr \,\Theta_{k,m}'$ are well defined, uniform $(H^1(B))^\ast$ bounds on $\Phi_m(\Theta_{k,m})$ and $\Phi_m(\Theta_{k,m})'$ also follow from the bounds on $\Theta_{k,m}$, $\Theta_{k,m}'$, and $\tr\,\Theta_{k,m}'$.}
	\begin{align*}
	\begin{split}
	\|\Phi_m(\Theta_{k,m})''\|_{L^2([0,T];(H^1(B))^\ast)}\leq C,
	\end{split}
	\end{align*}
	where $C$ depends on the upper bounds on the right-hand side of \eqref{eq:gen1}. This means that $\Theta_{k,m}\in L^2([0,T];H^1(B))$, $\Theta_{k,m}'\in L^2([0,T];L^2(B))$, $(\tr \,\Theta_{k,m})'\in L^2([0,T];L^2(\partial B))$, and $\Phi(\Theta_{k,m})''\in L^2([0,T];(H^1(B))^\ast)$, and they form bounded sequences, and hence have weak limits (along a subsequence) in the same spaces. Let $\Theta_k$ denote the limit of $\Theta_{k,m}$ and $U$ the limit of $\Phi_m(\Theta_{k,m})$. We claim that $U=\Phi(\Theta_{k})$ and that $\Theta_{k}$ satisfies \eqref{eq:weak2}. For the former claim suppose $v:=\sum_{\ell=1}^M v^\ell e_\ell$ is element in $H^1(B)$ belonging to the span of $\{e_1,\dots, e_M\}$ for some $M$. Then
	\begin{align*}
	\begin{split}
	(\Phi(\Theta_{k}),v)&=\angles{\Theta_{k}}{v}+\bangles{\tr\,\Theta_{k}}{\gamma^{-1}\tr\,v}=\lim_{m\to\infty}\angles{\Theta_{k,m}}{v}+\lim_{m\to\infty}\bangles{\tr\,\Theta_{k,m}}{\gamma^{-1}\tr\,v}\\
	&=\lim_{m\to\infty}\angles{\Theta_{k,m}}{P_m \,v}+\lim_{m\to\infty}\bangles{\tr\,\Theta_{k,m}}{\gamma^{-1}\tr\,P_m \,v}=\lim_{m\to\infty}(\Phi_m(\Theta_{k,m}),v)=:(U,v).
	\end{split}
	\end{align*}
	Since elements $v$ of this form are dense in $H^1(B)$, it follows that $U$ and $\Phi(\Theta_{k})$ agree as elements of $(H^1(B))^\ast$. Since $\Phi_m(\Theta_{k,m})''\rightharpoonup U''$ it follows that $\Phi_m(\Theta_{k,m})''\rightharpoonup\Phi(\Theta)''$. Indeed, for any smooth compactly supported (in $(0,T)$) $\varphi:(0,T)\to H^1(B)$
	\begin{align*}
	\begin{split}
	\int_0^T (\Phi(\Theta_{k}), \varphi'' )\ud t=  \lim_{m\to\infty} \int_0^T (\Phi_m(\Theta_{k,m}),\varphi'')\ud t=\lim_{m\to\infty}\int_0^T(\Phi_m(\Theta_{k,m})'',\varphi)\ud t .
	\end{split}
	\end{align*}
Using a similar argument as above, starting with elements of the form $v:\sum_{\ell=1}^M v^\ell e_\ell$, we can pass to the limit in \eqref{eq:gw1} and conclude that $\Theta_{k}$ is a solution of \eqref{eq:weak2}, and the initial data are attained. Finally, the energy estimates \eqref{eq:energy} follows from passing to the limit in \eqref{eq:gen1}.
	
		{\bf{Uniqueness.}} To prove uniqueness (to ease notation we consider only the case $k=0$, but the proof easily extends to other values of $k$) suppose $\Theta$ is a solution with initial data $\theta_0$, $\theta_1$, $\tiltheta_1$, and sources $f$, $F$, and $\calF$ all equal to zero, and for some fixed $s\in (0,T)$ let
	\begin{align*}
	\begin{split}
	\zeta(t):=\begin{cases} -\int_t^s\Theta(\tau)\ud\tau,\quad &t\leq s\\ 0,\quad&t\geq s\end{cases}.
	\end{split}
	\end{align*}
	Then $\zeta(t)\in H^1(B)$ for each $t$ so integrating \eqref{eq:weak2} with $v=\zeta(t)$ gives
	\begin{align*}
	\begin{split}
	\int_0^s (\Phi(\Theta)'',\zeta)\ud t +\int_0^s \bigg(B(\Theta,\zeta)+C(\Theta',\zeta)+D((\tr\,\Theta)',\zeta)+E(\tr\,\Theta,\zeta)\bigg)\ud t =0.
	\end{split}
	\end{align*}
	Note that $\zeta'(t)=\Theta(t)$ for $t\leq s$, so in particular $\zeta'(0)=0$. Also by definition $\zeta(t)=0$ for $t\geq s$. Since $\Theta$ has zero initial data by assumption, it follows that 
	\begin{align*}
	\begin{split}
	\int_0^s(\Phi(\Theta)'',\zeta)\ud t&=-\int_0^s(\Phi(\Theta)',\zeta')\ud t\\
	& =-\int_0^s\angles{\Theta'}{\Theta}\ud t-\int_0^s\bigg(\bangles{(\tr\,\Theta)'}{\gamma^{-1}\tr\,\Theta}+\bangles{\tr\,\Theta}{(\gamma^{-1})'\tr\,\Theta}\bigg)\ud t\\
	&=-\frac{1}{2}\|\Theta(s)\|_{L^2(B)}^2-\frac{1}{2}\|\gamma^{-\frac{1}{2}}\tr\,\Theta\|_{L^2(\partial B)}^2-\int_0^s\bangles{\tr\,\Theta}{(\gamma^{-1})'\tr\,\Theta}\ud t.
	\end{split}
	\end{align*}
	Next replacing $\Theta$ by $\zeta'$ and integrating by parts we can write
	\begin{align*}
	\begin{split}
	B(\Theta,\zeta) &= \frac{1}{2}\partial_{t}\angles{\partial_a\zeta}{g^{ab}\partial_b\zeta}-\frac{1}{2}\angles{\partial_a\zeta}{(\partial_tg^{ab})\partial_b\zeta}+\frac{1}{2}\angles{\zeta'}{ (\partial_a\zeta) g^{a\alpha}\partial_\alpha\log |g|}+\frac{1}{2}\angles{\zeta'}{ \zeta\partial_a( g^{a\alpha}\partial_\alpha\log |g|)}\\
	&\quad-\frac{1}{2}\bangles{\zeta'}{\zeta g^{r\alpha}\partial_\alpha\log |g|}-\bangles{\zeta'}{\zeta\partial_tg^{tr}}+\angles{\zeta'}{(\partial_a\zeta)\partial_tg^{ta}}+\angles{\zeta'}{\zeta\partial^2_{ta}g^{ta}}.
	\end{split}
	\end{align*}
	Similar calculations give
	\begin{align*}
	\begin{split}
	C(\Theta',\zeta)&=2\partial_t\angles{\zeta'}{g^{ta} \partial_a\zeta}+\angles{\zeta'}{(\partial_ag^{ta}) \zeta'}+\bangles{\zeta'}{g^{tr}\zeta'}-2\angles{\zeta'}{(\partial_tg^{ta}) \partial_a\zeta}-\frac{1}{2}\partial_t\angles{\zeta'}{\zeta g^{t\alpha}\partial_\alpha\log |g|}\\
	&\quad +\frac{1}{2}\angles{\zeta'}{\zeta' g^{t\alpha}\partial_\alpha\log |g|}+\frac{1}{2}\angles{\zeta'}{\zeta \partial_t(g^{t\alpha}\partial_\alpha\log |g|)}+\partial_t\angles{\zeta'}{\zeta\partial_a g^{ta}}-\angles{\zeta'}{\zeta'\partial_a g^{ta}}-\angles{\zeta'}{\zeta\partial_a \partial_tg^{ta}},
	\end{split}
	\end{align*}
	and
	\begin{align*}
	\begin{split}
	D((\tr\,\Theta)',\zeta)&=-\partial_t\bangles{\tr\,\zeta'}{g^{tr}\tr\,\zeta}+\bangles{\tr\,\zeta'}{g^{tr}\tr\,\zeta'}+\bangles{\tr\,\zeta'}{(\partial_tg^{tr})\tr\,\zeta}\\
	&\quad-2\partial_t\bangles{\tr\,\zeta'}{(\gamma^{-1})'\tr\,\zeta}+2\bangles{\tr\,\zeta'}{(\gamma^{-1})'\tr\zeta'}+2\bangles{\tr\,\zeta'}{(\gamma^{-1})''\tr\zeta},
	\end{split}
	\end{align*}
	as well as 
	\begin{align*}
	E(\tr\,\Theta,\zeta)=-\bangles{\tr\,\zeta'}{(\gamma^{-1})''\tr\,\zeta}
	\end{align*}
	Putting everything together (and keeping in mind that $\zeta'(0)=\zeta(s)=0$) we arrive at
	\begin{align*}
	\begin{split}
	&\|\nabla_g \zeta(0)\|_{L^2(B)}^2+\|\Theta(s)\|_{L^2(B)}^2+\|\tr\,\Theta(s)\|_{L^2(\partial B)}^2\\
	&\leq C \int_0^s(\|\zeta(t)\|_{H^1(B)}^2+\|\zeta'(t)\|_{L^2(B)}^2+\|\tr\,\zeta'(t)\|_{L^2(\partial B)}^2)\ud t
	\end{split}
	\end{align*}
	where the constant $C$ depends only on
	\begin{align*}
	\begin{split}
	\sup_{0\leq t\leq T}\sum_{k=0}^2\bigg(\|\partial^kg\|_{L^\infty(B)}+\|\partial^k_t \gamma\|_{L^\infty(\partial B)}+\|\partial^k \gamma^{-1}\|_{L^\infty(\partial B)}\bigg).
	\end{split}
	\end{align*}
	Since $\zeta(0)=-\int_0^s\zeta'(t)\ud t$ we can add $\|\zeta(0)\|_{L^2(B)}^2$ to the left-hand side above to get (with possible different $C$ but with the same dependence on the coefficients)
	\begin{align*}
	\begin{split}
	&\|\zeta(0)\|_{H^{1}(B)}^2+\|\Theta(s)\|_{L^2(B)}^2+\|\tr\,\Theta(s)\|_{L^2(\partial B)}^2\\
	&\leq C \int_0^s(\|\zeta(t)\|_{H^1(B)}^2+\|\Theta(t)\|_{L^2(B)}^2+\|\tr\,\Theta(t)\|_{L^2(\partial B)}^2)\ud t.
	\end{split}
	\end{align*}
	Now let
	\begin{align*}
	\begin{split}
	w(t):=\int_0^t \Theta(\tau)\ud\tau
	\end{split}
	\end{align*}
	so that for $t\leq s$
	\begin{align*}
	\begin{split}
	\zeta(t) = w(t)-w(s).
	\end{split}
	\end{align*}
	It follows that 
	\begin{align*}
	\begin{split}
	&\| w(s)\|_{H^{1}(B)}^2+\|\Theta(s)\|_{L^2(B)}^2+\|\tr\,\Theta(s)\|_{L^2(\partial B)}^2\\
	&\leq C \int_0^s(\|w(t)-w(s)\|_{H^1(B)}^2+\|\Theta(t)\|_{L^2(B)}^2+\|\tr\,\Theta(t)\|_{L^2(\partial B)}^2)\ud t,
	\end{split}
	\end{align*}
	which in turn gives
	\begin{align*}
	\begin{split}
	&(1-2Cs)\| w(s)\|_{H^1(B)}^2+\|\Theta(s)\|_{L^2(B)}^2+\|\tr\,\Theta(s)\|_{L^2(\partial B)}^2\\
	&\leq 2C \int_0^s(\|w(t)\|_{H^1(B)}^2+\|\Theta(t)\|_{L^2(B)}^2+\|\tr\,\Theta(t)\|_{L^2(\partial B)}^2)\ud t.
	\end{split}
	\end{align*}
	If $s\leq s_0$ with $s_0<\frac{1}{4C}$, it follows from Gronwall that $\Theta(s)\equiv0$ on $[0,s_0]$. But since the choice of $s_0$ is independent of the choice of the time origin, we can repeat the argument  to get that $\Theta(s)\equiv 0$ on $[s_0,2s_0]$ and  continue in this way to get that $\Theta(s)\equiv0$ on $[0,T]$.
	
		{\bf{Recursive relation.}} It remains to prove the recursive relation $\Theta'_{j-1}=\Theta_{j}$. Let us define $\Gamma(t):=\Theta_{j-1}(0)+\int_{0}^{t}\Theta_{j}(s)\ud s$. By the compatibility condition, we have
		\begin{align}\label{uniqueness data j-1}
		\Gamma(0)=\Theta_{j-1}(0),\quad (\Phi(\Gamma)'(0),v)=(\Phi(\Theta_{j-1})'(0),v),\quad \forall v\in H^{1}(B).
		\end{align}
		Let us consider
		\begin{align*}
		&Z(t):=	(\Phi(\Gamma)''(t),v)+\calL(\Gamma(t),v)+(j-1)\bangles{\tilde{\calC}_{\calB}(\Gamma(t))}{v}.
		\end{align*}
		We have
		\begin{align*}
		Z'(t)=&(\Phi(\Theta_{j})''(t),v)+\calL(\Theta_{j}(t),v)+j\,\bangles{\tilde{\calC}_{\calB}(\Theta_{j}(t))}{v}+\left((\Phi(\Gamma)''(t),v)+\calL(\Gamma(t),v)+(j-1)\bangles{\tilde{\calC}_{\calB}(\Gamma(t))}{v}\right)'\\
		&-(\Phi(\Theta_{j})''(t),v)-\calL(\Theta_{j}(t),v)-j\,\bangles{\tilde{\calC}_{\calB}(\Theta_{j}(t))}{v},
		\end{align*}
		and according to \eqref{inhomogeneous source recursive},
		\begin{align*}
		&\left((\Phi(\Gamma)'',v)+\calL(\Gamma,v)+(j-1)\bangles{\tilde{\calC}_{\calB}(\Gamma)}{v}\right)'-(\Phi(\Theta_{j})'',v)-\calL(\Theta_{j},v)-j\,\bangles{\tilde{\calC}_{\calB}(\Theta_{j})}{v}\\
		&=\angles{\calC(\Gamma)}{v}+\angles{\calC^{a}(\Gamma)}{\partial_{a}v}+\bangles{\calC_{\calB}(\Gamma)}{\tr\,v}+(j-1)\bangles{\tiltilcalC_{B}(\Gamma)}{\tr\,v}.
		\end{align*}
		By the equation satisfied by $\Theta_{j}$, we have
		\begin{align*}
		Z'=&\angles{F_{j}}{v}+\angles{\calF_j^a}{\partial_a v}+\bangles{f_{j}}{\tr\,v}+\angles{\calC(\Gamma)}{v}+\angles{\calC^{a}(\Gamma)}{\partial_{a}v}+\bangles{\calC_{\calB}(\Gamma)}{\tr\,v}+(j-1)\bangles{\tiltilcalC_{B}(\Gamma)}{\tr\,v}\\
		=&\angles{F'_{j-1}}{v}+\angles{{\calF_{j-1}^{a}}'}{\partial_a v}+\bangles{f'_{j-1}}{\tr\,v}+\angles{\calC(\Gamma)-\calC(\Theta_{j-1})}{v}+\angles{\calC^{a}(\Gamma)-\calC^{a}(\Theta_{j-1})}{\partial_{a}v}\\
		&+\bangles{\calC_{\calB}(\Gamma)-\calC_{\calB}(\Theta_{j-1})}{\tr\,v}+(j-1)\bangles{\tiltilcalC_{\calB}(\Gamma)-\tiltilcalC_{\calB}(\Theta_{j-1})}{\tr\,v},
		\end{align*}
		which gives
		\begin{align*}
		Z(t)-Z(0)&=\angles{F_{j-1}(t)-F_{j-1}(0)}{v}+\angles{\calF_{j-1}^{a}(t)-\calF_{j-1}^{a}(0)}{\partial_a v}+\bangles{f_{j-1}(t)-f_{j-1}(0)}{\tr\,v}\\
		&\quad+\int_{0}^{t}\left(\angles{\calC(\Gamma)-\calC(\Theta_{j-1})}{v}+\angles{\calC^{a}(\Gamma)-\calC^{a}(\Theta_{j-1})}{\partial_{a}v}+\bangles{\calC_{\calB}(\Gamma)-\calC_{\calB}(\Theta_{j-1})}{\tr\,v}\right)\ud s\\
		&\quad+(j-1)\int_{0}^{t}\bangles{\tiltilcalC_{\calB}(\Gamma)-\tiltilcalC_{\calB}(\Theta_{j-1})}{\tr\,v}\ud s.
		\end{align*}
		In view of \eqref{uniqueness data j-1}, this implies that
		\begin{align*}
		Z(t)&=\angles{F_{j-1}(t)}{v}+\angles{\calF_{j-1}^{a}(t)}{\partial_a v}+\bangles{f_{j-1}(t)}{\tr\,v}+(j-1)\int_{0}^{t}\bangles{\tiltilcalC_{\calB}(\Gamma)-\tiltilcalC_{\calB}(\Theta_{j-1})}{\tr\,v}\ud s\\
		&\quad+\int_{0}^{t}\left(\angles{\calC(\Gamma)-\calC(\Theta_{j-1})}{v}+\angles{\calC^{a}(\Gamma)-\calC^{a}(\Theta_{j-1})}{\partial_{a}v}+\bangles{\calC_{\calB}(\Gamma)-\calC_{\calB}(\Theta_{j-1})}{\tr\,v}\right)\ud s.
		\end{align*}
	Comparing with the equation satisfied by $\Theta_{j-1}$, it follows that $w(t):=\Gamma(t)-\Theta_{j-1}(t)$ has zero initial data and satisfies
		\begin{align*}
		&(\Phi(w)''(t),v)+\calL(w(t),v)+(j-1)\bangles{\tilde{\calC}_{\calB}(w)}{\tr\,v}\\
		=&\int_{0}^{t}\left(\angles{\calC(w(s))}{v}+\angles{\calC^{a}(w(s))}{\partial_{a}v}+\bangles{\calC_{\calB}(w(s))}{\tr\,v}+(j-1)\bangles{\tiltilcalC_{\calB}(w)}{\tr\,v}\right)ds.
		\end{align*}
		Since this is a homogeneous equation with zero initial data, we can argue as in the proof of uniqueness to conclude that $w\equiv0$, which implies $\Theta_{j-1}'=\Theta_j$ as desired.
	\end{proof}

As discussed earlier, to prove higher order regulairty we need to prove Sobolev estimates on $\Theta$, using the fact that $\Theta$ satisfies the conclusions of Proposition~\ref{prop:weak1}. The main ingredient for this is the following elliptic estimate for weak solutions of the Neumann problem.
\begin{lemma}\label{lem:Neumann}
	Suppose $u\in H^1(B)$ satisfies 
	\begin{align}\label{eq:weakN-gen}
	\begin{split}
	\angles{g^{ab}\partial_au}{\partial_bv}=\bangles{w}{\tr\,v}+\angles{W}{v},\qquad \forall v\in H^1(B),
	\end{split}
	\end{align}
	for some $w\in H^{\frac{1}{2}}(\partial B)$ and $W\in L^2(B)$. Then $u\in H^2(B)$ and for some constant depending only on $g$ 
	\begin{align}\label{eq:Neumann-bound1}
	\begin{split}
	\|u\|_{H^2(B)}\leq C (\|w\|_{H^{\frac{1}{2}}(\partial B)}+\|W\|_{L^2(B)}).
	\end{split}
	\end{align}
	More generally, for each $k$  if $g,W\in H^k(B)$ and $w\in H^{k+\frac{1}{2}}(\partial B)$, then $u\in H^{k+2}(B)$ and there exists a function $P_k$ depending polynomially on its arguments such that 
	\begin{align}\label{eq:Neumann-boundk}
	\begin{split}
	\|u\|_{H^{k+2}(B)}\leq P_k(\|g\|_{H^5(B)},\|g\|_{H^k(B)},\|w\|_{H^{k+\frac{1}{2}}(\partial B)}, \|W\|_{H^k(B)}).
	\end{split}
	\end{align}
	\end{lemma}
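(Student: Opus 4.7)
This is a standard elliptic regularity result for a Neumann boundary value problem with variable coefficients, so I would prove it via the classical Nirenberg difference-quotient method. First, note that \eqref{eq:weakN-gen} is the weak form of
\begin{align*}
-\partial_b(g^{ab}\partial_a u) = W \quad\text{in } B, \qquad n_b g^{ab} \partial_a u = w \quad\text{on } \partial B,
\end{align*}
as seen by integration by parts. The coefficients $(g^{ab})$ are assumed uniformly positive definite (this is the only structural input beyond smoothness that the argument needs; in the applications within the paper, $g^{ab}$ is the coefficient matrix of $\barA$ from Lemma~\ref{lem:A1}, which is uniformly elliptic by Corollary~\ref{cor:aV}). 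Since the estimate is local on $\overline B$, I would fix a smooth partition of unity and treat interior patches and boundary patches separately.

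On an interior patch, I fix a cutoff $\eta \in C_c^\infty(B)$ and substitute the test function $v = D_h^{-\ell}(\eta^2 D_h^\ell u)$ (with $D_h^\ell$ the $\ell$-th coordinate difference quotient of size $h$) into \eqref{eq:weakN-gen}. No boundary term arises because $v$ is compactly supported, and the standard manipulations using $g^{ab}\xi_a\xi_b \gtrsim |\xi|^2$ give the $h$-uniform bound $\|\eta D_h^\ell \partial u\|_{L^2(B)} \lesssim \|W\|_{L^2(B)} + \|u\|_{H^1(B)}$. On a boundary patch I would flatten $\partial B$ locally via a smooth diffeomorphism onto a piece of a half-ball $B^+$, transforming \eqref{eq:weakN-gen} into an analogous weak equation on $B^+$ with still uniformly elliptic coefficients $\tilde g^{ab}$. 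Only tangential difference quotients $D_h^\tau$ ($\tau<n$) are admissible, and testing against $v = D_h^{-\tau}(\eta^2 D_h^\tau u)$ yields $h$-uniform control of $D_h^\tau \partial u$ in $L^2(B^+)$; the boundary integral produced by this test function pairs exactly against $w$ and is absorbed using $\|w\|_{H^{1/2}(\partial B)}$ via a trace-duality estimate. The only remaining second derivative, $\partial_n^2 u$, I recover algebraically from the strong form:
\begin{align*}
\tilde g^{nn}\partial_n^2 u = -W - \sum_{(a,b)\ne(n,n)}\tilde g^{ab}\partial_a\partial_b u - (\partial_b \tilde g^{ab})\partial_a u,
\end{align*}
and dividing by $\tilde g^{nn}>0$ puts $\partial_n^2 u \in L^2(B^+)$, completing the proof of \eqref{eq:Neumann-bound1}. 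An $H^1(B)$ bound on $u$ used to absorb the lower-order term is obtained by testing \eqref{eq:weakN-gen} against $v=u$ (no kernel issue arises in the applications, as $u$ is always an explicit derivative of the fluid variables).

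For the higher-order estimate \eqref{eq:Neumann-boundk} I induct on $k$. Differentiating the strong equation once (only tangentially near the boundary, freely in the interior) shows that each derivative of $u$ satisfies a weak Neumann problem of the same form, with source of the schematic type $\partial W + \sum_{|\alpha|+|\beta|\le k+1}(\partial^\alpha g)(\partial^\beta u)$ and Neumann datum $\partial_\tau w$ plus similar commutator terms. Controlling these products via Sobolev embedding and product estimates in three dimensions (which is where the a priori $\|g\|_{H^5}$ norm enters, to guarantee $L^\infty$ control on the first few derivatives of $g$) yields the polynomial dependence in \eqref{eq:Neumann-boundk}. The main technical obstacle throughout is the boundary bookkeeping: verifying at each stage that the test functions built from tangential difference quotients lie in $H^1$, that the boundary integrals match exactly against the Neumann datum and its tangential derivatives, and that the commutators $[D_h^\tau, \tilde g^{ab}]$ are controlled by the stated norms of $g$ without any loss of regularity.
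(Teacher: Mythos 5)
The paper does not prove this lemma at all: it is stated together with Lemma~\ref{lem:Dirichlet}, and the authors remark immediately afterward that ``Lemmas~\ref{lem:Neumann} and~\ref{lem:Dirichlet} are standard elliptic estimates with transversal and Dirichlet boundary conditions respectively, and their proofs, which we omit, can be found in many references. See for instance \cite{Taylor-book1}.'' So there is no internal proof to compare against; what you have done is supply a self-contained argument for a result the authors cite as black-box.

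Your argument is the classical Nirenberg difference-quotient proof and, modulo one point, it is correct and in the spirit of what a reference like Taylor would present: interior difference quotients to get interior $H^2$, boundary flattening and tangential difference quotients to control the mixed second derivatives, recovering $\partial_n^2 u$ algebraically from the strong equation, handling the trace term $\bangles{w}{\tr\,v}$ by summation-by-parts and $H^{1/2}\times H^{-1/2}$ duality, and inducting for the higher-order bound with $\|g\|_{H^5}$ furnishing the $L^\infty$ control on low derivatives of the coefficients. All of this is standard and sound.

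The one point to be careful about is the kernel of the Neumann problem. As stated, the estimate $\|u\|_{H^2(B)}\le C(\|w\|_{H^{1/2}(\partial B)}+\|W\|_{L^2(B)})$ cannot literally hold: constants solve the homogeneous problem ($W=0$, $w=0$) with $\|u\|_{H^2}\ne 0$. Your remark that ``testing against $v=u$'' gives an $H^1$ bound only yields $\|\nabla u\|_{L^2}^2\lesssim(\|w\|+\|W\|)\|u\|_{H^1}$, which still leaves $\|u\|_{L^2}$ undetermined. The honest statement either includes $\|u\|_{L^2(B)}$ on the right or assumes $u$ normalized (e.g.\ mean zero), and you should say so explicitly rather than wave at ``no kernel issue arises in the applications.'' That said, this imprecision is inherited from the lemma as the paper states it — the application in Proposition~\ref{prop:Sobolev1} does come with independent lower-order control — so this is a defect of the statement rather than of your argument. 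Everything else in your proposal is fine.
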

For future reference in the treatment of the equation for $D_V\sigma^2$ we also record the following elliptic estimates for the Dirichlet problem.
\begin{lemma}\label{lem:Dirichlet}
Suppose $u\in H^1_0(B)$ satisfies
\begin{align*}
\begin{split}
\angles{g^{ab}\partial_au}{\partial_bv}=\angles{W}{v},\qquad  \forall v\in H^1_0(B),
\end{split}
\end{align*}
for some $W\in L^2(B)$. Then $u\in H^2(B)$ and for some constant depending only on $g$
\begin{align}\label{eq:Dirichlet-bound1}
	\begin{split}
	\|u\|_{H^2(B)}\leq C \|W\|_{L^2(B)}.
	\end{split}
	\end{align}
	More generally, for each $k$  if $g,W\in H^k(B)$, then $u\in H^{k+2}(B)$ and there exists a function $P_k$ depending polynomially on its arguments such that 
	\begin{align}\label{eq:Dirichlet-boundk}
	\begin{split}
	\|u\|_{H^{k+2}(B)}\leq P_k(\|g\|_{H^5(B)},\|g\|_{H^k(B)}, \|W\|_{H^k(B)}).
	\end{split}
	\end{align}
\end{lemma}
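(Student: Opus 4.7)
The plan is to prove this via the standard Nirenberg difference quotient method, adapted to the Dirichlet condition. Since the statement is essentially a classical elliptic regularity result for divergence-form equations with variable coefficients, the only thing to verify is that it survives the weak formulation and the polynomial dependence claimed in \eqref{eq:Dirichlet-boundk}.

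First I would establish \eqref{eq:Dirichlet-bound1} by a partition of unity argument splitting into interior and boundary patches. For the interior patches (balls $B_r(x_0)$ with $\overline{B_{2r}(x_0)} \subset B$), I take a cutoff $\fy \in C^\infty_c(B_{2r}(x_0))$ and test the weak equation against $v = -\partial_\ell^{-h}(\fy^2 \partial_\ell^h u)$, where $\partial_\ell^h$ is the usual difference quotient. Product rule on the difference quotient plus uniform ellipticity of $(g^{ab})$ (which follows from the assumptions, after shrinking time if needed, since $g$ is close to Minkowski in the iteration) yields a uniform-in-$h$ bound $\|\fy \partial_\ell^h \nabla u\|_{L^2} \lesssim \|W\|_{L^2} + \|u\|_{H^1(B)}$, and sending $h \to 0$ gives $\partial_\ell \nabla u \in L^2(B_r(x_0))$. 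For boundary patches I flatten $\partial B$ locally via a smooth diffeomorphism, which preserves the form of the equation up to a change of the coefficient matrix (still uniformly elliptic). Crucially, the zero Dirichlet trace means $\partial_\ell^h u \in H^1_0$ for \emph{tangential} $\ell \in \{1,2\}$, so the same test function $v = -\partial_\ell^{-h}(\fy^2 \partial_\ell^h u)$ is admissible and gives bounds on all tangential second derivatives $\partial_\ell \partial_m u$ with $(\ell,m) \neq (3,3)$. The missing normal-normal derivative $\partial_3^2 u$ is then recovered from the equation itself: since $g^{33} \neq 0$ by uniform ellipticity, one solves algebraically
\[
g^{33}\partial_3^2 u = -W - \sum_{(a,b)\neq(3,3)} g^{ab}\partial_a\partial_b u - (\partial_a g^{ab})\partial_b u,
\]
and the right-hand side is already in $L^2$. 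Summing over patches yields \eqref{eq:Dirichlet-bound1}.

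For \eqref{eq:Dirichlet-boundk} I would induct on $k$. Suppose $u \in H^{k+1}(B)$ has been shown. For interior patches, applying $\partial^\alpha$ with $|\alpha| = k$ to the strong form of the equation (which is justified by the interior regularity just established) produces a new divergence-form equation for $\partial^\alpha u$ with source
\[
\partial^\alpha W - \sum_{0 < \beta \leq \alpha} \binom{\alpha}{\beta}\partial_a\!\left((\partial^\beta g^{ab})\, \partial^{\alpha-\beta}\partial_b u\right),
\]
and the interior $H^2$ bound applied to this yields $\partial^\alpha u \in H^2$ locally. The commutator terms are controlled by standard Moser/Gagliardo–Nirenberg product estimates in Sobolev spaces, which give bounds polynomial in $\|g\|_{H^k(B)}$ once $\|g\|_{L^\infty}$ and $\|\nabla g\|_{L^\infty}$ are bounded; the latter two $L^\infty$ norms are absorbed into $\|g\|_{H^5(B)}$ via the Sobolev embedding $H^{2+\epsilon}(B) \hookrightarrow L^\infty(B)$ in three dimensions (taking $5$ gives ample room). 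Near the boundary, I apply only tangential derivatives $\partial_\tau^\alpha$ with $|\alpha| = k$ (still admissible after straightening), then recover one normal derivative at a time using the equation as in the base case, iterating until all mixed derivatives of order $k+2$ are controlled.

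The main technical obstacle is really just bookkeeping: verifying that the commutator terms arising from differentiating the equation can indeed be estimated with coefficients that are \emph{polynomial} in $\|g\|_{H^k(B)}$ rather than, say, exponential. This is handled cleanly by writing each commutator as a sum of products $(\partial^{\beta_1} g)\cdots(\partial^{\beta_r}g)\,\partial^\gamma u$ and splitting factors between $L^\infty$ (controlled by $\|g\|_{H^5}$) and $L^2$ (controlled by $\|g\|_{H^k}$ or by lower norms of $u$) according to whether their order exceeds $k-3$ or not — a routine application of the Kato–Ponce/Moser inequalities. Once this is done, the induction closes and \eqref{eq:Dirichlet-boundk} follows. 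Note that Lemma~\ref{lem:Neumann} has already been set up in exactly the same polynomial format, so the two proofs are essentially parallel; the Dirichlet case is in fact simpler because no boundary term $w$ appears and the test functions vanishing on $\partial B$ make the difference-quotient argument cleaner.
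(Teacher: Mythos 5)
The paper does not actually prove Lemma~\ref{lem:Dirichlet}: immediately after stating Lemmas~\ref{lem:Neumann} and~\ref{lem:Dirichlet}, the authors write that these are ``standard elliptic estimates \dots whose proofs, which we omit, can be found in many references,'' citing Taylor's book. Your proposal is therefore filling in a genuinely omitted argument rather than matching or deviating from one.

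Your argument is the standard Nirenberg difference-quotient proof (interior balls, boundary flattening with tangential difference quotients, algebraic recovery of $\partial_3^2 u$ from the equation via $g^{33}\neq 0$, then induction with commutator terms split between $L^\infty$ and $L^2$), and it is correct. Two minor points worth making explicit. First, \eqref{eq:Dirichlet-bound1} as stated has no $\|u\|_{H^1(B)}$ on the right, whereas your difference-quotient estimate produces $\|W\|_{L^2}+\|u\|_{H^1}$; to close this you need the a priori bound $\|u\|_{H^1(B)}\lesssim\|W\|_{L^2(B)}$, which follows by testing the weak equation with $v=u$ and using the Poincar\'e inequality on $H^1_0(B)$ together with uniform ellipticity of $(g^{ab})$. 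Second, uniform ellipticity of the spatial block $(g^{ab})_{a,b=1,2,3}$ of the inverse Lorentzian metric does hold here (the constant-$t$ slices are spacelike in the Lagrangian pullback, so the spatial block of $g^{-1}$ is positive definite near Minkowski), but it is the positive definiteness of that block of the \emph{inverse} metric, not of the metric itself, that you are invoking --- worth stating so that the reader does not confuse $g^{ab}$ with $(g_{ab})^{-1}$. With those clarifications the proof is complete and in the same standard form as the cited reference.
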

Lemmas~\ref{lem:Neumann} and~\ref{lem:Dirichlet} are standard elliptic estimates with transversal and Dirichlet boundary conditions respectively, and their proofs, which we omit, can be found in many references. See for instance \cite{Taylor-book1}. We can now prove our first Sobolev estimate on the lower derivatives of $\Theta$.
\begin{proposition}\label{prop:Sobolev1}
	Suppose $\Theta$ is as in Proposition~\ref{prop:weak1}. Then for each $k\leq K-5$ and $2a\leq K-3-k$,
	\begin{align*}
	\begin{split}
	\partial^a_\bary\Theta_{k}\in L^\infty([0,T];L^2(B))
	\end{split}
	\end{align*}
	and for each $\tau\leq T$ 
	\begin{align}
	&\|\partial^a_\bary\Theta_{k}(\tau)\|_{L^\infty([0,\tau];L^2(B))}\leq P_k\bigg(\sup_{t\leq\tau}\sum_{\ell\leq 2a+k-2}(\|\nabla \Theta_{\ell}(t)\|_{L^{2}(B)}+\| \Theta_{\ell+1}(t)\|_{L^2(B)}+\| \Theta_{\ell+1}(t)\|_{L^2(\partial B)}),\nonumber\\
	&\phantom{\|\partial^a_\bary\Theta_{k}(\tau)\|_{L^\infty([0,\tau];L^2(B))}\leq P_k\bigg(}\|g\|_{L^\infty([0,\tau];H^{\max\{a-2,5\}}(B))},\sum_{\ell\leq k}\|\partial_t^{\ell}f\|_{L^{\infty}([0,T];H^{a-\frac32}(B))}\bigg),\label{eq:Sobolev-estimate1}
	\end{align}
	where $P_k$ is a function (not the same as in Lemma~\ref{lem:Neumann}) depending polynomially on its arguments.
\end{proposition}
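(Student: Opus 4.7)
The plan is to proceed by strong induction on the number of spatial derivatives $a$, exploiting the wave-equation structure to trade two time derivatives for two spatial derivatives via the elliptic Neumann estimate of Lemma~\ref{lem:Neumann}. The base cases $a=0$ and $a=1$ follow immediately from Proposition~\ref{prop:weak1}, which gives $\Theta_k\in L^\infty([0,T];H^1(B))$ and hence the desired bound with the right-hand side coming from $\|\nabla\Theta_\ell\|_{L^2(B)}$ for $\ell\leq k$.

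For the inductive step at level $a\geq 2$, I would rewrite the weak equation~\eqref{eq:weak2} as a Neumann-type elliptic problem for $\Theta_k$ by moving all non-elliptic terms to the right-hand side:
\begin{align*}
\angles{g^{ab}\partial_a\Theta_k}{\partial_b v}=\bangles{w_k}{\tr\,v}+\angles{W_k}{v},\qquad\forall v\in H^1(B).
\end{align*}
The contribution $(\Phi(\Theta_k)'',v)=\angles{\Theta_k''}{v}+\bangles{(\tr\,\Theta_k)''}{\gamma^{-1}\tr\,v}$ is reprocessed using the recursion $\Theta_k''=\Theta_{k+2}$ provided by Proposition~\ref{prop:weak1}; this is the key mechanism for exchanging two time derivatives for two spatial derivatives. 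The terms $C(\Theta_{k+1},v)$ and $\angles{\calF_k^a}{\partial_a v}$, which carry derivatives on the test function, are converted into $\angles{\cdot}{v}+\bangles{\cdot}{\tr\,v}$ form via integration by parts in $\bary$; this introduces $\partial_\bary\Theta_{k+1}$ and $\partial_\bary\calF_k^a$ into $W_k$ and their traces into $w_k$. The remaining pieces of $B$, $D$, $E$, the boundary commutator $\tilcalC_\calB$, and the sources $F_k$, $f_k$ are already in the required form.

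Applying Lemma~\ref{lem:Neumann} with exponent $a-2$ then yields
\begin{align*}
\|\Theta_k\|_{H^a(B)}\leq P_{a-2}\bigl(\|g\|_{H^{\max\{a-2,5\}}(B)},\|w_k\|_{H^{a-3/2}(\partial B)},\|W_k\|_{H^{a-2}(B)}\bigr),
\end{align*}
and I would verify each term on the right by the induction hypothesis. The $H^{a-2}(B)$ norm of $W_k$ is handled as follows: the top piece $\partial_\bary^{a-2}\Theta_{k+2}$ is controlled by IH at level $(a-2,k+2)$, requiring $\Theta_\ell$ only for $\ell\leq 2a+k-4$; the lower-order pieces $\partial_\bary^{a-1}\Theta_{k+1}$ and $\partial_\bary^{a-1}\Theta_k$ are handled by IH at level $a-1$, giving $\ell\leq 2a+k-3$ and $\ell\leq 2a+k-4$, respectively. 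The boundary norm $\|w_k\|_{H^{a-3/2}(\partial B)}$ is controlled via the trace theorem together with IH at level $a-1$ applied to $\Theta_{k+2}$, which produces exactly $\ell\leq 2a+k-2$, matching the maximum in the statement. The boundary-source contribution $\sum_{\ell\leq k}\|\partial_t^\ell f\|_{H^{a-3/2}}$ enters through the recursion~\eqref{higher order source} that expresses $f_k$ in terms of $\partial_t^\ell f_0=\partial_t^\ell(\gamma^{-1}f)$ and spatial derivatives of the coefficients.

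The main obstacle is the bookkeeping of indices so that the substitution $\Theta_k''\rightsquigarrow\Theta_{k+2}$ keeps the combined budget $2a+k$ invariant, enabling the induction to close within the constraint $2a+k\leq K-3$ and the available regularity of $\Theta_\ell$ for $\ell\leq K-6$ from Proposition~\ref{prop:weak1}. A secondary technicality is tracking the $g$-regularity needed when commuting $\partial_\bary^{a-2}$ with the Neumann operator in Lemma~\ref{lem:Neumann}: since that lemma absorbs the constant-coefficient case into $\|g\|_{H^5}$ and picks up $\|g\|_{H^{a-2}}$ from variable-coefficient commutator errors, the allowance $\|g\|_{H^{\max\{a-2,5\}}}$ appearing in~\eqref{eq:Sobolev-estimate1} is precisely what the argument consumes.
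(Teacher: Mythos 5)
Your argument is correct and follows essentially the same route as the paper's own (very terse) proof: an elliptic bootstrap on $\Theta_k$ via Lemma~\ref{lem:Neumann}, with the key mechanism being the substitution $\Theta_k''=\Theta_{k+2}$ supplied by Proposition~\ref{prop:weak1}, which trades two time derivatives for two spatial derivatives while preserving the budget $2a+k$. You have filled in the details the paper omits — the rewriting of~\eqref{eq:weak2} in the Neumann form of Lemma~\ref{lem:Neumann}, the integration by parts on $C(\Theta_{k+1},v)$ and $\angles{\calF_k^a}{\partial_a v}$, the extra terms arising from $\partial_t$ hitting $\gamma^{-1}$, and the index bookkeeping showing that the worst contribution comes from the boundary trace of $\Theta_{k+2}$ at level $\ell\leq 2a+k-2$ — and these match what the paper's $P_k$ must absorb. (Incidentally, the paper's proof cites Proposition~\ref{prop:weak2} at one point, which is circular since that proposition is proved using this one; this is evidently a typo for Proposition~\ref{prop:weak1}, and your reading is the correct one.)
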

\begin{proof}
	The argument is a simpler (at linear lever) version of the proof of Proposition~\ref{prop:L2Sobolev}, so we will be brief on details. By Proposition~\ref{prop:weak1}, we already know that
	\begin{align*}
	\begin{split}
	\Theta_{k}\in H^1(B),\qquad k\leq K-5.
	\end{split}
	\end{align*}
	We proceed inductively. First note, that in view of Proposition~\ref{prop:weak2} we can apply Lemma~\ref{lem:Neumann} to $\Theta_{k}$, $k\leq K-7$, to get
	\begin{align*}
	\begin{split}
	\Theta_{k}\in H^2(B),\quad k\leq K-7.
	\end{split}
	\end{align*}
	This estimate now allows us to improve the regularity of the lower derivatives. Indeed, using the higher regularity statement in Lemma~\ref{lem:Neumann} it follows that
	\begin{align*}
	\begin{split}
	\Theta_{k}\in H^3(B),\quad k\leq K-9,
	\end{split}
	\end{align*}
	and inductively, for $3\leq m \leq \frac{1}{2}(K-1)$,
	\begin{align*}
	\begin{split}
	\Theta_{k}\in H^{m-1}(B),\quad k\leq K-(2m+1).
	\end{split}
	\end{align*}
	The desired estimate \eqref{eq:Sobolev-estimate1} also follows from Lemma~\ref{lem:Neumann}. 
\end{proof}
\begin{remark}\label{rmk: Linfty Sobolev}
Using Proposition \ref{prop:Sobolev1}, we can control the $L^{\infty}([0,T];L^{\infty}(B))$ norm for the lower derivatives. More precisely, under the assumptions of Proposition \ref{prop:Sobolev1}, we have, for $a\geq 2$,
\begin{align}
	&\|\partial^{a-2}_{\bary}\Theta_k(\tau)\|_{L^\infty([0,\tau];L^{\infty}(B))}\leq P_k\bigg(\sup_{t\leq\tau}\sum_{\ell\leq 2a+k-4}(\|\nabla \Theta_{\ell}(t)\|_{L^{2}(B)}+\| \Theta_{\ell+1}(t)\|_{L^2(B)}+\| \Theta_{\ell+1}(t)\|_{L^2(\partial B)}),\nonumber\\
	&\phantom{\|\partial^{a-2}_{\bary}\Theta_k(\tau)\|_{L^\infty([0,\tau];L^{\infty}(B))}\leq P_k\bigg(}\|g\|_{L^\infty([0,\tau];H^{\max\{a-2,5\}}(B))},\sum_{\ell\leq k}\|\partial_t^\ell f\|_{L^{\infty}([0,T];H^{a-\frac32}(B))}\bigg).\label{eq:Sobolev-estimate2}
\end{align}
\end{remark}
Based on the Sobolev estimate \eqref{eq:Sobolev-estimate2}, under the assumptions \eqref{eq:g-assumption1}, we have the following improved version of Proposition \ref{prop:weak1}:
	\begin{proposition}\label{prop:weak2}
	Suppose \eqref{eq:g-assumption1} holds and that there exist
	\begin{align*}
	\begin{split}
	\theta_k\in H^1(B),\quad \theta_{k+1}\in L^2(B), \quad \tiltheta_{k+1}\in L^2(\partial B), \quad k=0,...,K
	\end{split}
	\end{align*}
	such that the following two conditions hold:
	\begin{itemize}
		\item {\bf{Regularity:}} For $k=0,\dots,K-1$
		\begin{align*}
		\begin{split}
		\angles{\theta_{k+2}}{v}+\bangles{\tiltheta_{k+2}}{\gamma^{-1}\tr\,v}+\calL(\theta_{k},v)+k\bangles{\tilcalC_\calB(\theta_k)}{v}
		=\bangles{f_{k}(0)}{\tr\,v}+\angles{F_{k}(0)}{v}+\angles{\calF^{a}_{k}(0)}{\partial_av}.
		\end{split}
		\end{align*}
		Here $f_k(0)$, $F_k(0)$, $\calF^{a}_{k}(0)$ are defined as in Proposition~\ref{prop:weak1}.
		\item {\bf{Compatibility:}} $\tiltheta_k=\tr\,\theta_k$ for $k=1,\dots, K$.
	\end{itemize}
	Then there exists a unique $\Theta_{k}$ satisfying \eqref{eq:Thetaspaces1} and \eqref{eq:weakdata1}, such that for all $v\in H^1(B)$ equation \eqref{eq:weak2} holds for almost every $t\in[0,T]$. The solution satisfies
	\begin{align}\label{eq:energy high}
	\begin{split}
	&\sup_{t\in [0,T]}\big(\|\Theta'_{k}\|_{L^2(B)}+\|\Theta_{k}\|_{H^1(B)}+\|\tr\,\Theta'_{k}\|_{L^2(\partial B)}\big)\\
	&\leq C_1e^{C_2T}\Big(\|\theta_k\|_{H^1(B)}+\|\theta_{k+1}\|_{L^2(B)}+\|\tiltheta_{k+1}\|_{L^2(\partial B)}+\|f_{k}\|_{L^2([0,T];L^2(\partial B))}\\
	&\phantom{\leq C_1e^{C_2T}\Big(}+\|F_{k}\|_{L^2([0,T];L^2(B))}+\|\calF^{a}_{k}\|_{L^{\infty}([0,T];L^2(B))}\Big).
	\end{split}
	\end{align}
	In these estimates $C_1$, $C_2$, and $C_3$ are constants depending on the various norms of $g,\tr\,g, \gamma, \gamma^{-1}$ appearing in \eqref{eq:g-assumption1}.  Moreover, we have $\Theta'_{k-1}=\Theta_{k}$ for $k=1,...,K$, and there exist functions $P_k$ depending polynomially on their arguments such that \eqref{eq:Sobolev-estimate1} holds for $k\leq K$ and $2a+k\leq K+2$.
\end{proposition}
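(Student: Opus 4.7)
The plan is to extend the existence argument of Proposition~\ref{prop:weak1} from $k\le K-6$ to all $k\le K$ by repeating the Galerkin construction, with the key new ingredient being the $L^\infty$ control on lower derivatives of $\Theta_j$ provided by Remark~\ref{rmk: Linfty Sobolev}. We will induct on $k$. The base case $k\le K-6$ is exactly Proposition~\ref{prop:weak1}, together with the Sobolev estimates of Proposition~\ref{prop:Sobolev1} and Remark~\ref{rmk: Linfty Sobolev} at that order. For the inductive step, assuming $\Theta_0,\ldots,\Theta_{k-1}$ have been constructed with the claimed regularity and that the recursive identity $\Theta'_{j-1}=\Theta_j$ holds, we construct $\Theta_k$.

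The main challenge, not present in Proposition~\ref{prop:weak1}, is that the source terms $F_k,\calF^a_k,f_k$ given by \eqref{higher order source} contain products of the form $(\partial_t^\ell\text{coeff})\cdot(\text{spatial derivatives of }\Theta_{k-\ell-1})$ in which the coefficient factor (such as $\partial_t^\ell g$ or $\partial_t^\ell\gamma^{-1}$) is only controlled in $L^2$ by the hypothesis \eqref{eq:g-assumption1} once $\ell>K-5$, so one cannot simply pull it out in $L^\infty$. The resolution will be to put the coefficient in $L^2$ and the $\Theta_{k-\ell-1}$ factor in $L^\infty$: since $k-\ell-1<k$ is strictly less than the inductive level, Remark~\ref{rmk: Linfty Sobolev} applied at the previous step yields exactly the $L^\infty$ bound needed on the relevant number of spatial derivatives of $\Theta_{k-\ell-1}$. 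This produces bounds on $F_k$ in $L^2([0,T];L^2(B))$, on $\calF^a_k$ in $L^\infty([0,T];L^2(B))$, and on $f_k$ in $L^2([0,T];L^2(\partial B))$, depending polynomially on the inductive data.

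With these bounds in hand, the Galerkin approximation scheme of Proposition~\ref{prop:weak1} goes through verbatim: one solves the projected ODE system \eqref{eq:gw1}--\eqref{eq:gdata1}, derives the approximate energy identity \eqref{eq:ge1}, and controls the inhomogeneous contributions exactly as in \eqref{eq:gen1} to produce a bounded sequence $\Theta_{k,m}$. Weak limits in the spaces \eqref{eq:Thetaspaces1} give the desired $\Theta_k$, the initial conditions are attained via the regularity and compatibility assumptions on $(\theta_k,\theta_{k+1},\tiltheta_{k+1})$ exactly as before, and the uniqueness argument as well as the identification $\Theta'_{k-1}=\Theta_k$ carry over without change since they depend only on the abstract structure of $\Phi$, $\calL$, and $\tilcalC_\calB$, not on the order $k$. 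The energy estimate \eqref{eq:energy high} follows by passing to the limit in \eqref{eq:gen1}.

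Finally, once existence is established throughout $k\le K$, the Sobolev estimate \eqref{eq:Sobolev-estimate1} for the full range $2a+k\le K+2$ will follow by iterating Lemma~\ref{lem:Neumann} on the Neumann-type elliptic equation satisfied weakly by $\Theta_k$, exactly as in the proof of Proposition~\ref{prop:Sobolev1}: each application bootstraps one degree of Sobolev regularity, using the energy bounds on $\Theta_{k+2}$ after one spatial or temporal differentiation as the source. The hard part is the bookkeeping described in the second paragraph, where one must verify that in every term of \eqref{higher order source} the derivatives falling on the coefficients and those falling on $\Theta$ can be distributed so that at most one factor is placed in $L^2$ and the rest in $L^\infty$, which is precisely the calibration encoded in the range $2a+k\le K+2$ of Proposition~\ref{prop:Sobolev1}.
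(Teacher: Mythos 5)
Your proposal is correct and follows essentially the same route as the paper: the paper's own proof of Proposition~\ref{prop:weak2} consists precisely of the observation that the Galerkin/energy argument of Proposition~\ref{prop:weak1} goes through unchanged once the terms in \eqref{higher order source} where many $\partial_t$ derivatives fall on $g$ or $\gamma^{-1}$ are placed in $L^2$ while the accompanying lower-order derivatives of $\Theta$ are bounded in $L^\infty$ via \eqref{eq:Sobolev-estimate2}. Your write-up in fact supplies more of the inductive bookkeeping than the paper does, which omits these details as routine.
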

\begin{proof}
	The proof is similar to that of Proposition \ref{prop:weak1}. The only difference is that when most derivatives fall on the coefficients $g,\gamma$ we bound these terms in $L^2$ and bound the lower order derivatives of $\Theta$ in $L^{\infty}(B)$, using the Sobolev estimate \eqref{eq:Sobolev-estimate2}. We omit the routine details.
\end{proof}

We turn to the equation for $D_V\sigma^2$. The overall proofs of existence, uniqueness, and higher regularity are similar to those in Propositions~\ref{prop:weak1} and~\ref{prop:weak2}. Therefore, we will omit most details and concentrate on deriving the appropriate energy estimate.

\begin{lemma}\label{lem:Lambdanormal}
Assume \eqref{eq:g-assumption1} hold. Given $H\in L^2([0,T];L^2(B))$ and $\calH^a\in L^\infty([0,T];L^2(B))$, suppose $\Lambda$ satisfying \eqref{eq:Lambdaspaces1} and \eqref{eq:Lambdaid} is a weak solution of 
\begin{align}\label{eq:Lambdaep}
\begin{split}
(\Lambda'',v)+\calL_\sigma(\Lambda,v) = \angles{H}{,v}+\angles{\calH^a}{\partial_av},\qquad \forall v\in H^1_0(B),
\end{split}
\end{align}
satisfying
\begin{equation}\label{eq:energystandardtemp1}
\sup_{t\in[0,T]}(\|\Lambda'\|_{L^2(B)}^2+\|\Lambda\|_{H^1(B)}^2)\leq c_0( \|\lambda_0\|_{H^1(B)}^2+\|\lambda_1\|_{L^2(B)}^2+\|H\|_{L^2([0,T];L^2(B))}^2+\|\calH\|_{L^\infty([0,T];L^2(B))}^2).
\end{equation}
If $\partial_\bary\calH^a\in L^1([0,T];L^2(B))$, then $\Lambda$ satisfies \footnote{Here $\nabla\Lambda$ on the boundary $\partial B$ is in the weak sense: Suppose $\Lambda\in H^{1}(B)$ is a solution to \eqref{eq:Lambdaep}. Then its weak normal derivative $\nabla^{w}_{n}\Lambda$ on $[0,T]\times\partial B$ is defined such that for any $\varphi\in C^{\infty}([0,T]\times\overline{B})$, we have 
\begin{align*}
	\int_{0}^{T}\int_{\partial B}\bangles{\nabla_{n}^{w}\Lambda}{\varphi}dS\,dt:=\int_{0}^{T}\int_{B}\angles{g^{ab}\partial_{a}\Lambda}{\partial_{b}\varphi}d\bary\,dt+\int_{0}^{T}\int_{B}\left((\Lambda'',\varphi)-\angles{H}{\varphi}-\angles{\calH^{a}}{\partial_{a}\varphi}+\tilde{\calL}_{\sigma}(\Lambda,\varphi)\right)d\bary\,dt.
\end{align*}
Here $\tilde{\calL}_{\sigma}(\Lambda,\varphi):=\calL_{\sigma}(\Lambda,\varphi)-\angles{g^{ab }\partial_{a}\Lambda}{\partial_{b}\varphi}.$}
\begin{align}\label{eq:Lambdanormal1}
\begin{split}
\|\nabla\Lambda\|_{L^2([0,T];L^2(\partial B))}^2\leq c_1\big( \|\lambda_0\|_{H^1(B)}^2+\|\lambda_1\|_{L^2(B)}^{2}+\|H\|_{L^2([0,T];L^2(B))}^2+\|\partial_a\calH^a\|_{L^2([0,T];L^2(B))}^2\big),
\end{split}
\end{align}
for some constant $c_1$ depending only on the first two derivatives of $g$ and on $c_0$. If instead $\Lambda$ can be written as $\partial_t\Gamma$, with $\Gamma\in L^\infty([0,T];H^1(B))$, and $\partial_t\calH\in L^2([0,T];L^2(B))$, $\calH^r\in L^2([0,T];L^2(\partial B))$, then $\|\partial_a\calH^a\|_{L^2([0,T];L^2(B))}^2$ on the right-hand side of \eqref{eq:Lambdanormal1} can be replaced by
\begin{align}\label{eq:Lambdanormal2}
\begin{split}
\|\calH^r\|_{L^2([0,T];L^2(\partial B))}^2+\|\calH\|_{L^2([0,T];L^2(B))}^2+\int_0^T\int_B |\partial_t\calH||\partial^2_{t,\bary}\Gamma|\ud \bary \ud t+\sup_{t\in[0,T]}\int_B|\calH||\partial_{t,\bary}^2\Gamma|\ud\bary.
\end{split}
\end{align}
\end{lemma}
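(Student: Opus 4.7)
\textbf{Proof proposal for Lemma~\ref{lem:Lambdanormal}.} The plan is to use a spatial multiplier of the same type as in Lemma~\ref{lem:benergy}, exploiting the fact that $\Lambda \equiv 0$ on $\partial B$, so that on the boundary the spatial gradient is purely normal and $|\nabla\Lambda|^2 = (n\Lambda)^2$. Concretely, fix a smooth vectorfield $Q = Q^b\partial_b$ on $\overline{B}$ (for instance a smooth extension of $\partial_r$ to all of $B$) with $Q^b n_b \ge c_Q > 0$ on $\partial B$. Formally, I would test \eqref{eq:Lambdaep} with $v = Q\Lambda$ (which is not in $H^1_0(B)$, so one first performs the argument on Galerkin approximations from the existence proof, or equivalently on a smoothing of $\Lambda$ with zero boundary trace, and then passes to the limit). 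Using the multiplier identity \eqref{eq:multid} with this $Q$ and integrating over $[0,T]\times B$ against the Lagrangian metric, together with $\Lambda|_{\partial B}\equiv 0$, the only surviving boundary contribution at $\partial B$ is $\tfrac{1}{2}\int_0^T\!\!\int_{\partial B} Q^b n_b (n\Lambda)^2\,dS\,dt$, which furnishes the coercive quantity on the left-hand side of \eqref{eq:Lambdanormal1}. The time-slice terms at $t=0,T$ and the bulk terms $\tfrac{1}{2}(\nabla_\mu Q^\mu)|\nabla\Lambda|^2 - (\nabla^\mu Q^\nu)\nabla_\mu\Lambda\nabla_\nu\Lambda$ and $(\Box_g\Lambda)(Q\Lambda)$ are all bounded by $\|\lambda_0\|_{H^1(B)}^2+\|\lambda_1\|_{L^2(B)}^2$ plus the standard energy \eqref{eq:energystandardtemp1}.

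The only delicate piece is the contribution of the source, which after testing against $Q\Lambda$ reads
\[
\int_0^T\angles{H}{Q\Lambda}\,dt + \int_0^T\angles{\calH^a}{\partial_a(Q\Lambda)}\,dt.
\]
The first is handled by Cauchy--Schwarz and \eqref{eq:energystandardtemp1}. For the second, expanding $\partial_a(Q^b\partial_b\Lambda)$ and integrating by parts once in space transfers the extra spatial derivative off $\Lambda$ and onto $\calH^a$, producing a bulk term $-\angles{\partial_a\calH^a}{Q^b\partial_b\Lambda}$ (bounded by Cauchy--Schwarz using $\|\partial_a\calH^a\|_{L^2([0,T];L^2(B))}$ and the standard energy), a zeroth-order bulk term involving $\partial_aQ^b\cdot\calH^a\partial_b\Lambda$, and a boundary contribution proportional to $\int_0^T\!\!\int_{\partial B}\calH^r(Q\Lambda)\,dS\,dt$. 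Because $\Lambda\equiv 0$ on $\partial B$, this last term also vanishes, leaving no boundary loss; combining everything with a Cauchy--Schwarz with small constant to absorb $\tfrac{1}{2}\int_0^T\!\!\int_{\partial B}Q^n(n\Lambda)^2$ on the left yields \eqref{eq:Lambdanormal1}.

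For the improved estimate replacing $\|\partial_a\calH^a\|$ by \eqref{eq:Lambdanormal2}, the idea is to avoid losing a spatial derivative on $\calH$ by exploiting $\Lambda=\partial_t\Gamma$ and integrating by parts in time instead. Writing
\[
\int_0^T \angles{\calH^a}{\partial_a(Q\Lambda)}\,dt = \int_0^T \angles{\calH^a}{\partial_t\partial_a(Q\Gamma)}\,dt,
\]
and integrating by parts in $t$, we obtain time-boundary terms $\pm\angles{\calH^a(s)}{\partial_a(Q\Gamma)(s)}$ at $s=0,T$, controlled by $\sup_{t\in[0,T]}\int_B |\calH||\partial^2_{t,\bary}\Gamma|\,d\bary$ (here one more spatial integration by parts places the second $\partial_\bary$ on $\Gamma$, producing at worst a boundary piece that is absorbed since $\Gamma|_{\partial B}=0$ follows from $\Lambda|_{\partial B}=0$ and the assumption on $\Gamma$), and a bulk term $-\int_0^T\angles{\partial_t\calH^a}{\partial_a(Q\Gamma)}\,dt$, controlled by $\int_0^T\!\!\int_B |\partial_t\calH||\partial^2_{t,\bary}\Gamma|\,d\bary\,dt$ after a further spatial integration by parts. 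The boundary contribution $\int_0^T\!\!\int_{\partial B}\calH^r\,Q^bn_b\,\partial_b\Gamma \,dS\,dt$ arising in this spatial integration by parts is estimated using Cauchy--Schwarz and the trace theorem, generating $\|\calH^r\|_{L^2([0,T];L^2(\partial B))}^2$ (paired against a small multiple of $\|\nabla\Lambda\|_{L^2([0,T];L^2(\partial B))}^2$ on the left).

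\textbf{Main obstacle.} The principal technical difficulty is making the formal multiplier argument rigorous for weak solutions, since $Q\Lambda \notin H^1_0(B)$. I would handle this by applying the identity first to the Galerkin approximants from the existence theorem (whose boundary traces vanish in a strong sense) and passing to the limit, or by a tangential mollification that preserves the Dirichlet condition. A secondary subtlety is in Step~3: the correct bookkeeping of the $\calH^r$ boundary term, whose appearance is unavoidable when $\calH$ has no spatial regularity, requires that $\Gamma$ inherit zero boundary values from $\Lambda = \partial_t\Gamma$, which should follow from the hypothesis $\Gamma \in L^\infty([0,T];H^1(B))$ combined with $\Lambda \in L^2([0,T];H^1_0(B))$ modulo an additive constant (irrelevant for the estimate since only derivatives of $\Gamma$ appear).
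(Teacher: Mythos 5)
Your overall strategy---a Morawetz-type multiplier whose radial component is positive on $\partial B$, combined with $\Lambda|_{\partial B}\equiv 0$ so that the boundary flux reduces to $(n\Lambda)^2$, and integration by parts in time via $\Lambda=\partial_t\Gamma$ for the refined estimate---is indeed the approach the paper takes (with $q^\alpha = g^{\alpha r}$ rather than a purely spatial $Q$; both furnish the needed coercivity, the paper's choice merely streamlines the boundary flux computation to $\tfrac12(g^{rr}\partial_r\Lambda)^2$). However, there is a genuine error in your treatment of the source. After spatial integration by parts on $\angles{\calH^a}{\partial_a(Q\Lambda)}$ you obtain a boundary contribution $\int_0^T\int_{\partial B}\calH^r(Q\Lambda)\,dS\,dt$ and assert it vanishes ``because $\Lambda\equiv 0$ on $\partial B$.'' It does not: $Q\Lambda = Q^b\partial_b\Lambda$ restricted to $\partial B$ equals $Q^bn_b\,\partial_n\Lambda$ (the tangential derivatives vanish, but the normal one does not), which is precisely the quantity you are trying to control. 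Since part one of the lemma supplies no $L^2([0,T];L^2(\partial B))$ bound on $\calH^r$, this term cannot simply be discarded, and your proof of \eqref{eq:Lambdanormal1} has a hole.

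The underlying structural issue is that you ``test'' the weak equation \eqref{eq:Lambdaep} with $v=Q\Lambda\notin H^1_0(B)$. The source $\angles{H}{v}+\angles{\calH^a}{\partial_av}$ coincides with $\angles{H-\partial_a\calH^a}{v}$ only for $v\in H^1_0(B)$; for general $v$ these differ by exactly $\int_{\partial B}\calH^r\,v$. Regularizing $\Lambda$ directly (as you suggest as an alternative) does not help, since a smoothed $\Lambda$ no longer solves the equation; and the Galerkin approximants do not help either, since $Q\Lambda_m$ does not lie in the finite-dimensional Galerkin space. The paper instead regularizes the \emph{data} $(\lambda_0,\lambda_1,H,\calH^a)$, solves the strong Dirichlet wave equation $\Box_g\Lambda^\ep = H^\ep-\partial_a\calH^{\ep,a}$, applies the multiplier identity \eqref{eq:genmultLag} pointwise, and passes to the limit. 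The source contribution is then simply $\int(H^\ep-\partial_a\calH^{\ep,a})Q\Lambda^\ep$, bounded by Cauchy--Schwarz against the energy with no boundary term ever appearing. You reach the right bound, but for the wrong reason: it is not that the boundary term vanishes, it is that the correct strong-form multiplier identity never generates it.
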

\begin{proof}
This is a standard estimate for the wave equation with Dirichlet boundary conditions. See for instance \cite{Lions-Magenes-book1,Wloka-book,Evans-book} and Lemma~\ref{lem:benergy} above. We sketch the proof for completeness. Note that by \eqref{eq:g-assumption1} the metric $g$ is at least $C^3$. We approximate $(\lambda_0,\lambda_1)$, $H$, and $H^a$ by regular (say $C^3$) functions $(\lambda_0^\ep,\lambda_1^\ep)$, $H^\ep$, and $H^{\ep,a}$ such that 
\begin{align*}
\begin{split}
(\lambda_0^\ep,\lambda_1^\ep)\to (\lambda_0,\lambda_1) \qquad &\mathrm{in}\quad H^1_0(B)\times L^2(B),\\
H^\ep\to H\qquad &\mathrm{in}\quad L^2([0,T];L^2(B)),\\
\partial_aH^{\ep,a}\to \partial_aH^{a}\qquad &\mathrm{in}\quad L^2([0,T];L^2(B)),
\end{split}
\end{align*}
and satisfying appropriate compatibility conditions. Let $\Lambda^\ep$ be the solution of the corresponding wave equation
\begin{align*}
\begin{split}
\Box_g\Lambda^\ep = h^\ep:=H^\ep-\partial_a\calH^{\ep,a},\qquad (\Lambda^\ep(0),\partial_t\Lambda^\ep(0))=(\lambda_0^\ep,\lambda_1^\ep).
\end{split}
\end{align*}
Multiplying by a general multiplier $Q\Lambda^\ep:=q^\gamma\partial_\gamma\Lambda^\ep$ we get
\begin{align}
(\Box_g\Lambda^\ep)(Q\Lambda^\ep)&=\partial_\alpha\big(g^{\alpha\beta}q^\gamma(\partial_\gamma\Lambda^\ep)(\partial_\beta\Lambda^\ep)+\frac{1}{2}q^{\alpha}g^{\gamma\beta}(\partial_\beta\Lambda^\ep)(\partial_\gamma\Lambda^\ep)\big)\label{eq:genmultLag}\\
&\quad+\frac{1}{2}(\partial_\alpha\log |g|)q^\gamma g^{\alpha\beta}(\partial_\beta\Lambda^\ep)(\partial_\gamma\Lambda^\ep)-g^{\alpha\beta}(\partial_\alpha q^\gamma)(\partial_\beta\Lambda^\ep)(\partial_\gamma\Lambda^\ep)+\frac{1}{2}\partial_\gamma(g^{\alpha\beta}q^\gamma)(\partial_\alpha\Lambda^\ep)(\partial_\beta\Lambda^\ep).\nonumber
\end{align}
We now take $q^{\alpha}=g^{\alpha r}$. Integrating \eqref{eq:genmultLag} over $[0,T]\times B$ we see that the contribution on the timelike boundary $[0,T]\times\partial B$ is
\begin{align*}
\begin{split}
(g^{r\alpha}\partial_\alpha\Lambda^\ep)^2-\frac{1}{2}g^{rr}g^{\alpha\beta}(\partial_\alpha\Lambda^\ep)(\partial_\beta\Lambda^\ep).
\end{split}
\end{align*}
decomposing into polar coordinates, and noting that the tangential derivatives (that is, $\partial_t$ and the angular derivatives tangential to $\partial B$) of $\Lambda^\ep$ are zero (because $\Lambda^\ep$ is constant on $[0,T]\times \partial B$) this expression simplifies to
\begin{align*}
\begin{split}
\frac{1}{2}(g^{rr}\partial_r\Lambda^\ep)^2.
\end{split}
\end{align*}
Since $g^{rr}$ is bounded away from zero and the tangential derivatives of $\Lambda^\ep$ are zero on $[0,T]\times \partial B$ this controls $|\partial_{t,\bary} \Lambda^\ep|^2$ on $[0,T]\times \partial B$. Therefore integration of \eqref{eq:genmultLag} gives
\begin{align}\label{eq:Lambdanormaltemp1}
\begin{split}
\int_0^T\int_{\partial B}|\partial_{t,\bary}\Lambda^\ep|^2\ud S\,\ud t\lesssim \sup_{t\in[0,T]}\int_B|\partial_{t,\bary}\Lambda^\ep|^2\ud \bary+ \int_0^T\int_B|\partial_{t,\bary}\Lambda_\ep|^2\ud\bary\,\ud t+\Big|\int_0^T\int_B (\Box \Lambda^\ep)(Q\Lambda^\ep)\ud \bary\,\ud t\Big|.
\end{split}
\end{align}
Estimate \eqref{eq:Lambdanormal1} for $\Lambda^\ep$ follows after adding a suitably large multiple of \eqref{eq:energystandardtemp1}, and the corresponding estimate for $\Lambda$ follows by taking the limit $\ep\to0$. For \eqref{eq:Lambdanormal2} we simply integrate by parts in the last term in \eqref{eq:Lambdanormaltemp1} (here $\Gamma^{\epsilon}$ is defined such that $\partial_{t}\Gamma^{\epsilon}=\Lambda^{\epsilon}$): 
\begin{align*}
\begin{split}
\int_0^T\int_B(\partial_a\calH^{\ep,a})Q\Lambda^\ep\ud\bary \ud t&= \int_0^T\int_{\partial B}\calH^{\ep,r} Q\Lambda^\ep \ud S\ud t-\int_0^T\int_B\calH^{\ep,a}(\partial_aq^\alpha)\partial_\alpha\Lambda^\ep\ud \bary\ud t\\
&\quad-\int_0^T\int_B\calH^{\ep,a}q^\alpha\partial_t\partial^2_{a\alpha}\Gamma^\ep\ud \bary\ud t\\
&=\int_0^T\int_{\partial B}\calH^{\ep,r} Q\Lambda^\ep \ud S\ud t-\int_0^T\int_B\calH^{\ep,a}(\partial_aq^\alpha)\partial_\alpha\Lambda^\ep\ud \bary\ud t\\
&\quad+\int_0^T\int_B (\partial_t\calH^{\ep,a})q^\alpha\partial^2_{a\alpha}\Gamma^\ep\ud \bary \ud t-\int_B\calH^{\ep,a}q^\alpha\partial^2_{a\alpha}\Gamma^\ep\ud\bary\Big\vert^T_0.
\end{split}
\end{align*}
This gives the desired estimate for $\Lambda^\ep$ and the corresponding estimate for $\Lambda$ follows by taking limits.
\end{proof}

We can now state the analogue of Proposition~\ref{prop:weak2} for $\Lambda$.

\begin{proposition}\label{prop:weaksigma}
Suppose \eqref{eq:g-assumption1} holds and that there exist
	\begin{align*}
	\begin{split}
	\lambda_k\in H^1_0(B),\quad \lambda_{k+1}\in L^2(B),  \quad k=0,...,K
	\end{split}
	\end{align*}
	such that
		\begin{align*}
		\begin{split}
		\angles{\lambda_{k+2}}{v}+\calL_\sigma(\lambda_{k},v)
		=\angles{F_{\sigma,k}(0)}{v}+\angles{\calF^{a}_{\sigma,k}(0)}{\partial_av}.
		\end{split}
		\end{align*}
		Here $F_{\sigma,k}$ and $\calF^{a}_{\sigma,k}$ and their initial values are defined as in Proposition~\ref{prop:weak1} using \eqref{eq:Lambdalinsource}.
	Then there exists a unique $\Lambda_{k}$ satisfying \eqref{eq:Lambdaspaces1} and \eqref{eq:Lambdaid}, such that for all $v\in H^1_0(B)$ equation \eqref{eq:Lambdalinweak2} holds for almost every $t\in[0,T]$. The solution satisfies
	\begin{align}\label{eq:lambda-energy}
	\begin{split}
	&\sup_{t\in [0,T]}\big(\|\Lambda'_{k}\|_{L^2(B)}+\|\Lambda_{k}\|_{H^1(B)}\big)+\|\nabla \Lambda_k\|_{L^2([0,T];L^2(\partial B))}^2\\
	&\leq C_1e^{C_2T}\Big(\|\lambda_k\|_{H^1(B)}+\|\lambda_{k+1}\|_{L^2(B)}+\|F_{\sigma,k}\|_{L^2([0,T];L^2(B))}+\|\calF_{\sigma,k}\|_{L^{\infty}([0,T];L^2(B))}\Big).
	\end{split}
	\end{align}
	In these estimates $C_1$, $C_2$, and $C_3$ are constants depending on the various norms of $g$ appearing in \eqref{eq:g-assumption1}.  Moreover, we have $\Lambda'_{k-1}=\Lambda_{k}$ for $k=1,...,k$, and for for some function $P_k$ depending polynomially on its arguments such that for $k\leq K$ and $2a+k\leq K+2$  
	\begin{align}\label{eq:LambdaSobolev-estimate1}
	\begin{split}
	&\|\partial^a_\bary\Lambda_k(\tau)\|_{L^\infty([0,\tau];L^2(B))}\leq P_k\bigg(\sup_{t\leq\tau}\sum_{\ell\leq 2a+k-2}(\|\nabla \Lambda_{\ell}(t)\|_{L^{2}(B)}+\| \Lambda_{\ell+1}(t)\|_{L^2(B)}),\\
	&\phantom{\|\partial^a_\bary\Lambda_k(\tau)\|_{L^\infty([0,\tau];L^2(B))}\leq P_k\bigg(}\|g\|_{L^\infty([0,\tau];H^{\max\{a-2,5\}}(B))},\sum_{\ell\leq k}\|\partial_{t}^{\ell}F_{\sigma}\|_{L^{2}([0,T];H^a(B))}\bigg).
	\end{split}
	\end{align}
\end{proposition}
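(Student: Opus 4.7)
The plan is to mirror the proof of Propositions~\ref{prop:weak1} and~\ref{prop:weak2}, but with two simplifications and one addition. The simplifications are that the boundary condition is the standard homogeneous Dirichlet one (so the weighted embedding $\Phi$ is replaced by the trivial embedding $H^1_0(B)\hookrightarrow H^{-1}(B)$, and all boundary bilinear forms $D,E,\tilde\calC_\calB,\tiltilcalC_\calB$ disappear), and correspondingly the compatibility conditions only demand that the $\lambda_k$ lie in $H^1_0(B)$. The addition is the boundary normal–derivative estimate, which is supplied by Lemma~\ref{lem:Lambdanormal}.

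First I would carry out existence and uniqueness of $\Lambda_k$ by the Galerkin method. Fix an orthogonal basis $\{e_\ell\}$ of $H^1_0(B)$ which is orthonormal in $L^2(B)$ (e.g.\ the Dirichlet Laplacian eigenfunctions), set $\Lambda_{k,m}(t,\bary)=\sum_{\ell=1}^m\Lambda_{k,m}^\ell(t)e_\ell(\bary)$, and impose \eqref{eq:Lambdalinweak2} tested against $e_1,\dots,e_m$. Since now the $L^2$ mass matrix is just the identity, the resulting system is a standard second–order linear ODE in $\Lambda_{k,m}^\ell$, and the initial conditions are recovered directly from $\lambda_k,\lambda_{k+1}$ via $L^2(B)$ projection onto $E_m$. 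Multiplying the $\ell$-th equation by $(\Lambda_{k,m}^\ell)'$ and summing produces the standard energy identity; because $e_\ell\in H^1_0(B)$, the boundary traces all vanish, so only the analogue of the first three lines of the $\Theta$-energy identity in the proof of Proposition~\ref{prop:weak1} survive. A routine Gronwall argument then yields the uniform bound
\begin{equation*}
\sup_{t\in[0,T]}\bigl(\|\Lambda_{k,m}'\|_{L^2(B)}+\|\Lambda_{k,m}\|_{H^1(B)}\bigr)\le C_1e^{C_2T}\bigl(\|\lambda_k\|_{H^1(B)}+\|\lambda_{k+1}\|_{L^2(B)}+\|F_{\sigma,k}\|_{L^2L^2}+\|\calF^a_{\sigma,k}\|_{L^\infty L^2}\bigr),
\end{equation*}
where the $\calF^a_{\sigma,k}$ term is treated (as in Proposition~\ref{prop:weak1}) by an integration by parts in time. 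From here I would also extract a uniform $H^{-1}$ bound on $\Lambda_{k,m}''$ from the weak equation, pass to a weak limit $\Lambda_k$ along a subsequence, verify that $\Lambda_k\in L^2([0,T];H^1_0(B))$ (the subspace is weakly closed), and check that $\Lambda_k$ solves \eqref{eq:Lambdalinweak2} with the correct initial data. Uniqueness is the same integration–test argument as in Proposition~\ref{prop:weak1}, but with $\zeta$ taking values in $H^1_0(B)$, which is simpler because there are no boundary traces to bookkeep. The recursive identity $\Lambda'_{k-1}=\Lambda_k$ is proved exactly as in Proposition~\ref{prop:weak1}: one introduces $\Gamma(t)=\Lambda_{k-1}(0)+\int_0^t\Lambda_k$, checks that $\Gamma-\Lambda_{k-1}$ solves the linear homogeneous problem obtained by differentiating~\eqref{eq:Lambdalinweak2} in $t$, with vanishing data, and invokes uniqueness.

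Next I would establish the boundary normal–derivative estimate. Since $\Lambda_k\in L^2([0,T];H^1_0(B))$ with $\Lambda_k'\in L^2([0,T];L^2(B))$ and $\Lambda_k''\in L^2([0,T];H^{-1}(B))$, and the right-hand side of \eqref{eq:Lambdalinweak2} is of the form $\angles{F_{\sigma,k}}{v}+\angles{\calF^a_{\sigma,k}}{\partial_av}$, Lemma~\ref{lem:Lambdanormal} applies directly (with $H=F_{\sigma,k}$ and $\calH^a=\calF^a_{\sigma,k}$, plus the lower–order pieces of $\calL_\sigma$ absorbed into $H$ after using the $H^1$ energy bound just proved). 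This yields the $\|\nabla\Lambda_k\|_{L^2([0,T];L^2(\partial B))}^2$ summand in \eqref{eq:lambda-energy}. For $k\ge 1$ we would use the second alternative in Lemma~\ref{lem:Lambdanormal} with $\Gamma=\Lambda_{k-1}$, since $\Lambda_k=\partial_t\Lambda_{k-1}$, so that the inconvenient $\|\partial_a\calH^a\|_{L^2L^2}$ hypothesis is avoided.

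Finally, for the Sobolev estimate \eqref{eq:LambdaSobolev-estimate1}, I would bootstrap by induction on $a$ using the Dirichlet elliptic estimates of Lemma~\ref{lem:Dirichlet}, exactly in the spirit of Proposition~\ref{prop:Sobolev1}. Taking $v\in H^1_0(B)$ and integrating by parts in \eqref{eq:Lambdalinweak2} shows that $\Lambda_k$ solves, in the weak sense, $g^{ab}\partial_a\partial_b\Lambda_k=W_k$ with $W_k$ expressible in terms of $\Lambda_k''$, $\Lambda_k'$, $\partial\Lambda_k$, $F_{\sigma,k}$, $\partial\calF^a_{\sigma,k}$, and one $\bary$–derivative of the metric; with the $H^1$ energy already controlled, \eqref{eq:Dirichlet-bound1} upgrades $\Lambda_k$ to $H^2(B)$ whenever $k+2\le K$. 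Iterating \eqref{eq:Dirichlet-boundk}, at each stage we trade one unit of time regularity for one unit of spatial regularity, and the counting $2a+k\le K+2$ is the standard budget that ensures each induction step has all its inputs under control. The homogeneous Dirichlet condition automatically handles the compatibility of the traces at every order, which is the main reason the proof here is cleaner than for $\Theta$.

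The main obstacle I expect is purely bookkeeping: tracking at which order of commutation the coefficient terms in $\calC,\calC^a$ must be placed in $L^\infty$ versus $L^2$, so that the commutator source $F_{\sigma,k},\calF^a_{\sigma,k}$ defined by \eqref{eq:Lambdalinsource} fits into the hypotheses of Lemma~\ref{lem:Lambdanormal} (in its second form) and of Lemma~\ref{lem:Dirichlet}. This is settled by splitting top-order coefficient derivatives into $L^2$ and bounding lower-order factors of $\Lambda_\ell$ ($\ell<k$) pointwise through the Sobolev embedding $H^2(B)\hookrightarrow L^\infty(B)$, using the already-established estimate~\eqref{eq:LambdaSobolev-estimate1} at the previous induction step — the exact analogue of Remark~\ref{rmk: Linfty Sobolev}.
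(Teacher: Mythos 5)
Your proposal is correct and follows essentially the same route as the paper: Galerkin approximation and Gronwall for existence, uniqueness, and the recursive identity (all simplified by the homogeneous Dirichlet condition), Lemma~\ref{lem:Lambdanormal} in its second form with $\Gamma=\Lambda_{k-1}$ for the boundary normal-derivative term, and Lemma~\ref{lem:Dirichlet} in place of Lemma~\ref{lem:Neumann} for the Sobolev bootstrap. You have in fact supplied more detail than the paper, which disposes of everything except the $\|\nabla\Lambda_k\|_{L^2([0,T];L^2(\partial B))}$ estimate by reference to Propositions~\ref{prop:weak1},~\ref{prop:weak2}, and~\ref{prop:Sobolev1}.
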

\begin{proof}
The proof is similar to those of Propositions~\ref{prop:weak1},~\ref{prop:weak2}, and~\ref{prop:Sobolev1}, where for higher derivatives we use Lemma~\ref{lem:Dirichlet} instead of~\ref{lem:Neumann} (see also \cite{Wloka-book,Lions-Magenes-book1}).  The only part that requires separate treatment is the estimate on $\|\nabla\Lambda_k\|_{L^2([0,T];L^2(\partial B))}$ in \eqref{eq:lambda-energy}. For this we may assume that we already have a weak solution satisfying \eqref{eq:lambda-energy} without $\|\nabla\Lambda_k\|_{L^2([0,T];L^2(\partial B))}$ on the left-hand side, and then appeal to Lemma~\ref{lem:Lambdanormal} to finish the proof. Here, note that $\Gamma$ on the right-hand side of \eqref{eq:Lambdanormal2} corresponds to $\Lambda_{k-1}$ so the corresponding contribution can be bounded using Cauchy-Schwarz with a small constant, and the elliptic estimate \eqref{eq:Dirichlet-bound1} applied to \eqref{eq:Lambdalinweak2} with $k$ replaced by $k-1$. See the calculation leading to \eqref{eq:gen1} for a similar estimate. 
\end{proof}

Before moving on to the iteration for the nonlinear system, we need one more estimate for $\Theta$ corresponding to Lemma~\ref{lem:nablaDV1}. In our scheme, this will be necessary to guarantee the second assumption in \eqref{eq:g-assumption1}.

\begin{lemma}\label{lem:voblique}
Under the assumptions of Proposition~\ref{prop:weak2}, for any $\ell \leq K-1$
\begin{align*}
\begin{split}
\|\nabla\Theta_\ell\|_{L^2([0,T];L^2(\partial B))}^2&\lesssim \sum_{j\leq\ell}\Big(\|F_j\|_{L^2([0,T];L^2(B))}^2+\|\calF_j\|_{L^2([0,T];L^2(B))}^2+\|f_j\|_{L^2([0,T];L^2(\partial B))}^2\Big)\\
&\quad+\|\theta_\ell\|_{H^1(B)}^2+\|\theta_{\ell+1}\|_{L^2(B)}^2+\|\tiltheta_{\ell+1}\|_{L^2(\partial B)}^2\\
&\quad+\|(\tr\,\Theta_\ell)'\|_{L^2([0,T];L^2(\partial B))}^2+\|(\tr\,\Theta_\ell)''\|_{L^2([0,T];L^2(\partial B))}^2,
\end{split}
\end{align*}
where the implicit constant depends only on $g$, $\gamma$, and their first three derivatives.
\end{lemma}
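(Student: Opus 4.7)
The plan is to prove the estimate via a first-order multiplier identity for $\Box_g$, in the spirit of Lemma~\ref{lem:oblique} and Lemma~\ref{lem:Lambdanormal}, transplanted to the Lagrangian setting, together with the strong boundary equation for $\Theta_\ell$ to eliminate the normal derivative. Since Proposition~\ref{prop:weak2} only guarantees $\Theta_\ell\in H^1(B)$, the computation is carried out on the smooth-in-space Galerkin approximations $\Theta_{\ell,m}$ from the existence proof, and the final estimate is obtained by passing to the limit, since every quantity on the right-hand side is controlled uniformly in $m$ by Proposition~\ref{prop:weak2}.

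I would pick the purely spatial multiplier $Q=-\chi(r)\,(y^a/r)\partial_a$, with $\chi$ a smooth cutoff equal to $1$ near $r=1$, and use the identity
\begin{equation*}
\sqrt{|g|}\,(\Box_g\Theta_\ell)(Q\Theta_\ell)=\partial_\alpha\mathcal{P}^\alpha[\Theta_\ell]+\mathcal{R}[\Theta_\ell],
\end{equation*}
where $\mathcal{P}^\alpha$ is the associated current and $\mathcal{R}$ is quadratic in $\nabla_{t,\bary}\Theta_\ell$ with coefficients involving $\nabla g,\nabla Q$. Because $[0,T]\times\partial B$ is $g$-timelike ($g^{rr}>0$) and $Q|_{\partial B}=-\partial_r$, a direct computation yields
\begin{equation*}
n_\alpha\mathcal{P}^\alpha\big|_{\partial B}\;\ge\;c_1|\nabla_{t,\bary}\Theta_\ell|^2-c_2\bigl((\partial_r\Theta_\ell)^2+(\partial_t\Theta_\ell)^2\bigr),
\end{equation*}
with $c_1,c_2>0$ depending only on $g$ and $\partial g$ on $\partial B$. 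Integrating over $[0,T]\times B$, the bulk $\mathcal{R}$ and the slice contributions at $t=0,T$ from $\partial_\alpha\mathcal{P}^\alpha$ are bounded by $\sup_t\|\nabla_{t,\bary}\Theta_\ell(t)\|_{L^2(B)}^2$, which by \eqref{eq:energy high} is controlled by the data and source norms on the right-hand side of the lemma; and $(\partial_t\Theta_\ell)^2$ on $\partial B$ is precisely $\|(\tr\,\Theta_\ell)'\|_{L^2(L^2(\partial B))}^2$.

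The bulk source $\iint(\Box_g\Theta_\ell)(Q\Theta_\ell)\sqrt{|g|}$ is handled by testing the weak equation \eqref{eq:weak2} against $Q\Theta_\ell$ (admissible since $Q$ is purely spatial): the $F_\ell$, $\calF_\ell^a$, and $f_\ell$ contributions are bounded by Cauchy--Schwarz by the source norms on the right-hand side plus $\epsilon$ times the LHS. The remaining $(\partial_r\Theta_\ell)^2$ contribution on $\partial B$ is eliminated using the strong boundary equation derived from \eqref{eq:weak2} via Remark~\ref{rem:strongeqn}, schematically
\begin{equation*}
\gamma^{-1}(\tr\,\Theta_\ell)''+g^{ab}(\partial_a\Theta_\ell)n_b=f_\ell+\calF_\ell^a n_a+(\text{l.o.t.\ in }\tr\,\Theta_\ell,(\tr\,\Theta_\ell)').
\end{equation*}
Solving for $\partial_r\Theta_\ell$ (using $g^{rr}>0$), squaring, and integrating produces the desired bound by $\|(\tr\,\Theta_\ell)''\|^2$, $\|f_\ell\|^2$, $\|(\tr\,\Theta_\ell)'\|^2$ in $L^2(L^2(\partial B))$, together with tangential derivatives absorbed into the LHS and the $\calF_\ell^a n_a$ contribution, all ultimately bounded by the RHS of the lemma. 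The estimate closes directly, without any induction on $\ell$.

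\emph{Main obstacle.} The principal technical difficulty is that the lemma only assumes a bulk $L^2$ bound on $\calF_\ell^a$, which does not directly control its trace on $\partial B$. This is circumvented by invoking the Sobolev part of Proposition~\ref{prop:weak2}: the recursive formulas \eqref{higher order source} express $\calF_\ell^a$ in terms of first derivatives of $\Theta_j$ with $j\le\ell-1\le K-2$, which lie in $L^\infty([0,T];H^2(B))$ by \eqref{eq:Sobolev-estimate1}. Consequently $\calF_\ell^a\in L^\infty(H^1(B))$ at the Galerkin level uniformly in $m$, so its trace is controlled in $L^\infty(L^2(\partial B))$, ultimately in terms of the data and source norms already present on the right-hand side of the lemma. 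The remainder is a standard curved-background multiplier calculation.
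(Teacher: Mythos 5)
Your overall strategy is the right one: a first-order multiplier identity whose boundary flux controls $|\nabla_{t,\bary}\Theta_\ell|^2$ on $\partial B$ modulo $(\partial_r\Theta_\ell)^2$ and $(\partial_t\Theta_\ell)^2$, followed by elimination of the normal derivative via the boundary equation. This is indeed how the paper proceeds (it adapts Lemma~\ref{lem:nablaDV1} and Lemma~\ref{lem:Lambdanormal}), and your spatial choice $Q=-\chi(r)(y^a/r)\partial_a$ is a legitimate substitute for the timelike $\alpha D_{\barV}-n$ of Lemma~\ref{lem:oblique}. But there are two concrete gaps.

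First, the computation cannot be performed ``on the smooth-in-space Galerkin approximations'' as you propose. The $m$-th Galerkin approximant $\Theta_{\ell,m}$ satisfies \eqref{eq:gw1} only when tested against $v$ in the finite-dimensional span $E_m$; it does not solve the strong wave equation nor the strong boundary equation, and $Q\Theta_{\ell,m}\notin E_m$, so neither step of your argument (testing against $Q\Theta_\ell$, invoking Remark~\ref{rem:strongeqn}) is available at the Galerkin level. The same obstruction reappears for the limit $\Theta_\ell$, since you only know $\Theta_\ell\in H^1(B)$ a priori: $Q\Theta_\ell\in L^2(B)$ is not an admissible test function in \eqref{eq:weak2}. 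The paper avoids this in Lemma~\ref{lem:Lambdanormal} by mollifying the data, coefficients, and sources, solving the resulting classical wave problem, performing the multiplier computation on the classical solution, and passing to the limit; some such regularization of the equation itself (rather than a Galerkin truncation) is what is actually needed here.

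Second, your route around the $\calF_\ell^a n_a$ boundary term is genuinely different from the paper's, and gives a weaker conclusion. The paper's Lemma~\ref{lem:nablaDV1} (of which Lemma~\ref{lem:voblique} is the Lagrangian version) proceeds by \emph{induction on} $\ell$: the analogue of $\calF_\ell^a n_a$ on $\partial B$ involves $\nabla\Theta_j|_{\partial B}$ for $j\le\ell-1$ times coefficients in $g$, and these are controlled \emph{linearly} by the inductive hypothesis (the same estimate for lower $\ell$), which is why the source sum $\sum_{j\le\ell}$ appears on the right-hand side of the lemma. Your replacement by the Sobolev estimates of Proposition~\ref{prop:weak2} does not give this: $\|\calF_\ell^a\|_{L^\infty(H^1(B))}$ is bounded by $P_k(\cdot)$, a \emph{polynomial} of the energy quantities, so the final estimate is superlinear in the data and source norms, not the clean $\lesssim$ of the stated lemma (with constant depending only on $g,\gamma$ and their derivatives). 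This is still enough for the paper's use of the lemma in Section~\ref{sec:iteration}, where boundedness suffices, but it is a weakening of the claim, and it is not the paper's argument: you should instead run the induction on $\ell$, using Proposition~\ref{prop:weak2}'s Sobolev bounds and the trace theorem only for the base case $\ell=0$, exactly as in Lemma~\ref{lem:nablaDV1}.
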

\begin{proof}
The proof is essentially the same as that of Lemma~\ref{lem:nablaDV1} adapted to the Lagrangian setting as in the proof of Lemma~\ref{lem:Lambdanormal}. We omit the details.
\end{proof}

\section{The Iteration}\label{sec:iteration}
With the linear theory and energy estimates at hand, the proof of Theorem~\ref{thm:main} is a more or less routine application of Picard iteration.  We continue to work with the renormalization $c=1$ to simplify the notation.

\begin{proof}[Proof of Theorem~\ref{thm:main}]
We will prove the existence for the system on the Lagrangian side \eqref{eq:V2}, \eqref{eq:LambdaLag}, \eqref{eq:SigmaLag}. We will sketch the proof of existence of the solution (convergence of the iteration) in some detail, and uniqueness and persistence of regularity follow from similar arguments as usual (see for instance \cite{Wu97}, Theorem~5.11). It is then a routine calculation to go back to the corresponding Eulerian equations in \eqref{eq:sigma1a-intro} and \eqref{eq:sigma1a-intro},  provided $K$ is sufficiently large. At the end, we will show how to return to the original equation \eqref{eq:Voriginalintro2}.

\underline{\emph{The differentiate equations \eqref{eq:V2}, \eqref{eq:LambdaLag}, \eqref{eq:SigmaLag}:}} We will use $\Thetam$, $\Lambdam$, and $\Sigmam$ to denote the iterates of $V$, $D_V\sigma^2$, and $\sigma^2$ on the Lagrangian side, respectively. The zeroth iterates $\Theta^{(0)}$ and $\Lambda^{(0)}$ are chosen such that they are polynomials in $t$ and when $t=0$ they themselves and their first order time derivatives agree with the corresponding initial data. The zeroth iterate $\Sigma^{(0)}$ is defined to be the solution to the elliptic equation \eqref{eq:SigmaLag} where $\Theta, \Lambda$ in that equation are replaced by $\Theta^{(0)}, \Lambda^{(0)}$ respectively. Given $\Thetam$ we define the $m$th iterate of the (renormalized) Lagrangian map $X$ by 
\begin{align*}
\begin{split}
\frac{\partial (\Xm)^i}{\partial t} = \frac{(\Thetam)^i}{(\Thetam)^0},\quad i=1,2,3.
\end{split}
\end{align*}
The $m$th iterate, $\dgm$, of the metric is then defined as
\begin{align*}
\begin{split}
\dgm=&-\left(1-\sum_{i=1}^3\frac{(\Thetam)^{i})^{2}}{((\Thetam)^{0})^{2}}\right)\ud t^{2}+2\sum_{i,\ell=1}^3\frac{(\Thetam)^{i}}{(\Thetam)^{0}}\frac{\partial (\Xm)^{i}}{\partial y^{\ell}}\ud t\ud y^{\ell}+\sum_{i,k,\ell=1}^3\frac{\partial (\Xm)^{i}}{\partial y^{k}}\frac{\partial (\Xm)^{i}}{\partial y^{\ell}}\ud y^{k}\ud y^{\ell}.
\end{split}
\end{align*}
We denote the components of the metric $\dgm$ and the inverse metric $\gm$ by $\dgm_{\alpha\beta}$ and $\gm^{\alpha\beta}$ respectively. Note that these components, and their first time derivatives, are at the same regularity level as one derivative of $\Thetam$. 
The $m$th iterate of the coefficient $\gamma$ is defined as
\begin{align*}
\begin{split}
\gammam:=\frac{\sqrt{\gm^{\alpha\beta}\partial_\alpha\Sigmam\partial_\beta\Sigmam}}{2((\Thetam)^0)^2},
\end{split}
\end{align*}
and $\Phim:H^1(B)\to (H^1(B))^\ast$ by
\begin{align*}
\begin{split}
(\Phim(u),v):=\angles{u}{v}+\bangles{\gammam^{-1}\tr\,u}{\tr\,v}.
\end{split}
\end{align*}
If the $m$th iterate of $V$, $D_V\sigma^2$, and $\sigma^2$ are given, we define the $(m+1)$st iterates as follows: First $\Thetamp\in L^2([0,T];H^1(B))$ is the weak solution of
\begin{align}\label{eq:Thetamp1}
\begin{split}
&(\Phim(\Thetamp)'',v)+\calLm(\Thetamp,v)=\bangles{\fm}{\tr\,v},\qquad \forall v\in H^1(B),\\
&\Thetamp(0)=\theta_0\quad\mathrm{in~} L^2(B),\\
&(\Phi(\Thetamp)'(0),v)=\angles{\theta_1}{v}+\bangles{\tiltheta_1}{\tr\,v},\qquad \forall v\in H^1(B).
\end{split}
\end{align}
Here $\calLm$ is defined as
\begin{align*}
\begin{split}
\calLm(u,v):=\Bm(u,v)+\Cm(u',v)+\Dm((\tr\,u)',v)+\Em(\tr\,u,v),
\end{split}
\end{align*}
where
\begin{align*}
\begin{split}
&\Bm:H^1(B)\times H^1(B)\to \bbR,\quad \Cm:L^2(B)\times H^1(B)\to\bbR,\quad \Dm,\Em:L^2(\partial B)\times H^1(B)\to \bbR,\\
&\Bm(u,v):=\angles{\gm^{ab}\partial_au}{\partial_bv} -\frac{1}{2}\angles{\partial_au}{ v\gm^{a\alpha}\partial_\alpha\log |\dgm|}-\angles{\partial_au}{v\partial_t\gm^{ta}},\\
&\Cm(u,v):=2\angles{u}{\gm^{ta} \partial_av}-\frac{1}{2}\angles{u}{v \gm^{t\alpha}\partial_\alpha\log |\dgm|}+\angles{u}{v\partial_a \gm^{ta}},\\
&\Dm(u,v):=-\bangles{u}{\gm^{tr}\tr\,v}-2\bangles{u}{(\gammam^{-1})'\tr\,v},\\
&\Em(u,v):=-\bangles{u}{(\gammam^{-1})''\tr\,v}.
\end{split}
\end{align*}
Similarly, $\Lambdamp\in L^2([0,T];H^1_0(B))$ is the weak solution of
\begin{align}\label{eq:Lambdamp1}
\begin{split}
&((\Lambdamp)'',v)+\Bm(\Lambdamp,v)+\Cm((\Lambdamp)',v)+\angles{\Fm_\sigma}{v}=0,\qquad \forall v\in H^1_0(B),\\
&\Lambdamp(0)=\lambda_0,\qquad \quad ((\Lambdamp)'(0),v)=\angles{\lambda_1}{v},\quad\forall v\in H^1_0(B),
\end{split}
\end{align}
where 
\begin{align*}
\begin{split}
\Fm_\sigma:= S(\Thetam,\Sigmam)
\end{split}
\end{align*}
Here $S$ is defined as in \eqref{eq:Lambdam-intro} and $(\Lambdam)''$ should be understood as an element of $H^{-1}(B):=(H^1_0(B))^\ast$, where $\Lambdam$ is identified with an element of $H^{-1}(B)$ through $(\Lambdam,v):=\angles{\Lambdam}{v}$. 
Finally, $\Sigmamp$ is defined through the transport equation $\partial_t\Sigmamp= \frac{1}{(\Thetamp)^0}\Lambdamp$.

{\bf{Boundedness.}} From now on, we will stop writing $\tr \,u$ for the restriction to the boundary and simply write $u$ when there is no risk of confusion. Let\footnote{Here among the components of $\nabla_{t,y}\partial_{t}^{K}\Lambda^{(m)}$, the normal components $\nabla_{n}\partial^{K}\Lambdam$ is defined in the weak sense as in the statement of Lemma \ref{lem:Lambdanormal}}.

\begin{align*}
\begin{split}
\calE_k^m(T)&:=\sup_{0\leq t\leq T}\sum_{\ell\leq k}(\|\nabla_{t,y}\partial_t^\ell\Thetam(t)\|_{L^2(B)}^2+\|\nabla_{t,y}\partial_t^\ell\Lambdam(t)\|_{L^2(B)}^2+\|\partial_t^{\ell+1}\Thetam(t)\|_{L^2(\partial B)}^2)\\
&\quad+\sum_{\ell\leq k}\int_0^T\|\nabla_{t,y}\partial_t^\ell\Lambdam(t)\|_{L^2(\partial B)}^2\ud t.
\end{split}
\end{align*}
We claim that if $T$ is sufficiently small, then there are constants $A_0\leq A_1\leq\dots\leq A_K$ such that for all $m$
\begin{align}\label{eq:boundedness-claim}
\begin{split}
\calE_k^m(T)\leq A_k,\quad k=0,\dots,K.
\end{split}
\end{align}
For $m=0$ this holds trivially for any $T$, so we assume that \eqref{eq:boundedness-claim} holds for some $m$, with constant $A_k$ to be determined, and prove it for $m+1$.  Let us note a few consequences of the induction hypothesis. First, from the Sobolev estimates in Lemmas~\ref{lem:Neumann} and~\ref{lem:Dirichlet} (see also Proposition~\ref{prop:Sobolev1}) it follows that
\begin{align*}
\begin{split}
\sum_{\ell\leq k}\sum_{2p+\ell\leq k+2}(\|\partial_\bary^p\partial_t^\ell\Thetam\|_{L^2(B)}^2+\|\partial_\bary^p\partial_t^\ell\Lambdam\|_{L^2(B)}^2)\leq C_{A_k},
\end{split}
\end{align*}
for some constant depending on $A$. Also note that in in view of the transport equation defining $\Sigmamp$, we can estimate $\partial_t^{k}\Sigmamp$ in terms of $\partial_t^\ell\Lambdamp$ and $\partial_t^\ell\Thetamp$ for $\ell\leq k-1$. 
It then follows from these observations that the coefficients and source terms in the equations \eqref{eq:Thetamp1} for $\Thetamp$ and \eqref{eq:Lambdamp1} for $\Lambdamp$ satisfy the assumptions in Propositions~\ref{prop:weak2} and~\ref{prop:weaksigma} respectively. Since $\calF=0$ in equation \eqref{eq:Thetamp1} the energy estimates in Propositions~\ref{prop:weak2} and~\ref{prop:weaksigma} imply that
\begin{align*}
\begin{split}
\calE_0^{m+1}(T)\leq C_0+TC_{0,A_K},
\end{split}
\end{align*}
where $C_0$ depends only on the initial data. If $A_0$ is sufficiently large and $T$ sufficiently small it follows that
\begin{align*}
\begin{split}
\calE_0^{m+1}(T)<A_0
\end{split}
\end{align*}
as desired. Next, again by Propositions~\ref{prop:weak2} and~\ref{prop:weaksigma}
\begin{align*}
\begin{split}
\calE_1^{m+1}(T)\leq C_1+ C_{1,A_0 }+TC_{1,A_K}
\end{split}
\end{align*}
where $C_1$ depends only on the initial data, and the term $C_{1,A_0}$ comes from the fact that now $\calF\neq0$ after commuting one $\partial_t$ with \eqref{eq:Thetamp1}. If $A_1$ is sufficiently large, relative to $C_1$ and $A_0$, and $T$ sufficiently small, it follows that
\begin{align*}
\begin{split}
\calE_1^{m+1}(T)<A_1.
\end{split}
\end{align*}
We can now continue inductively in this fashion to prove \eqref{eq:boundedness-claim} with $m$ replaced by $m+1$. The only additional detail is that for higher values of $k$, we also need to use the Sobolev estimates to bound $\Thetamp$ and $\Lambdamp$ (and their lower order derivatives) in $L^\infty$ in terms of $\calE_k^{m+1}$. Using Gronwall we can then conclude boundedness of the higher order energies as above. We omit the routine details. Note that after completing the proof of \eqref{eq:boundedness-claim} we can again appeal to the Sobolev estimates from Lemmas~\ref{lem:Neumann} and~\ref{lem:Dirichlet} to conclude that 
\begin{align*}
\begin{split}
\sum_{k\leq K}\sum_{2p+k\leq K+2}(\|\partial_\bary^p\partial_t^k\Thetam\|_{L^2(B)}^2+\|\partial_\bary^p\partial_t^\ell\Lambdam\|_{L^2(B)}^2)\leq C_{A_K}.
\end{split}
\end{align*}

{\bf{Convergence.}} Having established that $\calE_K^m(T)$ is uniformly bounded, we prove the convergence of $\Thetam$, $\Lambdam$, and $\Sigmam$ in some lower order Sobolev norm. The argument is standard using our energy estimates for linear systems and we will be brief. If $K$ is sufficiently large we can assume that equations \eqref{eq:Thetamp1} and \eqref{eq:Lambdamp1} are satisfied in the strong sense. Let
\begin{align*}
\begin{split}
C_m(t)&:=\sup_{0\leq s\leq t}\sum_{\ell\leq 5}(\|\nabla_{t,y}\partial_t^\ell\Thetamp(s)-\Thetam(s)\|_{L^2(B)}^2+\|\partial_t^{\ell+1}\Thetamp(s)-\Thetam(s)\|_{L^2(\partial B)}^2)\\
&\quad+\sup_{0\leq s\leq t}\sum_{\ell\leq 5}\|\nabla_{t,y}\partial_t^\ell\Lambdamp(s)-\Lambdam(s)\|_{L^2(B)}^2+\sum_{\ell\leq 5}\int_0^t\|\nabla_{t,y}\partial_t^\ell\Lambdamp(s)-\Lambdam(s)\|_{L^2(\partial B)}^2\ud s.
\end{split}
\end{align*}
We write the equations satisfied by $\Thetamp$ and $\Lambdamp$ in the schematic forms
\begin{align*}
\begin{cases}
\Box_{\dgm}\Thetamp =0,\qquad &\mathrm{in~}[0,T]\times B\\
\partial_t^2\Thetamp+\gamma^{(m)}\nabla_{n^{(m)}}\Thetamp=f(\Thetam,\Lambdam),\qquad&\mathrm{on~}[0,T]\times \partial B
\end{cases},
\end{align*}
and
\begin{align*}
\begin{cases}
\Box_{\dgm}\Lambdamp =F(\Thetam,\Sigmam),\qquad &\mathrm{in~}[0,T]\times B\\
\Lambdamp\equiv0,\qquad&\mathrm{on~}[0,T]\times \partial B
\end{cases}.
\end{align*}
and recall that $\partial_t\Sigmamp= \frac{1}{(\Thetamp)^0}\Lambdamp$.
Taking differences we see that $\Thetamp-\Thetam$ and $\Lambdamp-\Lambdam$ have zero initial data and satisfy
\begin{align}\label{eq:Thetadif1}
\begin{cases}
\Box_{\dgm}(\Thetamp-\Thetam)= \Box_{\dgm-\dgmm}\Thetam,\qquad &\mathrm{in~}[0,T]\times B\\
(\partial_t^2+\gamma^{(m)}\nabla_{n^{(m)}})(\Thetamp-\Thetam)=f(\Thetam,\Lambdam)-f(\Thetamm,\Lambdamm)\\
\phantom{(\partial_t^2+\gamma^{(m)}\nabla_{n^{(m)}})(\Thetamp-\Thetam)=}-(\gamma^{(m)}\nabla_{n^{(m)}}-\gamma^{(m-1)}\nabla_{n^{(m-1)}})\Thetamm,\qquad&\mathrm{on~}[0,T]\times\partial B
\end{cases},
\end{align}
and
\begin{align}\label{eq:Lambdadif1}
\begin{cases}
\Box_{\dgm}(\Lambdamp-\Lambdam)= F(\Thetam,\Sigmam)-G(\Thetamm,\Sigmamm)\\
\phantom{\Box_{\dgm}(\Lambdamp-\Lambdam)=}-\Box_{\dgm-\dgmm}\Lambdam,\qquad &\mathrm{in~}[0,T]\times B\\
\Lambdamp-\Lambdam\equiv0,\qquad&\mathrm{on~}[0,T]\times\partial B
\end{cases}.
\end{align}
Similarly,
\begin{align}\label{eq:Sigmadif1}
\partial_t(\Sigmamp-\Sigmam)= \frac{1}{(\Thetamp)^0}\Lambdamp-\frac{1}{(\Thetam)^0}\Lambdam.
\end{align}
Since we have already shown that all coefficients, as well as their first few derivatives, are uniformly bounded, we can apply the energy estimates from Propositions~\ref{prop:weak2} and~\ref{prop:weaksigma} to \eqref{eq:Thetadif1} and \eqref{eq:Lambdadif1} to conclude that for some absolute constant $C$
\begin{align*}
\begin{split}
C_m(t)\leq C\int_0^t C_{m-1}(t_1)\ud t_1.
\end{split}
\end{align*}
Iterating this inequality gives
\begin{align*}
\begin{split}
C_m(t)\leq \frac{C^m t^m}{m!}\sup_{t\leq T}C_0(t),
\end{split}
\end{align*}
proving that $(\Thetam)_{m=0}^\infty$ and $(\Lambdam)_{m=0}^\infty$, and hence $(\Sigmam)_{m=0}^\infty$ by and elliptic estimates, are Cauchy sequences, converging to some $\Theta$, $\Lambda$, and $V$ respectively.

\underline{\emph{Going back to \eqref{eq:Voriginalintro2}:}} It is now not difficult to show that the undifferentiated version of the equations, that is, \eqref{eq:Voriginalintro2}, holds. For this we start with our Eulerian solutions $(V,D_V\sigma^2,\sigma^2)$ which satisfy 
\begin{align*}
\begin{split}
&\Box V =0\mathrm{~in~}\Omega,\qquad (D_V^2-\frac{1}{2}(\nabla^\alpha\sigma^2)\nabla_\alpha)V_\mu=-\frac{1}{2}\nabla_\alpha D_V\sigma^2 \mathrm{~on~}\partial\Omega,\qquad V\mathrm{~tangent~ to~}\partial\Omega,\\
& \Box D_V\sigma^2 =  4(\nabla^\mu V^\nu)(\nabla_\mu V^\alpha)(\nabla_\alpha V_\nu)+4(\nabla^\mu V^\nu)\nabla_\mu\nabla_\nu \sigma^2,\qquad D_V\sigma\equiv0\mathrm{~on~}\partial\Omega,\qquad \sigma^2\equiv1\mathrm{~on~}\partial\Omega.
\end{split}
\end{align*}
Let $B:=V^\alpha V_\alpha+\sigma^2$, $X_\alpha := D_VV_\alpha+\frac{1}{2}\partial_\alpha\sigma^2$, $\omega_{\mu\nu}:=\partial_\mu V_\nu-\partial_\nu V_\mu$, and $Y_\mu=(D_V^2-\frac{1}{2}(\nabla^\alpha\sigma^2)\nabla_\alpha)V_\mu+\frac{1}{2}\nabla_\alpha D_V\sigma^2$. We need to show that these quantities are identically zero. For this we use the following equations which can be verified by direct computation:
\begin{align*}
\begin{split}
&D_V\Box B = 4(\nabla^\mu V^\nu)\nabla_\mu X_\nu\quad \mathrm{in~}\Omega,\qquad D_V B =  2V^\alpha X_\alpha,\quad \mathrm{in~}\overline{\Omega},\\
&\Box \omega =0,\quad\mathrm{in~}\Omega,\qquad (D_V^2-\frac{1}{2}(\nabla^\alpha\sigma^2)\partial_\alpha)\omega = f(\omega,D_V\omega,\nabla Y,\nabla X),\quad\mathrm{on~}\partial\Omega,\\
&\Box Y = F(\omega,\nabla\omega,\nabla X,\nabla^{(2)}X),\quad\mathrm{in~}\Omega,\qquad Y\equiv0,\quad \mathrm{on~}\partial\Omega,\\
&D_VX = G(\omega,Y),\quad\mathrm{in~}\overline{\Omega}.
\end{split}
\end{align*}
It follows that $(\omega,Y,X)$ satisfy exactly the same type of equation\footnote{The only difference is that $D_VX$ involves both $\omega$ and $Y$, but in our a priori estimates we already encountered $\nabla^{(2)}D_V^{k-1}V$ after commuting $k$ derivatives. See Lemmas~\ref{lem:Vho},~\ref{lem:boxDVkV}, and~\ref{lem:boxDVk1sigma}.} as $(V,D_V\sigma^2,\sigma^2)$ for which we already proved a priori estimates. Therefore, since these quantities vanish initially, they must vanish on all of $\Omega$. Then the equations for $B$ imply that $B$ is also identically zero.
\end{proof}

\section{Newtonian limit: Proof of Theorem~\ref{thm:limit}}\label{sec:Newtonian}
In this final section we present the proof of Theorem~\ref{thm:limit} which at this point is an almost direct consequence of Theorem~\ref{thm:main} and Proposition~\ref{prop:apriori}.
\begin{proof}[Proof of Theorem~\ref{thm:limit}]
Let $\Omega_t:=\Phi(t,\Omega_0)$ where $\Phi$ denotes the flow of $V$ given by $\frac{\ud\Phi^{i}(t,\cdot)}{\ud t}=\left(\frac{\barV^{i}}{\barV^{0}}\right)(t,\cdot)$ and let $\Psi_c(t',\cdot)=\Phi(ct',\cdot)$ so that $\Omega_{ct'}=\Psi_c(t',\Omega_0)$ and $\frac{\ud \Psi}{\ud t'}(t',x')=c\frac{\ud \Phi}{\ud t}(ct',x')$. We work under the hypotheses of Theorem~\ref{thm:limit}.  By Theorem~\ref{thm:main}, for any $c>0$ we have a local in time solution $(V,\sigma^2)$ to \eqref{eq:Voriginalintro2}  and hence \eqref{eq:main}--\eqref{eq:sigma1a}. By Proposition~\ref{prop:apriori} the solution can be extended to $x^0=cT_1$ with $T_1$ independent of $c$. For each $c$ and $t'\in [0,T_1]$ we define (with $x':=\Psi_{c}(t',x'_{0})$ and  $x'_{0}\in\Omega_{0}$)
\begin{align*}
\begin{split}
f_c(t',x'):=\frac{V^0}{c}(ct',\Psi_{c}(t',x'_{0}))-c,\quad v_c^j(t',x'):=\frac{V^j}{c}(ct',\Psi_{c}(t',x'_{0})),\quad h_c(t',x'):=\sigma(ct',\Psi_{c}(t',x'_{0}))-c^2.
\end{split}
\end{align*}
We claim that $(f_{c},v^j_c,\partial_{t'}v^j_c,h_c)$ converge as $c\rightarrow\infty$. Indeed, since the higher order energies of $
\barV$ and $\barsigma$, as defined in Proposition~\ref{prop:apriori}, are uniformly bounded (assuming $K$ in the statement of the theorem is large), by the Rellich-Kondrachov compactness theorem, there is an increasing subsequence $c_\ell\nearrow\infty$ such that for each $t'\in[0,\barT]$, $(f_{c_\ell}(c_{\ell}t',\Psi_{c_{\ell}}(t',\cdot)),v^j_{c_\ell}(c_{\ell}t',\Psi_{c_{\ell}}(t',\cdot)),\partial_{t'}v^j_{c_\ell}(c_{\ell}t',\Psi_{c_{\ell}}(t',\cdot)),h_{c_\ell}(c_{\ell}t',\Psi_{c_{\ell}}(t',\cdot)))$ converge in, say, $H^{10}(\Omega_{0})$ to some $(f(t',\Psi(t',\cdot)),v^j(t',\Psi(t',\cdot)),\partial_{t'}v^j(t',\Psi(t',\cdot)),h(t',\Psi(t',\cdot)))$ with $\Psi(t',\cdot)$ given by \eqref{def Psi}. Now we prove that $(v^j(t',\cdot),\partial_{t'}v^j(t',\cdot),h(t',\cdot))$ is a solution to the free boundary problem \eqref{eq:Newtonian-problem} (similar considerations show that $f=h+\frac{1}{2}|v|^2$ but we do not present the details as this is not needed for the proof).
	 Let us start with the following relation:
	\begin{align}\label{recovered pre 1}
	\begin{split}
		c^{-2}\sigma^2-c^2=&c^{-2}\left((V^0)^2-\sum_{i=1}^{3}(V^i)^{2}-c^{4}\right)=\left(c^{-1}V^0-c\right)^{2}-\sum_{i=1}^{3}\left(c^{-1}V^{i}\right)^{2}+2c(c^{-1}V^{0}-c).
		\end{split}
	\end{align}
	Since $\barsigma^2_{c}, \barV^{i}_{c}$ and $\barV^{0}-c$ remain bounded as $c\rightarrow\infty$, the last term on the right above is bounded and hence. Therefore we have
	\begin{align}\label{B0 recovered}
		c^{-1}V^0-c=O(c^{-1}),\quad \textrm{as}\quad c\rightarrow\infty.
	\end{align}
	Differentiating equation \eqref{recovered pre 1}, we similarly obtain that $c^{-1}\partial_{0}V^{0}=O(c^{-1})$ as $c\rightarrow\infty$. On the other hand, based on \eqref{B0 recovered}, we have, with $x'=\Psi_c(t',x_0')$, 
\begin{align}\label{velocity convergence}
\begin{split}
\lim_{\ell\to\infty} \left(c^{-2}_{\ell}V^{0}(ct',x')\frac{\partial}{\partial t'}+\sum_{i=1}^{3}c_{\ell}^{-1}V^{i}(ct',x')\frac{\partial }{\partial x'}\right) \to \frac{\partial}{\partial t'}+\sum_{i=1}^{3}v^{i}(t',x')\frac{\partial}{\partial x'}.
\end{split}
\end{align}
Therefore, taking the limit in the second equation in \eqref{eq:Voriginalintro2} and the first equation in \eqref{eq:Voriginalintro2} we obtain the first two lines of \eqref{eq:Newtonian-problem}. The boundary condition $h=0$ also follows from taking the limit of the boundary condition $\sigma-c^{2}=0$, and the tangency condition $(1,v)\in\calT(\cup_{t'}(t',\partial\calD_{t'}))$ follows from \eqref{velocity convergence}.


	 Finally, since the solution to \eqref{eq:Newtonian-problem} is unique, any convergent subsequence of $(f_c,v_c,\partial_tv_c,h_c)$ converges to the same limit. Therefore the entire sequence converges to the same limit which is a solution to \eqref{eq:Newtonian-problem}. This completes the proof of Theorem~\ref{thm:limit}.
\end{proof}

\appendix

\section{Non-vanishing vorticity and general sound speed}\label{sec:app general}
Here we discuss how the proof of Theorem~\ref{thm:main} can be adapted with minimal changes to treat the case of barotropic fluids \eqref{isentropic-barotropic1}-\eqref{isentropic-barotropic2}-\eqref{boundary1}-\eqref{boundary2} with non-zero vorticity and general sound speeds. By a slight abuse of notation, we will use $\sigma$ to denote $\|V\|$. The main equation is now
\begin{align}\label{main eqs vor}
	\begin{split}
	\nabla_\mu(G V^\mu) =0 ,\qquad D_VV^\mu+\frac{1}{2}\nabla^\mu\left(\sigma^2\right)=0.
	\end{split}
	\end{align}
 Differentiating the first equation above, a direct computation shows that with the notation $h^{\mu\nu}:=Gm^{\mu\nu}-2G' V^\mu V^\nu$, $V$ satisfies the following \emph{acoustical} wave equation (all indices in this appendix are raised and lowered with respect to $m$):
\begin{align}\label{eq:Vgenint}
\begin{split}
\partial_\mu \big(h^{\mu\nu}\partial_\nu V_\alpha\big) + \partial_\mu (G m^{\mu\nu}\omega_{\alpha\nu})=0\qquad \mathrm{in~}\Omega.
\end{split}
\end{align}
Here we used the fact $i_{V}\omega=0$ which can be derived as follows. Let $\calL_{V}$ be the Lie derivative along the vectorfield $V$ and the $1$-form $\beta$ be $\beta_{\mu}:=m_{\mu\nu}V^{\nu}$. Then the second equation in \eqref{main eqs vor} implies:
	\begin{align*}
	\calL_{V}\beta=-d\,\sigma^{2}
	\end{align*}
	Therefore we have 
	\begin{align*}
	i_{V}\beta=-\sigma^{2},\quad \Rightarrow\quad i_{V}\omega=i_{V}d\beta=\calL_{V}\beta-d\,i_{V}\beta=\calL_{V}\beta+d\,\sigma^{2}=0.
	\end{align*}
It is well-known that the wave equation above for $V$ can be written as the wave operator of a metric conformal to $h$ applied to $V$, but we will not need this formulation. On the boundary (here $D_V:=V^\mu \partial_\mu$)
\begin{align}\label{eq:Vgenb}
\begin{split}
(D_V^2-\frac{1}{2}(\nabla^\mu\sigma^2)\partial_\mu)V_\alpha-\frac{1}{2}(\nabla^\mu\sigma^2)\omega_{\alpha\mu}=-\frac{1}{2}\nabla_\alpha D_V\sigma^2.
\end{split}
\end{align}
Note that since $V$ is tangent to $\partial\Omega$ and $\nabla\sigma^2$ normal, $h^{\mu\nu}\partial_\mu\sigma^2=Gm^{\mu\nu}\partial_\mu\sigma^2$ on $\partial\Omega$. The fact $i_{V}\omega=0$ implies that the vorticity $\omega$ satisfies the transport equation $\calL_V\omega=0$. In coordinates this can be rewritten as
\begin{align}\label{eq:omegagen}
\begin{split}
D_V\omega_{\mu\nu}+(\nabla_\mu V^\lambda)\omega_{\lambda\nu}+(\nabla_\nu V^\lambda)\omega_{\mu\lambda}=0.
\end{split}
\end{align}
For $\sigma^2$ and $D_V\sigma^2$ which are constant on the boundary, we can derive the following interior acoustical wave equations:
\begin{align}\label{eq:sigmagen}
\begin{split}
\partial_\mu(h^{\mu\nu}\partial_\nu \sigma^2)=2G(\nabla^\mu V^\nu)\omega_{\mu\nu}-2h^{\mu\nu}(\nabla_\mu V^\lambda)(\nabla_\nu V_\lambda),
\end{split}
\end{align}
and
\begin{align}\label{eq:DVsigmagen}
\begin{split}
\partial_\mu(h^{\mu\nu}\partial_\nu D_V\sigma^2) = F(\omega,\sigma^2,D_V\sigma^2,\nabla\sigma^2,\nabla D_V\sigma^2,\nabla V,\nabla\omega, \nabla^2{\sigma^2}),
\end{split}
\end{align}
where the right-hand side is given by
\begin{align*}
\begin{split}
F&= -2 D_V\big((\nabla_\mu V^\lambda)(h^{\mu\nu}\nabla_\nu V_\lambda+G m^{\mu\nu}\omega_{\lambda\nu})\big)+(\nabla_\mu V^\lambda)(2h^{\mu\nu}\nabla_\lambda\nabla_\nu\sigma^2+(\nabla_\lambda h^{\mu\nu})\nabla_\nu\sigma^2)\\
&\quad-\nabla_\mu\big((D_Vh^{\mu\nu})\nabla_\nu\sigma^2\big)+(\nabla^\lambda\sigma^2)\nabla_\mu(Gm^{\mu\nu}\omega_{\lambda\nu}).
\end{split}
\end{align*}
This term can be further simplified, but the exact structure is not important for our purposes, except that using the relation $D_VV=-\frac{1}{2}\nabla\sigma^2$, the dependencies of $F$ on the unknowns is as stated in \eqref{eq:DVsigmagen}. At this point we can already see that our proof of a priori estimates for \eqref{eq:main} and \eqref{eq:sigma1a} can be applied to \eqref{eq:Vgenint}, \eqref{eq:Vgenb}, and \eqref{eq:DVsigmagen} with minimal modifications. Indeed, note that since $V$ and $\nabla \sigma^2$ are respectively tangential and normal (with respect to $m$) to $\partial\Omega$, we have $h^{\mu\nu}\partial_\mu\sigma^2 = Gm^{\mu\nu}\partial_\mu\sigma^2= G\nabla^\nu\sigma^2$. It follows that multiplying \eqref{eq:Vgenint} by $D_VV_\alpha$ the resulting boundary flux on $\partial\Omega$ is\footnote{Remarkably, we can also integrate by parts in the expression $(D_VV_\alpha) \partial_\mu(Gm^{\mu\nu}\omega_{\alpha\nu})$ to cancel out the boundary term $\frac{1}{2}(\nabla^\mu\sigma^2)\omega_{\alpha\mu}$ in \eqref{eq:Vgenb}. More precisely, multiplying the equation \eqref{eq:Vgenint} by $D_{V}V_{\alpha}$ ($\alpha$ is not summed) and then integrating in $\Omega$, we obtain
	\begin{align*}
	0=&\int_{\Omega}\partial_{\mu}\left(\left((h^{\mu\nu}\partial_{\nu}V_{\alpha})+\partial_{\mu}(Gm^{\mu\nu}\omega_{\alpha\nu})\right)D_{V}V_{\alpha}\right)dx\,dt\\
	&-\int_{\Omega}\left(\left(h^{\mu\nu}\partial_{\nu}V_{\alpha}+Gm^{\mu\nu}\omega_{\alpha\nu}\right)(\partial_{\mu}V^{\kappa})(\partial_{\kappa}V_{\alpha})\right)dx\,dt\\
	&-\int_{\Omega}\left(D_{V}\left(\frac{h^{\mu\nu}}{2}(\partial_{\nu}V_{\alpha})(\partial_{\mu}V_{\alpha})+Gm^{\mu\nu}\omega_{\alpha\nu}\partial_{\mu}V_{\alpha}\right)\right)dx\,dt\\
	&+\int_{\Omega}\left(\frac12(D_{V}h^{\mu\nu})(\partial_{\nu}V_{\alpha})(\partial_{\mu}V_{\alpha})+(D_{V}G)m^{\mu\nu}\omega_{\alpha\nu}\partial_{\mu}V_{\alpha}+Gm^{\mu\nu}(D_{V}\omega_{\alpha\nu})\partial_{\mu}V_{\alpha}\right)dx\,dt.
	\end{align*}
	Except the first term on the right-hand side above, all the other integrals in $\Omega$ can be treated as lower order terms, similar as in the irrotational hard phase case. The first term on the right-hand side above gives a boundary integral 
	\begin{align*}
	\int_{\partial\Omega}\left((D_{V}V_{\alpha})(\partial^{\nu}\sigma^{2})(\partial_{\nu}V_{\alpha}+\omega_{\alpha\nu})\right)dS\,dt,
	\end{align*}
	which cancels exactly the non-coercive contributions from the left-hand side of the equation \eqref{eq:Vgenb}.}
\begin{align*}
\begin{split}
\int_0^T\int_{\partial\Omega_t}G((\nabla^\mu\sigma^2)\partial_\mu V_\alpha)(D_V V_\alpha)\ud S \ud t,
\end{split}
\end{align*} 
which can be combined with \eqref{eq:Vgenb} to give us control of 
\begin{align*}
\begin{split}
\sup_{t\in[0,T]}\int_{\partial\Omega_t}|D_VV|^2\ud S.
\end{split}
\end{align*}
Similarly, the right-hand sides of equations \eqref{eq:Vgenb} and \eqref{eq:DVsigmagen} have the same regularity structure as those of the corresponding equations in \eqref{eq:main} and \eqref{eq:sigma1a}. Indeed, the contribution of $\sigma^2$ can be treated exactly as before using equation \eqref{eq:sigmagen} (which amounts to the fact that it is one order lower than $D_V\sigma^2$ in terms of our energies). Moreover, the transport equation \eqref{eq:omegagen} shows that in general $D_V^k\omega$ is of the order $\nabla D_V^{k-1}V$. But as observed in Lemmas~\ref{lem:Vho},~\ref{lem:boxDVkV}, and~\ref{lem:boxDVk1sigma}, we already encountered $\nabla D_V^{k-1}V$ in the right-hand side of the boundary equation for $D_V^kV$, and encountered $\nabla^{(2)}D_V^{k-1}V$ in the right-hand side of the interior equations for $D_V^kV$ and $D_V^{k+1}\sigma^2$. Finally, it remains to check the commutator structure between $D_V$ and the acoustical operator $\partial_\mu(h^{\mu\nu}\partial_\nu)$. But again a direct computation using the definition of $h$ shows that for any $\Theta$,
\begin{align*}
\begin{split}
[D_V,\partial_\mu(h^{\mu\nu}\partial_\nu)]\Theta = -(\nabla_\mu V^\lambda)\big((\nabla_\lambda h^{\mu\nu})\nabla_\nu \Theta+2h^{\mu\nu}\nabla_\mu\nabla_\nu\Theta\big)+\nabla_\mu\big((D_Vh^{\mu\nu})\nabla_\nu\Theta\big)+(\nabla_\mu(Gm^{\mu\nu}\omega_{\lambda\nu}))\nabla^\lambda\Theta.
\end{split}
\end{align*}
Comparing with the commutator identity for $[D_V,\Box]$ from equation \eqref{eq:comtemp3}, we see that the right-hand side above has exactly the same regularity as the case we already treated in our a priori estimates. Indeed, the only difference is the appearance of second order derivatives of $V$, but these always come with lower orders of $D_V$ and as mentioned above were already encountered in Lemmas~\ref{lem:boxDVkV} and~\ref{lem:boxDVk1sigma}. 

\bibliographystyle{plain}
\bibliography{ref}

\bigskip

\centerline{\scshape Shuang Miao}
\smallskip
{\footnotesize
 \centerline{School of Mathematics and Statistics, Wuhan University}
\centerline{Wuhan, Hubei, 430072, China}
\centerline{\email{shuang.m@whu.edu.cn}}
} 

 \medskip

\centerline{\scshape Sohrab Shahshahani}
\medskip
{\footnotesize
 \centerline{Department of Mathematics and Statistics, University of Massachusetts}
\centerline{Lederle Graduate Research Tower, 710 N. Pleasant Street,
Amherst, MA 01003-9305, U.S.A.}
\centerline{\email{sohrab@math.umass.edu}}
}

\medskip

\centerline{\scshape Sijue Wu}
\medskip
{\footnotesize
 \centerline{Department of Mathematics, University of Michigan}
\centerline{East Hall, 530 Church Street,
Ann Arbor, MI 48109-1043, U.S.A.}
\centerline{\email{sijue@umich.edu}}
}

\end{document}